\newtheorem{theorem}{Theorem}
\newtheorem{lemma}{Lemma}
\newtheorem{proposition}{Proposition}
\newtheorem{corollary}{Corollary}
\newtheorem{definition}{Definition}
\newtheorem{remark}{Remark}
\numberwithin{equation}{section}
\numberwithin{lemma}{section}
\numberwithin{proposition}{section}
\numberwithin{corollary}{section}
\renewcommand\Re{\operatorname{Re}}
\def\slashpar{\slash\mkern-10mu \partial}
\title{Radiation Field for Einstein Vacuum Equations\\ with Spacial Dimension $n\geq 4$}
\author{Fang Wang}
\address{Shanghai Jiao Tong University}
\email{fangwang1984@sjtu.edu.cn}
\keywords{Einstein vacuum equations, radiation field}
\date{October 16, 2014}
\begin{document}

\begin{abstract}
In this paper, the radiation field is defined for solutions to Einstein vacuum equations which are close to Minkowski space-time with spacial dimension $n\geq 4$. The regularity properties and asymptotic behavior of those Einstein vacuum solutions are established at the same time. In particular, the map from Cauchy intial data to the radiation field is proved to be an isomorphism when restricting to a small neighborhood of Minkowski data in suitable weighted b-Sobolev spaces. 
\end{abstract}

\maketitle

\section{Introduction}
The Einstein vacuum equations determine a manifold $M^{1+n}$ with a Lorentzian metric with vanishing Ricci curvature:
\begin{equation}\label{eq.1}
R_{\mu\nu}=0. 
\end{equation}
The set $(\mathbb{R}_{t,x}^{1+n},m)$: standard Minkowski metric $m=-dt^2+\sum_{i=1}^n(dx^i)^2$ on $\mathbb{R}_{t,x}^{1+n}$ describes the Minkowski space-time solution of the system (\ref{eq.1}), which is stable under small perturbation according to the remarkable work \cite{CK} of D. Christodoulou and S. Klainerman in 1993. In this paper, the authors showed that  for $n\geq 3$ given $n$-dimensional manifold $\Sigma_0$ with a Riemannian metric $g_0$ and a symmetric two tensor $k_0$, such that $(\Sigma_0,g_0)$ is close to the Euclidean space, $k_0$ is close to $0$ and $(g_0,k_0)$  satisfy the constraint equations: 
\begin{equation}\label{constraint.1}
R_0-[k_0]_j^i[k_0]_i^j+[k_0]_i^i[k_0]_j^j=0,\quad
\nabla^j[k_0]_{ij}-\nabla_i[k_0]_j^j=0
\end{equation}
where $R_0$ is the scalar curvature of $g_0$ and $\nabla$ is the covariant differentiation w.r.t. $g_0$, 
then we can find out a $n+1$-dimensional Lorentzian manifold $(M,g)$ satisfying (\ref{eq.1}) and an embedding $\Sigma_0 \subset M$ such that $g_0$ is the restriction of $g$ to $\Sigma_0$ and $k_0$ is the second fundamental form. 

The Einstein vacuum equations are invariant under diffeomorphism. In the work of Y. Choquet-Bruhat,  \cite{CB1} followed by  \cite{CBG}, this allows her to choose a special \textit{harmonic gauge} to prove the existence and uniqueness up to diffeomorphism of a maximal globally hyperbolic smooth space-time arising from any set of smooth initial data. 
The harmonic gauge is also referred as \textit{wave coordinates}, in which the Einstein vacuum equations become a system of quasilinear wave equations on the components of the unknown metric $g_{\mu\nu}=m_{\mu\nu}+h_{\mu\nu}$: 
\begin{equation}\label{eq.2}
\Box_{g} h_{\mu\nu}=F_{\mu\nu}(h)(\partial h,\partial h)
\end{equation}
where $\Box_{g}=D_{\alpha}D^{\alpha}$ is the geometric wave operator of $g$ and  $F(u)(v,v)$ depends quadratically on $v$ and analytically on $u$ for $u$ small. By using Christoffel symbols, 
$\Box_{g}=g^{\alpha\beta}\partial_{\alpha}\partial_{\beta}-g^{\alpha\beta}\Gamma^{\mu}_{\alpha\beta}\partial_{\mu}$, which reduces to $\Box_{g}=g^{\alpha\beta}\partial_{\alpha}\partial_{\beta}$ in wave coordinates. This is because wave coordinates are required to be solutions to the equations 
\begin{equation*}
\Box_{g}x^{\mu}=0
\end{equation*}
for $\mu=0,1,...,n$. 
The metric $g_{\mu\nu}$ relative to wave coordinates $\{x^{\mu}\}$ satisfies the \textit{harmonic gauge condition}:
\begin{equation}\label{harmonic.3}
\Gamma_{\mu}=0
\end{equation}
for $\mu=0,1,...,n$, where
$$\Gamma_{\mu}=g_{\mu\nu}g^{\alpha\beta}\Gamma_{\alpha\beta}^{\nu}=g^{\alpha\beta}\partial_{\beta}g_{\alpha\mu}
-\tfrac{1}{2}g^{\alpha\beta}\partial_{\mu}g_{\alpha\beta}.
$$
Under this condition the stability of Minkowski space-time can reformulate as follows: given a pair of small symmetric two tensors $(h^0,h^1)$ on $\mathbb{R}^n\simeq \{t=0\}\subset\mathbb{R}^{1+n}_{t,x}$ such that
\begin{equation}\label{harmonic.1}
\Gamma_{\mu}|_{t=0}=0, \quad \partial_{t}\Gamma_{\mu}|_{t=0}=0
\end{equation}
for $\mu=0,1,...,n$, we want to find a Lorentian metric $g=m+h$ of signature $(n,1)$ satisfying the reduced Einstein equations (\ref{eq.2}) and such that $(h|_{t=0},\partial_th|_{t=0})=(h^0,h^1)$. 

\begin{theorem}[H. Lindblad, I. Rodnianski]\label{thm.lr}
Given asymptotically flat Cauchy data $(h^0,h^1)$ in coordinates $(t,x)$ satisfying the harmonic gauge condition (\ref{harmonic.1}) and small, then the solution to the reduced Einstein equations (\ref{eq.2}) provides a solution to the Einstein vacuum equations (\ref{eq.1}).
\end{theorem}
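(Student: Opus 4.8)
The plan is to invoke Choquet-Bruhat's classical observation that the reduced system (\ref{eq.2}) differs from the full system (\ref{eq.1}) only through terms carrying the \emph{gauge vector} $\Gamma^\lambda:=g^{\alpha\beta}\Gamma^\lambda_{\alpha\beta}$ (equivalently $\Gamma_\mu=g_{\mu\lambda}\Gamma^\lambda$, as in (\ref{harmonic.3})), and then to show that $\Gamma^\lambda$ propagates off the initial slice: it vanishes identically once it and its time derivative vanish at $t=0$.

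First I would record the pointwise identity, valid for every metric $g=m+h$,
\begin{equation*}
R_{\mu\nu}=R^{\mathrm{red}}_{\mu\nu}+\tfrac12\big(g_{\mu\lambda}\,\partial_\nu\Gamma^\lambda+g_{\nu\lambda}\,\partial_\mu\Gamma^\lambda\big),
\end{equation*}
where $R^{\mathrm{red}}_{\mu\nu}=-\tfrac12 g^{\alpha\beta}\partial_\alpha\partial_\beta g_{\mu\nu}+Q_{\mu\nu}(g)(\partial g,\partial g)$ with $Q_{\mu\nu}$ a $g$-dependent quadratic form in $\partial g$; the nonlinearity $F_{\mu\nu}$ in (\ref{eq.2}) is defined precisely so that the reduced equation reads $R^{\mathrm{red}}_{\mu\nu}=0$. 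Hence, along the solution $g$ furnished by the hypothesis, $R_{\mu\nu}=\tfrac12(g_{\mu\lambda}\partial_\nu\Gamma^\lambda+g_{\nu\lambda}\partial_\mu\Gamma^\lambda)$ and, tracing, $R=g^{\mu\nu}R_{\mu\nu}=\partial_\lambda\Gamma^\lambda$. It therefore suffices to prove $\Gamma^\lambda\equiv0$ on the region of existence.

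Next I would substitute these expressions into the twice-contracted second Bianchi identity $\nabla^\mu\big(R_{\mu\nu}-\tfrac12 g_{\mu\nu}R\big)=0$, which holds for every metric. Using $\nabla g=0$, the highest-order contributions are $\tfrac12 g_{\nu\lambda}\,g^{\alpha\beta}\partial_\alpha\partial_\beta\Gamma^\lambda$ (from differentiating the $g_{\nu\lambda}\partial_\mu\Gamma^\lambda$ term and contracting) and $\tfrac12\partial_\nu\partial_\lambda\Gamma^\lambda$ (from the $g_{\mu\lambda}\partial_\nu\Gamma^\lambda$ term), while the term $\tfrac12\partial_\nu R=\tfrac12\partial_\nu\partial_\lambda\Gamma^\lambda$ on the right exactly cancels the latter; everything else is linear in $\Gamma^\lambda$ and $\partial\Gamma^\lambda$ with coefficients depending on $g$ and its first two derivatives. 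One is thus left with a closed, homogeneous, linear wave system
\begin{equation*}
\Box_g\Gamma^\lambda=A^\lambda{}_{\kappa}{}^{\rho}\,\partial_\rho\Gamma^\kappa+B^\lambda{}_{\kappa}\,\Gamma^\kappa .
\end{equation*}
Carrying out this cancellation carefully — checking that no second derivative of $g$ survives outside of $\Box_g\Gamma^\lambda$, so that the right-hand side really is homogeneous of first order in $\Gamma$ — is the step I expect to demand the most bookkeeping.

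Finally, the Cauchy data for this system vanishes: $\Gamma_\mu|_{t=0}=0$ is the first half of (\ref{harmonic.1}), and $\partial_t\Gamma_\mu|_{t=0}=0$ the second; once the reduced equations are used to express $\partial_t^2 g|_{t=0}$ through $(h^0,h^1)$, this latter condition is, given $\Gamma_\mu|_{t=0}=0$, equivalent to the Einstein constraint equations (\ref{constraint.1}), which is what makes admissible data producible. Since $\Gamma^\lambda=g^{\lambda\mu}\Gamma_\mu$ with $g^{\lambda\mu}$ invertible near $t=0$, the vector $\Gamma^\lambda$ likewise has vanishing Cauchy data, so by uniqueness for linear wave systems with finite speed of propagation $\Gamma^\lambda\equiv0$ on the domain of dependence of $\{t=0\}$. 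Inserting $\Gamma^\lambda\equiv0$ into the first displayed identity gives $R_{\mu\nu}\equiv0$, i.e. $g=m+h$ solves (\ref{eq.1}). The only input used beyond the hypotheses is local existence for the quasilinear system (\ref{eq.2}) with the prescribed data, which is standard and in any case implicit in the statement.
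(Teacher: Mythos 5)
The paper does not prove Theorem~\ref{thm.lr}; it is quoted as a known result and used as a black box, with the relevant arguments delegated to \cite{CB1} and \cite{LR1,LR2}. So there is no in-text proof to compare against — you are supplying the argument the paper omits, and what you give is the correct and standard gauge-propagation proof going back to Choquet-Bruhat. Your opening identity agrees (modulo harmless lower-order terms of the form $(\partial g)\Gamma$) with the decomposition of $R_{\mu\nu}$ the paper itself writes out in Section~\ref{sec.initialdata}, where $R_{\mu\nu}=-\tfrac12 g^{\lambda\delta}\partial_\lambda\partial_\delta h_{\mu\nu}+\tfrac12\partial_\mu\Gamma_\nu+\tfrac12\partial_\nu\Gamma_\mu+\tfrac12 F_{\mu\nu}$; the passage through the twice-contracted Bianchi identity to a \emph{homogeneous} linear wave system for $\Gamma^\lambda$, the vanishing Cauchy data furnished by (\ref{harmonic.1}), and uniqueness for linear hyperbolic systems close the argument, and the bookkeeping step you flag (no stray $\partial^2 g$ outside the principal part) is indeed the only delicate point — it is exactly what the Bianchi identity is designed to cancel. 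Two small remarks. First, the part of the theorem that is genuinely due to Lindblad--Rodnianski is global existence of the reduced solution from small asymptotically flat data; the gauge-propagation half you prove is Choquet-Bruhat's, and since the statement already presupposes ``the solution to the reduced Einstein equations'', treating existence as an input is consistent with what is being asked. Second, your observation that $\partial_t\Gamma_\mu|_{t=0}=0$ is, given $\Gamma_\mu|_{t=0}=0$ and the reduced equations, equivalent to the constraint equations (\ref{constraint.1}) is precisely the content of the paper's Lemma~\ref{lem.1}, which you could cite rather than rederive.
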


The reduced Einstein vacuum equations satisfies the \textit{null condition} when the spacial dimension $n\geq 4$, which ensures the global existence theorem for small Cauchy data. For $n=3$, it can be shown that the Einstein vacuum equations in harmonic gauge do not satisfy the null condition. Moreover, Y. Choquet-Bruhat showed in \cite{CB2} that even without imposing a specific gauge the Einstein equations violate the null condition. However, the reduced Einstein vacuum equations (\ref{eq.2}) satisfy the \textit{weak null condition}. In the work of \cite{LR1} and \cite{LR2}, H. Lindblad and I. Rodnianski reproved the global existence for Einstein vacuum equations in harmonic gauge for general asymptotic flat initial data  by combining this condition with the vector field method. 

In this paper, we apply the radiation field theory due to F. G. Friedlander to study the asymptotic behavior of solutions to Einstein vacuum equations in harmonic gauge. Friedlander's radiation field was used by L. H\"{o}rmander to study the asymptotic behavior of solutions to linear hyperbolic equation in the following coordinates: 
\begin{equation*}
\rho=\frac{1}{|x|}, \quad\tau=t-r, \quad\theta=\frac{x}{r}
\end{equation*}
for $|x|$ large. For instance, consider the Cauchy problem in Minkowski space-time as follows: 
\begin{equation*}
\Box_{m}u(t,x)=0,\quad u(0,x)=u_0(x),\quad \partial_t u(0,x)=u_1(x),
\end{equation*}
where $u_0,u_1\in C_c^{\infty}(\mathbb{R}^n).$
Near $\rho=0$, by writing $u=\rho^{\frac{n-1}{2}}\tilde{u}$, $\tilde{u}$ satisfies the following equation 
\begin{equation*}
(\Box_{\tilde{m}}+\tfrac{(n-1)(n-3)}{4})\tilde{u}=0
\end{equation*}
where $\tilde{m}=\rho^2m$. F. G. Friedlander showed that $\tilde{u}$ is smooth up to $\rho=0$. Then the radiation field is the image of the map
\begin{equation*}
\mathcal{R_{LP}}: \dot{H}^1(\mathbb{R}^n)\times L^2(\mathbb{R}^n) \ni (u_0,u_1)\longrightarrow \partial_{\tau}\tilde{u}|_{\rho=0}\in L^2(\mathbb{R}_{\tau}\times S^{n-1}_{\theta})
\end{equation*}
which is an isometric isomorphism. Here $\mathcal{R_{LP}}$ is the free space translation representation of Lax and Phllips.

I generalize this idea by considering the conformal transformation of the reduced Einstein vacuum equations on a suitable compactification of $\mathbb{R}_{t,x}^{1+n}$ as follows.
\begin{figure}[htp]
\centering
\includegraphics[totalheight=2.5in,width=2.5in]{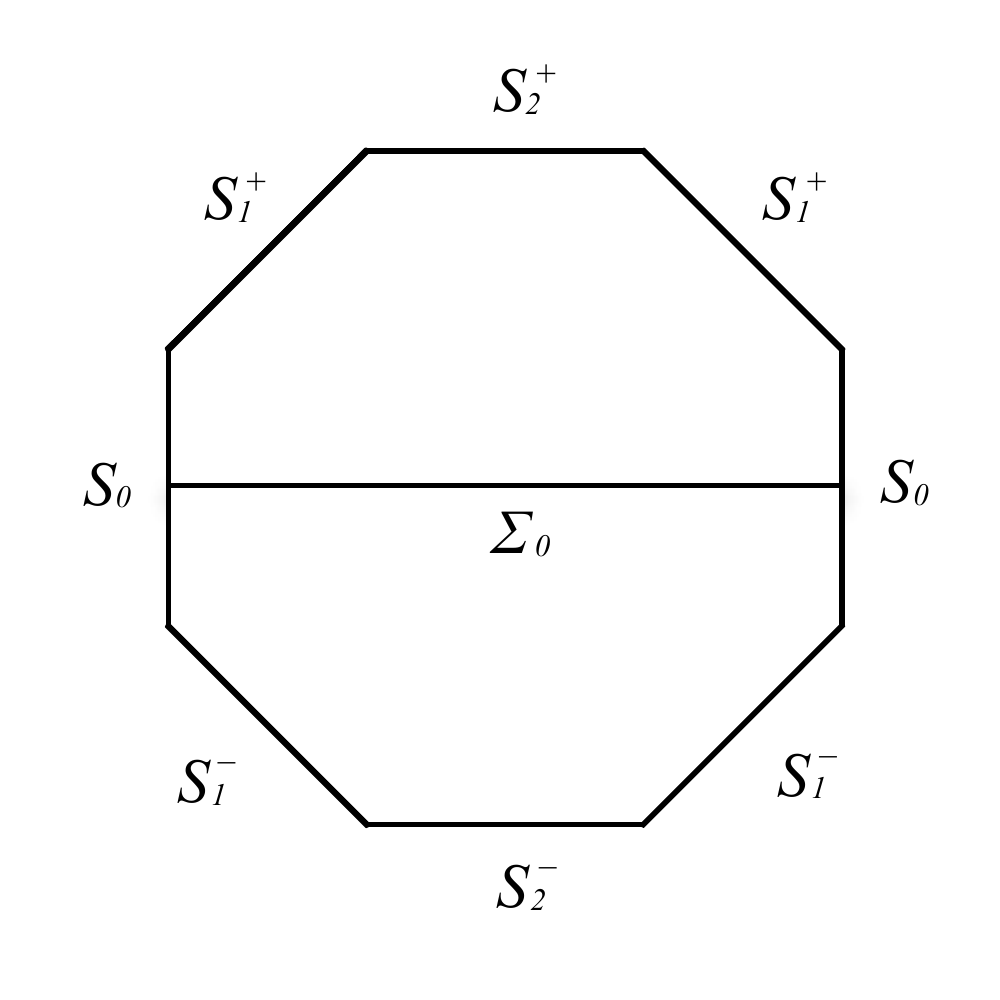}
\caption{the blown-up space $X=\left[\overline{\mathbb{R}_{t,x}^{1+n}}:\overline{\{|t|=|x|\}}\cap \partial\overline{\mathbb{R}_{t,x}^{1+n}}\right]$}\label{fig.1}
\end{figure}
Here $S_1^{\pm}$ is the compactification of null infinity and $\Sigma_0=\overline{\{t=0\}}$ is Cauchy surface. Let $\tilde{\rho}$ be the total boundary defining function and $\rho_0,\rho_1, \rho_2$ be the defining functions for corresponding boundary hypersurfaces $S_0, S_1^{\pm}, S_2^{\pm}$. With a conformal change $\tilde{g}=\tilde{\rho}^2g$ and $\tilde{h}=\tilde{\rho}^{\frac{1-n}{2}}h$, the reduced Einstein equations (\ref{eq.2}) are equivalent to 
\begin{equation}\label{eq.conf}
(\Box_{\tilde{g}}+\gamma(\tilde{h}))\tilde{h}_{\mu\nu} 
=\rho_1^{\frac{n-5}{2}} (\rho_0\rho_2)^{\frac{n-1}{2}} \tilde{F}_{\mu\nu}(\tilde{h},\tilde{h})
\end{equation}
where $\gamma(\tilde{h})= - \tilde{\rho}^{\frac{n-1}{2}}\Box_{\tilde{g}}\tilde{\rho}^{\frac{1-n}{2}}
$ and $\tilde{F}_{\mu\nu}(\tilde{h},\tilde{h})=\rho_1^{1-n}(\rho_0\rho_2)^{-1-n}F_{\mu\nu}(\tilde{\rho}^{\frac{n-1}{2}}\tilde{h}, \tilde{\rho}^{\frac{n-1}{2}}\tilde{h})$. See Section 5 for more details. 
Denote by $\widetilde{\mathcal{V}}^{N,\delta}_{\epsilon}$ the space of elements in the weighted b-Sobolev space 
$$\rho_0^{\frac{n-1}{2}+\delta}H_b^{N+1}(\Sigma_0)\times \rho_0^{\frac{n+1}{2}+\delta}H_b^{N+1}(\Sigma_0)$$
with norm less then $\epsilon$ and
satisfying the harmonic gauge conditions (\ref{harmonic.1}), for some $N\geq n+6$, $\frac{1}{2}>\delta>0$ and $\epsilon>0$ small. Then by energy estimate method we can show that for $n\geq 4$, if $(h^0,h^1)\in \widetilde{\mathcal{V}}^{N,\delta}_{\epsilon}$ then $\tilde{h}=\tilde{\rho}^{\frac{1-n}{2}}h$ is $C^{0,\delta}$ up to $S_1^{\pm}$ and hence the radiation field for $h$ is well defined which satisfies the corresponding harmonic gauge conditions:
\begin{equation}\label{harmonic.2}
\tilde{\Gamma}_{\mu}|_{S_1^{\pm}}=0
\end{equation}
for $\mu=0,1,...,n$, where $\tilde{\Gamma}_{\mu}=\tilde{\rho}^{\frac{1-n}{2}}\Gamma_{\mu}$. Combining this with the linear radiation field theory and the implicit function theorem, we further show that $\tilde{h}|_{S_1^{\pm}}$ also lies in some  weighted b-Sobolev space on $S_1^{\pm}$. More explicitly, denote by $\widetilde{\mathcal{W}}^{N,\delta}_{\epsilon'}$ the elements in 
$$
(\rho_0\rho_2)^{\delta} [H_b^{\frac{1}{2}-\delta}(\overline{\mathbb{R}};H^{N+\frac{1}{2}+\delta} (\mathbb{S}^{n-1}))\cap L^2(\mathbb{S}^{n-1};H_b^{N+1}(\overline{\mathbb{R}}))]
$$
with norm less than $\epsilon'$ and satisfying (\ref{harmonic.2}). Here we take the equivalence $S_1^{\pm}=\overline{\mathbb{R}}_{\tau}\times \mathbb{S}^{n-1}_{\theta}$.  Then $\tilde{h}|_{S_1^{\pm}}\in\widetilde{\mathcal{W}}^{N,\delta}_{\epsilon'}$ for some $\epsilon'>0$. Here both  $\widetilde{\mathcal{V}}^{N,\delta}_{\epsilon}$ and $\widetilde{\mathcal{W}}^{N,\delta}_{\epsilon'}$ are Banach manifolds. See Section \ref{sec.geosetting} for the definition of weighted b-Sobolev spaces and Section \ref{sec.initialdata}, \ref{sec.mollerop} for more details about $\widetilde{\mathcal{V}}^{N,\delta}_{\epsilon}$ and $\widetilde{\mathcal{W}}^{N,\delta}_{\epsilon'}$.

The main theorem of this paper is the following: 
\begin{theorem}\label{mainthm}
For $n\geq 4$, $N\geq n+6$, $\frac{1}{2}>\delta>0$ and $\epsilon>0$ small, if $(h^0,h^1)\in  \widetilde{\mathcal{V}}^{N,\delta}_{\epsilon}$, then the Einstein equation (\ref{eq.1}) has a global solution $h$ such that $\tilde{h}$ is $C^{0,\delta}$ up to $S_1^{\pm}$. Hence the Radiation field is well defined and 
the nonlinear M\o ller wave operator defines a continuous and open map
\begin{equation*}
\mathscr{R_{F}}^{\pm}:  \widetilde{\mathcal{V}}^{N,\delta}_{\epsilon}\ni(h^0,h^1)
\longrightarrow \tilde{h}|_{S^{\pm}_1}\in \widetilde{\mathcal{W}}^{N,\delta}_{C\epsilon}
\end{equation*}
for some constant $C>0$.
\end{theorem}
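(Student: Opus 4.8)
The plan is to combine three ingredients: (i) a global existence theory for the reduced Einstein equations in the weighted b-Sobolev framework, (ii) energy estimates on the compactified space $X$ that give uniform control up to the null-infinity faces $S_1^\pm$, and (iii) the linear radiation field theory together with the implicit function theorem to identify the target space $\widetilde{\mathcal{W}}^{N,\delta}_{C\epsilon}$ and prove openness.

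\textbf{Step 1: Global existence and the conformal reformulation.} First I would invoke the Lindblad--Rodnianski global existence theorem (Theorem~\ref{thm.lr} together with \cite{LR1,LR2}) for small asymptotically flat data satisfying the harmonic gauge conditions (\ref{harmonic.1}), after checking that data in $\widetilde{\mathcal{V}}^{N,\delta}_{\epsilon}$ fall into the admissible class: the weight $\rho_0^{(n-1)/2+\delta}$ on $\Sigma_0$ (and one more power on $h^1$) encodes precisely the decay $|x|^{-(n-1)/2-\delta}$, which for $n\geq 4$ is within the Lindblad--Rodnianski range. This yields a global solution $h=g-m$. Then I would pass to the conformal variables $\tilde g = \tilde\rho^2 g$, $\tilde h = \tilde\rho^{(1-n)/2} h$ and rewrite the equation as (\ref{eq.conf}), checking that the weights $\rho_1^{(n-5)/2}(\rho_0\rho_2)^{(n-1)/2}$ on the right-hand side are exactly those produced by the conformal rescaling of the quadratic nonlinearity $F_{\mu\nu}(h)(\partial h,\partial h)$ together with the weak null structure — this is the point where the sign $n\geq 4$, i.e. $(n-5)/2 \geq -1/2$, makes the source term integrable enough near $S_1^\pm$.

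\textbf{Step 2: Uniform estimates up to $S_1^\pm$ and $C^{0,\delta}$ regularity.} Next I would run a b-energy estimate for equation (\ref{eq.conf}) on $X$, using the vector fields tangent to the boundary. The conformal zeroth-order term $\gamma(\tilde h)$ contributes a potential that is $O(1)$ (or better) near $S_1^\pm$; the key is that the geometric wave operator $\Box_{\tilde g}$ of the rescaled metric degenerates in a controlled (b- or edge-) fashion at the corner $S_1^\pm \cap S_2^\pm$ and at $S_1^\pm$ itself. Commuting with up to $N$ b-derivatives and using the smallness $\epsilon$ to absorb the nonlinear source, I would derive an estimate of the form $\|\tilde h\|_{\rho_*^{\delta} H_b^{N+1}} \lesssim \epsilon$ in a neighborhood of $S_1^\pm$, where $\rho_*$ is the appropriate product of boundary defining functions. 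A Sobolev embedding in the b-category (losing $(n+1)/2$ derivatives, whence the hypothesis $N\geq n+6$) then upgrades this to $\tilde h \in C^{0,\delta}(X)$ up to $S_1^\pm$, so the restriction $\tilde h|_{S_1^\pm}$ is well-defined and the radiation field exists. Propagating the harmonic gauge conditions (\ref{harmonic.1}) from $\Sigma_0$ along the evolution and taking the boundary limit gives (\ref{harmonic.2}), $\tilde\Gamma_\mu|_{S_1^\pm}=0$.

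\textbf{Step 3: Membership in $\widetilde{\mathcal{W}}^{N,\delta}_{C\epsilon}$ and openness.} The mixed space $(\rho_0\rho_2)^\delta[H_b^{1/2-\delta}(\overline{\mathbb{R}}; H^{N+1/2+\delta}(\mathbb{S}^{n-1})) \cap L^2(\mathbb{S}^{n-1}; H_b^{N+1}(\overline{\mathbb{R}}))]$ is exactly the image of the linear radiation field map $\mathcal{R}_{LP}$ on the corresponding b-Sobolev Cauchy data (a restriction–trace theorem for the wave operator in these coordinates $\rho=1/|x|$, $\tau=t-r$, $\theta=x/r$, following H\"ormander and Friedlander); the $\tfrac12$-derivative loss in $\tau$ and the half-derivative gain in $\theta$ are the standard trace mismatch. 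I would write the nonlinear map $\mathscr{R}_F^\pm$ as the linear radiation map plus a remainder coming from the Duhamel term against the source $\rho_1^{(n-5)/2}(\rho_0\rho_2)^{(n-1)/2}\tilde F$, and show the remainder lands in the same space with a quadratic (hence $O(\epsilon^2)$) bound, giving the $C\epsilon$ target radius. Continuity of $\mathscr{R}_F^\pm$ follows from continuous dependence of the solution on the data in these norms. For openness I would apply the implicit function theorem on the Banach manifolds $\widetilde{\mathcal{V}}^{N,\delta}_\epsilon$ and $\widetilde{\mathcal{W}}^{N,\delta}_{C\epsilon}$: the linearization of $\mathscr{R}_F^\pm$ at a point is the linear M\o ller wave operator perturbed by a compact/small term, which is invertible (an isomorphism onto the gauge-constrained subspace) by the linear theory, so the nonlinear map is a local diffeomorphism onto its image and in particular open.

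\textbf{Main obstacle.} The hard part will be Step~2: obtaining energy estimates that are \emph{uniform all the way up to $S_1^\pm$}, because the conformal wave operator $\Box_{\tilde g}$ is genuinely degenerate there and the weak (rather than classical) null condition means the nonlinearity does not decay fast enough to be treated perturbatively by naive weighted estimates — one must exploit the precise tensorial structure of $F_{\mu\nu}$ (the split into "good" components and the $\tilde\Gamma_\mu$-terms) so that the dangerous quadratic terms either carry extra decay or are total derivatives, matching the weight $\rho_1^{(n-5)/2}$ on the source. Getting the b-Sobolev orders and weights to close consistently across the corner $S_1^\pm\cap S_2^\pm$, and simultaneously propagating the gauge condition to the boundary, is the technical crux; the linear radiation theory and the implicit function theorem argument in Step~3 are then comparatively routine.
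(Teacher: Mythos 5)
Your overall three-step architecture matches the paper's, but there are two points worth flagging — one structural departure in Step~1 and one genuine gap in Step~2 — while Step~3 is essentially the paper's argument in different words.

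\textbf{Step 1.} You propose to first invoke Lindblad--Rodnianski for global existence and only afterwards pass to conformal variables and run b-energy estimates. The paper does not separate these: it constructs the solution directly by Picard iteration \emph{inside} the compactified picture, domain by domain across $\widetilde\Omega_1,\dots,\widetilde\Omega_4$, so that existence, smallness, and the uniform estimates up to $S_1^{\pm}$ are obtained simultaneously. Your route is not wrong in principle, but you would still have to close a bootstrap for the b-energy norms on a solution whose only a priori control is in the Lindblad--Rodnianski framework, which is a nontrivial translation step. Building the solution in the b-framework from the start, as the paper does, is what makes the estimates and the existence come out together; as stated, your Step~1 is redundant and leaves a small gap in how the external estimates are to be converted to the weighted b-norms near the null faces.

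\textbf{Step 2.} Here there is a real gap. You claim that once $\tilde h\in \rho_*^{\delta}H_b^{N+1}$, a b-Sobolev embedding ``losing $(n+1)/2$ derivatives'' upgrades this to $\tilde h\in C^{0,\delta}(X)$ up to $S_1^{\pm}$. That step fails: b-Sobolev spaces control only \emph{tangential} derivatives at the boundary, so $\bigcap_N H_b^N$ embeds into $L^\infty$ and into conormal regularity, but gives no H\"older continuity in the direction transverse to $S_1^\pm$. The paper obtains $C^{0,\delta}$ up to null infinity from a pointwise bound on the transverse derivative of the form
\begin{equation*}
|\partial_{\rho_1}\tilde h| \le C\,\rho_0^{\delta}\,\rho_1^{\delta-1}\,\epsilon,
\end{equation*}
which then integrates in $\rho_1$ to give the H\"older modulus. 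This bound comes from two nontrivial facts that your write-up does not isolate: (i) the chosen quadratic form $\langle\mathcal F_{\tilde g}(T_i,v),\nabla T_i'\rangle$ carries an extra factor of $\rho_1$ in front of $|\partial_{\rho_1}v|^2$, so the energy norm only controls $\rho_1^{1/2}\partial_{\rho_1}\tilde h$, producing a $\rho_1^{-1/2}$ loss in any pointwise transverse estimate; (ii) the Gr\"onwall argument gives the rate $M_i^N(\mathfrak t;\tilde h,\tilde g)\lesssim\epsilon|\mathfrak t|^{\delta-\frac12}$ on the level sets $T_i'=\mathfrak t$, and $\rho_1^{-1/2}\cdot|\mathfrak t|^{\delta-1/2}=\rho_1^{\delta-1}$. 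Closing the Gr\"onwall inequality in turn hinges on the harmonic-gauge gain $\tilde h_{\rho\rho}=O(\rho_1^{\delta})$ (and the $\rho_1^{1/2}$ gain in the energy-level version), obtained by integrating the gauge relation $D(2\tilde h_{\mu\rho}-\theta_\mu\mathrm{tr}_m\tilde h)=O(\rho_1\tilde\partial\tilde h)+\cdots$ transversely. You gesture at this in the ``Main obstacle'' paragraph, but the specific mechanism — $\rho_1^{-1/2}$ from the degenerate quadratic form cancelled by the $|\mathfrak t|^{\delta-1/2}$ decay rate, both contingent on the gauge gain for $\tilde h_{\rho\rho}$ — is the actual content, and a plain b-Sobolev embedding argument will not deliver it.

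\textbf{Step 3.} Your Duhamel decomposition (linear radiation map plus a remainder estimated via the source) is close to what the paper actually does: it solves the linear equation $\Box_g k=F(h)(\partial k,\partial h)$ with background $g$ and Cauchy data $(k^0,k^1)$, defines the linear M\o ller operator ${}^g\mathcal{R_F}$, and shows it has the same boundary operator as ${}^m\mathcal{R_F}$ by estimating $k-k'$ for $\Box_m k'=0$ with the same data — which is precisely your Duhamel remainder, written in the conormal framework of $[\mathrm{MW}]$. The nonlinear $\mathscr{R_F}$ is then just ${}^g\mathcal{R_F}$ at the fixed point $(h^0,h^1)$, and boundedness plus density gives continuity. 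Openness by the implicit function theorem and the isomorphism of ${}^m\mathcal{R_F}$ restricted to the gauge-constrained tangent space is the same in both. So Step~3 is fine; just note that the decay you need on $H^{\mu\nu}\partial^2 k$ and $F(h)(\partial k,\partial h)$ to conclude the remainder contributes nothing to the boundary operator is exactly $\Re\lambda<n-2$, which holds in the target weight range precisely because $n\ge 4$.
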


The open property guarantees the solution for characteristic initial value problem for Einstein Vacuum equations. 

\begin{theorem}
For $n\geq 4$, given characteristic data $\tilde{h}^{S_1^+}\in \widetilde{\mathcal{W}}^{N,\delta}_{\epsilon}$ for some $N\geq n+6, \delta\in (0,\frac{1}{2})$ and $\epsilon>0$ small enough, there exists an asymptotically flat solution $g$ to the Einstein vacuum equation (\ref{eq.1}) such that $\tilde{\rho}^{\frac{1-n}{2}}(g-m)|_{S_1^+}=\tilde{h}^{S_1^+}$. Moreover, $(t,x)$ are wave coordinates w.r.t. $g$ globally.
\end{theorem}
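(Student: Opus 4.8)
The plan is to obtain this theorem as a soft consequence of the open mapping property of the nonlinear M\o ller wave operator $\mathscr{R_F}^{+}$ established in Theorem \ref{mainthm}; essentially no new analysis is needed beyond recording that Minkowski space-time lies in the domain and maps to the zero radiation field. In the scattering picture the datum on $S_1^+$ is a complete characteristic datum: by the linear radiation field theory the map from Cauchy data to the radiation field at (future) null infinity is an isomorphism, so prescribing $\tilde h^{S_1^+}$ alone should pin down a space-time, and the task reduces to inverting $\mathscr{R_F}^{+}$ near $0$.

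Concretely, first note that $(h^0,h^1)=(0,0)$ satisfies the harmonic gauge conditions (\ref{harmonic.1}) trivially, hence lies in $\widetilde{\mathcal{V}}^{N,\delta}_{\epsilon_0}$ for every $\epsilon_0>0$; the corresponding solution of (\ref{eq.2}) is $h\equiv 0$, i.e.\ $g=m$, whose radiation field vanishes, so $\mathscr{R_F}^{+}(0,0)=0$. Now fix $\epsilon_0>0$ small enough for Theorem \ref{mainthm} to apply. Since $\widetilde{\mathcal{V}}^{N,\delta}_{\epsilon_0}$ is an open ball in the Banach manifold of admissible Cauchy data and $\mathscr{R_F}^{+}$ is an open map, the image $\mathscr{R_F}^{+}(\widetilde{\mathcal{V}}^{N,\delta}_{\epsilon_0})$ is an open subset of $\widetilde{\mathcal{W}}^{N,\delta}_{C\epsilon_0}$ containing $0$ (the zero element again satisfies (\ref{harmonic.2}) trivially). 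Choose $\epsilon>0$ so small that $\widetilde{\mathcal{W}}^{N,\delta}_{\epsilon}\subset \mathscr{R_F}^{+}(\widetilde{\mathcal{V}}^{N,\delta}_{\epsilon_0})$. Then any $\tilde h^{S_1^+}\in\widetilde{\mathcal{W}}^{N,\delta}_{\epsilon}$ is of the form $\mathscr{R_F}^{+}(h^0,h^1)$ for some $(h^0,h^1)\in\widetilde{\mathcal{V}}^{N,\delta}_{\epsilon_0}$.

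It then remains to read off the assertions. By Theorem \ref{mainthm} the reduced equations (\ref{eq.2}) with data $(h^0,h^1)$ have a global solution $h$; put $g=m+h$. Since $(h^0,h^1)$ satisfies (\ref{harmonic.1}), Theorem \ref{thm.lr} guarantees that the harmonic gauge condition $\Gamma_\mu=0$, equivalently $\Box_g x^\mu=0$, holds throughout the space-time, so $g$ solves the Einstein vacuum equations (\ref{eq.1}) and $(t,x)$ are wave coordinates for $g$ globally. Asymptotic flatness is inherited from the spatial decay encoded in $(h^0,h^1)\in\rho_0^{\frac{n-1}{2}+\delta}H_b^{N+1}(\Sigma_0)\times\rho_0^{\frac{n+1}{2}+\delta}H_b^{N+1}(\Sigma_0)$ together with the regularity and decay of $h$ supplied by Theorem \ref{mainthm}. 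Finally $g-m=h$ and $\tilde h=\tilde\rho^{\frac{1-n}{2}}h$, so $\tilde\rho^{\frac{1-n}{2}}(g-m)|_{S_1^+}=\tilde h|_{S_1^+}=\mathscr{R_F}^{+}(h^0,h^1)=\tilde h^{S_1^+}$, as required.

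Since the genuine work lives in Theorem \ref{mainthm}, the points to be careful about here are bookkeeping rather than analysis: the openness in Theorem \ref{mainthm} is attached to a \emph{fixed} ball $\widetilde{\mathcal{V}}^{N,\delta}_{\epsilon_0}$, so the admissible size $\epsilon$ of the characteristic data is determined by $\epsilon_0$ and $C$ and should be made quantitative; and one must check that the gauge constraint (\ref{harmonic.2}) is exactly the constraint built into $\widetilde{\mathcal{W}}^{N,\delta}_{\epsilon}$, so that every element of $\widetilde{\mathcal{W}}^{N,\delta}_{\epsilon}$ is genuinely realized as a radiation field and no further compatibility condition on $\tilde h^{S_1^+}$ is hidden — (\ref{harmonic.2}) playing here the role of the characteristic constraint equation, the null-infinity analogue of (\ref{constraint.1}).
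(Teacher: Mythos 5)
Your proof is correct and is exactly the argument the paper intends: the paper itself states, immediately before this theorem, that "the open property guarantees the solution for characteristic initial value problem," and leaves the routine bookkeeping to the reader, which is what you have supplied. Your handling of the quantitative relationship between $\epsilon_0$ and $\epsilon$, the identification of $\mathscr{R_F}^{+}(0,0)=0$, and the passage from the reduced equations back to the Einstein vacuum equations via Theorem \ref{thm.lr} are all in line with the paper.
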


For $n=3$, the Cauchy data of interest for reduced Einstein vacuum equations has an asymptotic leading term $-M|x|^{-1}\delta_{ij}$ for some constant $M>0$ small, which has a long range effect at null infinity. Since $-M|x|^{-1}\delta_{ij}$ provides a solution to the linearization of (\ref{eq.2}) in a neighborhood of null infinity in $X$, we may expect that the essential change of the geometry of perturbed solution $g=m+h$ only comes from the asymptotic leading term, i.e. the constant $M$. By a different change of coordinates to $\tau=t-|x|-M\log |x|$, which was suggested by Friedlander to study the linear equation $\Box_{g}u=0$ with such background metric, I find a corresponding compactification of $\mathbb{R}_{t,x}^{1+n}$ such that the radiation filed is well defined in a similar way. This will be discussed in details in a separate paper. 


The outline of this paper is as follows: in Section \ref{sec.geosetting} we define a compactification of $\mathbb{R}_{t,x}^{1+n}$ and give basic geometric notations used through out the paper; in Section 3, we study the solution spaces of constraint equations  (\ref{constraint.1}) and  of harmonic gauge conditions (\ref{harmonic.1}) separately; in Section 4, we make a conformal transformation of the reduced Einstein equations and study the metric and wave operator in the conformal setting; in Section 5, we define a family of time-like functions, which will be used to define a special type of energy norm, called the weighted b-Sobolev norm; in Section 6, we do energy estimates for the equations (\ref{eq.conf}), show the regularity of Einstein solutions near boundary and define the radiation field; in Section 7, we modify the result obtained in Section 6 and obtain the isomorphism property for the nonlinear M\o ller wave operator. 

\textbf{Acknowledgment}: I want to thank to my adviser Richard Melrose for inspirations and valuable discussion on this topic. Without his help, this paper would not exist. I also would like to apologize to all who expressed interest in this work for the long delay between the first announcement of these results and the appearance of this monograph.

\vspace{0.2in}
\section{Geometric Setting}\label{sec.geosetting}

This section is a preparation for describing the problem and dealing with it in a manifold with corners arising from a suitable compactification of $\mathbb{R}^{1+n}_{t,x}$.  To simplify the notation, we take $t=x^0, x=(x^1,...,x^n)$ and use the lower case English alphabet $i,j,k,l,...$ as indices taking value in $\{1,2,...,n\}$ and the Greek alphabet $\alpha,\beta,\mu,\nu,...$ as indices taking value in $\{0,1,2,...,n\}$. Moreover, we use the capital English alphabet $I,J,K,L,...$ as multi indices. The change between superscript and subscript is taken over Minkowski metric $m=-d^2t+\sum_{i=1}^nd^2x^i$. For example,  $x_0=-x^0, x_i=x^i$.

\subsection{Compactification of the Space-time}

The usual radial compactification of $\mathbb{R}_{t,x}^{1+n}$, denoted by $X_0$,  can be realized by embedding 
	$$
 	\Phi: \mathbb{R}_{t,x}^{1+n}\ni (t,x) \longrightarrow (\frac{1}{\sqrt{1+t^2+|x|^2}},\frac{t}{\sqrt{1+t^2+|x|^2}}, \frac{x}{\sqrt{1+t^2+|x|^2}})\in \mathbb{R}^{2+n}
 	$$
and taking $X_0=\overline{\Phi(\mathbb{R}^{1+n}_{t,x})}$, which is a closed half sphere of dimension $n+1$.  Then all null rays w.r.t. Minkowski metric converge to  an embedded submanifold of $X_0$ on the bounday:
	 $$
	 L=\overline{\Phi(\{|t|=|x|\})}  \cap \partial X_0=\{(0,\pm\frac{1}{\sqrt{2}}, \pm\frac{x}{\sqrt{2}}): x\in \mathbb{R}^n,\ |x|=1\}.
	 $$
The compactification space $X$, which we use in this paper to study the asymptotic behavior of Einstein vacuum solutions that are close to Minkowski metric,  is $X_0$ blown up $ L$. Hence $X$  is a manifold with corners up to codimension $2$. See Figure \ref{fig.1}. 

More explicitly, let $d(p,q)$ be the distance function on $X_0$ w.r.t. standard spherical metric and $U_{\epsilon}=\{p\in X_0: d(p,L)<\epsilon\}$. Choose some $\epsilon>0$ small and define
	$$
	\begin{gathered}
	\Psi: X_0\backslash L \longrightarrow  X_0\backslash U_{\epsilon}
	\end{gathered}
	$$
as follows: if $d(p,L)>\epsilon$ then $\Psi(p)=p$; if $d(p,L)\leq \epsilon$ then
	$$
	d(\Psi(p),L)=d(\Psi(p),p)+d(p, L), \ d(\Psi(p),L) = \psi (d(p,L))
	$$
where $\psi: [0,\epsilon]\longrightarrow [\epsilon/2,\epsilon]$ is smooth and satisfies: 
$\psi(0)=\epsilon/2$, $\psi'>0$ and $\psi(d)=d$ for $7\epsilon/8$. Then the blowup can be realized by taking 
	\begin{equation}\label{eq.blowup}
	X=\overline{\Psi( X_0\backslash L)}=X_0\backslash U_{\epsilon}\subset \mathbb{R}^{2+n}.
	\end{equation}
	Obviously, $\Psi\Phi$ embeds $\mathbb{R}^{1+n}_{t,x}$ into $\mathbb{R}^{2+n}$ with image $\mathring{X}$, the interior of $X$. 
	
	This compactification is closely related to Penrose's diagram. The three essentially different boundary hypersurfaces can be represented with the three regions at infinity: spatial infinity $S_0$, null infinity $S_1^{\pm}$ and temporal infinity $S_2^{\pm}$ where the latter two both have forward and backward components. In particular, we can identify $S_0, S_1^{\pm}$ with $\overline{\mathbb{R}}\times \mathbb{S}^{n-1}$, where $\overline{\mathbb{R}}$ is the radial compactification of $\mathbb{R}$, and identify $S_2^{\pm}$ with closed ball of dimension $n$. Denote the set of boundary hyper surfaces of $X$ by
	$$
	M_1(X)=\{S_0, S_1^{\pm}, S_2^{\pm}\}. 
	$$

\begin{definition}\label{def.definingfun}
For $S\subset M_1(X)$,  we say $\rho_S$  a defining function for $S$ iff
	$$
	0\leq \rho_S \in C^{\infty}(X), \ S=\{\rho_S=0\}, \ d\rho_S|_S\neq 0. 
	$$
Two boundary defining functions $\rho_S, \rho'_S$ are equivalent on a domain $\Omega\subset X$  iff 
	$$
	\rho_S=e^{\psi}\rho'_S\ \textrm{for some}\ \psi\in C^{\infty}(\overline{\Omega}).
	$$
We say $\tilde{\rho}$ a total boundary defining function for $X$ iff
	$$
	\tilde{\rho}=\prod_{S\in M_1(X)} \rho_S.
	$$
\end{definition}
Throughout this paper, we use $\rho_0,\rho_1, \rho_2$ to  denote the defining functions for $S_0,S_1^{\pm}, S_2^{\pm}$ and $\tilde{\rho}=\rho_0\rho_1\rho_2$ the total boundary defining function for $\partial X$. We make a special choice of them in local coordinates in the following. 

\begin{figure}[htp]
\centering
\includegraphics[totalheight=2.5in,width=2.5in]{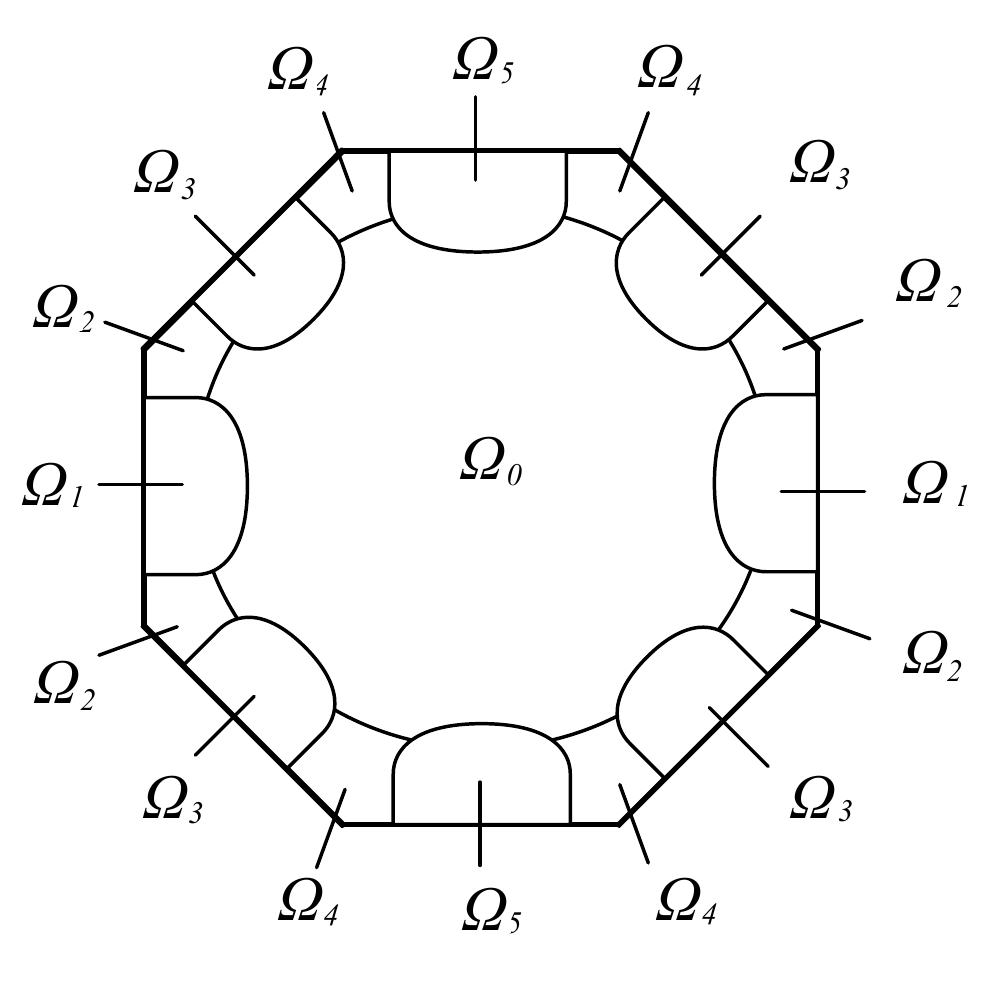}
\caption{The local coordinate charts on $X$}\label{fig.3}\end{figure}
Choose a covering $\{\Omega_i:0\leq i\leq 5\}$ for $X$ such that $\Omega_0$ is a bounded domain in the interior of $X$ and 
 $\bigcup_{i=1}^{5}\Omega_i$ covers $\partial X$.
The corresponding local coordinates can be chosen from the following functions:
	$$
	\begin{gathered}
	r=|x|,\ \tau=|t|-|x|,\ \rho=\frac{1}{|x|},\ s=\frac{t}{|x|},\ \phi=\frac{1}{|t|},\ y=\frac{x}{|t|},
	\\
	a=1-\frac{|t|}{|x|},\  b=\frac{1}{|x|-|t|},\  \bar{a}=1-\frac{|x|}{|t|},\  \bar{b}=\frac{1}{|t|-|x|},\ \theta^i=\frac{x^i}{|x|},\ \theta^0=1.
	\end{gathered}
	$$
More explicitly, 
\begin{itemize}
\item In the domain $\Omega_0=\{(t,x):|t|^2+|x|^2<1000\},$ choose
	$$
	\tilde{\rho}=\rho_0=\rho_1=\rho_2=1.
	$$
\item In the domain $\Omega_1=\{(s,\rho,\theta): - \frac{7}{8}< s<  \frac{7}{8}, 0\leq \rho <1, \theta\in\mathbb{S}^{n-1}\}$, choose
	$$
	\tilde{\rho}=\rho_0=\rho, \  \rho_2=\rho_1=1.
	$$
\item In the domain $\Omega_2=\{(a,b,\theta):0\leq a<\frac{7}{8}, 0\leq b<1, \theta\in \mathbb{S}^{n-1}\}$,  choose
	$$
	\tilde{\rho}=ab,\  \rho_0=b,\  \rho_1=a, \  \rho_2=1.
	$$
\item In the domain $\Omega_3=\{(\tau,\rho, \theta):-\tau_0\leq \tau\leq \tau_0, 0\leq \rho< 1, \theta\in \mathbb{S}^{n-1}\}$ for some $\tau_0>8$,  choose
	$$
	\tilde{\rho}=\rho_1=\rho, \  \rho_0=\rho_2=1.
	$$
\item In the domain $\Omega_4=\{(\bar{a},\bar{b},\theta):0\leq \bar{a}<\frac{7}{8}, 0\leq \bar{b}<1, \theta\in \mathbb{S}^{n-1}\}$, choose
	$$
	\tilde{\rho}=\bar{a}\bar{b}, \  \rho_1=\bar{a}, \  \rho_2=\bar{b}, \  \rho_0=1.
	$$
\item In the domain $\Omega_5=\{(\phi,y):0\leq \phi<1,|y|< \frac{7}{8}\}$,  choose
	$$
	\tilde{\rho}=\rho_2=\phi, \  \rho_1=\rho_0=1.
	$$
\end{itemize}
Notice that, in the intersection of any two domains, the different choices of defining functions are equivalent. In many cases, for example when doing energy estimates, equivalent defining functions give equivalent statements. So we change the choice of $\rho_i$ freely from one set of coordiantes to the other in the rest of this paper if they give equivalent results. However, when performing energy estimates, we may not been able to restrict the discussion in a single coordinate chart and hence have to consider some of them together. See Section 6 for more details.

We also denote by $[X]^2$ a double space of $X$ across the null infinity $S^+_1\cup S_1^-$ such that $[X]^2$ is a manifold with boundary and the smooth structure on $X$ extends to $[X]^2$, i.e.  $C^{\infty}(X)=C^{\infty}([X]^2)|_{X}$. Here $[X]^2$ is not uniquely determined. Refer to \cite{Me} for more details. The boundary of $[X]^2$ consists of three components,  $[S_0]^2$, $[S_2^+]^2$ and $[S_2^-]^2$, which are  also double spaces of $S_0$, $S_2^+$ and $S_2^-$ across their boundaries and hence are close manifolds. Moreover, $[S_0]^2$ can be identified with $\mathbb{S}^1\times \mathbb{S}^{n-1}$ and $[S_2^{\pm}]$ can be identified with $\mathbb{S}^n$. We introduce $[X]^2$ for the reason that the conformal Minkowski metric, or with small perturbation, extends to a b-type Lorentz metric on $[X]^2$. See Section \ref{sec.conf} for more details.

\subsection{Vector Fields.}
Let $\mathscr{V}_b(X)$ (resp. $\mathscr{V}_b([X]^2)$) be the space of smooth vector fields on $X$ which are tangent to $\partial X$ (resp. $\partial [X]^2$). The relation between $\mathscr{V}_b([X]^2)|_X$ and $\mathscr{V}_b(X)$ is 
\begin{equation*}
\mathscr{V}_b([X]^2)|_X = \mathscr{V}_b(X) + C^{\infty}(X)V_1. 
\end{equation*}
where $V_1\in \mathscr{V}_b([X]^2)|_{X}$ transverse to $S_1^{\pm}$, i.e. for any $p\in S_1^{\pm}$, $V_1(p)\notin T_pS_1^{\pm}$.  In particular, in domain $\Omega_2,\Omega_3,\Omega_4$, we can choose $V_1=\partial_{\rho_1}$. Set
\begin{equation*}
\begin{gathered}
\partial_0=\partial_t,\quad \partial_i=\partial_{x^i},\quad \slashpar_i=r(\delta_i^j-\theta_i\theta^j)\partial_j,
\\ 
Z_{00}=r\partial_r+t\partial_t, \quad Z_{0i}=t\partial_i+x_i\partial_t, \quad Z_{ij}=x_{i}\partial_{j}-x_{j}\partial_{i}.
\end{gathered}
\end{equation*}
Here $\slashpar_i$ are the projection of $\partial_i$ on $T\mathbb{S}^{n-1}\subset T\mathbb{R}^n$ satisfying $\theta^i\slashpar_i=0$. 
We denote by $\tilde{Z}$ any vector field in $\{\partial_{\mu}, Z_{\mu\nu}: \mu, \nu=0,...,n\}$ and $\tilde{\partial}^k$ any element in $\mathscr{V}^k_b(X)$ for $k\geq 1$. 
\begin{lemma}\label{lem.0}
The vector fields defined above satisfy the following properties:
\begin{equation*}
\begin{aligned}
(\mathrm{i})&\quad
\mathscr{V}(\mathbb{S}^{n-1}) =\mathrm{Span}_{C^{\infty}(\mathbb{S}^{n-1})}\{Z_{ij}: i,j=1,...,n\};
\\
(\mathrm{ii})&\quad
\mathscr{V}_b(X) = \mathrm{Span}_{C^{\infty}(X)} \{\partial_{\mu},Z_{\mu\nu}:\mu,\nu=0,1,...,n\};
\\
(\mathrm{iii})&\quad
\partial_{\mu} \in \rho_0\rho_2\mathscr{V}_b(X),\ \mathrm{for}\ \mu=0,...,n.
\end{aligned}
\end{equation*}
%
\end{lemma}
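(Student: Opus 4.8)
The plan is to verify each of the three statements of Lemma~\ref{lem.0} by reducing to explicit local coordinate computations on the coordinate charts $\Omega_0,\dots,\Omega_5$ introduced above, and gluing with a partition of unity subordinate to this cover.

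For part (i), I would recall that $\mathbb{S}^{n-1}\subset\mathbb{R}^n$ is the unit sphere and that the rotation vector fields $Z_{ij}=x_i\partial_j-x_j\partial_i$, restricted to $\mathbb{S}^{n-1}$, are precisely the generators of $SO(n)$ acting on the sphere; since $SO(n)$ acts transitively on $\mathbb{S}^{n-1}$ and the isotropy subalgebra has the right dimension, the $Z_{ij}$ span $T_\theta\mathbb{S}^{n-1}$ at every point $\theta$, hence span $\mathscr{V}(\mathbb{S}^{n-1})$ over $C^\infty(\mathbb{S}^{n-1})$. Concretely: near any $\theta_0\in\mathbb{S}^{n-1}$ one may assume $\theta_0=e_n$; then the $n-1$ vector fields $Z_{in}=x_i\partial_n-x_n\partial_i$, $i=1,\dots,n-1$, restrict at $e_n$ to $-\partial_i$, which are linearly independent and tangent to the sphere, so by smoothness they remain a local frame for $T\mathbb{S}^{n-1}$ in a neighbourhood. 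The identity $\slashpar_i = r(\delta_i^j-\theta_i\theta^j)\partial_j$ gives the relation $\slashpar_i$ on $\mathbb{S}^{n-1}$ to these, confirming consistency of the notation.

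For part (ii), the inclusion $\mathrm{Span}_{C^\infty(X)}\{\partial_\mu,Z_{\mu\nu}\}\subset\mathscr{V}_b(X)$ requires checking that each of $\partial_\mu$ and $Z_{\mu\nu}$ is tangent to $\partial X$; this is done chart by chart by writing the vector field in the local coordinates of $\Omega_1,\dots,\Omega_5$ and checking that the coefficient of the relevant $\partial_{\rho_S}$ vanishes at $\rho_S=0$. For instance, in $\Omega_1$ with coordinates $(s,\rho,\theta)$ one has $\rho=1/|x|$, $s=t/|x|$, so $\partial_t=\rho\,\partial_s$, $r\partial_r=-\rho\partial_\rho - s\partial_s+(\text{angular})$, etc., and one reads off that all of $\partial_\mu$, $Z_{\mu\nu}$ lie in $\mathscr{V}_b$; similar bookkeeping in $\Omega_2$ (coordinates $a,b,\theta$ with $a=1-|t|/|x|$, $b=1/(|x|-|t|)$), $\Omega_3$, $\Omega_4$, $\Omega_5$ completes the inclusion. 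For the reverse inclusion $\mathscr{V}_b(X)\subset \mathrm{Span}_{C^\infty(X)}\{\partial_\mu,Z_{\mu\nu}\}$, the key point is that already on $\mathring X=\mathbb{R}^{1+n}_{t,x}$ the fields $\{\partial_\mu,Z_{\mu\nu}\}$ span the full tangent space at every point (indeed $\partial_0,\dots,\partial_n$ alone do), so any smooth vector field on $X$ is a $C^\infty(\mathring X)$-combination of them; the content is that the combining coefficients extend smoothly up to $\partial X$ when the field is b-tangent. This I would get from part (iii): writing $V\in\mathscr{V}_b(X)$ in each chart and using that $\{\rho_0\rho_2\partial_\mu\}\cup\{Z_{\mu\nu}\}$ is, locally near each boundary face, a spanning set for $\mathscr{V}_b$ with smooth transition, one solves for the coefficients and checks smoothness; partition of unity patches the local solutions.

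Part (iii), $\partial_\mu\in\rho_0\rho_2\mathscr{V}_b(X)$, is the computational heart and the main obstacle, since it is where the precise choices of $\rho_0,\rho_2$ in each chart must be matched against the blow-down of $\partial_t,\partial_{x^i}$. In $\Omega_1$: $\rho_0\rho_2=\rho$, and $\partial_t=\rho\partial_s\in\rho\,\mathscr{V}_b$, while $\partial_{x^i}$ expands (using $\rho=1/r$, $\theta=x/r$) as $\rho$ times a combination of $\partial_\rho$, $\partial_s$ and the spherical fields, all b-tangent. In $\Omega_2$: $\rho_0\rho_2=b=1/(|x|-|t|)$, and one checks $\partial_t$, $\partial_{x^i}$ each equal $b$ times a b-vector field in coordinates $(a,b,\theta)$ — the relations $\partial_t|x|^{\pm1}$, $\partial_t a$, $\partial_t b$ being computed directly; note $a$-derivatives are fine since $a$ is a defining function for $S_1^\pm$, not for $S_0$ or $S_2^\pm$, so $\partial_a$ is allowed in $\rho_0\rho_2\mathscr{V}_b$. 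The analogous identities hold in $\Omega_4,\Omega_5$ with $\bar a,\bar b,\phi,y$, and in $\Omega_0$ and $\Omega_3$ (where $\rho_0=\rho_2=1$) the statement is vacuous/immediate because $\partial_\mu\in\mathscr{V}_b$ there already. The one thing to be careful about is that equivalent choices of $\rho_0,\rho_2$ on chart overlaps do not affect the statement, precisely because they differ by a positive smooth factor; so the local verifications glue. I expect the transition-region computations near the corners $S_0\cap S_1^\pm$ and $S_1^\pm\cap S_2^\pm$ — where one must simultaneously track two defining functions — to be the fiddliest part, but each is a finite explicit check.
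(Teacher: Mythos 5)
Your plan is essentially the paper's: verify each claim chart-by-chart in $\Omega_0,\dots,\Omega_5$ by writing $\partial_\mu$, $Z_{\mu\nu}$ in the local b-frame and then inverting. Your part (i) via $SO(n)$-transitivity and a dimension count is a legitimate alternative to the paper's one-line identities $Z_{ij}=\theta_i\slashpar_j-\theta_j\slashpar_i$ and $\slashpar_j=Z_{ij}\theta^i$; both are elementary.

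There is, however, a genuine conceptual slip in the $\Omega_2$ step that would corrupt the verification if taken at face value. You write that ``$\partial_a$ is allowed in $\rho_0\rho_2\mathscr{V}_b$'' because $a$ is the defining function of $S_1^\pm$ rather than of $S_0$ or $S_2^\pm$. This is backwards: by the definition in Section 2.2, $\mathscr{V}_b(X)$ consists of vector fields tangent to \emph{all} of $\partial X$, including $S_1^\pm$. Since $\partial_a$ is transverse to $\{a=0\}=S_1^\pm\cap\Omega_2$, it is not in $\mathscr{V}_b(X)$, and prefactoring by $\rho_0\rho_2=b$ — which does not vanish at $a=0$ — cannot repair that. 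What actually makes part (iii) true in $\Omega_2$ is that the computation forces an extra factor of $a$ on every $\partial_a$ term, so what appears is $a\partial_a\in\mathscr{V}_b$, never bare $\partial_a$; this is visible in the paper's expansions $\partial_t=b(b\partial_b-a\partial_a)$ and $\partial_i=-b\theta_i(b\partial_b-a\partial_a)+ab(-\theta_ia\partial_a+\slashpar_i)$, and the analogous cancellation must be (and is) present in $\Omega_4$ with $\bar{a}\partial_{\bar{a}}$. You appear to be conflating $\mathscr{V}_b(X)$ with $\mathscr{V}_b([X]^2)|_X$, where transversality to $S_1^\pm$ \emph{is} permitted — that difference is exactly the $C^{\infty}(X)V_1$ summand noted just above the lemma. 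Once you replace the mistaken ``$\partial_a$ is allowed'' reasoning by the correct criterion — that $\partial_\mu$ equals $\rho_0\rho_2$ times a field tangent to every face, i.e.\ its $\partial_{\rho_1}$-coefficient after division by $\rho_0\rho_2$ still vanishes at $\rho_1=0$ — your chart-by-chart computations line up with the paper's and the proof closes.
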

\begin{proof}
The first property follows directly by $
Z_{ij}=\theta_i\slashpar_j-\theta_j\slashpar_i\in T\mathbb{S}^{n-1}$ and $ \slashpar_j=Z_{ij}\theta^i$. 
To prove (ii) and (iii), we only need to check them in local coordinates near $\partial X$:
\begin{itemize}
\item In $\Omega_1$ with coordinates $(s,\rho,\theta)$,  $\mathscr{V}_b(\Omega_1)=Span_{C^{\infty}(\Omega_1)} \{\partial_s, \rho\partial_{\rho}, \slashpar_i:1\leq i\leq n\}$. Hence (ii) and (iii)  follow by
\begin{equation*}
\begin{gathered}
\partial_t=\rho\partial_s,\  
\partial_i=\rho(-\theta_i(\rho\partial_{\rho}+s\partial_s)+\slashpar_i),
\\
Z_{00}=-\rho\partial_{\rho},\  
Z_{0i}=\theta_i(1-s^2)\partial_s-\theta_is\rho\partial_{\rho}+s\slashpar_i,
\\ 
\Longrightarrow\quad 
\partial_s=(1-s^2)^{-1}(Z_{0i}\theta^i-sZ_{00}),\
\rho\partial_{\rho}=-Z_{00}.
\end{gathered}
\end{equation*}
\item In $\Omega_2$ with coordinates $(a,b,\theta)$,  $\mathscr{V}_b(\Omega_2)=Span_{C^{\infty}(\Omega_2)} \{a\partial_a, b\partial_b, \slashpar_i:1\leq i\leq n\}$. Hence (ii) and (iii) follow by
\begin{equation*}
\begin{gathered}
\partial_t=b(b\partial_b-a\partial_a),\ 
\partial_i=-b\theta_i(b\partial_b-a\partial_a)+ab(-\theta_ia\partial_a+\slashpar_i),
\\
Z_{00}=-b\partial_b,\  
Z_{0i}=-\theta_i((2-a)a\partial_a-b\partial_b)+(1-a)\slashpar_i,
\\
\Longrightarrow\quad  b\partial_b=-Z_{00},\ 
a\partial_a=-(2-a)^{-1}(Z_{00}+Z_{0i}\theta^i).
\end{gathered}
\end{equation*}
\item In $\Omega_3$ with coordinates $(\tau,\rho,\theta)$, $\mathscr{V}_b(\Omega_3)=Span_{C^{\infty}(\Omega_3)} \{\partial_{\tau}, \rho\partial_{\rho}, \slashpar_i:1\leq i\leq n\}$. Hence (ii) and (iii)follow by
\begin{equation*}
\begin{gathered}
\partial_t=\partial_{\tau},\ \partial_i= -\theta_i\partial_{\tau}+\rho(-\theta_i\rho\partial_{\rho}+\slashpar_i),
\\
Z_{00}=\tau\partial_{\tau}-\rho\partial_{\rho},\
Z_{0i}=-\theta_i(\tau\partial_{\tau}+(1+\rho\tau)\rho\partial_{\rho}) +(1+\rho\tau)\slashpar_i,
\\
\Longrightarrow\quad  \partial_{\tau}=\partial_t,\  
\rho\partial_{\rho}=-(2+\rho\tau)^{-1}(Z_{00}+Z_{0i}\theta^i).
\end{gathered}
\end{equation*}
\item In $\Omega_4$ with coordinates $(\bar{a},\bar{b},\theta)$, $\mathscr{V}_b(\Omega_4)=Span_{C^{\infty}(\Omega_4)} \{\bar{a}\partial_{\bar{a}}, \bar{b}\partial_{\bar{b}}, \slashpar_i:1\leq i\leq n\}$. Hence (ii) and (iii) follow by
\begin{equation*}
\begin{gathered}
\partial_t=-\bar{b}(\bar{b}\partial_{\bar{b}}-\bar{a}\partial_{\bar{a}})-\bar{a}\bar{b}(\bar{a}\partial_{\bar{a}}),\  
\partial_i=\bar{b}\theta_i(\bar{b}\partial_{\bar{b}}-\bar{a}\partial_{\bar{a}}) +\bar{a}\bar{b}(1-\bar{a})^{-1}\slashpar_i,
\\
Z_{00}=-\bar{b}\partial_{\bar{b}},\ 
Z_{0i}=-\theta_i((2-\bar{a})\bar{a}\partial_{\bar{a}} -\bar{b}\partial_{\bar{b}})+(1-\bar{a})^{-1}\slashpar_i,
\\
\Longrightarrow \quad
\bar{b}\partial_{\bar{b}}=-Z_{00},\  
\bar{a}\partial_{\bar{a}}=-(2-\bar{a})^{-1}(Z_{00}+Z_{0i}\theta^i).
\end{gathered}
\end{equation*}
\item In $\Omega_5$ with coordinates $(\phi,y)$, $\mathscr{V}_b(\Omega_5)=Span_{C^{\infty}(\Omega_5)} \{\phi\partial_{\phi}, \partial_{y^i}:1\leq i\leq n\}$. Hence (ii) and (iii) follow by
\begin{equation*}
\begin{gathered}
\partial_{t}=-\phi(\phi\partial_{\phi}+y^i\partial_{y^i}),\ \partial_i=\phi\partial_{y^i},
\\
Z_{00}=-\phi\partial_{\phi},\ Z_{0i}=\partial_{y^i}-y_i(\phi\partial_{\phi}+y^j\partial_{y^j}),
\\
\Longrightarrow \quad \phi\partial_{\phi}=-Z_{00},\ \partial_{y^i}= Z_{0i}-(1-|y|^2)^{-1}y_i(Z_{00}-Z_{0j}y^j).
\end{gathered}
\end{equation*}
\end{itemize}
We finish the proof.
\end{proof}

\subsection{Sobolev Spaces and Symbol Spaces}
We define the b-type Sobolev space and conormal symbol space on $X$ in the following. These spaces are used to characterize the regularity and asymptotic properties near boundary for Einstein vacuum solutions. 
Let $m_0$ be the Riemannian metric on $X$ induced from Euclidean metric on $\mathbb{R}^{2+n}$ via (\ref{eq.blowup}) and denote by $dvol_{m_0}$ the corresponding volume form.
\begin{definition}\label{def.1}
Define the b-Sobolev space $H^N_b(X)$ for any $N\in\mathbb{N}_0$  the completion of $C_c^{\infty}(\mathring{X})$ w.r.t. the norm: 
$$
\|v\|_{H^N_b(X)} = \left(\int_{X} \sum_{|I|\leq N}|\tilde{Z}^I v|^2 \frac{dvol_{m_0}}{\tilde{\rho}}\right)^{\frac{1}{2}}.
$$
Define the conormal symbol space by 
$$
\begin{aligned}
\mathscr{A}^{0,0,0}(X)&=\{v\in C^{-\infty}(X): \mathscr{V}_b(X)^lv\in L^{\infty}(X),\ \forall\ l\in \mathbb{N}_0\},\\
\mathscr{A}^{c_1,c_2,c_3}(X)&=\rho_0^{c_0}\rho_1^{c_1}\rho_2^{c_2}\mathscr{A}^{0,0,0}(X) 
\end{aligned}
$$
for $c_1,c_2,c_3\in \mathbb{C}$. 
\end{definition}
The relationship between the b-Sobolev spaces and the conormal symbol spaces are stated in the following lemma. 
\begin{lemma}\label{lem.2.3}
If $\Re{c_0},\Re{c_1},\Re{c_2}>0$ , then
\begin{equation*}
\mathscr{A}^{c_1,c_2,c_3}(X) \
\subset \bigcap_{N=0}^{\infty}H_b^N(X).
\end{equation*}
Moreover, there exists a constant $C_N$ for any $N> \frac{1}{2}\mathrm{dim}(X)$ such that
\begin{equation*}
\|v\|_{L^{\infty}(X)}\leq C_N\|v\|_{H_b^N(X)},
\end{equation*}
which implies that 
\begin{equation*}
\bigcap_{N=0}^{\infty} H_b^N(X) \subset \mathscr{A}^{0,0,0}(X). 
\end{equation*}
\end{lemma}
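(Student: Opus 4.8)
The plan is to establish the three assertions in turn, with the b-Sobolev embedding $H^{N}_{b}(X)\hookrightarrow L^{\infty}(X)$ serving as the engine for the last two, and to reduce every estimate to a local computation in the finitely many coordinate charts $\Omega_{0},\dots,\Omega_{5}$ of Section~\ref{sec.geosetting}. Two structural facts are used throughout. First, by Lemma~\ref{lem.0}(ii) every generator $\tilde Z\in\{\partial_{\mu},Z_{\mu\nu}\}$ lies in $\mathscr{V}_{b}(X)$, and conversely $\mathscr{V}_{b}(X)=\mathrm{Span}_{C^{\infty}(X)}\{\tilde Z\}$; hence the $H^{N}_{b}$--norm built from the $\tilde Z$ is comparable, chart by chart, to the one built from any local frame of $\mathscr{V}_{b}(X)$, and a b-vector field applied to a defining function $\rho_{S}$ produces an element of $\rho_{S}\,C^{\infty}(X)$. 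Second, since $m_{0}$ is a smooth Riemannian metric on the compact manifold with corners $X$, the density $dvol_{m_{0}}/\tilde{\rho}$ is a b-density: in a chart in which the active defining functions are $\rho_{i_{1}},\dots,\rho_{i_{k}}$ (the remaining ones $\equiv 1$) and $\omega$ denotes the transverse coordinates, $dvol_{m_{0}}/\tilde{\rho}=J\,\tfrac{d\rho_{i_{1}}}{\rho_{i_{1}}}\cdots\tfrac{d\rho_{i_{k}}}{\rho_{i_{k}}}\,d\omega$ with $J$ smooth and strictly positive.

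For the first inclusion, write $v=\rho_{0}^{c_{0}}\rho_{1}^{c_{1}}\rho_{2}^{c_{2}}w$ with $\mathscr{V}_{b}(X)^{l}w\in L^{\infty}(X)$ for all $l$. Applying $\tilde Z^{I}$ with $|I|\le N$ and using Leibniz' rule together with $\tilde Z(\rho_{S}^{c})\in\rho_{S}^{c}\,C^{\infty}(X)$ gives
\[
\tilde Z^{I}v=\rho_{0}^{c_{0}}\rho_{1}^{c_{1}}\rho_{2}^{c_{2}}\sum_{|J|\le N}a_{J}\,\mathscr{V}_{b}(X)^{|J|}w,\qquad a_{J}\in C^{\infty}(X),
\]
so that $|\tilde Z^{I}v|\le C\,\rho_{0}^{\Re c_{0}}\rho_{1}^{\Re c_{1}}\rho_{2}^{\Re c_{2}}$ on $X$, and hence
\[
\|v\|_{H^{N}_{b}(X)}^{2}\le C\int_{X}\rho_{0}^{2\Re c_{0}}\rho_{1}^{2\Re c_{1}}\rho_{2}^{2\Re c_{2}}\,\frac{dvol_{m_{0}}}{\tilde{\rho}}.
\]
Breaking the right-hand side into the charts above and using the b-density normal form, this reduces to finitely many integrals of the type $\bigl(\int_{0}^{1}\rho^{2\Re c-1}\,d\rho\bigr)$ times a bounded transverse factor, which are finite precisely because $\Re c_{0},\Re c_{1},\Re c_{2}>0$. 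Since $N$ is arbitrary, $\mathscr{A}^{c_{1},c_{2},c_{3}}(X)\subset\bigcap_{N}H^{N}_{b}(X)$.

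For the embedding, take a partition of unity $1=\sum_{i}\chi_{i}$ subordinate to $\{\Omega_{i}\}$, with each $\chi_{i}$ supported in a compact subset of $X$, and bound each $\|\chi_{i}v\|_{L^{\infty}}$. In a chart meeting a corner of codimension $k$, the substitution $\rho_{i_{j}}=e^{-s_{j}}$ turns $\rho_{i_{j}}\partial_{\rho_{i_{j}}}$ into $-\partial_{s_{j}}$ and, by the b-density normal form, turns $dvol_{m_{0}}/\tilde{\rho}$ into $J'\,ds\,d\omega$ with $J'$ bounded above and below on $\mathrm{supp}\,\chi_{i}$; consequently $\|\chi_{i}v\|_{H^{N}_{b}(X)}$ dominates the ordinary $H^{N}$--norm of the pushed-forward function on a region of $\mathbb{R}^{n+1}$ of the form (product of half-lines)$\times$(relatively compact set). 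Applying the classical Sobolev embedding $H^{N}\hookrightarrow L^{\infty}$ for $N>\tfrac12(n+1)=\tfrac12\dim X$, after the standard extension across the finitely many non-boundary ends of the model, and summing over $i$ gives $\|v\|_{L^{\infty}(X)}\le C_{N}\|v\|_{H^{N}_{b}(X)}$. The last inclusion is then immediate: if $v\in\bigcap_{N}H^{N}_{b}(X)$, expanding b-vector fields in the frame $\{\tilde Z\}$ and commuting (the coefficients produced lie in $C^{\infty}(X)\subset L^{\infty}(X)$) yields $\|\mathscr{V}_{b}(X)^{l}v\|_{H^{N}_{b}(X)}\le C_{l,N}\|v\|_{H^{N+l}_{b}(X)}<\infty$ for all $l,N$, and the embedding applied to $\mathscr{V}_{b}(X)^{l}v$ gives $\mathscr{V}_{b}(X)^{l}v\in L^{\infty}(X)$ for every $l$, i.e. $v\in\mathscr{A}^{0,0,0}(X)$.

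The only step with real content is the b-Sobolev embedding near the corners: one must check that the exponential change of variables is uniform over each chart and that the behaviour at the non-boundary ends of the resulting half-infinite cylinders is harmless, so that the classical embedding can be invoked chart by chart and summed. The two inclusions are then essentially bookkeeping, resting on the vector-field description of $\mathscr{V}_{b}(X)$ in Lemma~\ref{lem.0} and on $dvol_{m_{0}}/\tilde{\rho}$ being a b-density.
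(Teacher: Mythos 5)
The paper states Lemma~\ref{lem.2.3} without proof (it merely remarks afterward that the lemma generalizes to any manifold with corners and refers to \cite{Me}), so there is no proof in the paper against which to compare. Your argument is the standard one and is correct in all three parts: the first inclusion reduces, via the b-vector-field characterization $\tilde Z(\rho_S^c)\in\rho_S^c C^\infty(X)$ and the normal form of $dvol_{m_0}/\tilde\rho$ as a b-density, to the integrability of $\rho^{2\Re c-1}$ near $\rho=0$ for $\Re c>0$; the embedding $H^N_b\hookrightarrow L^\infty$ for $N>\tfrac12\dim X=\tfrac{n+1}{2}$ follows by the logarithmic substitution $\rho_j=e^{-s_j}$, which converts each chart into a product of half-lines with a compact transverse factor carrying the usual Sobolev norm, extended by reflection so the classical embedding applies; and the last inclusion is a direct consequence of the embedding applied to $\mathscr{V}_b(X)^l v$ together with the commutator bookkeeping in the frame $\{\tilde Z\}$. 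You have also correctly read through the paper's typo in Definition~\ref{def.1} (the superscripts $c_1,c_2,c_3$ versus the exponents $c_0,c_1,c_2$), matching the hypotheses $\Re c_0,\Re c_1,\Re c_2>0$ with the three boundary defining functions $\rho_0,\rho_1,\rho_2$.
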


Definition \ref{def.1} and Lemma \ref{lem.2.3} can be generalize to any manifold with corners, including closed $p$-submanifolds of $X$ with induced Riemannian metric and boundary defining functions.  See \cite{Me} for definition of $p$-submainifold. In this paper, we mainly consider the Cauchy surface $\Sigma_0=\overline{\{t=0\}}=\overline{\mathbb{R}^n}$ and  null infinity $S^{\pm}_1$, which are both $p$-submanifolds of $X$. We study the map from Cauchy data to Characteristic data for Einstein vacuum equations, which lie in some weighted b-Sobolev spaces or cornormal symbol space on $\Sigma_0$ and $S^{\pm}_1$ correspondingly.

\vspace{0.2in}
\section{Initial Data}\label{sec.initialdata}
In this section, we study the spaces of small asymptotically flat initial data for Einstein vacuum equations which satisfy either the constraint equations (\ref{constraint.1}) or the harmonic gauge conditions (\ref{harmonic.1}), and the relation between them. 

Denote by $g=m+h$ a small perturbation of Minkowski metric: 
$g=g_{\mu\nu}dx^{\mu}dx^{\nu}, h=h_{\mu\nu}dx^{\mu}dx^{\nu}$. We can view $g,h$ as functions of $(t,x)$ valued in $(n+1)\times(n+1)$ symmetric matrices.  Then (\ref{constraint.1}) and (\ref{harmonic.1}) can be written in terms of the components of $(h^0, h^1)=(h|_{t=0},\partial h|_{t=0})$, which turn out to be undetermined elliptic systems after suitable decomposition. 

\subsection{Initial Data Subject to the Constraint Equations (\ref{constraint.1})}
Let us first derive the constraint equations (\ref{constraint.1}) in terms of components of $(h^0,h^1)$. Denote by
\begin{equation*}
\begin{aligned}
&\Gamma_{\alpha\beta\mu}=2g_{\mu\nu}\Gamma^{\nu}_{\alpha\beta} =\partial_{\alpha}h_{\mu\beta}+\partial_{\beta}h_{\alpha\mu}-\partial_{\mu}h_{\alpha\beta}, \\
&\Gamma_{\mu}=g_{\mu\nu}g^{\alpha\beta}\Gamma^{\nu}_{\alpha\beta} =g^{\alpha\beta}(\partial_{\alpha}h_{\mu\beta}-\tfrac{1}{2}\partial_{\mu}h_{\alpha\beta}).
\end{aligned}
\end{equation*}
Then the components of Ricci curvature of $g$ can be expressed as
\begin{equation*}
\begin{aligned}
R_{\mu\nu}=&\ \partial_{\lambda}\Gamma^{\lambda}_{\mu\nu}-\partial_{\mu}\Gamma^{\lambda}_{\lambda\nu}+\Gamma^{\lambda}_{\mu\nu}\Gamma^{\delta}_{\lambda\delta} -\Gamma^{\delta}_{\mu\lambda}\Gamma^{\lambda}_{\nu\delta}
\\
=&\ \tfrac{1}{2}g^{\lambda\delta}(-\partial_{\lambda}\partial_{\delta}h_{\mu\nu}+\partial_{\lambda}\partial_{\nu}h_{\mu\delta}+\partial_{\lambda}\partial_{\mu}h_{\delta\nu}-\partial_{\mu}\partial_{\nu}h_{\lambda\delta}) +E_{\mu\nu}\\
=&\ 
-\tfrac{1}{2}g^{\lambda\delta}\partial_{\lambda}\partial_{\delta}h_{\mu\nu} +\tfrac{1}{2}\partial_{\mu}\Gamma_{\nu}+\tfrac{1}{2}\partial_{\nu}\Gamma_{\mu}
+\tfrac{1}{2}F_{\mu\nu}
\end{aligned}
\end{equation*}
where 
\begin{equation*}\label{notation.1}
\begin{aligned}
F_{\mu\nu}=&\ 2E_{\mu\nu}+D_{\mu\nu}+D_{\nu\mu},
\\
E_{\mu\nu}=&\ \Gamma^{\lambda}_{\mu\nu}\Gamma^{\delta}_{\lambda\delta} -\Gamma^{\delta}_{\mu\lambda}\Gamma^{\lambda}_{\nu\delta}
+ \tfrac{1}{2}(\partial_{\lambda}g^{\lambda\delta})\Gamma_{\mu\nu\delta} 
-\tfrac{1}{2}(\partial_{\mu}g^{\lambda\delta})\Gamma_{\lambda\nu\delta},
\\
D_{\mu\nu}=& 
-(\partial_{\mu}g^{\lambda\delta})(\partial_{\lambda}h_{\nu\delta} -\tfrac{1}{2}\partial_{\nu}h_{\lambda\delta}).
\end{aligned}
\end{equation*}
Denote by 
\begin{equation*}
D'_{\mu\nu}=D_{\mu\nu}|_{t=0}, \quad 
E'_{\mu\nu}=E_{\mu\nu}|_{t=0}, \quad 
F'_{\mu\nu}=F_{\mu\nu}|_{t=0}.
\end{equation*}
Then $D'_{\mu\nu}, E'_{\mu\nu}, F'_{\mu\nu}$ are quadratic forms of $(\partial_ih^0,h^1)$ with coefficients analytically depending on $h^0$.

%
%

The Einstein vacuum equations (\ref{eq.1}) imply that
\begin{equation*}
\begin{aligned}
g^{00}R_{00}-g^{ij}R_{ij}=0, \quad g^{00}R_{0k}+g^{0j}R_{kj}=0
\end{aligned}
\end{equation*}
for $k=1,...,n$, which cancel the $\partial_0^2h$ terms and impose $(n+1)$ constraint equations on $(h^0, h^1)$ at $t=0$: 
\begin{equation}\label{constraint.2}
\left\{
\begin{aligned}
&g^{pl}g^{ij}\partial_i(\partial_jh^0_{pl}-\partial_ph^0_{jl})
+g^{pl}g^{0i}(\partial_ih^1_{pl}-\partial_ph^1_{il}-\partial_i\partial_ph^0_{0l}+\partial_p\partial_l h^0_{i0})
+ g^{00}E'_{00}-g^{pl}E'_{pl}=0,
\\
&g^{00}g^{ij}(\partial_ih^1_{kj}-\partial_kh^{1}_{ij}
-\partial_i\partial_jh^0_{k0}+\partial_k\partial_ih^0_{0j})
+g^{0l}g^{0i}(\partial_kh^1_{il}-\partial_ih^{1}_{kl}
+\partial_i\partial_lh^0_{k0}-\partial_k\partial_lh^0_{0i})
\\
&\quad\quad\quad\quad\quad\  \ 
+g^{0l}g^{ij}(-\partial_i\partial_jh^0_{kl}+ \partial_i\partial_kh^0_{jl}+\partial_i\partial_lh^{0}_{kj}
-\partial_k\partial_lh^0_{ij})+
2(g^{00}E'_{0k}+g^{0l}E'_{kl})=0. 
\end{aligned}
\right.
\end{equation}
for $k=1,...,n$. Here (\ref{constraint.2}) is the system of constraint equations (\ref{constraint.1}) in fixed coordinates. We are interested in small solutions to (\ref{constraint.2}) in the following space:
\begin{equation}\label{initialspace}
\rho_0^{\lambda}H_b^{N+1}(\overline{\mathbb{R}^n}: \mathbb{R}^{\frac{(n+1)(n+2)}{2}}) \times \rho_0^{\lambda+1}H_b^{N}(\overline{\mathbb{R}^n}: \mathbb{R}^{\frac{(n+1)(n+2)}{2}}).
\end{equation}
Let
\begin{equation*}
\widetilde{\mathcal{U}}_{\epsilon}^{N, \lambda}
=\{ (h_0,h_1):\ (h_0,h_1)\ \textrm{satisfies (\ref{constraint.2}) and } \|(h^0,h^1)\|_{\rho_0^{\lambda}H_b^{N+1}(\overline{\mathbb{R}^n}) \times \rho_0^{\lambda+1}H_b^{N}(\overline{\mathbb{R}^n})}<\epsilon \}.
\end{equation*}
\begin{proposition}\label{prop.2}
For $n\geq 3$, $\lambda\in (0,n-2)$ and $\epsilon>0$ small enough, $\widetilde{\mathcal{U}}_{\epsilon}^{N,\lambda}$ is a Fr\'{e}chet manifolds.
\end{proposition}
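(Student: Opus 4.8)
The plan is to realize $\widetilde{\mathcal{U}}^{N,\lambda}_\epsilon$ as the zero set of a smooth map between Banach spaces whose differential at the origin is surjective with a complemented kernel, so that the implicit function theorem (in the Banach–manifold form) produces the Fréchet manifold structure; the Fréchet part comes afterward by intersecting over $N$. First I would set up the constraint map. Write the left-hand sides of \eqref{constraint.2} as a map
\[
\Phi:\ \rho_0^{\lambda}H_b^{N+1}(\overline{\mathbb{R}^n})\times \rho_0^{\lambda+1}H_b^{N}(\overline{\mathbb{R}^n})\ \longrightarrow\ \rho_0^{\lambda+2}H_b^{N-1}(\overline{\mathbb{R}^n};\mathbb{R}^{n+1}),
\]
and check it is well-defined and smooth (real-analytic, in fact) on a small ball: the principal part is a second-order b-differential operator with constant coefficients applied to $(h^0,h^1)$, losing two b-derivatives and gaining two powers of $\rho_0$ by Lemma \ref{lem.0}(iii); the terms $E'_{\mu\nu},D'_{\mu\nu},F'_{\mu\nu}$ are quadratic in $(\partial_i h^0,h^1)$ with coefficients analytic in $h^0$, and the required weighted-space multiplication and composition estimates follow from the algebra property $H_b^{N}\cdot H_b^{N}\subset H_b^{N}$ for $N>\tfrac12 n$ (Lemma \ref{lem.2.3}), which holds since $N\ge n+6$. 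One must track weights carefully: each factor of $h^1$ or $\partial_i h^0$ carries weight $\rho_0^{\lambda+1}$, so a quadratic term lands in $\rho_0^{2\lambda+2}H_b^{N-1}$, and since $\lambda>0$ this sits inside $\rho_0^{\lambda+2}H_b^{N-1}$; the analytic-coefficient factors, being bounded conormal functions, do not disturb this.

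Next I would compute the linearization $D\Phi(0)$. At the Minkowski data $(h^0,h^1)=(0,0)$ the quadratic terms drop out and $g^{\mu\nu}=m^{\mu\nu}$, so $D\Phi(0)$ is the constant-coefficient linear operator obtained by keeping only the first-order-in-$h$ terms of \eqref{constraint.2}; schematically this is the classical linearized constraint operator $(\delta h^0,\delta h^1)\mapsto(\operatorname{lin.\,Hamiltonian},\operatorname{lin.\,momentum})$ acting on a symmetric two-tensor pair on $\mathbb{R}^n$, which is a first-order underdetermined elliptic system (elliptic in the sense that its symbol is surjective). The heart of the argument is the surjectivity of $D\Phi(0)$ as a map between the stated weighted b-Sobolev spaces. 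The standard route is to exhibit a right inverse by the "York/conformal" decomposition: split the target using the $L^2$-orthogonal (Hodge-type) decomposition of symmetric tensors into a divergence part plus a transverse-traceless/trace part, and solve the resulting elliptic equations — a vector Laplace-type equation for the momentum and a scalar Laplace-type equation for the conformal factor — on $\overline{\mathbb{R}^n}$ in weighted b-Sobolev spaces. Here the constraint $\lambda\in(0,n-2)$ is exactly the range in which the flat Laplacian $\Delta:\rho_0^{\lambda}H_b^{N+1}\to\rho_0^{\lambda+2}H_b^{N-1}$ is an isomorphism: these are the admissible weights strictly between the indicial roots $0$ and $n-2$ of $\rho_0^2\Delta$ (equivalently, between the decay rates $r^0$ and $r^{2-n}$ of harmonic functions on $\mathbb{R}^n$), so no $L^2$-cokernel or kernel obstruction appears. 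I would cite or quickly reprove this b-Laplacian isomorphism (it is standard Melrose-type b-calculus, or equivalently weighted-Sobolev elliptic theory à la McOwen/Bartnik) and assemble the right inverse; its existence also shows $\ker D\Phi(0)$ is complemented, since a bounded right inverse splits the sequence.

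With $D\Phi(0)$ bounded, surjective, and split, the implicit function theorem gives a neighborhood of $0$ in which $\Phi^{-1}(0)$ is a smooth Banach submanifold modeled on $\ker D\Phi(0)$; shrinking $\epsilon$ so that the ball of radius $\epsilon$ lies in this neighborhood, $\widetilde{\mathcal{U}}^{N,\lambda}_\epsilon$ is a Banach manifold for each fixed $N$. To obtain the Fréchet statement I would note that the chart maps are compatible as $N$ increases (the constraint operator and its right inverse commute with the inclusions $H_b^{N+1}\hookrightarrow H_b^{N}$), so on the common small ball the manifold structures are consistent and the intersection over all $N\ge n+6$ carries the inverse-limit Fréchet topology, making $\widetilde{\mathcal{U}}^{N,\lambda}_\epsilon$ (interpreted in the smooth-conormal sense, i.e. in $\bigcap_N$) a Fréchet manifold; equivalently one runs a Nash–Moser-free argument since the loss of derivatives is fixed and recoverable. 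The main obstacle I anticipate is precisely the surjectivity/isomorphism step: verifying that the linearized constraint system, after the Hodge-type splitting, reduces to scalar and vector b-Laplacians that are invertible for the weight range $\lambda\in(0,n-2)$, and checking that the tensor decomposition itself is bounded and has no spurious kernel or cokernel in these weighted b-Sobolev spaces on $\overline{\mathbb{R}^n}$ — all the quadratic and analytic-coefficient bookkeeping is routine by the algebra property once $N$ is large, but the functional-analytic setup of the weighted elliptic theory is where the real work lies.
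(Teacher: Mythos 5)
Your overall strategy matches the paper's: linearize the constraint map at Minkowski data, show the linearization is a surjective elliptic operator with complemented kernel, and invoke the implicit function theorem, with the interval $\lambda\in(0,n-2)$ entering precisely as the range of weights for which the flat b-Laplacian $\triangle:\rho_0^{\lambda}H_b^{N+1}\to\rho_0^{\lambda+2}H_b^{N-1}$ is an isomorphism. The point of divergence is in \emph{how} you decompose the linearized system. You propose the York/conformal route: split symmetric two-tensors into transverse-traceless plus longitudinal plus trace, then invert a scalar Laplacian and a vector Laplacian (conformal Killing operator). The paper instead performs an explicit elementary re-parametrization: it isolates the trace of $h^0_{ij}$ (the variable $A_1$) and the traceless diagonal combinations $A_l$, and treats the shift $B=h^0_{0k}\,dx^k$ as a $1$-form which it splits by the ordinary Hodge decomposition into closed plus coclosed parts. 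The linearized Hamiltonian constraint then solves for $A_1$ by $\triangle$, and the linearized momentum constraint splits into a condition $\triangle D'=0$ determining $C_1$ plus the identity $B''=D''$; this produces the explicit elliptic system $(3.8)$. The price the paper pays for avoiding the York machinery is a mixed-order bookkeeping issue between $h^0$ and $h^1$, which it resolves by interposing $\triangle^{1/2}$ and the commutative diagram relating $\rho_0^{\lambda}H_b^{N+1}$ to $\rho_0^{\lambda+1}H_b^{N}$ --- a trick your outline does not need because the conformal method naturally poses second-order equations for the seed data. Conversely, your route requires verifying Fredholm/isomorphism theory for the vector Laplacian $\hat L^*\hat L$ in weighted b-Sobolev spaces on $\overline{\mathbb{R}^n}$ (kernel of conformal Killing fields, indicial roots, etc.), which the paper sidesteps: the only elliptic operators it ever inverts are $\triangle$, $\triangle^{1/2}$, and the Hodge Laplacian on $1$-forms. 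Either route works; the paper's is more hands-on and self-contained, yours is closer to the standard conformal-method literature on the Einstein constraints.

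One small caution: you describe $D\Phi(0)$ as a ``first-order underdetermined elliptic system.'' In the parametrization actually used here (in terms of $(h^0,h^1)$ rather than $(g_0,k_0)$) the linearized Hamiltonian constraint is second order in $h^0$ and the linearized momentum constraint is second order in $h^0$ and first order in $h^1$; it is this genuine order mismatch that forces the $\triangle^{1/2}$ intermediary in the paper's explicit computation, and which you would need to address either by the conformal reduction (as you sketch) or by some equivalent device before ellipticity in the weighted b-calculus can be cleanly asserted.
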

\begin{proof}
We prove the statement by studying the linearization of (\ref{constraint.2}) at $(0,0)$:
\begin{equation}\label{constraint.3}
\left\{
\begin{aligned}
&-\triangle\sum_i h^0_{ii}-\sum_{ij}\partial_i\partial_jh^0_{ij}=0,
\\
&-\triangle h^0_{0k}-\partial_k\sum_i\partial_ih^0_{0i}
+\partial_k\sum_ih^1_{ii}-\sum_i\partial_ih^1_{ki}=0, \quad\textrm{for $k=1,...,n$}
\end{aligned}
\right.
\end{equation}
where $\triangle=-\sum_{j=1}^n \partial_j^2$,  and applying implicit function theorem.
Notice that by \cite{Me2}, for $n\geq 3$ and $\lambda\in(0,n-2)$, 
\begin{equation*}
\triangle: \rho_0^{\lambda}H_b^{N+1}(\overline{\mathbb{R}^n})\longrightarrow \rho_0^{\lambda+2}H_b^{N-1}(\overline{\mathbb{R}^n})
\end{equation*} 
is an isomorphism and hence
\begin{equation*}
\triangle^{\frac{1}{2}}: \rho_0^{\lambda}H_b^{N+1}(\overline{\mathbb{R}^n})\longrightarrow \rho_0^{\lambda+1}H_b^{N}(\overline{\mathbb{R}^n}) 
\quad\mathrm{and}\quad 
\triangle^{\frac{1}{2}}:
\rho_0^{\lambda+1}H_b^{N+1}(\overline{\mathbb{R}^n})\longrightarrow \rho_0^{\lambda+2}H_b^{N}(\overline{\mathbb{R}^n}), 
\end{equation*} 
are both isomorphism. See Proposition \ref{app.1} in Appendix. Assume $(h^0,h^1)$ is in space (\ref{initialspace}) and take a linear transformation of as follows: for $1\leq k\leq n$, $2\leq l\leq n$, $1\leq i\neq j\leq n$,
%
\begin{equation*}
\begin{gathered}
A_1=\tfrac{n-1}{n}\sum_ih^0_{ii},\quad
A_l=\tfrac{1}{n}\sum_ih^0_{ii}-h^0_{ll}, \quad 
A_{ij}=h^0_{ij},
\\
C_1=\tfrac{n-1}{n}\sum_ih^1_{ii},\quad
C_l=\tfrac{1}{n}\sum_i h^1_{ii} - h^1_{ll}, \quad 
C_{ij}=h^1_{ij}.
\end{gathered}
\end{equation*}
Then the first equation in (\ref{constraint.3}) becomes
\begin{equation}\label{eq.4}
\triangle A_1=-\partial_1^2\sum_{j=2}^nA_j+ \sum_{j=2}^n\partial_j^2A_j -\sum_{i\neq j} \partial_i\partial_jA_{ij}. 
\end{equation}
Denote by $B=h^0_{0k}dx^k$ and $D=\triangle^{-1}(\partial_k\sum_ih^1_{ii}-\sum_i\partial_ih^1_{ki})dx^k$, $1$-forms on the Euclidean space, i.e.
\begin{equation*}
\begin{gathered}
B, D\in \rho_0^{\lambda}H_b^{N+1}(\overline{\mathbb{R}^n}:\Lambda^1(\mathbb{R}^n)).
\end{gathered}
\end{equation*}
Then the second equation in (\ref{constraint.3}) can be expressed as
\begin{equation}\label{eq.3}
-\triangle B+ d\delta B+\triangle D=0, 
\end{equation}
Since $\triangle$ has trivial kernel on $\rho_0^{\lambda}H_b^{N+1}(\overline{\mathbb{R}^n}:\Lambda^p(\mathbb{R}^n)) $ for all $0\leq p\leq n$, the Hodge decomposition theorem implies
\begin{equation*}
\rho_0^{\lambda}H_b^{N+1}(\overline{\mathbb{R}^n}:\Lambda^1(\mathbb{R}^n)) 
=\mathcal{E}^{N+1,\lambda}_c\varoplus \mathcal{E}^{N+1,\lambda}_{cc}
\end{equation*}
with $\mathcal{E}^{N+1,\lambda}_c$ and $\mathcal{E}^{N+1,\lambda}_{cc}$ the subspace of closed and coclosed forms respectively.  More explicitly,  let
\begin{equation*}
\begin{aligned}
&\Psi: \rho^{\lambda+1}H_b^{N}(\overline{\mathbb{R}^n})\ni f \longrightarrow 
\triangle^{-1}df\in \rho^{\lambda}H_b^{N+1}(\overline{\mathbb{R}^n}: \Lambda^1(\mathbb{R}^n)),
\\
&\Phi: \rho^{\lambda}H_b^{N+1}(\overline{\mathbb{R}^n}:\Lambda^1(\mathbb{R}^n))\ni u\longrightarrow \delta u\in \rho^{\lambda+1}H_b^{N}(\overline{\mathbb{R}^n}).
\end{aligned}
\end{equation*}
Then $\Phi\Psi=Id$ and 
$$ \Psi\Phi: \rho_0^{\lambda}H_b^{N+1}(\overline{\mathbb{R}^n}:\Lambda^1(\mathbb{R}^n)) \longrightarrow \mathcal{E}^{N+1,\lambda}_c $$
is a projection. Writing $B=B'+B''$ and $D=D'+D''$ with $B', D'$ closed and $B'',D''$ coclosed, i.e.
\begin{equation*}
B'=\Psi\Phi(B)
, \quad
B''=(Id-\Psi\Phi)(B),\quad
D'=\Psi\Phi(D)
, \quad D''=(Id-\Psi\Phi)(D),
\end{equation*}
then (\ref{eq.3}) split into 
\begin{equation}\label{eq.5}
\triangle D'=0,\quad -\triangle B''+\triangle D''=0.
\end{equation}
For the first equation in (\ref{eq.5}), notice that we have the following commutative diagram:
\begin{displaymath}
\xymatrix {
 \rho^{\lambda+1}H_b^{N}(\overline{\mathbb{R}^n}:\Lambda(\mathbb{R}^n)) 
 \ar[r] ^{d,\delta} \ar[d]_{\triangle^{-\frac{1}{2}}}
  & \rho^{\lambda+2}H_b^{N-1}(\overline{\mathbb{R}^n}:\Lambda(\mathbb{R}^n)) 
  \ar[d]^{\triangle^{-\frac{1}{2}}}
   \\
  \rho^{\lambda}H_b^{N+1}(\overline{\mathbb{R}^n}:\Lambda(\mathbb{R}^n)) 
  \ar[r]_{d,\delta} &  \rho^{\lambda+1}H_b^{N}(\overline{\mathbb{R}^n}:\Lambda(\mathbb{R}^n))
  }
\end{displaymath}
where $\Lambda(\mathbb{R}^n)=\sum_{p=0}^n\Lambda^p(\mathbb{R}^n) $.
Since $h^1\in \rho^{\lambda+1}H_b^{N}(\overline{\mathbb{R}^n}:\Lambda^1(\mathbb{R}^n))$, 
$$
\begin{aligned}
\triangle D' &= \triangle \Psi\Phi (D) =d\delta \triangle ^{-1}\big((\partial_k\sum_ih^1_{ii}-\sum_i\partial_ih^1_{ki})dx^k\big)
\\
&=d\triangle^{-\frac{1}{2}}\delta \big((\partial_k\sum_i \triangle^{-\frac{1}{2}}h^1_{ii}-\sum_i\partial_i \triangle^{-\frac{1}{2}}h^1_{ki})dx^k\big)
\\
&=d\triangle^{-\frac{1}{2}} (\triangle\sum_i \triangle^{-\frac{1}{2}}h^1_{ii}+ \sum_{i,k}\partial_k\partial_i \triangle^{-\frac{1}{2}}h^1_{ki})
\end{aligned}
$$
Hence $\triangle D'=0$ is equivalent to  
$$
\triangle\sum_i \triangle^{-\frac{1}{2}}h^1_{ii}+ \sum_{i,k}\partial_k\partial_i \triangle^{-\frac{1}{2}}h^1_{ki}=0.
$$
Substitute $h^1_{ij}$ by $C$,  (\ref{constraint.3}) is equivalent to the elliptic system:
\begin{equation}\label{constraint.4}
\left\{
\begin{aligned}
&\triangle A_1=-\partial_1^2\sum_{j=2}^nA_j +\sum_{j=2}^n\partial_j^2A_j -\sum_{i\neq j} \partial_i\partial_jA_{ij},
\\
&\triangle^{\frac{1}{2}} C_1=-\partial_1^2\sum_{j=2}^n \triangle^{-\frac{1}{2}} C_j +\sum_{j=2}^n\partial_j^2\triangle^{-\frac{1}{2}} C_j -\sum_{i\neq j} \partial_i\partial_j \triangle^{-\frac{1}{2}} C_{ij},
\\
&\triangle B''=\partial_1(C_1-\sum_{l=2}^nC_l)dx^1 +\sum_{k=2}^n\partial_k(C_1+C_l)dx^k
+\sum_{i\neq k}\partial_iC_{ki}dx^k.
\end{aligned}
\right.
\end{equation}
In (\ref{constraint.3}), the components $\{h^0_{00}, h^1_{00}, h^1_{0k}: 1\leq k\leq n\}$ are free. To solve $\{h^0_{0k}, h^0_{ij}, h^1_{ij}: 1\leq i,j,k\leq n\}$ from (\ref{constraint.3}), it is equivalent to solve $\{B_k, A_k, A_{ij}, C_i, C_{ij}:1\leq k, i\neq j\leq n\}$ from (\ref{constraint.4}). 
Given arbitray $h^0_{00},A_k, A_{ij}\in \rho_0^{\lambda}H_b^{N+1}(\overline{\mathbb{R}^n})$ and $h^1_{0\mu},C_k, C_{ij}\in \rho_0^{\lambda+1}H_b^{N}(\overline{\mathbb{R}^n})$ for $0\leq\mu\leq n, 2\leq k\leq n, 1\leq i\neq j\leq n$ and $B'\in \mathcal{E}^{N+1,\lambda}_c$, we can solve (\ref{constraint.4}) uniquely and have
\begin{equation*}
A_1\in \rho_0^{\lambda}H_b^{N+1}(\overline{\mathbb{R}^n}),\quad
C_1\in \rho_0^{\lambda+1}H_b^{N}(\overline{\mathbb{R}^n}), \quad
B''\in \mathcal{E}^{N+1,\lambda}_{cc}. 
\end{equation*}
And hence the solution space of (\ref{constraint.3}), denoted by $\mathcal{U}^{N,\lambda}$, is isomorphism to 
\begin{equation*}
\rho_0^{\lambda}H_b^{N+1}(\overline{\mathbb{R}^n}: \mathbb{R}^{\frac{n(n+1)}{2}}) \times \mathcal{E}^{N+1,\lambda}_{c}\times \rho_0^{\lambda+1}H_b^{N}(\overline{\mathbb{R}^n}: \mathbb{R}^{\frac{(n+1)(n+2)}{2}-1})
\end{equation*}
Due to the ellipticity of (\ref{constraint.4}) and invertibility of $\triangle$ and $\triangle^{\frac{1}{2}}$, when $\epsilon>0$ small enough, 
$\widetilde{\mathcal{U}}^{N,\lambda}_{\epsilon}$ is a submanifold in space (\ref{initialspace}), which has tangent space $\mathcal{U}^{N,\lambda}$ at $(0,0)$. 
\end{proof}

\subsection{Initial Data Subject to Harmonic Gauge Conditions (\ref{harmonic.1})}
We first deriving the equations (\ref{harmonic.1}) in terms of components of $(h^0,h^1)$. 
First $\Gamma_{\mu}|_{t=0}=0$ is equivalent to 
\begin{equation}\label{constraint.6}
\left\{
\begin{aligned}
&\tfrac{1}{2}g^{00}h^1_{00}-\tfrac{1}{2}g^{ij}h^1_{ij}+g^{i\beta}\partial_ih^0_{0\beta}=0,\\
&g^{0\beta}h^1_{k\beta}+g^{i\beta}\partial_ih^0_{k\beta}
-\tfrac{1}{2}g^{\alpha\beta}\partial_kh^0_{\alpha\beta}=0, 
\end{aligned}
\right.
\end{equation}
for $k=1,...,n$. 
Secondly $\partial_t\Gamma_{\mu}|_{t=0}$ is equivalent to
\begin{equation}\label{eq.7}
\left\{
\begin{aligned}
& (\tfrac{1}{2}g^{00}\partial_t^2h_{00}-\tfrac{1}{2}g^{ij}\partial_t^2h_{ij})|_{t=0} 
+g^{i\beta}\partial_ih^1_{0\beta}- D'_{00}=0, 
\\
& (g^{0\beta}\partial_t^2h_{k\beta})|_{t=0} 
+ g^{i\beta}\partial_ih^1_{k\beta}-\tfrac{1}{2} g^{\alpha\beta}\partial_kh^1_{\alpha\beta}
-D'_{0k}=0,
\end{aligned}
\right.
\end{equation}
for $k=1,...,n$. 
When assuming $\Gamma_{\mu}=0$, the Reduced Einstein vacuum equations (\ref{eq.2}) at $t=0$ is 
\begin{equation}\label{eq.0}
g^{00}\partial_t^2h_{\mu\nu}|_{t=0}=-2g^{0i}\partial_ih^1_{\mu\nu} -g^{ij}\partial_i\partial_jh^0_{\mu\nu}+F'_{\mu\nu}.
\end{equation}
Multiplying (\ref{eq.7}) by $g^{00}$ and substituting $(g^{00}\partial^2_th_{\mu\nu})|_{t=0}$ terms by the right hand side of (\ref{eq.0}),
we have
\begin{equation}\label{constraint.7}
\left\{\begin{aligned}
&g^{00}g^{ij}\partial_ih^1_{0j}+g^{pl}g^{0i}\partial_ih^1_{pl} -\tfrac{1}{2}g^{00}g^{ij}\partial_i\partial_jh^0_{00} 
+ \tfrac{1}{2}g^{pl}g^{ij}\partial_i\partial_jh^0_{pl}
+ g^{00}E'_{00}- \tfrac{1}{2}g^{pl}F'_{pl}=0,
\\
&g^{0\beta}(- 2g^{0i}\partial_ih^1_{k\beta}-g^{ij}\partial_i\partial_j h^0 _{k\beta})
+g^{00}(g^{i\beta}\partial_ih^1_{k\beta}-\tfrac{1}{2} g^{\alpha\beta}\partial_kh^1_{\alpha\beta})
+g^{0\beta}F'_{k\beta}-g^{00} D'_{0k}=0,
\end{aligned}
\right.
\end{equation}
for $k=1,...,n$. 

\begin{lemma}\label{lem.1}
The harmonic gauge conditions (\ref{constraint.6}) and (\ref{constraint.7}) 
are equivalent to the constraint conditions (\ref{constraint.2}) combined with (\ref{constraint.6}) in the fixed coordinates, i.e. (\ref{constraint.6}) + (\ref{constraint.7}) =(\ref{constraint.2}) + (\ref{constraint.6}).
\end{lemma}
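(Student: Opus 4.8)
The plan is to route everything through the single algebraic identity for the Ricci tensor recorded above,
\[
R_{\mu\nu}=-\tfrac{1}{2}g^{\lambda\delta}\partial_{\lambda}\partial_{\delta}h_{\mu\nu}+\tfrac{1}{2}\partial_{\mu}\Gamma_{\nu}+\tfrac{1}{2}\partial_{\nu}\Gamma_{\mu}+\tfrac{1}{2}F_{\mu\nu},
\]
together with the remark that both (\ref{constraint.2}) and (\ref{constraint.7}) arise by restricting to $\{t=0\}$ and eliminating $\partial_t^2h|_{t=0}$ by means of the reduced equation (\ref{eq.0}), i.e.\ by means of $g^{\lambda\delta}\partial_{\lambda}\partial_{\delta}h_{\mu\nu}|_{t=0}=F'_{\mu\nu}$. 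First I would observe that after this substitution the first and the last term of the Ricci identity cancel, whence
\[
R_{\mu\nu}\big|_{t=0}=\tfrac{1}{2}\big(\partial_{\mu}\Gamma_{\nu}+\partial_{\nu}\Gamma_{\mu}\big)\big|_{t=0}\qquad\text{for all }\mu,\nu,
\]
with the understanding that the only possible second time derivative on the right, hidden in $\partial_t\Gamma_0$, is again rewritten via (\ref{eq.0}).

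Next I would bring in the hypothesis (\ref{constraint.6}). Since $\Gamma_{\mu}$ vanishes identically on the hypersurface $\{t=0\}$, all its tangential (spatial) derivatives $\partial_i\Gamma_{\mu}$ vanish there as well, so only $\partial_t\Gamma_{\mu}|_{t=0}$ survives, giving
\[
R_{00}\big|_{t=0}=\partial_t\Gamma_0\big|_{t=0},\qquad R_{0k}\big|_{t=0}=\tfrac{1}{2}\partial_t\Gamma_k\big|_{t=0},\qquad R_{ij}\big|_{t=0}=0,
\]
still after the substitution (\ref{eq.0}). Feeding this into the two constraint combinations, and using the fact already noted in the text just before (\ref{constraint.2})---that $g^{00}R_{00}-g^{ij}R_{ij}$ and $g^{00}R_{0k}+g^{0j}R_{kj}$ contain no second time derivative of $h$ at all, so their restrictions to $\{t=0\}$ are insensitive to how $\partial_t^2h|_{t=0}$ is prescribed---I obtain that the left-hand sides of (\ref{constraint.2}) equal, respectively, $g^{00}\partial_t\Gamma_0|_{t=0}$ and $\tfrac{1}{2}g^{00}\partial_t\Gamma_k|_{t=0}$, with $\partial_t^2h|_{t=0}$ prescribed by (\ref{eq.0}).

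It then remains only to recognize the right-hand sides as (\ref{constraint.7}). By construction (\ref{constraint.7}) is obtained from (\ref{eq.7}) by multiplying by $g^{00}$ and substituting $\partial_t^2h|_{t=0}$ via (\ref{eq.0}), and (\ref{eq.7}) is precisely $\partial_t\Gamma_{\mu}|_{t=0}=0$ written out. Hence, assuming (\ref{constraint.6}), each equation of (\ref{constraint.2}) is a nonzero multiple (by $g^{00}$, resp.\ $\tfrac{1}{2}g^{00}$, which is bounded away from $0$ when $h$ is small) of the corresponding equation of (\ref{constraint.7}), and conversely. This is the asserted equivalence of the systems (\ref{constraint.2}) and (\ref{constraint.7}) in the presence of (\ref{constraint.6}), i.e.\ $(\ref{constraint.6})+(\ref{constraint.7})=(\ref{constraint.6})+(\ref{constraint.2})$.

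The only point requiring genuine care is the bookkeeping of the $\partial_t^2h$ terms: the main (still routine) computation is to confirm that in $g^{00}R_{00}-g^{ij}R_{ij}$ and in $g^{00}R_{0k}+g^{0j}R_{kj}$ the second time derivatives of $h$ indeed cancel as polynomials in the $2$-jet of $h$---this is the content of the phrase ``cancel the $\partial_0^2h$ terms'' in the text---since it is exactly this cancellation that permits identifying ``the $(h^0,h^1)$-expression obtained by the cancellation'', namely the left sides of (\ref{constraint.2}), with ``the value obtained after imposing (\ref{eq.0})''. All the remaining steps are purely algebraic and pointwise on $\{t=0\}$; no analytic input is needed.
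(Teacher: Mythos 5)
Your argument is correct, and it is a genuinely different route from the one taken in the paper. The paper's proof is a direct computation: it solves (\ref{constraint.6}) for $g^{00}h^1_{0\mu}$ to obtain (\ref{eq.00}), substitutes this into each equation of (\ref{constraint.7}), and then expands and reorganizes the resulting polynomial in $(\partial_ih^0,h^1)$ term by term, tracking the quadratic remainders $Q_0,Q_k$ and matching them against $E'$ and $F'$. It is essentially a brute-force verification. Your proof instead routes everything through the Ricci decomposition $R_{\mu\nu}=-\tfrac12 g^{\lambda\delta}\partial_\lambda\partial_\delta h_{\mu\nu}+\tfrac12\partial_\mu\Gamma_\nu+\tfrac12\partial_\nu\Gamma_\mu+\tfrac12 F_{\mu\nu}$ already recorded in the text, together with the (also recorded) fact that the constraint combinations $g^{00}R_{00}-g^{ij}R_{ij}$ and $g^{00}R_{0k}+g^{0j}R_{kj}$ contain no $\partial_t^2h$. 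That freedom lets you evaluate these combinations on $\{t=0\}$ with $\partial_t^2h|_{t=0}$ \emph{assigned by} (\ref{eq.0}), whereupon the principal and the $F_{\mu\nu}$ terms cancel, and under (\ref{constraint.6}) the spatial derivatives of $\Gamma_\mu$ drop out, leaving (up to the nonzero factors $g^{00}$, resp.\ $\tfrac12 g^{00}$) exactly the expressions that define (\ref{constraint.7}). This identifies (\ref{constraint.2}) and (\ref{constraint.7}) as the same statement written two ways, which is cleaner and makes the mechanism of the equivalence visible. What your route buys is conceptual clarity and brevity; what the paper's route buys is that it makes no appeal to the earlier general assertions about the Ricci tensor and the cancellation of $\partial_0^2h$ — everything is re-derived at the level of the explicit formulas (\ref{constraint.2}), (\ref{constraint.6}), (\ref{constraint.7}), which is perhaps more self-contained for a reader checking those formulas directly. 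Both arguments are valid; yours is the more illuminating.
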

\begin{proof}
We only need to show under the condition  (\ref{constraint.6}), (\ref{constraint.7}) is equivalent to (\ref{constraint.2}) .  First (\ref{constraint.6}) gives
\begin{equation}\label{eq.00}
\left\{
\begin{aligned}
&g^{00}h^1_{00} =g^{ij}h^1_{ij}-2g^{i\beta}\partial_ih^0_{0\beta},
\\
&g^{00}h^1_{0k}=-g^{0l}h^1_{kl}-g^{i\beta}\partial_ih^0_{k\beta} +\tfrac{1}{2}g^{\alpha\beta}\partial_kh^0_{\alpha\beta},
\end{aligned}
\right.
\end{equation}
for $k=1,...,n$. 
Plugging them into the first equation of (\ref{constraint.7}), we have
\begin{equation*}
\begin{aligned}
0=\ & g^{ij}\partial_i(-g^{0l}h^1_{jl}-g^{l\beta}\partial_lh^0_{j\beta} +\tfrac{1}{2}g^{\alpha\beta}\partial_jh^0_{\alpha\beta})-g^{ij}\partial_ig^{00}h^1_{0j}+g^{pl}g^{0i}\partial_ih^1_{pl} -\tfrac{1}{2}g^{00}g^{ij}\partial_i\partial_jh^0_{00} 
\\ \ &
+ \tfrac{1}{2}g^{pl}g^{ij}\partial_i\partial_jh^0_{pl}
+ g^{00}E'_{00}- \tfrac{1}{2}g^{pl}F'_{pl}
\\
=\ &g^{0i}g^{pl}(\partial_ih^1_{pl}-\partial_ph^1_{il})+ g^{ij}(-g^{l\beta}\partial_i\partial_l h^0_{j\beta}+g^{pl}\partial_i\partial_jh^0_{pl}+g^{0l}\partial_i\partial_jh^0_{0l}) + Q_0
\end{aligned}
\end{equation*}
where $Q_0$ is a quadratic form of $(\partial_ih^0, h^1)$, 
\begin{equation*}
\begin{aligned}
Q_0 =\ & g^{ij}(-\partial_ig^{0l}h^1_{jl}-\partial_ig^{l\beta}\partial_lh^0_{j\beta}+\tfrac{1}{2}\partial_ig^{\alpha\beta}\partial_jh^0_{\alpha\beta})-g^{ij}\partial_ig^{00}h^1_{0j}+ g^{00}E'_{00}- \tfrac{1}{2}g^{pl}F'_{pl}
\\=\ &
g^{ij}(-\partial_ig^{0l}h^1_{jl}-\partial_ig^{l\beta}\partial_lh^0_{j\beta}+\tfrac{1}{2}\partial_ig^{\alpha\beta}\partial_jh^0_{\alpha\beta}-\partial_ig^{00}h^1_{0j}-D'_{ij}) + g^{00}E'_{00}-g^{pl}E'_{pl}
\\=\ &
g^{00}E'_{00}-g^{pl}E'_{pl},
\end{aligned}
\end{equation*}
which gives the first equation of (\ref{constraint.2}). 
Rewrite the second equation of  (\ref{constraint.7}) as follows:
\begin{equation*}
\begin{aligned}
&-\tfrac{1}{2}g^{00}g^{00}\partial_kh^{1}_{00}-g^{00}g^{0i}(\partial_ih^1_{k0}+\partial_kh^1_{0i})
+
g^{00}g^{ij}(-\partial_i\partial_jh^0_{k0}+\partial_ih^1_{kj} -\tfrac{1}{2}\partial_kh^1_{ij})
\\&\quad\quad\quad\quad
-2g^{0i}g^{0l}\partial_ih^1_{kl} -g^{0l}g^{ij}\partial_i\partial_jh^0_{kl} 
+g^{0\beta}F'_{k\beta}-g^{00} E''_{0k}=0.
\end{aligned}
\end{equation*}
Plugging (\ref{eq.00}) into 
the first three terms in above equation, we get
$$
\begin{aligned}
0=\ &-g^{00}\partial_k(\tfrac{1}{2}g^{ij}h^1_{ij}-g^{i\beta}\partial_ih^0_{0\beta}) 
+\tfrac{1}{2}g^{00}\partial_kg^{00}h^1_{00} +g^{0i}\partial_i(g^{0l}h^1_{kl}
+g^{l\beta}\partial_lh^0_{k\beta} -\tfrac{1}{2}g^{\alpha\beta}\partial_kh^0_{\alpha\beta})
\\ \ &
+g^{0i}\partial_ig^{00}h^1_{k0}+g^{0i}\partial_k(g^{0l}h^1_{il}+g^{l\beta}\partial_lh^0_{i\beta} 
-\tfrac{1}{2}g^{\alpha\beta}\partial_i h^0_{\alpha\beta}) 
+g^{0i}\partial_kg^{00}h^1_{0i}
\\ \ & 
+g^{00}g^{ij}(-\partial_i\partial_jh^0_{k0}+\partial_ih^1_{kj} -\tfrac{1}{2}\partial_kh^1_{ij})
-2g^{0i}g^{0l}\partial_ih^1_{kl} -g^{0l}g^{ij}\partial_i\partial_jh^0_{kl} 
+g^{0\beta}F'_{k\beta}-g^{00} E''_{0k}
\\
=\ & g^{00}g^{ij}(\partial_ih^1_{kj}-\partial_kh^{1}_{ij}
-\partial_i\partial_jh^0_{k0}+\partial_k\partial_ih^0_{0j})
+g^{0l}g^{0i}(\partial_kh^1_{il}-\partial_ih^{1}_{kl}
+\partial_i\partial_lh^0_{k0}-\partial_k\partial_lh^0_{0i})
\\ \ &
+g^{0l}g^{ij}(-\partial_i\partial_jh^0_{kl}+ \partial_i\partial_kh^0_{jl}+\partial_i\partial_lh^{0}_{kj}
-\partial_k\partial_lh^0_{ij})+
Q_k 
\end{aligned}
$$
where $Q_k$ is a quadratic form of $(\partial_ih^0, h^1)$, 
$$
\begin{aligned}
Q_k =\ &
g^{00}(-\tfrac{1}{2}\partial_kg^{ij}h^1_{ij}+\partial_kg^{i\beta}\partial_ih^0_{0\beta}+ \tfrac{1}{2}\partial_k g^{00} h^1_{00} )
+ g^{0i}(\partial_ig^{0l}h^1_{kl}+\partial_ig^{l\beta}\partial_lh^0_{k\beta}-\tfrac{1}{2}\partial_ig^{\alpha\beta} \partial_kh^0_{\alpha\beta}
\\ \ & 
+\partial_ig^{00} h^1_{k0} +\partial_kg^{0l}h^1_{il} +\partial_kg^{l\beta}\partial_lh^0_{i\beta}-\tfrac{1}{2}\partial_kg^{\alpha\beta}\partial_ih^0_{\alpha\beta}+\partial_kg^{00} h^1_{0i}) +g^{0\beta}F'_{k\beta}-g^{00} E''_{0k}
\\
=\ & -g^{00}E''_{k0} -g^{0i}(E''_{ik}+E''_{ki})+g^{00}F'_{k0}+g^{0i}F'_{ki}-g^{00} E''_{0k}
\\
=\ & 2(g^{00}E'_{k0}+g^{0i}E'_{ki})
\end{aligned}
$$
which gives the second equation in (\ref{constraint.2}). Since the substituting action is reversible, we finish proving the equivalence of (\ref{constraint.6})+(\ref{constraint.7}) and (\ref{constraint.6})+(\ref{constraint.2}) . 
\end{proof}

%
%

Notice that  (\ref{constraint.6}) is an elliptic system of equations of $h^1_{0\mu}$ for $\mu=0,1,...,n$,  which are free terms when solving the linearization of (\ref{constraint.2}). Hence by a similar decomposition as in the proof of Proposition \ref{prop.2},  the linearization of (\ref{constraint.6})+(\ref{constraint.7}) form an elliptic system 
 w.r.t. $\{A_1,C_1, B'', h^1_{0\mu}: \mu=0,...,n\}$ when considering the solutions in space (\ref{initialspace}) for $n\geq 3$ and $\lambda\in (0,n-2)$. 
Let
\begin{equation}\label{initial.2}
\widetilde{\mathcal{V}}_{\epsilon}^{N, \lambda}
=\{ (h_0,h_1):\ (h_0,h_1)\ \textrm{satisfies(\ref{constraint.6})+(\ref{constraint.7}), } \|(h^0,h^1)\|_{\rho_0^{\lambda}H_b^{N+1}(\overline{\mathbb{R}^n}) \times \rho_0^{\lambda+1}H_b^{N}(\overline{\mathbb{R}^n})}<\epsilon \}.
\end{equation}
Then by the same proof of Proposition \ref{prop.2}, we have 
\begin{proposition}\label{prop.3}
For $n\geq 3$, $\lambda\in (0,n-2)$ and $\epsilon>0$ small enough, $\widetilde{\mathcal{V}}_{\epsilon}^{N,\lambda}$ is a Fr\'{e}chet manifolds.
\end{proposition}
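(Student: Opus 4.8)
The plan is to mimic the proof of Proposition \ref{prop.2} essentially verbatim, with the only new ingredient being the observation (already established in Lemma \ref{lem.1}) that the harmonic gauge conditions (\ref{constraint.6})+(\ref{constraint.7}) are equivalent to (\ref{constraint.2})+(\ref{constraint.6}). First I would pass to the linearization at $(0,0)$: linearizing (\ref{constraint.7}) gives a system whose quadratic remainders $Q_0,Q_k$ drop out, and linearizing (\ref{constraint.6}) yields a linear elliptic system for the components $h^1_{0\mu}$, $\mu=0,\dots,n$. Combining this with the linearization (\ref{constraint.3}) of the constraint equations and the equivalence from Lemma \ref{lem.1}, the full linearized gauge system at $(0,0)$ is precisely (\ref{constraint.3}) together with the linearization of (\ref{constraint.6}).

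Next I would apply the same orthogonal-decomposition trick as in Proposition \ref{prop.2}: introduce the variables $A_1,A_l,A_{ij},C_1,C_l,C_{ij}$, the one-forms $B,D$ and their Hodge decompositions $B=B'+B''$, $D=D'+D''$ on $\overline{\mathbb{R}^n}$, using that $\triangle$ is an isomorphism $\rho_0^\lambda H_b^{N+1}\to\rho_0^{\lambda+2}H_b^{N-1}$ and $\triangle^{1/2}$ is an isomorphism between the appropriate weighted spaces for $n\ge 3$, $\lambda\in(0,n-2)$ (Proposition \ref{app.1}). As in the earlier proof, the constraint part becomes the elliptic system (\ref{constraint.4}) determining $\{A_1,C_1,B''\}$ in terms of the free data $\{h^0_{00},A_k,A_{ij},B',C_k,C_{ij}\}$ plus the previously-free components $h^1_{0\mu}$. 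The new equations, namely the linearization of (\ref{constraint.6}), are a further elliptic system in the $h^1_{0\mu}$; solving it determines those components (or an equivalent set of free parameters among them) and leaves the remaining free parameters unconstrained. Thus the linearized solution space is again a closed subspace — a product of weighted b-Sobolev spaces and the closed-form subspace $\mathcal{E}^{N+1,\lambda}_c$ — of the ambient space (\ref{initialspace}), hence is a Banach (Fr\'echet) subspace.

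Finally I would invoke the implicit function theorem exactly as in Proposition \ref{prop.2}: the full nonlinear map defined by (\ref{constraint.6})+(\ref{constraint.7}) has, at $(0,0)$, a surjective linearization with a complemented kernel equal to the space just described, and the nonlinear terms are quadratic forms in $(\partial_i h^0,h^1)$ with coefficients analytic in $h^0$, hence smooth on a neighborhood of $0$. For $\epsilon>0$ small this exhibits $\widetilde{\mathcal{V}}^{N,\lambda}_\epsilon$ as a submanifold of (\ref{initialspace}) with tangent space at $(0,0)$ equal to the linearized solution space, which proves that it is a Fr\'echet manifold.

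The step I expect to require the most care is verifying that the linearization of (\ref{constraint.6}) is genuinely elliptic in the $h^1_{0\mu}$ \emph{after} the same Hodge-type decomposition is applied — one must check that it couples with (\ref{constraint.7}) in a triangular (non-degenerate) fashion so that the combined linearized system is still elliptic with respect to $\{A_1,C_1,B'',h^1_{0\mu}\}$, exactly the point flagged in the paragraph preceding the statement. Once this structural compatibility is confirmed, the invertibility of $\triangle$ and $\triangle^{1/2}$ on the weighted spaces does the rest, and the implicit function theorem argument is routine.
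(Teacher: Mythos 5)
Your proposal reproduces the paper's own argument: the paper likewise reduces via Lemma~\ref{lem.1}, notes that (\ref{constraint.6}) is (trivially) elliptic in the $h^1_{0\mu}$ --- precisely the components left free in the linearization of (\ref{constraint.2}) --- so that the combined linearized system is elliptic w.r.t.\ $\{A_1,C_1,B'',h^1_{0\mu}\}$, and then simply invokes ``the same proof as Proposition~\ref{prop.2}'' to conclude, with tangent space $\rho_0^{\lambda}H_b^{N+1}(\overline{\mathbb{R}^n};\mathbb{R}^{\frac{n(n+1)}{2}})\times\mathcal{E}^{N+1,\lambda}_c\times\rho_0^{\lambda+1}H_b^{N}(\overline{\mathbb{R}^n};\mathbb{R}^{\frac{n(n+1)}{2}-1})$. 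The triangular coupling you flag as the delicate point is exactly the observation the paper makes, and it is non-degenerate since the system (\ref{constraint.4}) for $\{A_1,C_1,B''\}$ does not involve $h^1_{0\mu}$ at all.
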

Here $\widetilde{\mathcal{V}}_{\epsilon}^{N,\lambda}$ has tangent space 
$\mathcal{V}^{N,\lambda}$ at $(0,0)$ which is the space of solutions to linearization of (\ref{constraint.6})+(\ref{constraint.7}) and is isomorphic to
\begin{equation*}
\rho_0^{\lambda}H_b^{N+1}(\overline{\mathbb{R}^n}: \mathbb{R}^{\frac{n(n+1)}{2}}) \times \mathcal{E}^{N+1,\lambda}_{c}\times \rho_0^{\lambda+1}H_b^{N}(\overline{\mathbb{R}^n}: \mathbb{R}^{\frac{(n+1)n}{2}-1}).
\end{equation*}

\subsection{Gauge Fixing.} 
%


Suppose $(\mathbb{R}^{1+n}_{\bar{t},\bar{x}}, \bar{g})$ is an Einstein vacuum solution which is close to the Minkowski space-time with Cauchy data $(\bar{h}^0,\bar{h}^{1})$ satisfying the constraint equations (\ref{constraint.2}). Here
\begin{equation*}
\bar{h}^0_{\mu\nu}=\bar{g}_{\mu\nu}-m_{\mu\nu}|_{\bar{t}=0}, \quad \bar{h}^1_{\mu\nu}=\partial_{\bar{t}}\bar{g}_{\mu\nu}|_{\bar{t}=0}
\end{equation*}
in the sense of matrices. 
Let $\Phi: \mathbb{R}^{1+n}_{t,x}\longrightarrow \mathbb{R}^{1+n}_{\bar{t},\bar{x}}$ be a diffeomorphism preserving the Cauchy surface: 
\begin{equation*}
\bar{x}^{\mu}=x^{\mu}+f^{\mu}(t,x) \quad\mathrm{with}\quad  f^{0}(0,x)=0.
\end{equation*}
%
We want to find out all $\Phi$ such that $(t,x)$ are wave coordinates w.r.t. the pull back metric  $g=\Phi^{*}\bar{g}$:
\begin{equation*}
\begin{aligned}
g_{\mu\nu}=\bar{g}_{\mu\nu}+\bar{g}_{\mu\beta}\partial_{\nu}f^{\beta}+ \bar{g}_{\alpha\nu}\partial_{\mu}f^{\alpha}+ \bar{g}_{\alpha\beta}\partial_{\mu}f^{\alpha}\partial_{\nu}f^{\beta},
\end{aligned}
\end{equation*}
Denote by $(h^0,h^1)$ the corresponding Cauchy data in coordinates (t,x) :
\begin{equation}\label{eq.10}
\left\{
 \begin{aligned}
  h^0_{\mu\nu} =&\ \bar{h}^0_{\mu\nu}+(\bar{g}_{\mu\beta}\partial_{\nu}f^{\beta}+ \bar{g}_{\alpha\nu}\partial_{\mu}f^{\alpha}+ \bar{g}_{\alpha\beta}\partial_{\mu}f^{\alpha}\partial_{\nu}f^{\beta})|_{t=0},
\\
  h^1_{\mu\nu} =&\ \bar{h}^1_{\mu\nu} + (\bar{h}^1_{\mu\beta}\partial_{\nu}f^{\beta}+ \bar{h}^1_{\alpha\nu}\partial_{\mu}f^{\alpha}+\bar{h}^1_{\alpha\beta}\partial_{\mu}f^{\alpha}\partial_{\nu}f^{\beta})|_{t=0}
\\
  &\ +[\partial_tf^{\gamma} (\partial_{\bar{x}^{\gamma}}\bar{g}_{\mu\nu} +\partial_{\bar{x}^{\gamma}}\bar{g}_{\mu\beta}\partial_{\nu}f^{\beta}+ \partial_{\bar{x}^{\gamma}}\bar{g}_{\alpha\nu}\partial_{\mu}f^{\alpha}+ \partial_{\bar{x}^{\gamma}}\bar{g}_{\alpha\beta}\partial_{\mu}f^{\alpha}\partial_{\nu}f^{\beta})]|_{t=0}
\\
  &\ +[\bar{g}_{\mu\beta}\partial_{\nu}\partial_tf^{\beta}+ \bar{g}_{\alpha\nu}\partial_{\mu}\partial_tf^{\alpha}+ \bar{g}_{\alpha\beta}(\partial_t\partial_{\mu}f^{\alpha}\partial_{\nu}f^{\beta}+ \partial_{\mu}f^{\alpha}\partial_t\partial_{\nu}f^{\beta})]_{t=0}.
 \end{aligned}\right.
\end{equation}
For simplicity, denote $\partial_{\bar{x}^{\alpha}}=\bar{\partial}_{\alpha}$. It shows that $(h^0, h^1)$ is determined by $(f,\partial_t f,\partial_t^2f)|_{t=0}$.

To make $g$ satisfy the harmonic gauge condition (\ref{harmonic.3}), it is equivalent to require $f$ satisfying the following equations:
\begin{equation}\label{eq.8}
\begin{gathered}
(\bar{g}_{\mu\nu}+\bar{g}_{\delta\nu}\partial_{\mu}f^{\delta}) g^{\alpha\beta}\partial_{\alpha}\partial_{\beta}f^{\nu}+ G_{\mu}(\bar{g},\bar{\partial} \bar{g};f,\partial f)=0
\end{gathered}
\end{equation}
for $\mu=0,...,n$, where $G_{\mu}$ are analytic functions of $(\bar{g},\bar{\partial}\bar{g};f,\partial f)$ for $(f,\partial f)$ small: 

\begin{equation*}
\begin{aligned}
G_{\mu}(\bar{g},\bar{\partial} \bar{g};f,\partial f)=
&\  g^{\alpha\beta}(\partial_{\alpha}\bar{g}_{\mu\beta}+\partial_{\alpha}\bar{g}_{\mu\gamma}\partial_{\beta}f^{\gamma}+ \partial_{\alpha}\bar{g}_{\delta\beta}\partial_{\mu}f^{\delta}+\bar{g}_{\delta\gamma}\partial_{\mu}f^{\delta}\partial_{\beta}f^{\gamma}) 
\\ &\ 
-\tfrac{1}{2}g^{\alpha\beta}(\partial_{\mu}\bar{g}_{\alpha\beta}+\partial_{\mu}\bar{g}_{\alpha\gamma}\partial_{\beta}f^{\gamma}+\partial_{\mu}\bar{g}_{\delta\beta}\partial_{\alpha}f^{\delta}+
\partial_{\mu}\bar{g}_{\delta\gamma}\partial_{\alpha}f^{\delta}\partial_{\beta}f^{\gamma}).
\end{aligned}
\end{equation*}
Notice that $\partial_{\alpha}\bar{g}_{\mu\nu}=\bar{\partial}_{\alpha}\bar{g}_{\mu\nu}+\partial_{\alpha}f^{\beta}\bar{\partial}_{\beta}\bar{g}_{\mu\nu}$. In particular, 
$$
G_{\mu}(\bar{g},\bar{\partial} \bar{g};0,0)=\ \bar{g}^{\alpha\beta}(\bar{\partial}_{\alpha}\bar{g}_{\mu\beta} -\tfrac{1}{2}\bar{\partial}_{\mu}\bar{g}_{\alpha\beta}).
$$
Here (\ref{eq.8}) is a system of quasilinear wave equations for $f$.  Given any $f_0=f|_{t=0}$ and $f_1=\partial_tf|_{t=0}$ with enough regularity, we can solve (\ref{eq.8}) in a neighborhood of Cauchy surface. Furthermore, if $(f_0,f_1)$ are also small enough, $f$ is determined globally. Since we assume $f_0^0=0$, $(f_0,f_1)$ have $2n+1$ degree of freedom. 

However, we want to choose the gauge before we find out the global Einstein solution $\bar{g}$ for the Cauchy problem.  By  Theorem \ref{thm.lr}, this can be done by requiring $(h^0,h^1)$ in the new coordinates $(t,x)$ to satisfy the harmonic gauge conditions (\ref{harmonic.1}), or equivalently, (\ref{constraint.6})+(\ref{constraint.7}). First  (\ref{constraint.6}) give $(n+1)$ linear equations of $\partial_t^2f|_{t=0}$ and allow us to write $ \partial_t^2f|_{t=0}$ 
in terms of $(f_0,f_1)$. More explicitly 
\begin{equation}\label{eq.50}
\partial^2_{t}f^{\mu}|_{t=0}= f^{\mu}_2(\bar{h}^0,\bar{h}^1;f_0,f_1)
\end{equation}
where $f_2^{\mu}$ are analytic functions of $(\bar{h}^0,\bar{h}^1;f_0,f_1)$. 
%
%
%
Then (\ref{constraint.7}) is satisfied automatically by Lemma \ref{lem.1} since the constraint equations (\ref{constraint.1}), or equivalently (\ref{constraint.2}), is invariant under deffeomorphism. This shows again that the global diffeomorphism and the global harmonic coordinates set are determined by $(f_0,f_1)$.

\subsection{Group Action and Fibration.} 
From (\ref{eq.10}),  the set of $(f_0,f_1)=(f|_{t=0},\partial_tf|_{t=0})$ gives a group action  on the space of  $(h^0,h^1)$ which satisfies the harmonic gauge conditions (\ref{constraint.6})+(\ref{constraint.7}). We investigate this group action in more details in the following. 

For $\lambda>0$, let $\mathcal{G}^{N,\lambda}$ be the nonabelian group generated by
$$
\{(f_0,f_1)\in \rho_0^{\lambda-1}H_b^{N+2}(\overline{\mathbb{R}^n}:\mathbb{R}^{n+1})\times \rho_0^{\lambda}H_b^{N+1}(\overline{\mathbb{R}^n}:\mathbb{R}^{n+1}):f^0_0=0,\ \|(f_0,f_1)\|_{}<\epsilon'\},
$$
for some $\epsilon'>0$ small with group operation
$$
(\tilde{f}_0,\tilde{f}_1)\star(f_0,f_1)=\big(\tilde{f}_0+f_0\circ (Id+\tilde{f}_0), \tilde{f}_1+(1+\tilde{f}_1^0)f_1\circ (Id+\tilde{f}_0)+\tilde{f}_1^i \partial_if_0 \circ (Id+\tilde{f}_0) \big).
$$
and identity $(0,0)$. Here $\circ$ denotes the composition of functions. It is clearly that $\mathcal{G}^{N,\lambda}$ does not depend on $\epsilon'>0$ small. Then $\mathcal{G}^{N,\lambda}$ has two subgroups generated by $(f_0,0)$ and $(0,f_1)$ respectively, which transversely intersect at $(0,0)$. 
Denote
\begin{equation*}
\begin{aligned}
&\mathcal{G}_0^{N,\lambda}= \{f_0: (f_0,0)\in\mathcal{G}^{N,\lambda}\},\quad
\mathcal{G}_1^{N,\lambda}= \{f_1: (0,f_1)\in\mathcal{G}^{N,\lambda}\}.
\end{aligned}
\end{equation*}
\begin{lemma}\label{lem.13}
With above notation,
$$
\mathcal{G}^{N,\lambda} = \mathcal{G}_1^{N,\lambda}\star\mathcal{G}_0^{N,\lambda}=\mathcal{G}_0^{N,\lambda}\star\mathcal{G}_1^{N,\lambda}.
$$
\end{lemma}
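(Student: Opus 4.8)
The plan is to reduce the statement to two explicit consequences of the group law and then to check that $\mathcal{G}_1^{N,\lambda}\star\mathcal{G}_0^{N,\lambda}$ is a subgroup of $\mathcal{G}^{N,\lambda}$ that contains all of its generators (and symmetrically for $\mathcal{G}_0^{N,\lambda}\star\mathcal{G}_1^{N,\lambda}$). Specializing the group operation to $\tilde f_0=0$, resp. to $f_1=0$, gives the identities
\begin{equation*}
(0,g_1)\star(f_0,0)=\big(f_0,\ g_1^{\mu}+g_1^{i}\partial_{i}f_0^{\mu}\big),\qquad
(f_0,0)\star(0,g_1)=\big(f_0,\ g_1\circ(Id+f_0)\big).
\end{equation*}
Because $f_0^{0}=0$, the linear map $g_1\mapsto\big(g_1^{\mu}+g_1^{i}\partial_{i}f_0^{\mu}\big)$ is block triangular: it fixes the $0$-component and acts as $I_n+D_xf_0'$ on the spatial components, where $f_0'=(f_0^{1},\dots,f_0^{n})$. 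Hence splitting a given element in the form $(0,g_1)\star(f_0,0)$ amounts to solving the linear algebraic system $g_1^{0}=f_1^{0}$, $(I_n+D_xf_0')\,(g_1^{1},\dots,g_1^{n})^{\mathsf T}=(f_1^{1},\dots,f_1^{n})^{\mathsf T}$.

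The first step is to carry out this inversion in the correct function space. Exactly as in Lemma \ref{lem.0}(iii), on $\Sigma_0=\overline{\mathbb{R}^n}$ one has $\partial_i\in\rho_0\,\mathscr{V}_b(\overline{\mathbb{R}^n})$ (indeed $\partial_i=\rho_0(\theta_i\,r\partial_r+\slashpar_i)$ near spatial infinity), so $f_0\in\rho_0^{\lambda-1}H_b^{N+2}$ forces $D_xf_0'\in\rho_0^{\lambda}H_b^{N+1}$ with norm $\lesssim\epsilon'$. Since $N\ge n+6$, the space $\rho_0^{\lambda}H_b^{N+1}(\overline{\mathbb{R}^n})$ is a Banach algebra, so for $\epsilon'$ small $I_n+D_xf_0'$ is invertible with $(I_n+D_xf_0')^{-1}\in I_n+\rho_0^{\lambda}H_b^{N+1}$ via the convergent Neumann series. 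Therefore $g_1=\big(f_1^{0},(I_n+D_xf_0')^{-1}(f_1^{1},\dots,f_1^{n})\big)$ lies in $\rho_0^{\lambda}H_b^{N+1}$ and is small, so $(0,g_1)\in\mathcal{G}_1^{N,\lambda}$, $(f_0,0)\in\mathcal{G}_0^{N,\lambda}$, and $(f_0,f_1)=(0,g_1)\star(f_0,0)$. Since $\mathcal{G}_0^{N,\lambda},\mathcal{G}_1^{N,\lambda}$ do not depend on the small parameter, we may shrink $\epsilon'$ to absorb the constants from the multiplication estimate. In particular every generator of $\mathcal{G}^{N,\lambda}$ lies in $\mathcal{G}_1^{N,\lambda}\star\mathcal{G}_0^{N,\lambda}$.

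To upgrade this to all of $\mathcal{G}^{N,\lambda}$ I would show that $\mathcal{G}_1^{N,\lambda}\star\mathcal{G}_0^{N,\lambda}$ is closed under $\star$ and under inversion; given that $\mathcal{G}_0^{N,\lambda}$ and $\mathcal{G}_1^{N,\lambda}$ are subgroups, both reductions come down to the commutation relation $\mathcal{G}_0^{N,\lambda}\star\mathcal{G}_1^{N,\lambda}\subseteq\mathcal{G}_1^{N,\lambda}\star\mathcal{G}_0^{N,\lambda}$. By the second displayed identity, $(f_0,0)\star(0,g_1)=(f_0,\,g_1\circ(Id+f_0))$, and applying the first step with $f_1:=g_1\circ(Id+f_0)$ rewrites this as $(0,\bar g_1)\star(f_0,0)$. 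Thus $\mathcal{G}_1^{N,\lambda}\star\mathcal{G}_0^{N,\lambda}$ is a subgroup of $\mathcal{G}^{N,\lambda}$ containing all generators, hence equals $\mathcal{G}^{N,\lambda}$; reading the same commutation relation backwards gives $\mathcal{G}_0^{N,\lambda}\star\mathcal{G}_1^{N,\lambda}=\mathcal{G}^{N,\lambda}$ as well.

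The main obstacle is the analytic input hidden in the commutation step: one must know that $x\mapsto x+f_0(x)$ with $f_0$ small in $\rho_0^{\lambda-1}H_b^{N+2}$ is a b-diffeomorphism of $\overline{\mathbb{R}^n}$ — since $\lambda-1$ may be negative, $f_0$ can grow sublinearly at spatial infinity, so properness and invertibility of the Jacobian near $S_0$ need checking — and that pullback by such a diffeomorphism (and by its inverse) preserves $\rho_0^{\lambda}H_b^{N+1}$ with controlled norm. This is precisely where the extra derivative in the regularity of $f_0$ (order $N+2$ versus $N+1$) is used, through the standard Moser and chain-rule estimates. Granting this, together with the Banach algebra property and the Neumann series, the remainder is bookkeeping with the two displayed identities.
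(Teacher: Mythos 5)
The paper states this lemma without proof, so there is no authorial argument to compare against; the evaluation is of your proposal on its own merits. Your computation of the two one-sided products is correct, and the observation that $f_0^0=0$ makes the map $g_1\mapsto g_1^\mu+g_1^i\partial_i f_0^\mu$ block-triangular (in fact block-diagonal, fixing $g_1^0$ and acting as $I_n+D_xf_0'$ on the spatial components) is exactly the right linear-algebra reduction. However, there is a structural gap in how you bootstrap from generators to the whole group: closure of $\mathcal{G}_1\star\mathcal{G}_0$ under $\star$ requires the commutation $\mathcal{G}_0\star\mathcal{G}_1\subseteq\mathcal{G}_1\star\mathcal{G}_0$ for \emph{arbitrary} $f_0\in\mathcal{G}_0$, whereas your Neumann-series inversion of $I_n+D_xf_0'$ is only justified when $\|D_xf_0'\|$ is small, i.e.\ when $f_0$ is a generator. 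You would need to either iterate the commutation one small generator of $\mathcal{G}_0$ at a time (noting that the inversion depends only on the smallness of $f_0$, not of $g_1$), or invoke the b-diffeomorphism structure to invert the Jacobian in the large. You flag the b-diffeomorphism point, but your phrasing (``$f_0$ small'') suggests you had not noticed that the elements of $\mathcal{G}_0$ you need to move across are not themselves small.

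There is a substantially shorter route that avoids the Neumann series and Banach-algebra discussion altogether and sidesteps this gap. Because the first component of $(\tilde f_0,\tilde f_1)\star(f_0,f_1)$ is $\tilde f_0+f_0\circ(Id+\tilde f_0)$ and depends only on the first components, the projection $\pi\colon\mathcal{G}^{N,\lambda}\to\mathcal{G}_0^{N,\lambda}$, $(f_0,f_1)\mapsto f_0$, is a group homomorphism. In particular, writing any $(f_0,f_1)\in\mathcal{G}^{N,\lambda}$ as a product of generators $g^{(1)}\star\cdots\star g^{(k)}$ and replacing each generator by its first component gives $(f_0,0)=(\,(g^{(1)})_0,0\,)\star\cdots\star(\,(g^{(k)})_0,0\,)\in\mathcal{G}^{N,\lambda}$, so $\pi$ is surjective with section $\sigma\colon f_0\mapsto(f_0,0)$. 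The kernel is $\{(0,g_1):g_1\in\mathcal{G}_1^{N,\lambda}\}$, which is normal, and conjugation preserves the first slot. Hence $\mathcal{G}^{N,\lambda}$ is the internal semidirect product $\mathcal{G}_1^{N,\lambda}\rtimes\mathcal{G}_0^{N,\lambda}$: given $(f_0,f_1)$, set $(0,g_1)=(f_0,f_1)\star(f_0,0)^{-1}\in\ker\pi$, so $(f_0,f_1)=(0,g_1)\star(f_0,0)$, and the other ordering follows from the subgroup property. This argument also makes clear which analytic facts are actually being used, namely only those already implicit in the assertion that $\mathcal{G}^{N,\lambda}$ \emph{is} a group in the stated function spaces, i.e.\ that composition with, and inversion of, $Id+f_0$ preserve the weighted b-Sobolev regularity --- the same inputs your Neumann-series route requires, but here they enter once rather than at each commutation step.
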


Denote by $\widetilde{\mathcal{V}}^{N,\lambda} = \mathcal{G}^{N,\lambda}\cdot \widetilde{\mathcal{V}}_{\epsilon}^{N,\lambda}\subset \widetilde{\mathcal{V}}^{N,\lambda}_{\infty}$ where $\widetilde{\mathcal{V}}_{\epsilon}^{N,\lambda} $ is given in Proposition \ref{prop.3}. By Lemma \ref{lem.13}, we can study the action of $\mathcal{G}_1^{N,\lambda}$ and $\mathcal{G}_0^{N,\lambda}$ on $\widetilde{\mathcal{V}}_{\epsilon}^{N,\lambda}$ separately.   First,  $\mathcal{G}_1^{N,\lambda}$ forms an nonabelian group with operation
\begin{equation}
(0,\tilde{f}_1)\star (0,f_1) = (0,f_1+\tilde{f}_1+ \tilde{f}^0_1f_1),\quad \forall\ f_1,\tilde{f}_1\in \mathcal{G}_1^{N,\lambda}.
\end{equation}
By (\ref{eq.10}), given $f_1\in \mathcal{G}_1^{N,\lambda}$ and $(\bar{h}^0,\bar{h}^1)\in \widetilde{\mathcal{V}}^{N,\lambda}$, then $(h^0,h^1)=(0,f_1)\cdot(\bar{h}^0,\bar{h}^1)$ is defined as follow: 
\begin{equation}
\left\{
\begin{aligned}
h^{0}_{00}=&\ \bar{h}^0_{00}+2\bar{g}_{0\alpha}f_1^{\alpha} 
+\bar{g}_{\alpha\beta}f_1^{\alpha}f_1^{\beta},
\\
h^0_{0i}=&\ \bar{h}^0_{0i}+\bar{g}_{i\beta}f_1^{\beta},
\\
h^0_{ij}=&\ \bar{h}^0_{ij}
\\
h^{1}_{00}=&\ (\bar{h}^1_{00}+\bar{h}^1_{00}f_1^0+\partial_i\bar{h}^0_{00}f_1^i)+ 2(\bar{h}^1_{0\beta}+\bar{h}^1_{0\beta}f_1^0+\partial_i\bar{h}^0_{0\beta}f_1^i)f_1^{\beta} 
\\
&\ +(\bar{h}^1_{\alpha\beta}+\bar{h}^1_{\alpha\beta}f_1^0+\partial_i\bar{h}^0_{\alpha\beta}f_1^i) f_1^{\alpha}f_1^{\beta}
 +2\bar{g}_{0\beta}f^{\beta}_2+ 2\bar{g}_{\alpha\beta}f_1^{\alpha}f_2^{\beta},
\\
h^1_{0i}=&\ (\bar{h}^1_{0i}+\bar{h}^1_{0i}f_1^0+\partial_k\bar{h}^0_{0i}f_1^k)+ (\bar{h}^1_{i\beta}+\bar{h}^1_{i\beta}f_1^0+\partial_k\bar{h}^0_{i\beta}f_1^k)f_1^{\beta}
\\
&\  +\bar{g}_{i\beta}f_2^{\beta}+\bar{g}_{0\beta}\partial_if_1^{\beta}+ \bar{g}_{\alpha\beta} f_1^{\alpha}\partial_if_1^{\beta},
\\
h^1_{ij}=&\ \bar{h}^1_{ij}+ \bar{h}^1_{ij}f_1^0+\partial_k\bar{h}^0_{ij}f_1^k+ \bar{g}_{i\beta}\partial_jf_1^{\beta}+\bar{g}_{\alpha j}\partial_if_1^{\alpha}.
\end{aligned}\right.
\end{equation}
Here $f_2=f_2(\bar{h}^0,\bar{h}^1;0,f_1)$ is defined in (\ref{eq.50}).
%
%
Obviously $\mathcal{G}_1^{N,\lambda}$ preserves the coordinates on the Cauchy surface
and hence preserves the restricting metric and second fundamental form on Cauchy surface. Given initial data $(h_0,h_1)$, let $g_0$ be the induced metric on Cauchy surface and $k_0$ the corresponding second fundamental form. This defines the map $\pi: (h^0,h^1)\longrightarrow (g_0,k_0)$:
$$
\begin{aligned}
 \quad [g_0]_{ij}= \delta_{ij}+h^0_{ij},
\quad
[k_0]_{ij}=-(1-h^0_{00}+g_0^{kl}h^0_{0l}h^0_{0k})^{\frac{1}{2}}
(\partial_ih^0_{0j}+\partial_jh^0_{0i}-h^1_{ij}).
\end{aligned}
$$
In our setting, 
$$
[g_0]_{ij}-\delta_{ij}\in \rho_0^{\lambda}H_b^{N+1}(\overline{\mathbb{R}^n}),\quad  [k_0]_{ij}\in \rho_0^{\lambda+1}H_b^{N}(\overline{\mathbb{R}^n}).
$$
Let 
$\widetilde{C}^{N,\lambda}_{\epsilon}=\pi(\mathcal{G}_1^{N,\lambda}\cdot \widetilde{\mathcal{V}}_{\epsilon}^{N,\lambda})$. Obviously, 
$$
\xymatrix{\mathcal{G}_1^{N,\lambda} \ar@{-}[r]& \mathcal{G}_1^{N,\lambda}\cdot \widetilde{\mathcal{V}}_{\epsilon}^{N,\lambda} \ar[d]^{\pi} \\ & \widetilde{C}^{N,\lambda}_{\epsilon}}
$$
provides a fibration structure. 

Secondly, $\mathcal{G}_0^{N,\lambda}$ is obvious equivalent to the diffeomorphism group on Cauchy surface with the corresponding regularity and decay at infinity. The group operation is as follows: 
$$
\tilde{f}_0 \star f_0 =\tilde{f}_0+f_0\circ(Id+ \tilde{f}_0),\quad \forall \ \tilde{f}_0,f_0\in \mathcal{G}_0^{N,\lambda}. 
$$
The action of $\mathcal{G}_0^{N,\lambda}$ on $\widetilde{C}^{N,\lambda}_{\epsilon}$ is just a pull-back. Denote by $\widetilde{C}^{N,\lambda}=\mathcal{G}_0^{N,\lambda}\cdot \widetilde{C}^{N,\lambda}_{\epsilon}$. Then for any $f_0\in \mathcal{G}_0^{N,\lambda}$, we have the commutativity
$$
\pi\circ f_0 =f_0 \circ \pi
$$
which implies that
$$
\xymatrix{\mathcal{G}_1^{N,\lambda} \ar@{-}[r]& \widetilde{\mathcal{V}}^{N,\lambda} \ar[d]^{\pi} \\ & \widetilde{C}^{N,\lambda}}
$$
is also a fibration. 
Let $\widetilde{\mathcal{D}}^{N,\lambda}=\widetilde{C}^{N,\lambda}/\mathcal{G}_0^{N,\lambda}$ and $\pi'$ the quotient map. Denote by $m_e$ be the Euclidean metric and $S^2(T^*\mathbb{R}^n)$ be the symmetric 2-tensor bundle on $\mathbb{R}^n$. Summarizing above discussion, we have
\begin{proposition}
For $n\geq 3$ and $\lambda\in(0,n-2)$, 
the quotient space $\widetilde{\mathcal{D}}^{N,\lambda}$ is an open neighborhood of $(m_e,0)$ in the solution space of constraint equation (\ref{constraint.1}) in
$$
\big(m_e+\rho_0^{\lambda}H_b^{N+1}(\overline{\mathbb{R}^n}; S^2(T^*\mathbb{R}^n)) \big) \times \rho_0^{\lambda+1}H_b^{N}(\overline{\mathbb{R}^n}; S^2(T^*\mathbb{R}^n)) .
$$
with the fibration structure: 
$$
\xymatrix{
\mathcal{G}_1^{N,\lambda} \ar@{-}[r]& \widetilde{\mathcal{V}}^{N,\lambda} \ar[d]^{\pi} \\ 
\mathcal{G}_0^{N,\lambda}\ar@{-}[r] & \widetilde{C}^{N,\lambda} \ar[d]^{\pi'}\\
& \widetilde{D}^{N,\lambda} .
}
$$

\end{proposition}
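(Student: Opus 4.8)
The plan is to combine the three structures already established — the manifold $\widetilde{\mathcal V}^{N,\lambda}_{\epsilon}$ of Proposition \ref{prop.3}, the splitting $\mathcal G^{N,\lambda}=\mathcal G^{N,\lambda}_{1}\star\mathcal G^{N,\lambda}_{0}=\mathcal G^{N,\lambda}_{0}\star\mathcal G^{N,\lambda}_{1}$ of Lemma \ref{lem.13}, and the identification of the harmonic gauge conditions with the constraint equations together with (\ref{constraint.6}) of Lemma \ref{lem.1} — and to add the one new ingredient: that $\pi$ sends $\widetilde{\mathcal V}^{N,\lambda}$ onto a full neighbourhood of $(m_e,0)$ in the solution set of (\ref{constraint.1}). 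First I would note $\widetilde C^{N,\lambda}=\pi(\widetilde{\mathcal V}^{N,\lambda})$, which follows from $\mathcal G^{N,\lambda}=\mathcal G^{N,\lambda}_{0}\star\mathcal G^{N,\lambda}_{1}$ and $\pi\circ f_0=f_0\circ\pi$; and that $\widetilde C^{N,\lambda}$ is contained in the constraint set, because by Lemma \ref{lem.1} each $(h^0,h^1)\in\widetilde{\mathcal V}^{N,\lambda}$ satisfies (\ref{constraint.2}), which is (\ref{constraint.1}) in the fixed coordinates, so $(g_0,k_0)=\pi(h^0,h^1)$ solves (\ref{constraint.1}) and has the stated decay.

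The heart of the proof is the reverse inclusion, and for it I would use an explicit gauge normalisation. Given $(g_0,k_0)$ solving (\ref{constraint.1}) and close to $(m_e,0)$ in $\rho_0^{\lambda}H_b^{N+1}\times\rho_0^{\lambda+1}H_b^{N}$, first manufacture a preliminary datum $(\bar h^0,\bar h^1)$ by setting $\bar h^0_{ij}=[g_0]_{ij}-\delta_{ij}$, $\bar h^0_{00}=\bar h^0_{0k}=\bar h^1_{00}=\bar h^1_{0k}=0$, and letting $\bar h^1_{ij}$ be the unique small solution of the algebraic $k_0$-relation in the definition of $\pi$. Since $\pi(\bar h^0,\bar h^1)=(g_0,k_0)$ solves (\ref{constraint.1}), and (\ref{constraint.2}) is precisely (\ref{constraint.1}) written in coordinates — a Gauss--Codazzi identity that does not involve the lapse and shift components — the pair $(\bar h^0,\bar h^1)$ satisfies (\ref{constraint.2}), so it lies in $\widetilde{\mathcal U}^{N,\lambda}_{\epsilon'}$ with $\epsilon'\lesssim\|(g_0,k_0)-(m_e,0)\|$. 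It need not satisfy the harmonic gauge, so next I would apply to it the element $(0,0)\in\mathcal G^{N,\lambda}$ via the formulas (\ref{eq.10}), with $f|_{t=0}=\partial_tf|_{t=0}=0$ and $\partial_t^2f|_{t=0}=f_2(\bar h^0,\bar h^1;0,0)$ the solution of the linear system coming from (\ref{constraint.6}) (see (\ref{eq.50})). As the computation preceding (\ref{eq.50}) shows, this transformation fixes $h^0$ and the spatial components $h^1_{ij}$ and only corrects $h^1_{00}$ and $h^1_{0k}$; hence the new datum $(h^0,h^1)$ still has $\pi(h^0,h^1)=(g_0,k_0)$, satisfies (\ref{constraint.6}) by the choice of $f_2$ and (\ref{constraint.7}) automatically by Lemma \ref{lem.1}, and has norm $O(\|(g_0,k_0)-(m_e,0)\|)$. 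Therefore $(h^0,h^1)\in\widetilde{\mathcal V}^{N,\lambda}_{\epsilon}\subset\widetilde{\mathcal V}^{N,\lambda}$ and $(g_0,k_0)\in\widetilde C^{N,\lambda}$; combining with the first paragraph, $\widetilde C^{N,\lambda}$ is an open neighbourhood of $(m_e,0)$ in the constraint set.

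Finally the fibration tower. The action of $\mathcal G^{N,\lambda}$ on $\widetilde{\mathcal V}^{N,\lambda}_{\epsilon}$ through (\ref{eq.10}) and (\ref{eq.50}) is given by first-order algebraic formulas with analytic dependence on the data, so $\widetilde{\mathcal V}^{N,\lambda}=\mathcal G^{N,\lambda}\cdot\widetilde{\mathcal V}^{N,\lambda}_{\epsilon}$ is a manifold on which $\pi$ is smooth; the fibre of $\pi$ over $(g_0,k_0)$ is exactly the $\mathcal G^{N,\lambda}_{1}$-orbit, since $\mathcal G^{N,\lambda}_{1}$ arises from diffeomorphisms restricting to the identity on $\Sigma_0$ (hence fixes $(g_0,k_0)$) while (\ref{constraint.6}) shows that two harmonic-gauge data over a fixed $(g_0,k_0)$ differ only in $(h^0_{00},h^0_{0k},h^1_{0\mu})$, with the $n+1$ free functions $(h^0_{00},h^0_{0k})$ swept out freely by the $n+1$ parameters of $f_1$. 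Local triviality of $\pi$ comes from $\mathcal G^{N,\lambda}=\mathcal G^{N,\lambda}_{1}\star\mathcal G^{N,\lambda}_{0}$, which furnishes smooth local sections by translating $\widetilde{\mathcal V}^{N,\lambda}_{\epsilon}$ with $\mathcal G^{N,\lambda}_{0}$; the same argument, using $\pi\circ f_0=f_0\circ\pi$ and the fact that for $\lambda>0$ the weighted space $\rho_0^{\lambda-1}H_b^{N+2}(\overline{\mathbb{R}^n})$ contains no nonzero constant (so $\mathcal G^{N,\lambda}_{0}$ contains no nontrivial Euclidean isometry and acts freely near $(m_e,0)$), makes $\pi'\colon\widetilde C^{N,\lambda}\to\widetilde{\mathcal D}^{N,\lambda}=\widetilde C^{N,\lambda}/\mathcal G^{N,\lambda}_{0}$ a locally trivial fibration with fibre $\mathcal G^{N,\lambda}_{0}$, and since the quotient map is open, $\widetilde{\mathcal D}^{N,\lambda}$ is an open neighbourhood of the trivial class. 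I expect the main obstacle to be twofold: first, making rigorous that (\ref{constraint.2}) is exactly the $\pi$-pullback of (\ref{constraint.1}) irrespective of the lapse and shift, so that the normalised datum $(\bar h^0,\bar h^1)$ genuinely solves it; and second, checking that the diffeomorphism action of $\mathcal G^{N,\lambda}_{0}$ on the weighted b-Sobolev spaces is proper with transverse orbits, so that $\widetilde C^{N,\lambda}$ and $\widetilde{\mathcal D}^{N,\lambda}$ are genuine manifolds rather than merely set-theoretic quotients.
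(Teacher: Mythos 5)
Your proof follows essentially the same approach the paper makes implicit — the paper states the proposition as a summary of the discussion in Section 3.4 without giving a separate argument, so the substantive content you add is the explicit lift in your middle paragraph: set the lapse and shift of $\bar h^0,\bar h^1$ to zero, read $\bar h^1_{ij}$ off the $k_0$-relation, then correct $h^1_{0\mu}$ using (\ref{constraint.6})/(\ref{eq.50}) and invoke Lemma \ref{lem.1} for (\ref{constraint.7}). That construction is sound, modulo the Gauss--Codazzi independence of the lapse and shift, which you correctly flag as a step to make rigorous and which the paper also takes for granted.

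One slip in your final paragraph: the assertion that for $\lambda>0$ the space $\rho_0^{\lambda-1}H^{N+2}_b(\overline{\mathbb{R}^n})$ contains no nonzero constant is false for $\lambda\in(0,1)$. Near $\rho_0=0$ the $b$-density on $\overline{\mathbb{R}^n}$ is $\rho_0^{-1}\,d\rho_0\,d\theta$, so for a constant $c$ one has $\|\rho_0^{1-\lambda}c\|^2_{L^2_b}\sim\int_0^1\rho_0^{1-2\lambda}\,d\rho_0$, which converges exactly when $\lambda<1$. Thus for $\lambda\in(0,1)$ the group $\mathcal{G}_0^{N,\lambda}$ does contain the full translation group, which fixes $(m_e,0)$ but moves generic nearby data, so the action is not free at $(m_e,0)$ and the local triviality of $\pi'$ is not automatic. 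This does not undermine the set-theoretic quotient and fibration diagram that the proposition asserts, and the paper's application uses $\lambda=\tfrac{n-1}{2}+\delta>1$ where your reasoning is fine, but it is a gap for the range $\lambda\in(0,1)$ that the proposition's hypotheses permit.
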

%


\vspace{0.2in}
\section{Conformal Transformation}\label{sec.conf}

To study the asymptotic behavior of Einstein vacuum solutions which are close to Minkowski space-time, we will work on their conformal transformation on $X$, the compactification of $\mathbb{R}^{1+n}_{t,x}$ defined in Section \ref{sec.geosetting}, and perform energy estimates for a conformal transformation of the reduced Einstein vacuum equation (\ref{eq.2}) with Cauchy data satisfying the harmonic gauge conditions (\ref{constraint.6})+(\ref{constraint.7}). We clarify the conformal transformation of metrics and equations in this section by describing them in the local coordinates  specified in Section \ref{sec.geosetting}.

\subsection{Perturbed Lorentzian Metric}
Let $m=-dt^2+\sum_{x=1}^n(dx^i)^2$ be the Minkowski metric and $g=m+h$ a small perturbation of it:  $h=h_{00}d^2t+2h_{0i}dtdx^i+h_{ij}dx^{i}dx^{j}$. In the fixed coordinates $(t,x)=(x^0,x^1,...,x^n)$, it is equivalent to say that $h$ is a function on $\mathbb{R}^{1+n}_{t,x}$ valued in $(n+1)\times(n+1)$ symmetric matrix space $SM(n+1,\mathbb{R})\simeq \mathbb{R}^{\frac{(n+2)(n+1)}{2}}$. In this sense, denote the inverse of $g$ by
\begin{equation*}
g^{\mu\nu}=m^{\mu\nu}+H^{\mu\nu}.
\end{equation*}
\begin{definition}
For any $k\in \mathbb{N}_0$, denote by $\Theta_k(h)$ a real analytic function of $h$ with coefficients in $\mathscr{A}^{0,0,0}(X)$ such that 
\begin{equation*}
\Theta_k(h)=O(|h|^k) \quad\mathrm{as}\quad |h|\rightarrow 0.
\end{equation*}
For any $k\in \mathbb{N}_0,j,l\in \mathbb{N}$, denote by $\Theta_k(h)(v^1,...,v^l)$ an $l$-form in $(v^1,...,v^l)$ with coefficients $\Theta_k(h)$, and  $\Theta_k(h)(v^1,...,v^l)(u^1,...,u^j)$ an $j$-form in $(u^1,...,u^j)$ with coefficients $\Theta_k(h)(v^1,...,v^l)$. 
\end{definition}

\begin{lemma} 
For any $k,l\in \mathbb{N}_0$
\begin{equation*}
\begin{gathered}
\Theta_k(h) +\Theta_l(h) =\Theta_{\min\{k,l\}}(h), \quad \Theta_k(h)\Theta_l(h)= \Theta_{k+l}(h).
\end{gathered}
\end{equation*}
\end{lemma}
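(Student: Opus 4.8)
The plan is to reduce both identities to two facts: that $\mathscr{A}^{0,0,0}(X)$ is a commutative ring under pointwise addition and multiplication, and that a real-analytic function of $h$ with $\mathscr{A}^{0,0,0}(X)$ coefficients admits a power-series expansion $\sum_\alpha a_\alpha h^\alpha$ in the $\tfrac{(n+1)(n+2)}{2}$ components of $h$ (with $\alpha$ a multi-index), convergent with $\mathscr{A}^{0,0,0}(X)$-coefficient bounds on a neighborhood $|h|<\delta$, in which the condition $\Theta_k(h)=O(|h|^k)$ forces $a_\alpha=0$ for $|\alpha|<k$ (seen pointwise on $X$ by analyticity in $h$ and Taylor's theorem).

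First I would verify the ring structure. Closure under addition is immediate because the defining condition $\mathscr{V}_b(X)^l v\in L^\infty(X)$ for all $l\in\mathbb{N}_0$ is linear. Closure under multiplication follows from the Leibniz rule: elements of $\mathscr{V}_b(X)$ are vector fields and hence act as derivations, so $\mathscr{V}_b(X)^l(uv)$ is a finite sum of terms $(\mathscr{V}_b(X)^{l_1}u)(\mathscr{V}_b(X)^{l_2}v)$ with $l_1+l_2=l$, each a product of two $L^\infty$ functions and thus in $L^\infty(X)$. The same argument works on the boundary $p$-submanifolds, so in particular each weighted space $\mathscr{A}^{c_1,c_2,c_3}(X)$ is a module over $\mathscr{A}^{0,0,0}(X)$; only the unweighted ring structure is needed here.

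With this in hand the two identities become formal power-series manipulations. Writing $\Theta_k(h)=\sum_{|\alpha|\ge k}a_\alpha h^\alpha$ and $\Theta_l(h)=\sum_{|\beta|\ge l}b_\beta h^\beta$, convergent on a common $|h|<\delta$, the sum is $\sum_{|\gamma|\ge\min\{k,l\}}(a_\gamma+b_\gamma)h^\gamma$ (with missing coefficients set to $0$): its coefficients lie in $\mathscr{A}^{0,0,0}(X)$ by the ring property, the series still converges on $|h|<\delta$, and its lowest nonzero degree is $\ge\min\{k,l\}$, so it is of class $\Theta_{\min\{k,l\}}$. For the product the Cauchy product gives $\Theta_k(h)\Theta_l(h)=\sum_{|\gamma|\ge k+l}c_\gamma h^\gamma$ with $c_\gamma=\sum_{\alpha+\beta=\gamma}a_\alpha b_\beta$, a finite sum of products of elements of $\mathscr{A}^{0,0,0}(X)$ and hence in $\mathscr{A}^{0,0,0}(X)$; absolute convergence of the two factors on $|h|<\delta$ yields convergence of the product series there, and $|\gamma|=|\alpha|+|\beta|\ge k+l$, so it is of class $\Theta_{k+l}$. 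No step is a serious obstacle; the only point requiring care is the legitimacy of regrouping the (double) series by total degree, which is guaranteed by absolute convergence on $|h|<\delta$ together with the ring structure of the coefficient space.
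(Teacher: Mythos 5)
Your argument is correct, and the paper in fact states this lemma without proof, treating it as immediate from the definition of $\Theta_k$; your write-up is the natural elaboration of exactly that reasoning (ring structure of $\mathscr{A}^{0,0,0}(X)$ via linearity and the Leibniz rule, plus the vanishing of low-order Taylor coefficients forced by the $O(|h|^k)$ condition), so there is no divergence from the paper's approach.
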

\begin{lemma}
With above notation 
\begin{equation*}
H^{00}=-h_{00}+\Theta_2(h), \quad H^{0i}=h_{0i}+\Theta_2(h), \quad H^{ij}=-h_{ij}+\Theta_2(h).
\end{equation*}
\end{lemma}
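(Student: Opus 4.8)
The statement is the formula for the inverse metric $g^{\mu\nu}=m^{\mu\nu}+H^{\mu\nu}$ in terms of $h_{\mu\nu}$, to leading order. The plan is to treat $g^{\mu\nu}$ as a real-analytic function of the matrix entries $h_{\mu\nu}$ on a neighborhood of $0$ in $SM(n+1,\mathbb{R})$, and Taylor-expand around $h=0$ where $g=m$. Concretely, write $g = m + h$ and use the Neumann-type series
\begin{equation*}
g^{-1} = (m+h)^{-1} = m^{-1}\sum_{k\geq 0}(-1)^k (h\,m^{-1})^k = m^{-1} - m^{-1}h\,m^{-1} + O(|h|^2),
\end{equation*}
which converges for $|h|$ small since $m$ is invertible with $m^{-1}=\mathrm{diag}(-1,1,\dots,1)$. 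The entries of the remainder $O(|h|^2)$ are real-analytic in $h$, vanish to second order, and — since the coefficients in the expansion involve only the constant matrix $m^{-1}$ — have coefficients in $\mathscr{A}^{0,0,0}(X)$; hence the remainder is of the form $\Theta_2(h)$ in the sense of the definition just above. This identifies $H^{\mu\nu} = -m^{\mu\alpha}m^{\nu\beta}h_{\alpha\beta} + \Theta_2(h)$.

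Next I would compute the linear term $-m^{\mu\alpha}m^{\nu\beta}h_{\alpha\beta}$ componentwise using $m^{00}=-1$, $m^{ii}=1$, and $m^{\mu\nu}=0$ for $\mu\neq\nu$. For $(\mu,\nu)=(0,0)$ this gives $-m^{00}m^{00}h_{00} = -h_{00}$; for $(\mu,\nu)=(0,i)$ it gives $-m^{00}m^{ii}h_{0i} = h_{0i}$; for $(\mu,\nu)=(i,j)$ it gives $-m^{ii}m^{jj}h_{ij} = -h_{ij}$. Combining with the remainder term yields exactly
\begin{equation*}
H^{00}=-h_{00}+\Theta_2(h),\quad H^{0i}=h_{0i}+\Theta_2(h),\quad H^{ij}=-h_{ij}+\Theta_2(h),
\end{equation*}
as claimed.

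There is essentially no obstacle here: the only point requiring a word of care is the bookkeeping that the higher-order coefficients really lie in $\mathscr{A}^{0,0,0}(X)$. This is immediate because each coefficient in the Neumann series is a universal polynomial in the constant entries of $m^{-1}$ (hence constant, hence conormal of the trivial order), and the convergence of the series for $|h|$ small is what makes $g^{\mu\nu}$ a genuine real-analytic function of $h$ in the required sense. One should also note that $h$ being a small perturbation guarantees $g$ is nondegenerate so that $g^{\mu\nu}$ is well-defined in the first place. The remaining arithmetic — matching the signs coming from $m^{-1}$ in each of the three component types — is routine.
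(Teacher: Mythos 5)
Your proof is correct and is exactly the standard Neumann-series argument the paper has in mind; the lemma is stated without proof in the paper because it is regarded as routine. The componentwise sign bookkeeping with $m^{-1}=\mathrm{diag}(-1,1,\dots,1)$, and the observation that all higher-order coefficients in the series are universal constants (hence lie in $\mathscr{A}^{0,0,0}(X)$ so the remainder qualifies as $\Theta_2(h)$), are precisely the two points that need to be checked, and you check both.
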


For the purpose of studying the geometry of $g$ near null infinity of $(\mathbb{R}^{1+n}_{t,x},g)$, we also denote by 
\begin{equation*}
\begin{gathered}
h_{0\rho}=h_{\rho 0}=h_{0\mu}\theta^{\mu},\quad
h_{\rho\rho}=h_{\mu\nu}\theta^{\mu}\theta^{\nu}, \quad h_{i\rho}=h_{\rho i}=h_{i\mu }\theta^{\mu}.
\end{gathered}
\end{equation*}
With this notations, the harmonic gauge condition says that some components decay faster than the others near null infinity. 
\begin{lemma}\label{lem.4}
Suppose $(t,x)$ are harmonic coordinates w.r.t. $g$. We define a vector field $D\in\mathscr{V}_b(\Omega_i)$ for each $1\leq i\leq 5$ as follows:
$$
D=\begin{cases}
0 &\text{in $\Omega_1$ }\\
b\partial b-a\partial a &\text{in $\Omega_2$}\\
\partial_{\tau} &\text{in $\Omega_3$}\\
\bar{a}\partial_{\bar{a}}-\bar{b}\partial_{\bar{b}} &\text{in $\Omega_4$}\\
0 &\text{in $\Omega_5$}
\end{cases}
$$
Then in each domain $\Omega_i$ for $1\leq i\leq 5$, with the choice of $\tilde{\rho}$ and $\rho_1$ specified in Section \ref{sec.geosetting}, $D$ satisfies
$$
[D,\tilde{\rho}]=[D,\theta]=0,
$$
and the harmonic gauge condition implies
$$
D(2h_{\mu\rho}-\theta_{\mu}\mathrm{tr}_mh)=\rho_1 \tilde{\partial}h+\Theta_1(h)(\tilde{\partial}h) 
$$
for $\mu=0,1,...,n$. Here $\tilde{\partial} \in \mathscr{V}_b(X)$.
\end{lemma}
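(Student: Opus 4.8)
The plan is to check the commutator identities by direct computation in each chart and to obtain the gauge identity by decomposing $\Gamma_\nu=0$ along the null pair $L:=\theta^\mu\partial_\mu$ (outgoing, as $\theta^0=1$) and $\underline L:=\underline\theta^\mu\partial_\mu$ with $\underline\theta^\mu=(1,-\theta^i)$, so that $\underline\theta_\mu=(-1,-\theta_i)$ and $\theta^\mu\theta_\mu=\underline\theta^\mu\underline\theta_\mu=0$. For the commutators: on $\Omega_1,\Omega_5$ nothing is needed since $D=0$; on $\Omega_2$, $D(ab)=b\,\partial_b(ab)-a\,\partial_a(ab)=0$ and $D\theta=0$ because $\theta$ is independent of $(a,b)$; on $\Omega_3$, $\partial_\tau\rho=\partial_\tau\theta=0$; on $\Omega_4$, $D(\bar a\bar b)=\bar a\bar b-\bar b\bar a=0$ and $D\theta=0$.

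For the gauge identity, write $g^{\alpha\beta}=m^{\alpha\beta}+\Theta_1(h)$; as $m$ is constant, $\Gamma_\nu=m^{\alpha\beta}\partial_\beta h_{\alpha\nu}-\tfrac12 m^{\alpha\beta}\partial_\nu h_{\alpha\beta}+\Theta_1(h)(\partial h)$. On $\Omega_2\cup\Omega_3\cup\Omega_4$, where $r\neq0$, I would insert the pointwise identity $\partial_\mu=-\tfrac12\underline\theta_\mu L-\tfrac12\theta_\mu\underline L+\tfrac1r\slashpar_\mu$ (with $\slashpar_0=0$) together with $L\theta=\underline L\theta=0$, which lets $\theta,\underline\theta$ be carried past $L,\underline L$; collecting the $L$-, $\underline L$- and angular contributions, and recalling $h_{\mu\rho}=\theta^\alpha h_{\alpha\mu}$, yields
\[
\Gamma_\nu=-\tfrac14 L\big(2h_{\underline\theta\nu}-\underline\theta_\nu\mathrm{tr}_m h\big)-\tfrac14\underline L\big(2h_{\nu\rho}-\theta_\nu\mathrm{tr}_m h\big)+\tfrac1r\big(\slashpar^\alpha h_{\alpha\nu}-\tfrac12\slashpar_\nu\mathrm{tr}_m h\big)+\Theta_1(h)(\partial h),
\]
with $h_{\underline\theta\nu}=\underline\theta^\alpha h_{\alpha\nu}$. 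Thus $\Gamma_\nu=0$ is equivalent to
\[
\underline L\big(2h_{\nu\rho}-\theta_\nu\mathrm{tr}_m h\big)=-L\big(2h_{\underline\theta\nu}-\underline\theta_\nu\mathrm{tr}_m h\big)+\tfrac4r\big(\slashpar^\alpha h_{\alpha\nu}-\tfrac12\slashpar_\nu\mathrm{tr}_m h\big)+\Theta_1(h)(\partial h).
\]

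I would then read off weights, uniformly on $\Omega_2,\Omega_3,\Omega_4$. From the computations in the proof of Lemma \ref{lem.0}, $L=\theta^\mu\partial_\mu$ equals $-a^2b\,\partial_a$, $-\rho^2\partial_\rho$, $-\bar a^2\bar b\,\partial_{\bar a}$ in $\Omega_2,\Omega_3,\Omega_4$, hence $L\in\tilde\rho\,\mathscr{V}_b$ with $\tilde\rho=\rho_0\rho_1\rho_2$; likewise $r^{-1}$ equals $ab$, $\rho$, $\bar a\bar b(1-\bar a)^{-1}$, so $r^{-1}\in\tilde\rho\,C^\infty$; moreover $\slashpar_\mu\in\mathscr{V}_b(X)$ and $\partial_\mu\in\rho_0\rho_2\,\mathscr{V}_b(X)$ by Lemma \ref{lem.0}. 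Since $2h_{\nu\rho}-\theta_\nu\mathrm{tr}_m h$ and $2h_{\underline\theta\nu}-\underline\theta_\nu\mathrm{tr}_m h$ are linear in the components of $h$ with coefficients annihilated by $L$ and $\underline L$, the last identity becomes
\[
\underline L\big(2h_{\nu\rho}-\theta_\nu\mathrm{tr}_m h\big)=\tilde\rho\,\tilde\partial h+\rho_0\rho_2\,\Theta_1(h)(\tilde\partial h).
\]

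Finally I would express $D$ through $L,\underline L$ in each chart, reading off from the proof of Lemma \ref{lem.0}: $\underline L=2\partial_\tau-L$ on $\Omega_3$, so $D=\tfrac12(\underline L+L)$; $\underline L=2bD-L$ on $\Omega_2$, so $D=\tfrac1{2\rho_0}(\underline L+L)$; $\underline L=2\bar bD+L$ on $\Omega_4$, so $D=\tfrac1{2\rho_2}(\underline L-L)$. Applying $D$ to $2h_{\nu\rho}-\theta_\nu\mathrm{tr}_m h$, substituting the previous display for the $\underline L$-part and $L\big(2h_{\nu\rho}-\theta_\nu\mathrm{tr}_m h\big)=\tilde\rho\,\tilde\partial h$ for the $L$-part, and dividing out the factor $\rho_0$ on $\Omega_2$ (resp. $\rho_2$ on $\Omega_4$) — legitimate because each term on the right carries such a factor, and $\tilde\rho/\rho_0=\rho_1\rho_2$ (resp. $\tilde\rho/\rho_2=\rho_0\rho_1$) — gives $D\big(2h_{\nu\rho}-\theta_\nu\mathrm{tr}_m h\big)=\rho_1\,\tilde\partial h+\Theta_1(h)(\tilde\partial h)$ after using $\rho_2\equiv1$ on $\Omega_2$, $\rho_0\equiv\rho_2\equiv1$ on $\Omega_3$, $\rho_0\equiv1$ on $\Omega_4$; on $\Omega_1,\Omega_5$ the identity is immediate since $D=0$. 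The main obstacle is precisely this weight bookkeeping: one must check that every error term produced by the null decomposition of $\Gamma_\nu=0$ carries the extra boundary-defining factor ($\rho_0$ on $\Omega_2$, $\rho_2$ on $\Omega_4$) that absorbs the singular coefficient of $D$ relative to $\underline L$ — equivalently, that the outgoing null field $L$ lies in $\tilde\rho\,\mathscr{V}_b$, not merely in $\rho_0\rho_2\,\mathscr{V}_b$.
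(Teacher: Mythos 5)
Your argument is correct, and it takes a genuinely different route from the paper's proof. The paper simply writes out $\Gamma_\mu=0$ chart by chart in $\Omega_2,\Omega_3,\Omega_4$ and states the resulting identities by inspection; there is no displayed derivation. You instead perform an invariant null-frame decomposition, writing $\partial_\mu=-\tfrac12\underline\theta_\mu L-\tfrac12\theta_\mu\underline L+\tfrac1r\slashpar_\mu$ with $L=\theta^\mu\partial_\mu$, $\underline L=\underline\theta^\mu\partial_\mu$, which recasts the gauge condition once and for all as an expression for $\underline L(2h_{\mu\rho}-\theta_\mu\mathrm{tr}_mh)$ in terms of $L$-, angular-, and quadratic error terms; the chart-dependent input is reduced to the three facts $L\in\tilde\rho\,\mathscr{V}_b(X)$, $r^{-1}\in\tilde\rho\,C^\infty$, and $D=\tfrac1{2\rho_0}(\underline L+L)$ (resp.\ $\tfrac12(\underline L+L)$, $\tfrac1{2\rho_2}(\underline L-L)$), all of which follow from the explicit vector-field formulas already recorded in Lemma \ref{lem.0}. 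I checked the decomposition $\partial_\mu=-\tfrac12\underline\theta_\mu L-\tfrac12\theta_\mu\underline L+\tfrac1r\slashpar_\mu$, the identities $L\theta=\underline L\theta=0$, the formulas $L=-a^2b\,\partial_a$, $-\rho^2\partial_\rho$, $-\bar a^2\bar b\,\partial_{\bar a}$, the expressions for $D$ in terms of $L,\underline L$, and the final weight bookkeeping including the division by $\rho_0$ or $\rho_2$; all are right. What your approach buys is a structural explanation of the lemma --- the gauge condition is precisely the statement that the transverse-to-$S_1$ derivative of the ``bad'' combination $2h_{\mu\rho}-\theta_\mu\mathrm{tr}_mh$ is controlled by tangential derivatives weighted by an extra $\tilde\rho$, and the singular factor $\rho_0^{-1}$ or $\rho_2^{-1}$ in $D$ is exactly absorbed because $L$ gains a full $\tilde\rho$ rather than only $\rho_0\rho_2$ --- whereas the paper's approach is a shorter brute-force verification that conceals why the cancellation is uniform across the three charts.
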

\begin{proof}
We only need to prove the statement in $\Omega_2,\Omega_3,\Omega_4$. By writing out the harmonic gauge condition in local coordinates, we have
\begin{itemize}
\item In $\Omega_2$, $a=1-\frac{t}{r}$, $b=\frac{1}{r-t}$, $\tilde{\rho}=ab$, $\rho_1=a$,
\begin{equation*}
\begin{gathered}
(b\partial_{b}-a\partial_a) (2h_{0\rho}+\mathrm{tr}_{m}h) =a\tilde{\partial}h +\Theta_1(h)(\tilde{\partial}h),
\\
(b\partial_{b}-a\partial_a) (2h_{i\rho}-\theta_i\mathrm{tr}_{m}h) =a\tilde{\partial}h +\Theta_1(h)(\tilde{\partial}h),
\\
\Longrightarrow\quad
Dh_{\rho\rho}=a\tilde{\partial}h +\Theta_1(h)(\tilde{\partial}h);
\end{gathered}
\end{equation*}
\item In $\Omega_3$, $\rho=\frac{1}{r}$, $\tau=t-r$, $\tilde{\rho}=\rho_1=\rho$,
\begin{equation*}
\begin{gathered}
\partial_{\tau}(2h_{0\rho}+\mathrm{tr}_{m}h) =\rho\tilde{\partial}h+\Theta_1(h)(\tilde{\partial}h),
\\
\partial_{\tau}(2h_{i\rho}-\theta_i\mathrm{tr}_{m}h) =\rho\tilde{\partial}h +\Theta_1(h)(\tilde{\partial}h),
\\
\Longrightarrow \quad
Dh_{\rho\rho}=\rho\tilde{\partial}h+\Theta_1(h)(\tilde{\partial}h);
\end{gathered}
\end{equation*}
\item In $\Omega_4$, $\bar{a}=1-\frac{r}{t}$, $\bar{b}=\frac{1}{t-r}$, $\tilde{\rho}=\bar{a}\bar{b}$, $\rho_1=\bar{a}$,
\begin{equation*}
\begin{gathered}
(\bar{a}\partial_{\bar{a}}-\bar{b}\partial_{\bar{b}}) (2h_{0\rho}+\mathrm{tr}_{m}h)=\bar{a}\tilde{\partial}h +\Theta_1(h)(\tilde{\partial}h),
\\
(\bar{a}\partial_{\bar{a}}-\bar{b}\partial_{\bar{b}}) (2h_{i\rho }-\theta_i\mathrm{tr}_{m}h)=\ \bar{a}\tilde{\partial}h +\Theta_1(h)(\tilde{\partial}h),
\\
\Longrightarrow\quad
Dh_{\rho\rho}= \bar{a}\tilde{\partial}h +\Theta_1(h)(\tilde{\partial}h).
\end{gathered}
\end{equation*}
\end{itemize} 
Here $\mathrm{tr}_{m}h=m^{\alpha\beta}h_{\alpha\beta}=-h_{00}+\sum_{i=1}^nh_{ii}$.  We finish the proof.
\end{proof}

\subsection{Conformal Transformation of the Lorentzian Metric}
Recall that $\tilde{\rho}$ denotes a total boundary defining function of $X$ and $\rho_i$ denotes the defining function for boundary hypersurfaces $S_i$ for $i=0,1,2$. Set 
\begin{equation}\label{eq.11}
\begin{gathered}
\tilde{m}=\tilde{\rho}^2m,\quad \tilde{h}=\tilde{\rho}^{\frac{1-n}{2}}h,\quad
\tilde{g}=\tilde{\rho}^2g=\tilde{m}+\tilde{\rho}^2h=\tilde{m}+\tilde{\rho}^{\frac{n+3}{2}}\tilde{h}.
\end{gathered}
\end{equation}
Similarly, denote
\begin{equation*}
\begin{gathered}
\tilde{h}_{0\rho}=\tilde{h}_{\rho 0}=\tilde{h}_{0\mu}\theta^{\mu},\quad 
\tilde{h}_{\rho\rho}=\tilde{h}_{\mu\nu}\theta^{\mu}\theta^{\nu},\quad
\tilde{h}_{i\rho}=\tilde{h}_{\rho i}=\tilde{h}_{\mu i}\theta^{\mu}.
\end{gathered}
\end{equation*}
\begin{lemma}
 Under the assumption and notation in Lemma \ref{lem.4}, for $\mu=0,1,...,n$,
\begin{equation*} 
D(2\tilde{h}_{\mu\rho}-\theta_{\mu}\mathrm{tr}_m\tilde{h})=\rho_1\tilde{\partial}\tilde{h}+
\rho_1 \Theta_1(\tilde{h}) +\Theta_1(\tilde{\rho}^{\frac{n-1}{2}}\tilde{h})(\tilde{\partial}\tilde{h})+\Theta_1(\tilde{\rho}^{\frac{n-1}{2}}\tilde{h})(\tilde{h}).
\end{equation*}
\end{lemma}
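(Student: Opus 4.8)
The plan is to deduce this conformal version of the harmonic gauge identity directly from Lemma \ref{lem.4} by substituting the relation $h=\tilde{\rho}^{\frac{n-1}{2}}\tilde{h}$ and carefully tracking how the operator $D$ interacts with the conformal weight. The key structural fact, already recorded in Lemma \ref{lem.4}, is that $[D,\tilde{\rho}]=0$ in each domain $\Omega_i$ with the choice of $\tilde{\rho}$ and $\rho_1$ specified there; likewise $[D,\theta]=0$, so $D$ commutes with all the contraction coefficients $\theta^\mu$. Therefore for any component, $D(\tilde{\rho}^{\frac{n-1}{2}}\tilde{h}_{\mu\nu}) = \tilde{\rho}^{\frac{n-1}{2}}D\tilde{h}_{\mu\nu} + \tfrac{n-1}{2}\tilde{\rho}^{\frac{n-1}{2}}(\tilde{\rho}^{-1}D\tilde{\rho})\tilde{h}_{\mu\nu}$, and one checks in each of $\Omega_2,\Omega_3,\Omega_4$ that $\tilde{\rho}^{-1}D\tilde{\rho}\in C^\infty$ is in fact a multiple of $\rho_1$ up to bounded factors — e.g. in $\Omega_3$, $D=\partial_\tau$ and $\tilde{\rho}=\rho$ with $[\partial_\tau,\rho]=0$ gives $D\tilde{\rho}=0$; in $\Omega_2$, $D=b\partial_b-a\partial_a$ and $\tilde{\rho}=ab$ gives $D\tilde{\rho}=0$ as well; similarly in $\Omega_4$. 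So in fact $D\tilde{\rho}=0$ identically in all three domains, which simplifies matters considerably.

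Given $D\tilde{\rho}=0$ and $[D,\theta]=0$, the first step is the clean computation
\begin{equation*}
D(2h_{\mu\rho}-\theta_\mu\mathrm{tr}_m h) = \tilde{\rho}^{\frac{n-1}{2}}\, D(2\tilde{h}_{\mu\rho}-\theta_\mu\mathrm{tr}_m\tilde{h}).
\end{equation*}
The second step is to rewrite the right-hand side of the Lemma \ref{lem.4} identity in terms of $\tilde{h}$. We have $\rho_1\tilde{\partial}h = \rho_1\tilde{\partial}(\tilde{\rho}^{\frac{n-1}{2}}\tilde{h}) = \tilde{\rho}^{\frac{n-1}{2}}\rho_1\tilde{\partial}\tilde{h} + \tilde{\rho}^{\frac{n-1}{2}}\rho_1(\tilde{\rho}^{-1}\tilde{\partial}\tilde{\rho})\cdot\tfrac{n-1}{2}\tilde{h}$; since $\tilde{\partial}\in\mathscr{V}_b(X)$ is tangent to the boundary, $\tilde{\rho}^{-1}\tilde{\partial}\tilde{\rho}\in C^\infty(X)$, so this term is $\tilde{\rho}^{\frac{n-1}{2}}(\rho_1\tilde{\partial}\tilde{h}+\rho_1\Theta_1(\tilde{h}))$ after absorbing the smooth coefficient into $\Theta_1$. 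For the nonlinear term, $\Theta_1(h)(\tilde{\partial}h) = \Theta_1(\tilde{\rho}^{\frac{n-1}{2}}\tilde{h})(\tilde{\partial}(\tilde{\rho}^{\frac{n-1}{2}}\tilde{h}))$, and expanding $\tilde{\partial}(\tilde{\rho}^{\frac{n-1}{2}}\tilde{h}) = \tilde{\rho}^{\frac{n-1}{2}}(\tilde{\partial}\tilde{h} + c\,\tilde{h})$ with $c\in C^\infty$ yields $\tilde{\rho}^{\frac{n-1}{2}}[\Theta_1(\tilde{\rho}^{\frac{n-1}{2}}\tilde{h})(\tilde{\partial}\tilde{h}) + \Theta_1(\tilde{\rho}^{\frac{n-1}{2}}\tilde{h})(\tilde{h})]$, using that $\Theta_1(\tilde{\rho}^{\frac{n-1}{2}}\tilde{h})$ has coefficients in $\mathscr{A}^{0,0,0}(X)$ and is closed under multiplication by such coefficients. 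The third step is to divide the resulting identity by the common factor $\tilde{\rho}^{\frac{n-1}{2}}$, which gives precisely the claimed formula.

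The main obstacle — really the only non-routine point — is verifying that $D\tilde{\rho}=0$ (equivalently $\tilde{\rho}^{-1}D\tilde{\rho}$ is a harmless smooth factor) in each of $\Omega_2,\Omega_3,\Omega_4$ with the specific defining functions chosen in Section \ref{sec.geosetting}, and that the definition of $D$ as a genuine element of $\mathscr{V}_b(\Omega_i)$ makes the conformal weight pass through cleanly; this is guaranteed by the commutation relations $[D,\tilde{\rho}]=[D,\theta]=0$ already established in Lemma \ref{lem.4}, so the work is bookkeeping rather than genuinely new estimates. One should also note that in $\Omega_1$ and $\Omega_5$ the operator $D=0$ and the statement is vacuous (both sides zero), so it suffices to treat the three remaining charts, exactly as in the proof of Lemma \ref{lem.4}.

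\begin{proof}
As in Lemma \ref{lem.4}, it suffices to work in $\Omega_2,\Omega_3,\Omega_4$, since $D=0$ in $\Omega_1$ and $\Omega_5$. In each of these domains the choices of $\tilde\rho$ and $\rho_1$ from Section \ref{sec.geosetting} satisfy $D\tilde\rho=0$ and $[D,\theta]=0$, hence, using $h=\tilde\rho^{\frac{n-1}{2}}\tilde h$,
\begin{equation*}
D(2h_{\mu\rho}-\theta_\mu\mathrm{tr}_m h)=\tilde\rho^{\frac{n-1}{2}}\,D(2\tilde h_{\mu\rho}-\theta_\mu\mathrm{tr}_m\tilde h).
\end{equation*}
On the other hand, since $\tilde\partial\in\mathscr V_b(X)$ we have $\tilde\partial\tilde\rho=c_0\tilde\rho$ with $c_0\in C^\infty(X)$, so
\begin{equation*}
\tilde\partial h=\tilde\rho^{\frac{n-1}{2}}\big(\tilde\partial\tilde h+\tfrac{n-1}{2}c_0\tilde h\big),
\end{equation*}
and therefore
\begin{equation*}
\rho_1\tilde\partial h=\tilde\rho^{\frac{n-1}{2}}\big(\rho_1\tilde\partial\tilde h+\rho_1\Theta_1(\tilde h)\big).
\end{equation*}
Likewise,
\begin{equation*}
\Theta_1(h)(\tilde\partial h)=\Theta_1(\tilde\rho^{\frac{n-1}{2}}\tilde h)\big(\tilde\rho^{\frac{n-1}{2}}(\tilde\partial\tilde h+\tfrac{n-1}{2}c_0\tilde h)\big)=\tilde\rho^{\frac{n-1}{2}}\Big(\Theta_1(\tilde\rho^{\frac{n-1}{2}}\tilde h)(\tilde\partial\tilde h)+\Theta_1(\tilde\rho^{\frac{n-1}{2}}\tilde h)(\tilde h)\Big),
\end{equation*}
where we used that $\Theta_1(\tilde\rho^{\frac{n-1}{2}}\tilde h)$ has coefficients in $\mathscr A^{0,0,0}(X)$, which is preserved under multiplication by $c_0\in C^\infty(X)$. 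Substituting these into the identity of Lemma \ref{lem.4} and dividing by $\tilde\rho^{\frac{n-1}{2}}$ yields
\begin{equation*}
D(2\tilde h_{\mu\rho}-\theta_\mu\mathrm{tr}_m\tilde h)=\rho_1\tilde\partial\tilde h+\rho_1\Theta_1(\tilde h)+\Theta_1(\tilde\rho^{\frac{n-1}{2}}\tilde h)(\tilde\partial\tilde h)+\Theta_1(\tilde\rho^{\frac{n-1}{2}}\tilde h)(\tilde h)
\end{equation*}
for $\mu=0,1,\dots,n$, as claimed.
\end{proof}
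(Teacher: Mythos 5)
Your proof is correct, and it supplies the deduction that the paper leaves implicit (no proof is given in the source; the lemma is stated as an immediate consequence of Lemma~\ref{lem.4}). Your approach — commute $D$ past $\tilde\rho^{\frac{n-1}{2}}$ using $D\tilde\rho=0$, expand $\tilde\partial h=\tilde\rho^{\frac{n-1}{2}}(\tilde\partial\tilde h+c_0\tfrac{n-1}{2}\tilde h)$ using tangency of $\tilde\partial$, absorb the smooth coefficients into the $\Theta_1$ symbols, and divide by $\tilde\rho^{\frac{n-1}{2}}$ — is exactly the intended route and the bookkeeping checks out.
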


\begin{proposition}\label{prop.4}
If $h\in C^1(X: SM(n+1,\mathbb{R}))$ such that for some $\delta>0$ and all $1\leq i\leq n$
\begin{equation*}
\begin{gathered}
\rho_1^{-1}h_{\rho i}\in C^1(X),\quad\rho_1^{-1-\delta}h_{\rho\rho}\in C^1(X),
\\
\sum_{\mu,\nu}\|h_{\mu\nu}\|_{\infty}+ \sum_{1\leq i\leq n}\|\rho_1^{-1}h_{\rho i}\|_{\infty}+\|\rho_1^{-1-\delta}h_{\rho\rho}\|_{\infty}<\epsilon,
\end{gathered}
\end{equation*}
for some $\epsilon>0$ small, then $\tilde{g}$ extends to a $C^0(X^2)$ Lorentzian b-metric of signature $(n,1)$ on $X^2$ with $S^{\pm}$ being its characteristic surfaces. 
\end{proposition}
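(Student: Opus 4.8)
## Proof Plan for Proposition \ref{prop.4}

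\textbf{Proof approach.}
The assertion is local, so the plan is to verify it chart by chart over the covering $\{\Omega_i\}_{i=0}^{5}$ of Section \ref{sec.geosetting}, using in each chart the local coordinates and the explicit choices of $\tilde\rho$ and $\rho_1$ recorded there. In $\Omega_0$ there is nothing to prove, since on this compact subset of the interior $\tilde g = g = m+h$ is manifestly a $C^0$ Lorentzian metric for $\epsilon$ small. In the remaining charts the strategy is uniform: first compute the conformal background $\tilde m = \tilde\rho^2 m$ explicitly in the b-coframe adapted to $\partial[X]^2$, check that it is a nondegenerate Lorentzian b-metric of signature $(n,1)$ which extends smoothly across $S_1^{\pm}$ to $[X]^2$ and has $S_1^{\pm}$ as characteristic hypersurfaces (i.e. $\tilde m^{-1}(d\rho_1,d\rho_1)$ vanishes on $S_1^{\pm}$); then expand the perturbation $\tilde g-\tilde m = \tilde\rho^2 h = \tilde\rho^{\frac{n+3}{2}}\tilde h$ in the same coframe and show its coefficients are $C^0$ up to the boundary and of size $O(\epsilon)$; finally invoke openness of the signature-$(n,1)$ condition to conclude.

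\textbf{The model b-metric.}
In $\Omega_1$ (coordinates $(s,\rho,\theta)$, $\tilde\rho=\rho_0=\rho$), writing $m$ in terms of $s=t/r$, $r=1/\rho$ and the round metric $g_{\mathbb{S}^{n-1}}$, one gets $\tilde m = -ds^2 + 2s\,ds\,(d\rho/\rho) + (1-s^2)(d\rho/\rho)^2 + g_{\mathbb{S}^{n-1}}$; since $|s|<7/8$ the quadratic form $-ds^2 + 2s\,ds\,\sigma + (1-s^2)\sigma^2$ in $(ds,\sigma)$, with $\sigma=d\rho/\rho$ the b-covector of $S_0$, is Lorentzian, so $\tilde m$ is a nondegenerate Lorentzian b-metric of signature $(n,1)$; the computation in $\Omega_5$ near $S_2^{\pm}$ is analogous. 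In $\Omega_3$ (coordinates $(\tau,\rho,\theta)$, $\tilde\rho=\rho_1=\rho$), using $t=\tau+r$, $r=1/\rho$ one finds the Friedlander normal form $\tilde m = -\rho^2 d\tau^2 + 2\,d\tau\,d\rho + g_{\mathbb{S}^{n-1}}$, which extends smoothly across $\{\rho=0\}$ to $[X]^2$, is nondegenerate of signature $(n,1)$, and satisfies $\tilde m^{-1}(d\rho,d\rho)=\rho^2$, so $S_1^{\pm}=\{\rho=0\}$ is characteristic. In $\Omega_2$ and $\Omega_4$ the analogous computations exhibit $\tilde m$ as a nondegenerate Lorentzian b-metric for $[S_0]^2$ (resp. $[S_2^{\pm}]^2$) extending across $S_1^{\pm}=\{\rho_1=0\}$ with $S_1^{\pm}$ characteristic; the vector fields $D$ of Lemma \ref{lem.4} are precisely the characteristic generators along which $d\rho_1$ is null on $S_1^{\pm}$.

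\textbf{The perturbation.}
It remains to show that $\tilde\rho^2 h$, expanded in the same b-coframe, has $C^0$ coefficients of size $O(\epsilon)$. Substituting $dx^{\mu}=dx^{\mu}(d\tau,d\rho,d\theta)$ in $\Omega_3$ (and the analogous substitutions in $\Omega_1,\Omega_2,\Omega_4,\Omega_5$), the only components of $h$ that acquire negative powers of the local defining functions are those obtained by contracting $h$ with the radial direction $\theta^{\mu}$: the coefficient of $d\rho\,d\theta^i$ picks up $\rho_1^{-1}h_{\rho i}$ and the coefficient of $d\rho^2$ picks up a power of $\rho_1$ times $h_{\rho\rho}$, while every other coefficient is $\rho_1$ (or $\tilde\rho$) times a bounded multiple of some $h_{\mu\nu}$, hence $O(\epsilon)$ and vanishing on $S_1^{\pm}$. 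This is exactly why the hypotheses are stated in terms of $\rho_1^{-1}h_{\rho i}$ and $\rho_1^{-1-\delta}h_{\rho\rho}$ rather than $h$ alone: they ensure these distinguished coefficients are continuous up to $S_1^{\pm}$ and, after shrinking $\epsilon$, bounded by $C\epsilon$; in $\Omega_2,\Omega_4$ the same bookkeeping uses $\tilde\rho=\rho_0\rho_1$ (resp. $\bar a\bar b$) together with the fact that $h_{\rho\rho},h_{\rho i}$ carry the extra powers of $\rho_1=a$ (resp. $\bar a$). Granting this, $\tilde g=\tilde m+\tilde\rho^2 h$ is a $C^0$ symmetric b-$2$-tensor on $[X]^2$ differing from the nondegenerate Lorentzian b-metric $\tilde m$ of signature $(n,1)$ by a $C^0$ tensor of norm $O(\epsilon)$; since being a nondegenerate Lorentzian metric of fixed signature is an open condition on $C^0$ symmetric $2$-tensors, for $\epsilon$ small $\tilde g$ is itself a $C^0$ Lorentzian b-metric of signature $(n,1)$. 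Finally $\tilde g^{-1}(d\rho_1,d\rho_1)=\tilde m^{-1}(d\rho_1,d\rho_1)+O(\epsilon)$, and a direct check shows the correction equals on $S_1^{\pm}$ a bounded multiple of the $d\tau^2$-type coefficient of $\tilde\rho^2 h$, which vanishes there; hence $\tilde g^{-1}(d\rho_1,d\rho_1)|_{S_1^{\pm}}=0$ and $S_1^{\pm}$ is characteristic for $\tilde g$.

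\textbf{Main obstacle.}
The delicate step is the bookkeeping in the charts $\Omega_2,\Omega_3,\Omega_4$ adjacent to $S_1^{\pm}$: one must identify precisely which components of $h$ are amplified by the singular conformal factor under the coframe change, and verify that the refined decay assumptions on $h_{\rho i}$ and $h_{\rho\rho}$ — and on no other components — are exactly sufficient to keep all coefficients of $\tilde g$ continuous up to the boundary while preserving the characteristic nature of $S_1^{\pm}$. By contrast, the behavior near $S_0$ and $S_2^{\pm}$ is soft, being governed by $\|h\|_{\infty}<\epsilon$ and the openness of the signature condition.
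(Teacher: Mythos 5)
Your plan is on the right track in $\Omega_1$ and $\Omega_5$, and your instinct that the distinguished components $h_{\rho i}$ and $h_{\rho\rho}$ are the danger near $S_1^\pm$ is correct, but the bookkeeping in $\Omega_2,\Omega_3,\Omega_4$ actually \emph{fails} for $\delta\in(0,1)$, and the gap is precisely where you wave your hands.

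Take $\Omega_3$ with coordinates $(\tau,\rho,\theta)$, $\tilde\rho=\rho_1=\rho$. The explicit expansion (recorded in the paper) is
\begin{equation*}
\tilde\rho^2 h = h_{\rho\rho}\Bigl(\frac{d\rho}{\rho}\Bigr)^2 + \rho^2 h_{00}\,d\tau^2 - 2h_{0\rho}\,d\tau\,d\rho + h_{ij}\,d\theta^i d\theta^j + 2\rho h_{0i}\,d\theta^i d\tau - 2h_{\rho i}\,d\theta^i\,\frac{d\rho}{\rho}.
\end{equation*}
On $[X]^2$ the relevant coframe near $S_1^+$ uses $d\rho$ (not $d\rho/\rho$, since $S_1^\pm$ is \emph{not} a boundary of $[X]^2$). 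The $d\rho^2$-coefficient of $\tilde\rho^2 h$ is therefore $\rho^{-2}h_{\rho\rho}$. The hypothesis gives only $\rho_1^{-1-\delta}h_{\rho\rho}\in C^1$, so $\rho^{-2}h_{\rho\rho}=\rho^{\delta-1}\bigl(\rho^{-1-\delta}h_{\rho\rho}\bigr)$, which blows up like $\rho^{\delta-1}$ as $\rho\to 0$ whenever $\delta<1$. The same unboundedness occurs for the $(da/a)^2$-coefficient in $\Omega_2$ and the $(d\bar a/\bar a)^2$-coefficient in $\Omega_4$. So your claim that ``$\tilde g-\tilde m$ is a $C^0$ tensor of norm $O(\epsilon)$'' in these coordinates is false for $\delta<1$; the openness-of-signature argument, applied in the original coordinates, does not close the proof.

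The paper handles this with an idea your proposal is missing: for $\delta\in(0,1)$ one changes coordinates near $S_1^\pm$ by $\tau=\tau'-f(\rho)$ (and analogously $b=b'e^{-f(a)}$ in $\Omega_2$, $\bar b=\bar b'e^{f(\bar a)}$ in $\Omega_4$), where $f(\rho_1)=\tfrac12\int_0^{\rho_1}\rho_1'^{-2}h_{\rho\rho}\,d\rho_1'\in\rho_1^\delta C^0(X)$. Then $d\tau = d\tau'-f'(\rho)\,d\rho$ with $f'(\rho)=\tfrac12\rho^{-2}h_{\rho\rho}$, and the cross term $2\,d\tau\,d\rho$ of $\tilde m$ produces $-\rho^{-2}h_{\rho\rho}\,d\rho^2$, which exactly cancels the singular coefficient coming from $\tilde\rho^2 h$. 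In the primed coordinates all coefficients are uniformly bounded, $S_1^\pm$ is still $\{\rho_1=0\}$ and is characteristic, and the coordinate change extends to a $C^{0,\delta}$ homeomorphism of $X$ onto itself; that is how the paper gets a $C^0$ Lorentzian b-metric on $[X]^2$. Without this (or an equivalent device such as reparametrizing the null generators), the statement you are asserting about the perturbation being $C^0$ in the fixed coordinates is simply incorrect for $\delta<1$, which is the generic case of interest.
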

\begin{proof}
The statement is independent of choice of boundary defining functions. Hence we only need to write the metric $\tilde{g}=\tilde{m}+\tilde{\rho}^2h$ in local coordinates near $\partial X$ with particular choice of $\tilde{\rho}$ and $\rho_i$ as stated in Section \ref{sec.geosetting}. 
\begin{itemize}
\item In $\Omega_1$, $s=\frac{t}{r}$, $\rho=\frac{1}{r}$, $\tilde{\rho}=\rho$,
\begin{equation*}
\begin{aligned}
\tilde{m}=&\ -d^2s+2sds\frac{d\rho}{\rho}+(1-s^2)(\frac{d\rho}{\rho})^2+d^2\theta,
\\
\tilde{\rho}^2h =&\  
h_{00}d^2s+(h_{00}s^2+2h_{0i}\theta^is +h_{ij}\theta^i\theta^j)(\frac{d\rho}{\rho})^2 -2(h_{00}s+h_{0i})ds\frac{d\rho}{\rho}\\
& +h_{ij}d\theta^id\theta^j +2h_{0i}dsd\theta^i -2(h_{0i}s+h_{ij}\theta^j)d\theta^j\frac{d\rho}{\rho}.
\end{aligned}
\end{equation*}
\item In $\Omega_2$, $a=1-\frac{t}{r}$, $b=\frac{1}{r-t}$, $\tilde{\rho}=ab$, $\rho_1=a$,
\begin{equation*}
\begin{aligned}
\tilde{m}=&\ 2da\frac{db}{b}+a(2-a)(\frac{db}{b})^2+d^2\theta,\quad 
\tilde{m}|_{\rho_1=0}= d^2\theta,
\\
\tilde{\rho}^2h =&\ h_{\rho\rho}(\frac{da}{a})^2 + (h_{\rho\rho}-2ah_{0\rho}+a^2h_{00}) (\frac{db}{b})^2 +2(h_{\rho\rho}-ah_{0\rho}) \frac{da}{a}\frac{db}{b} 
\\
&+h_{ij}d\theta^id\theta^j  
+2h_{\rho i}\frac{da}{a}d\theta^i+2(-h_{\rho i}+ah_{0 i})\frac{db}{b}d\theta^i.
\end{aligned}
\end{equation*}
\item In $\Omega_3$, $\rho=\frac{1}{r}$, $\tau=t-r$, $\tilde{\rho}=\rho_1=\rho$,
\begin{equation*}
\begin{aligned}
\tilde{m}=&\ 2d\tau d\rho-\rho^2 d^2\tau+d^2\theta,\quad
\tilde{m}|_{\rho_1=0}=d^2\theta,
\\
\tilde{\rho}^2h =&\  h_{\rho\rho}(\frac{d\rho}{\rho})^2+\rho^2h_{00}d^2\tau -2h_{0\rho}d\tau d\rho+ h_{ij}d\theta^id\theta^j+2\rho h_{0 i}d\theta^id\tau- 2h_{\rho i}d\theta^i \frac{d\rho}{\rho}.
\end{aligned}
\end{equation*}
\item In $\Omega_4$, $\bar{a}=1-\frac{r}{t}$, $\bar{b}=\frac{1}{t-r}$, $\tilde{\rho}=\bar{a}\bar{b}$, $\rho_1=\bar{a}$,
\begin{equation*}
\begin{aligned}
\tilde{m} =&\ -2d\bar{a}\frac{d\bar{b}}{\bar{b}} -\bar{a}(2-\bar{a})(\frac{d\bar{b}}{d\bar{b}})+(1-\bar{a})^2d^2\theta,
\quad \tilde{m}|_{\rho_1=0}= d^2\theta,
\\
\tilde{\rho}^2h =&\ h_{\rho\rho}(\frac{d\bar{a}}{\bar{a}})^2 + (h_{\rho\rho}-2\bar{a}h_{i\rho}\theta^i+\bar{a}^2h_{ij}\theta^i\theta^j) (\frac{d\bar{b}}{\bar{b}})^2 +2(h_{\rho\rho}-\bar{a}h_{\rho i}\theta^i)\frac{d\bar{a}}{\bar{a}} \frac{d\bar{b}}{\bar{b}}
\\
&+(1-\bar{a})^2h_{ij}d\theta^id\theta^j - 2(1-\bar{a})h_{\rho i} d\theta^i\frac{d\bar{a}}{\bar{a}} + 2(1-\bar{a})(-h_{\rho i}+\bar{a}h_{ij}\theta^j) d\theta^i\frac{d\bar{b}}{\bar{b}}.
\end{aligned}
\end{equation*}
\item In $\Omega_5$, $\phi=\frac{1}{t}$, $y=\frac{x}{t}$, $\tilde{\rho}=\phi$,
\begin{equation*}
\begin{aligned}
\tilde{m}=&\ -(1-|y|^2)(\frac{d\phi}{\phi})^2+\sum_{i=1}^nd^2y^i-2y^idy^i(\frac{d\phi}{\phi}),
\\
\tilde{\rho}^2h =&\  (h_{00}+2h_{0i}y^i+h_{ij}y^iy^j)(\frac{d\phi}{\phi})^2+h_{ij}dy^idy^j-2(h_{0i} +h_{ij}y^j)dy^i\frac{d\phi}{\phi}.
\end{aligned}
\end{equation*}
\end{itemize}
Here in $\Omega_i$ for $1\leq i\leq 4$ we use the polar coordinates. Notice here $\theta$ is restricted on $\mathbb{S}^{n-1}$, i.e. $|\theta|=1$ and $\theta_id\theta^i=0$. Then the statement is obviously true if $\delta\geq 1$. For $\delta\in (0,1)$, let
\begin{equation*}
f(\rho_1)=\tfrac{1}{2}\int_0^{\rho_1}{\rho'_1}^{-2}h_{\rho\rho}d\rho'_1\in \rho_1^{\delta}C^0(X).
\end{equation*}
and change the coordinates near $S_1$ as follows:
\begin{itemize}
\item In $\Omega_2$, set $b=b'e^{-f(a)}$;
\item In $\Omega_3$, set $\tau=\tau'-f(\rho)$; 
\item In $\Omega_4$, set $\bar{b}= \bar{b}'e^{f(\bar{a})}$.
\end{itemize}
In the new coordinates the metric components is uniformly bounded and $\{\rho_1=0\}$ is its characteristic surface. Since the coordinates changing preserves the boundary hypersurface $S^{\pm}_1$, we prove the statement. Notice that the coordinates change extends to a $C^{0,\delta}$ homeomorphism of $X\longrightarrow X$. 
\end{proof}


\subsection{Wave Operator and Commutators.}
Denote by $\Box_g$ and $\Box_{\tilde{g}}$ the Laplace-Beltrami operators w.r.t. $g$ and $\tilde{g}=\tilde{\rho}^2g$. The relation between $\Box_g$ and $\Box_{\tilde{g}}$ is stated as follows.
\begin{lemma}
For any $u\in C^2(\mathbb{R}^{1+n}_{t,x})$, 
\begin{equation}
\Box_g (\tilde{\rho}^{\frac{n-1}{2}}u)= \tilde{\rho}^{\frac{n+3}{2}}(\Box_{\tilde{g}}+\gamma)u,
\quad \gamma=-\tilde{\rho}^{\frac{n-1}{2}}\Box_{\tilde{g}}\tilde{\rho}^{\frac{1-n}{2}}.
\end{equation}
\end{lemma}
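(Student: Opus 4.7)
The identity is the standard conformal-change law for the scalar d'Alembertian (equivalently, the intertwining of $\Box_g$ and $\Box_{\tilde g}$ by multiplication by $\tilde\rho^{(n-1)/2}$, up to a zero-order term $\gamma$). My plan is to derive it by directly combining the Leibniz rule for $\Box_g$ on a product with the first-order conformal transformation formula.

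First, I would recall (or derive in one line from $\Box_g u = |g|^{-1/2}\partial_\mu(|g|^{1/2}g^{\mu\nu}\partial_\nu u)$, using $\sqrt{|\tilde g|}=\tilde\rho^{n+1}\sqrt{|g|}$ and $\tilde g^{\mu\nu}=\tilde\rho^{-2}g^{\mu\nu}$ in dimension $d=n+1$) the elementary relation
\begin{equation*}
\Box_{\tilde g}u \;=\; \tilde\rho^{-2}\Box_g u \;+\; (n-1)\,\tilde\rho^{-3}\,g^{\mu\nu}\partial_\mu\tilde\rho\,\partial_\nu u.
\end{equation*}
Solving this for $g^{\mu\nu}\partial_\mu\tilde\rho\,\partial_\nu u$ will provide the key substitution.

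Next, I would apply the Leibniz rule for $\Box_g$ to $\phi u$ with $\phi=\tilde\rho^{(n-1)/2}$:
\begin{equation*}
\Box_g(\phi u)\;=\;\phi\,\Box_g u\;+\;2g^{\mu\nu}\partial_\mu\phi\,\partial_\nu u\;+\;u\,\Box_g\phi.
\end{equation*}
Since $\partial_\mu\phi=\tfrac{n-1}{2}\tilde\rho^{(n-3)/2}\partial_\mu\tilde\rho$, the middle (cross) term is $(n-1)\tilde\rho^{(n-3)/2}g^{\mu\nu}\partial_\mu\tilde\rho\,\partial_\nu u$, which by the formula above equals $\tilde\rho^{(n+3)/2}\Box_{\tilde g}u - \tilde\rho^{(n-1)/2}\Box_g u$. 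This cancels the $\phi\,\Box_g u$ term exactly, leaving
\begin{equation*}
\Box_g(\tilde\rho^{(n-1)/2}u)\;=\;\tilde\rho^{(n+3)/2}\Box_{\tilde g}u\;+\;u\,\Box_g\tilde\rho^{(n-1)/2}.
\end{equation*}

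Finally, to identify the coefficient of $u$ with $\tilde\rho^{(n+3)/2}\gamma$, I would apply the identity just derived to the constant function $1$, written as $1=\tilde\rho^{(n-1)/2}\cdot\tilde\rho^{(1-n)/2}$: taking $u=\tilde\rho^{(1-n)/2}$ gives $\Box_g(1)=0$, hence
\begin{equation*}
\Box_g\tilde\rho^{(n-1)/2}\;=\;-\,\tilde\rho^{(n-1)/2}\cdot\tilde\rho^{(n+3)/2}\Box_{\tilde g}\tilde\rho^{(1-n)/2}\;=\;\tilde\rho^{(n+3)/2}\,\gamma,
\end{equation*}
which finishes the identification. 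There is no real obstacle here beyond careful bookkeeping of the exponents of $\tilde\rho$; the only minor subtlety is justifying that the formulas, originally interior computations on $\mathring X\simeq\mathbb{R}^{1+n}_{t,x}$, are algebraic identities of the stated form and need no boundary analysis at this stage (the behavior of $\gamma$ near $\partial X$ will be addressed separately in the following sections).
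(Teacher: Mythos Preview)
Your proof is correct and amounts to the same direct computation as the paper's, just organized differently. The paper proceeds in a single chain: it substitutes $|g|^{1/2}=\tilde\rho^{-(n+1)}|\tilde g|^{1/2}$ and $g^{\mu\nu}=\tilde\rho^{2}\tilde g^{\mu\nu}$ into $\Box_g(\tilde\rho^{(n-1)/2}u)$, then uses the algebraic identity $\tilde\rho^{1-n}\partial_\nu(\tilde\rho^{(n-1)/2}u)=\tilde\rho^{(1-n)/2}\partial_\nu u - u\,\partial_\nu\tilde\rho^{(1-n)/2}$ so that the divergence splits as $\tilde\rho^{(1-n)/2}\Box_{\tilde g}u - u\,\Box_{\tilde g}\tilde\rho^{(1-n)/2}$ with the cross terms cancelling. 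You instead isolate the first-order conformal relation $\Box_{\tilde g}u=\tilde\rho^{-2}\Box_g u+(n-1)\tilde\rho^{-3}g^{\mu\nu}\partial_\mu\tilde\rho\,\partial_\nu u$ and combine it with the Leibniz rule for $\Box_g(\phi u)$, then recover $\gamma$ by the substitution $u=\tilde\rho^{(1-n)/2}$. Both routes hinge on the same cancellation; yours is a bit more modular, the paper's a bit more compact.
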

\begin{proof}
The formula can be proved by direct computation as Friedlander did in \cite{Fr}.
\begin{equation}
\begin{aligned}
\Box_g (\tilde{\rho}^{\frac{n-1}{2}}u) =&\  |g|^{-\frac{1}{2}}\partial_{\mu} (|g|^{\frac{1}{2}}g^{\mu\nu}\partial_{\nu}(\tilde{\rho}^{\frac{n-1}{2}}u))
\\
=&\ \tilde{\rho}^{n+1}|\tilde{g}|^{-\frac{1}{2}}\partial_{\mu} (\tilde{\rho}^{1-n}|\tilde{g}|^{\frac{1}{2}}g^{\mu\nu}\partial_{\nu}(\tilde{\rho}^{\frac{n-1}{2}}u))
\\
=&\ \tilde{\rho}^{n+1}|\tilde{g}|^{-\frac{1}{2}}\partial_{\mu} (|\tilde{g}|^{\frac{1}{2}}g^{\mu\nu}(\tilde{\rho}^{\frac{1-n}{2}}\partial_{\nu}u -u\partial_{\nu}\tilde{\rho}^{\frac{1-n}{2}}))
\\
=&\ \tilde{\rho}^{\frac{n+3}{2}}(\Box_{\tilde{g}}u+\gamma u)
\end{aligned}
\end{equation}
\end{proof}

\begin{lemma}\label{lem.2}
Suppose $(t,x)$ are harmonic coordinates w.r.t. $g$. Then in each domain $\Omega_i$ for $1\leq i\leq5$, with the choice of $\tilde{\rho}, \rho_0,\rho_1,\rho_2$ specified in Section \ref{sec.geosetting}, we have
\begin{equation*}
\begin{gathered}
\Box_{\tilde{g}} =  \Box_{\tilde{m}}
+(\rho_0\rho_2)^{\frac{n-1}{2}} [-\rho_1^{\frac{n-5}{2}}\tilde{h}_{\rho\rho}(D^2+(3-i)D) + \rho_1^{\frac{n-3}{2}}\Theta_1(\tilde{h})((\tilde{\partial}^2+\tilde{\partial}))],\quad
\\
\Box_{\tilde{m}}=\ 2\partial_{\rho_1}D+ \tilde{\partial}^2+\tilde{\partial}, 
\quad
\gamma= \gamma_0+\tilde{\rho}^{\frac{n-1}{2}}\Theta_1(\tilde{h}),
\end{gathered}
\end{equation*}
where $D\in\mathscr{V}_b(\Omega_i)$ is defined in Lemma \ref{lem.4} and $\tilde{\partial}\in\mathscr{V}_b(X), \tilde{\partial}^2\in\mathscr{V}^2_b(X)$. 
Here $\gamma_0=-\tilde{\rho}^{\frac{n-1}{2}}\Box_{\tilde{m}}\tilde{\rho}^{\frac{1-n}{2}}$ is a constant in each domain.
\end{lemma}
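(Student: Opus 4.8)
The plan is to prove the three identities by explicit computation in each of the charts $\Omega_1,\dots,\Omega_5$ of Section~\ref{sec.geosetting} (nothing is needed on $\Omega_0$), working throughout with the particular $\tilde\rho,\rho_0,\rho_1,\rho_2$ fixed there; since all three assertions are invariant under replacing a boundary defining function by an equivalent one, this loses no generality. In each chart I write $\Box_{\tilde g}u=|\tilde g|^{-1/2}\partial_\mu(|\tilde g|^{1/2}\tilde g^{\mu\nu}\partial_\nu u)$ and feed in the explicit block forms of $\tilde g=\tilde m+\tilde\rho^2h=\tilde m+\tilde\rho^{\frac{n+3}{2}}\tilde h$ from Proposition~\ref{prop.4}, together with $h=\tilde\rho^{\frac{n-1}{2}}\tilde h$.

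For $\Box_{\tilde m}$ I invert the explicit $\tilde m$ of Proposition~\ref{prop.4}, expand $|\tilde m|^{-1/2}\partial_\mu(|\tilde m|^{1/2}\tilde m^{\mu\nu}\partial_\nu)$, and reorganise it by collecting the part that fails to be a b-differential operator into $2\partial_{\rho_1}D$, where $D$ is the field of Lemma~\ref{lem.4} (so $D=0$ in $\Omega_1,\Omega_5$, where $\rho_1\equiv1$); what remains is a b-operator of order $\le 2$, which yields the stated form of $\Box_{\tilde m}$. Evaluating this on the pure power $\tilde\rho^{\frac{1-n}{2}}$ then computes $\gamma_0$: the term $2\partial_{\rho_1}D$ kills $\tilde\rho^{\frac{1-n}{2}}$ because $D\tilde\rho=0$ (Lemma~\ref{lem.4}), and the b-part acts on it by its indicial value, which is the chart-independent constant $-\tfrac{(n-1)(n-3)}{4}$.

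For the genuinely nonlinear part I expand $\tilde g^{\mu\nu}=\tilde m^{\mu\nu}-\tilde m^{\mu\alpha}(\tilde\rho^2h)_{\alpha\beta}\tilde m^{\beta\nu}+\cdots$ and $|\tilde g|^{1/2}=|\tilde m|^{1/2}(1+\tfrac12\operatorname{tr}(\tilde m^{-1}\tilde\rho^2h)+\cdots)$ and keep track of the powers of $\rho_1$ carried by each term (the weights $\rho_0,\rho_2$ ride along and reassemble as $(\rho_0\rho_2)^{\frac{n-1}{2}}$). Among the entries of $\tilde\rho^2h$ the $\rho\rho$-component is the uniquely most singular one, of size $\rho_1^{\frac{n-5}{2}}(\rho_0\rho_2)^{\frac{n-1}{2}}\tilde h_{\rho\rho}$; propagated through the $\rho_1$-degenerate slots of $\tilde m^{-1}$ it produces, at the critical weight $\rho_1^{\frac{n-5}{2}}(\rho_0\rho_2)^{\frac{n-1}{2}}$, a combination of the second-order b-pieces $(\rho_1\partial_{\rho_1})^2,(\rho_1\partial_{\rho_1})D,D^2$ and of the first-order pieces created when $\partial_{\rho_1}$ and $D$ are commuted past the defining-function powers present in that chart, and one checks these recombine exactly into $-\tilde h_{\rho\rho}(D^2+(3-i)D)$. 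Every remaining contribution --- the trace correction, the less singular components of $\tilde g^{\mu\nu}$, the spherical pieces, and the term in which $\partial_{\rho_1}$ or $D$ differentiates the coefficient $\tilde h_{\rho\rho}$ --- carries an extra power of $\rho_1$; for the last one I invoke the harmonic-gauge identity $D\tilde h_{\rho\rho}=\rho_1\tilde\partial\tilde h+\cdots$ of Lemma~\ref{lem.4} (and its $\tilde h$-analogue) to supply it, so that all of them fall into $(\rho_0\rho_2)^{\frac{n-1}{2}}\rho_1^{\frac{n-3}{2}}\Theta_1(\tilde h)(\tilde\partial^2+\tilde\partial)$. Finally $\gamma=\gamma_0-\tilde\rho^{\frac{n-1}{2}}(\Box_{\tilde g}-\Box_{\tilde m})\tilde\rho^{\frac{1-n}{2}}$, and since the critical-weight operator $-\tilde h_{\rho\rho}(D^2+(3-i)D)$ annihilates $\tilde\rho^{\frac{1-n}{2}}$ (once more because $D\tilde\rho=0$), only the $\Theta_1(\tilde h)$ remainder survives, giving $\gamma=\gamma_0+\tilde\rho^{\frac{n-1}{2}}\Theta_1(\tilde h)$.

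The hard part is the weight bookkeeping just indicated: one must verify chart by chart that the several terms living at the critical weight $\rho_1^{\frac{n-5}{2}}(\rho_0\rho_2)^{\frac{n-1}{2}}$ --- and the set of such terms differs between $\Omega_2$, $\Omega_3$ and $\Omega_4$ because the $\rho_1$-degeneracy of $\tilde m^{-1}$ is of different order in these charts --- combine with precisely the coefficients that produce $D^2+(3-i)D$ and leave no stray $\tilde h_{\rho\rho}D$-term outside the stated error class, and that $\operatorname{tr}(\tilde m^{-1}\tilde\rho^2h)$ undergoes the cancellation making it $O(\rho_1^{\frac{n-1}{2}})$ rather than merely $O(\rho_1^{\frac{n-3}{2}})$ (otherwise the volume-form correction would contaminate the critical weight). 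Everything else reduces to the explicit algebra of Proposition~\ref{prop.4} and the harmonic-gauge relations of Lemma~\ref{lem.4}.
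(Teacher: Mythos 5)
Your route is genuinely different from the paper's, and the difference matters. The paper's proof begins by invoking the wave-coordinate gauge for $g$ itself to write
\begin{equation*}
\Box_g = g^{\alpha\beta}\partial_\alpha\partial_\beta = \Box_m + H^{\alpha\beta}\partial_\alpha\partial_\beta ,
\end{equation*}
so that $\Box_g$ contains \emph{no} first-order part, and then uses the conjugation identity $\Box_{\tilde g}+\gamma = \tilde\rho^{-\frac{n+3}{2}}\Box_g\,\tilde\rho^{\frac{n-1}{2}}$ from the preceding lemma. After that the only first-order terms in $\Box_{\tilde g}+\gamma$ arise from the product rule when $\partial_\alpha\partial_\beta$ falls on $\tilde\rho^{\frac{n-1}{2}}$ and from the change of coordinates, and the coefficients $H^{\alpha\beta}$ are never differentiated; this is why the paper's chart-by-chart expansion is short. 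You instead open from the intrinsic divergence formula for $\Box_{\tilde g}$ and let the harmonic gauge enter only through Lemma~\ref{lem.4}. That is a legitimate alternative, but it makes the first-order analysis much heavier, and your sketch of it has a concrete gap.

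The gap is in your treatment of the first-order piece $\partial_\mu(|\tilde g|^{1/2}\tilde g^{\mu\nu})\partial_\nu$. Take $\Omega_2$ with $(\rho_1,\xi,\theta)=(a,-\log b,\theta)$. The $h$-correction to $\tilde g^{a\xi}$ is $\sim a^{\frac{n-3}{2}}b^{\frac{n-1}{2}}\tilde h_{\rho\rho}$, so $\partial_a\tilde g^{a\xi}\partial_\xi$ produces a term $a^{\frac{n-3}{2}}b^{\frac{n-1}{2}}\partial_a\tilde h_{\rho\rho}\,\partial_\xi = a^{\frac{n-5}{2}}b^{\frac{n-1}{2}}(a\partial_a\tilde h_{\rho\rho})\partial_\xi$ sitting exactly at the critical weight $\rho_1^{\frac{n-5}{2}}(\rho_0\rho_2)^{\frac{n-1}{2}}$. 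Here $a\partial_a\tilde h_{\rho\rho}$ is a b-derivative, hence merely bounded, not $O(\rho_1)$, and Lemma~\ref{lem.4} does \emph{not} control it; the lemma only gives the extra $\rho_1$-decay for $D\tilde h_{\rho\rho}$ with $D=b\partial_b-a\partial_a$. Your phrase ``the term in which $\partial_{\rho_1}$ or $D$ differentiates the coefficient $\tilde h_{\rho\rho}$ --- carries an extra power of $\rho_1$'' is therefore false for the $\partial_{\rho_1}$-case taken on its own, and the invocation of Lemma~\ref{lem.4} ``for the last one'' leaves the $\partial_{\rho_1}$-term unaccounted for. What rescues the computation is a cancellation you never state: the $\partial_\xi\tilde g^{\xi\xi}\partial_\xi$ contribution produces a matching $-a^{\frac{n-5}{2}}b^{\frac{n-1}{2}}(\partial_\xi\tilde h_{\rho\rho})\partial_\xi$, and since $a\partial_a+\partial_\xi=-D$ the two derivative terms combine to $a^{\frac{n-5}{2}}b^{\frac{n-1}{2}}(D\tilde h_{\rho\rho})\partial_\xi$, to which Lemma~\ref{lem.4} does apply. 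The analogous recombination $a\partial_a+\partial_\xi=-D$ is also needed in the $\partial_\nu$-row with $\nu=a$, and in $\Omega_3$, $\Omega_4$. You have identified the right ingredient (Lemma~\ref{lem.4}) and the right critical-weight structure, and the trace-cancellation you flag for $|\tilde g|^{1/2}$ is correct, but without recording that $\rho_1\partial_{\rho_1}\tilde h_{\rho\rho}$ must be paired with the $b\partial_b$-derivative to form $D\tilde h_{\rho\rho}$, the divergence-form route does not close as written. The paper's reduction $\Box_g=g^{\alpha\beta}\partial_\alpha\partial_\beta$ is precisely what lets it bypass this cancellation.
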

\begin{proof}
If $(t,x)$ are harmonic coordinates w.r.t. $g$, then $\Box_{g}=g^{\mu\nu}\partial_{\mu}\partial_{\nu}=\Box_m+H^{\mu\nu}\partial_{\mu}\partial_{\nu}$, which implies that
\begin{equation*}
\begin{gathered}
\Box_{\tilde{g}}+\gamma=\Box_{\tilde{m}}+\gamma_0+ \tilde{\rho}^{-\frac{n+3}{2}} H^{\mu\nu}\partial_{\mu}\partial_{\nu} \tilde{\rho}^{\frac{n-1}{2}}.
\end{gathered}
\end{equation*}
We write $\Box_{\tilde{m}}$,  $\tilde{\rho}^{-\frac{n+3}{2}} H^{\mu\nu}\partial_{\mu}\partial_{\nu} \tilde{\rho}^{\frac{n-1}{2}}$,  $\gamma_0$ and $\gamma$ in local coordinates in each domain $\Omega_i$ for $1\leq i\leq 5$.
\begin{itemize}
\item In $\Omega_1$, $s=\frac{t}{r}$, $\rho=\frac{1}{r}$, $\tilde{\rho}=\rho$,
\begin{equation*}
\begin{aligned}
\Box_{\tilde{m}} =&\ -(1-s^2)\partial_s^2 +2s\partial_s\rho\partial_{\rho}+2s\partial_s +(\rho\partial_{\rho})^2 +\rho\partial_{\rho} +\triangle_{\theta},
\\
\Box_{\tilde{g}} =&\ \Box_{\tilde{m}}+[H^{00}-2sH^{0i}\theta_i+s^2H^{ij}\theta_i\theta_j]\partial^2_s +[H^{ij}\theta_i\theta_j](\rho\partial_{\rho})^2 +H^{ij}\slashpar_i\slashpar_j 
\\
& +2[-H^{0i}\theta_i+sH^{ij}\theta_i\theta_j]\partial_s(\rho\partial_{\rho}) +2[H^{0i}-sH^{ij}\theta_j]\partial_s\slashpar_i 
\\
& +2[-H^{ij}\theta_j](\rho\partial_{\rho})\slashpar_i  +[-(n+1)H^{0i}\theta_i+sH^{ij}(-\delta_{ij}+(n+2)\theta_i\theta_j)]\partial_s 
\\
& +[H^{ij}(-\delta_{ij}+(n+1)\theta_i\theta_j)]\rho\partial_{\rho}
+[-nH^{ij}\theta_j]\slashpar_i
\\
=&\ \Box_{\tilde{m}}+\rho^{\frac{n-1}{2}}\Theta_1(\tilde{h}) (\tilde{\partial}^2+\tilde{\partial}),  
\\
\gamma =&\ \gamma_0+[H^{ij}(-\tfrac{n-1}{2}\delta_{ij}+\tfrac{(n-1)(n+3)}{4}\theta_i\theta_j)], \quad
\gamma_0=-\tfrac{(n-1)(n-3)}{4}.
\end{aligned}
\end{equation*}
\item In $\Omega_2$, $a=1-\frac{t}{r}$, $b=\frac{1}{r-t}$, $\tilde{\rho}=ab$,
\begin{equation*}
\begin{aligned}
\Box_{\widetilde{m}} =&\  2\partial_ab\partial_b -a(2-a)\partial_a^2 -2(1-a)\partial_a +\triangle_{\theta}
\\
=&\ 2\partial_a(b\partial_b-a\partial_a)+(a\partial_a)^2+a\partial_a+\triangle_{\theta}
\\
\Box_{\tilde{g}} =&\ \Box_{\tilde{m}}+
a^{-2}[H^{00}-2H^{i0}\theta_i+H^{ij}\theta_{i}\theta_{j}](b\partial_{b}-a\partial_{a})^2 +[H^{ij}\theta_i\theta_j](a\partial_{a})^2 
\\
& +2a^{-1}[-H^{0i}\theta_i+H^{ij}\theta_i\theta_j] (a\partial_{a})(b\partial_{b}-a\partial_{a})+H^{ij}\slashpar_i\slashpar_j 
\\
& +2a^{-1}[H^{0i}-H^{ij}\theta_j]\slashpar_i(b\partial_{b}-a\partial_{a})
+2[-H^{ij}\theta_j]\slashpar_i(a\partial_{a})
\\
& +a^{-2}[H^{00}-2H^{i0}\theta_i+H^{ij}\theta_{i}\theta_{j}
+aH^{ij}(-\delta_{ij}+n\theta_i\theta_j) -(n-1)aH^{0i}\theta_i](b\partial_{b}-a\partial_{a})
\\
&
+[H^{ij}(-\delta_{ij}+(n+1))\theta_i\theta_j]a\partial_{a} +[-nH^{ij}\theta_j]\slashpar_i,
\\
=&\  \Box_{\tilde{m}}-a^{\frac{n-5}{2}}b^{\frac{n-1}{2}}\tilde{h}_{\rho\rho} [(b\partial_{b}-a\partial_{a})^2+(b\partial_{b}-a\partial_{a})]+
 a^{\frac{n-3}{2}} b^{\frac{n-1}{2}} \Theta_1(\tilde{h})(\tilde{\partial}^2+\tilde{\partial}),  
\\
\gamma =&\ \gamma_0+ [H^{ij}(-\tfrac{n-1}{2}\delta_{ij}+\tfrac{(n-1)(n+3)}{4}\theta_i\theta_j)], \quad \gamma_0=-\tfrac{(n-1)(n-3)}{4}.
\end{aligned}
\end{equation*}
\item In $\Omega_3$, $\rho=\frac{1}{r}$, $\tau=t-r$, $\tilde{\rho}=\rho$, 
\begin{equation*}
\begin{aligned}
\Box_{\tilde{m}}=&\ 2\partial_{\rho}\partial_{\tau} +(\rho\partial_{\rho})^2 +\rho\partial_{\rho} +\triangle_{\theta},
\\
\Box_{\tilde{g}} =&\  \Box_{\tilde{m}}+\rho^{-2}[H^{00}-2H^{i0}\theta_i+H^{ij}\theta_{i}\theta_{j}]\partial^2_{\tau} +[H^{ij}\theta_i\theta_j](\rho\partial_{\rho})^2 +H^{ij}\slashpar_i\slashpar_j 
\\
& +2[-H^{ij}\theta_j](\rho\partial_{\rho})\slashpar_i +2\rho^{-1}[-H^{i0}\theta_i+H^{ij}\theta_i\theta_j]\partial_{\tau}(\rho\partial_{\rho})
\\
& +2\rho^{-1}[H^{i0}-H^{ij}\theta_{j}]\partial_{\tau}\slashpar_i
+[H^{ij}(-\delta_{ij}+(n+1)\theta_i\theta_j)]\rho\partial_{\rho}
\\
& +\rho^{-1}[-(n-1)H^{0i}\theta_i+H^{ij}(-\delta_{ij}+n\theta_i\theta_j)]\partial_{\tau}
+n[-H^{ij}\theta_j]\slashpar_i  
\\
=&\ \Box_{\tilde{m}}-\rho^{\frac{n-5}{2}}\tilde{h}_{\rho\rho}\partial^2_{\tau} +\rho^{\frac{n-3}{2}}\Theta_1(\tilde{h})(\tilde{\partial}^2+\tilde{\partial}),
\\
\gamma
=&\ \gamma_0+[H^{ij}(-\tfrac{n-1}{2}\delta_{ij}+\tfrac{(n-1)(n+3)}{4})\theta_i\theta_j], 
\quad \gamma_0=-\tfrac{(n-1)(n-3)}{4}.
\end{aligned}
\end{equation*}
\item In $\Omega_4$, $\bar{a}=1-\frac{1}{t}$, $\bar{b}=\frac{1}{t-r}$, $\tilde{\rho}=\bar{a}\bar{b}$,
\begin{equation*}
\begin{aligned}
\Box_{\tilde{m}}=&\ -2\partial_{\bar{a}} \bar{b}\partial_{\bar{b}} +\bar{a}(2-\bar{a}) \partial_{\bar{a}}^2 +2\partial_{\bar{a}}-(n+1)\bar{a}\partial_{\bar{a}} +\tfrac{1}{(1-\bar{a})^{2}}\triangle_{\theta}
-\tfrac{n-1}{1-\bar{a}}(\bar{a}\partial_{\bar{a}} -\bar{b}\partial_{\bar{b}})
\\
=&\ 2\partial_{\bar{a}}(\bar{a}\partial_{\bar{a}} -\bar{b}\partial_{\bar{b}}) -(\bar{a}\partial_{\bar{a}})^2
-n\bar{a}\partial_{\bar{a}} +\tfrac{1}{(1-\bar{a})^{2}}\triangle_{\theta}
-\tfrac{n-1}{1-\bar{a}}(\bar{a}\partial_{\bar{a}} -\bar{b}\partial_{\bar{b}})
\\
\Box_{\tilde{g}} =&\ \Box_{\tilde{m}}+ 
\bar{a}^{-2}[H^{00}-2H^{i0}\theta_i+H^{ij}\theta_i\theta_j] (\bar{b}\partial_{\bar{b}}-\bar{a}\partial_{\bar{a}})^2 +H^{00}(\bar{a}\partial_{\bar{a}})^2 
\\
&
+ (1-\bar{a})^{-2}H^{ij}\slashpar_i\slashpar_j
+2\bar{a}^{-1}[H^{00}-H^{0i}\theta_i](\bar{a}\partial_{\bar{a}}) (\bar{b}\partial_{\bar{b}}-\bar{a}\partial_{\bar{a}})
\\
& -2(1-\bar{a})^{-1}H^{0i}(\bar{a}\partial_{\bar{a}})\slashpar_i
+2(\bar{a}(1-\bar{a}))^{-1}[-H^{i0}+H^{ij}\theta_j](\bar{b}\partial_{\bar{b}} -\bar{a}\partial_{\bar{a}})\slashpar_i 
\\
&+(1-\bar{a})^{-2}[-H^{ij}\theta_j-(n-1)H^{0i}]\slashpar_i +nH^{00}\bar{a}\partial_{\bar{a}} 
+ \bar{a}^{-2}[H^{00}-2H^{i0}\theta_i+H^{ij}\theta_i\theta_j 
 \\
&+(n-1)\bar{a}(H^{00}-H^{0i}\theta_i)
-\bar{a}(1-\bar{a})^{-1}H^{ij}(-\delta_{ij} +\theta_i\theta_j)](\bar{b}\partial_{\bar{b}} -\bar{a}\partial_{\bar{a}})
\\
=&\ \Box_{\tilde{m}}-\bar{a}^{\frac{n-5}{2}}\bar{b}^{\frac{n-1}{2}}\tilde{h}_{\rho\rho} [(\bar{b}\partial_{\bar{b}} -\bar{a}\partial_{\bar{a}})^2+(\bar{b}\partial_{\bar{b}} -\bar{a}\partial_{\bar{a}})]+  \bar{a}^{\frac{n-3}{2}} \bar{b}^{\frac{n-1}{2}} \Theta_1(\tilde{h})(\tilde{\partial}^2+\tilde{\partial}),  
\\
\gamma=&\ \gamma_0+\tfrac{n^2-1}{4}H^{00},\quad \gamma_0=-\tfrac{n^2-1}{4}.
\end{aligned}
\end{equation*}
\item In $\Omega_5$, $\phi=\frac{1}{t}$, $y=\frac{x}{t}$, $\tilde{\rho}=\phi$,
\begin{equation*}
\begin{aligned}
\Box_{\tilde{m}}=&\ -(\phi\partial_{\phi}+\sum_iy^i\partial_{y^i})^2 -n(\phi\partial_{\phi} +\sum_iy^i\partial_{y^i}) +\triangle_{y},
\\
\Box_{\tilde{g}} =&\ \Box_{\tilde{m}}+
H^{00}(\phi\partial_{\phi}+y^k\partial_{y^k})^2 +H^{ij}\partial_{y^i}\partial_{y^j} -2H^{0i}(\phi\partial_{\phi}+y^k\partial_{y^k})\partial_{y^i}
\\
& +nH^{00}(\phi\partial_{\phi}+y^i\partial_{y^i})-(n+1)H^{0i}\partial_{y^i},
\\
=&\ \Box_{\tilde{m}}+\phi^{\frac{n-1}{2}}\Theta_1(\tilde{h}) (\tilde{\partial}^2+\tilde{\partial}),  
\\
\gamma=&\ \gamma_0+\tfrac{n^2-1}{4}H^{00},\quad \gamma_0=-\tfrac{n^2-1}{4}.
\end{aligned}
\end{equation*}
\end{itemize}
We finish the proof.
\end{proof}

Next, we consider the commutator of $\Box_{\tilde{m}}$ and a basis of $\mathscr{V}_b(X)$. By Lemma \ref{lem.0}, we can choose a basis  $\mathscr{B}_j$ for $\mathscr{V}_b(\Omega_j)$ for $0\leq j\leq 5$ as follows:
\begin{equation}\label{def.basis}
\left\{
\begin{aligned}
&\mathscr{B}_0=\{\partial_{\mu}: \mu=0,1,...,n\},\\
&\mathscr{B}_j=\{Z_{\mu\nu}: \mu,\nu=0,1,...,n\},\ \textrm{for $j=1,2,4,5$,}\\
&\mathscr{B}_3=\{\partial_{\tau},\rho\partial_{\rho}, Z_{ij}: i,j=1,...,n\}.
\end{aligned}\right.
\end{equation}

\begin{lemma}\label{lem.4.7}
Suppose $(t,x)$ are harmonic coordinates w.r.t. $g$. Then in each domain $\Omega_j$ for $1\leq j\leq 5$,  with the choice of $\tilde{\rho}$ specified in Section \ref{sec.geosetting}, 
$$
[\Box_{\tilde{m}},Z_{00}]=[\Box_{\tilde{m}},Z_{ij}]=0, \quad
[\Box_{\tilde{m}},Z_{0i}]=c_i(\Box_{\tilde{m}}+\gamma_0) +\tilde{\partial}+c'_i, 
$$
for $1\leq i\leq n$, where $c_i,c'_i\in C^{\infty}(\overline{\Omega_j})$ and $\tilde{\partial}\in\mathscr{V}_b(\Omega_j)$.  In particular, in 
$\Omega_3$, 
$$
[\Box_{\tilde{m}},\partial_{\tau}]=0, 
\quad [\Box_{\tilde{m}},\rho\partial_{\rho}] =\Box_{\tilde{m}}-(\rho\partial_{\rho})^2-\rho\partial_{\rho}-\triangle_{\theta}
.$$
\end{lemma}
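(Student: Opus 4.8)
The plan is to exploit the fact that $Z_{00},Z_{ij},Z_{0i}$ are, up to sign, the generators of the conformal group of $m$, and to track how each interacts with the conformal factor $\tilde\rho$. Since $Z_{00}=x^\mu\partial_\mu$ is the Euler field, $\mathcal{L}_{Z_{00}}m=2m$; the $Z_{ij}$ are rotations and the $Z_{0i}$ are Lorentz boosts, so $\mathcal{L}_{Z_{ij}}m=\mathcal{L}_{Z_{0i}}m=0$, and in particular $[\Box_m,Z_{ij}]=[\Box_m,Z_{0i}]=0$. Using the expressions for $Z_{\mu\nu}$ in the charts $\Omega_j$ from the proof of Lemma \ref{lem.0}, together with the explicit $\tilde\rho$ chosen in Section \ref{sec.geosetting}, a one-line computation in each chart gives $Z_{00}\tilde\rho=-\tilde\rho$, $Z_{ij}\tilde\rho=0$, and $Z_{0i}\tilde\rho=\omega_i\tilde\rho$ where $\omega_i:=Z_{0i}\log\tilde\rho$ is smooth up to $\partial X$ on each $\overline{\Omega_j}$ (for instance $\omega_i=-\theta_i s$ in $\Omega_1$, $\omega_i=-\theta_i(1-a)$ in $\Omega_2$, $\omega_i=-\theta_i(1+\rho\tau)$ in $\Omega_3$, $\omega_i=-y_i$ in $\Omega_5$).

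From $\mathcal{L}_{Z_{00}}\tilde m=2\tilde\rho(Z_{00}\tilde\rho)m+\tilde\rho^2\mathcal{L}_{Z_{00}}m=(-2+2)\tilde m=0$ and $\mathcal{L}_{Z_{ij}}\tilde m=2\tilde\rho(Z_{ij}\tilde\rho)m=0$ it follows that $Z_{00}$ and $Z_{ij}$ are Killing for $\tilde m$, hence $[\Box_{\tilde m},Z_{00}]=[\Box_{\tilde m},Z_{ij}]=0$. This already includes the first $\Omega_3$ identity, since $\partial_\tau=\partial_t$ there; alternatively, $\partial_\tau$ commutes with every term of $\Box_{\tilde m}=2\partial_\rho\partial_\tau+(\rho\partial_\rho)^2+\rho\partial_\rho+\triangle_\theta$ because no coefficient depends on $\tau$.

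For the boosts, $\mathcal{L}_{Z_{0i}}\tilde m=2\tilde\rho(Z_{0i}\tilde\rho)m=2\omega_i\tilde m$, so $Z_{0i}$ is conformal Killing for $\tilde m$. The cleanest way to extract the commutator is through the conformally transformed operator: evaluating the formula $\Box_g(\tilde\rho^{\frac{n-1}{2}}u)=\tilde\rho^{\frac{n+3}{2}}(\Box_{\tilde g}+\gamma)u$ at $h=0$ shows that $P:=\Box_{\tilde m}+\gamma_0$ satisfies $Pu=\tilde\rho^{-\frac{n+3}{2}}\Box_m(\tilde\rho^{\frac{n-1}{2}}u)$. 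Using $[\Box_m,Z_{0i}]=0$ and $Z_{0i}\tilde\rho^{c}=c\,\omega_i\tilde\rho^{c}$, the two $\Box_m$-terms cancel when one expands $P(Z_{0i}u)-Z_{0i}(Pu)$, and after commuting the multiplier $\omega_i$ through $\Box_{\tilde m}$ one is left with
\[
[\Box_{\tilde m},Z_{0i}]=[P,Z_{0i}]=2\omega_i\,(\Box_{\tilde m}+\gamma_0)-(n-1)\langle d\omega_i,d\,\cdot\,\rangle_{\tilde m}-\tfrac{n-1}{2}(\Box_{\tilde m}\omega_i).
\]
Thus $c_i=2\omega_i$ and $c_i'=-\tfrac{n-1}{2}\Box_{\tilde m}\omega_i$ lie in $C^\infty(\overline{\Omega_j})$, while $\tilde\partial=-(n-1)\langle d\omega_i,d\,\cdot\,\rangle_{\tilde m}$ is a b-vector field on $\Omega_j$: indeed $2\langle d\omega_i,d\,\cdot\,\rangle_{\tilde m}=[\Box_{\tilde m},\omega_i]-(\Box_{\tilde m}\omega_i)$ is the first-order part of the commutator of a smooth multiplier with $\Box_{\tilde m}\in\mathrm{Diff}_b^2(\Omega_j)$, hence lies in $\mathscr{V}_b(\Omega_j)$. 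For the remaining $\Omega_3$ identity one computes directly: only the cross term of $\Box_{\tilde m}=2\partial_\rho\partial_\tau+(\rho\partial_\rho)^2+\rho\partial_\rho+\triangle_\theta$ fails to commute with $\rho\partial_\rho$, and $[2\partial_\rho\partial_\tau,\rho\partial_\rho]=2[\partial_\rho,\rho\partial_\rho]\partial_\tau=2\partial_\rho\partial_\tau=\Box_{\tilde m}-(\rho\partial_\rho)^2-\rho\partial_\rho-\triangle_\theta$.

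The routine but genuinely necessary work is the per-chart verification that $\tilde\rho$ transforms as claimed under each $Z_{\mu\nu}$ in the coordinates of Section \ref{sec.geosetting} and that $\omega_i$ is regular up to the boundary in each of the five domains; the point to be careful about is that $\tilde\rho$, and therefore $\tilde m$ and the constant $\gamma_0$, really do differ from chart to chart, yet the final form of the commutator is uniform. A fully pedestrian alternative, matching the style of Lemmas \ref{lem.4} and \ref{lem.2}, is to substitute the local expressions for $\Box_{\tilde m}$ from Lemma \ref{lem.2} and for $Z_{\mu\nu}$ from Lemma \ref{lem.0} and evaluate each commutator by hand; this is longer but presents no conceptual difficulty, so I do not expect any serious obstacle.
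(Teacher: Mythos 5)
Your overall strategy mirrors the paper's: both use the conjugation identity $\Box_{\tilde m}+\gamma_0=\tilde\rho^{-\frac{n+3}{2}}\Box_m\tilde\rho^{\frac{n-1}{2}}$, the commutation facts $[\Box_m,Z_{\mu\nu}]=c_{\mu\nu}\Box_m$, and the per-chart evaluation of $\omega_i=\tilde\rho^{-1}Z_{0i}(\tilde\rho)$; your packaging via Killing and conformal Killing fields for $\tilde m$ is a clean reformulation of exactly that computation, and your final formula
\[
[\Box_{\tilde m},Z_{0i}]=2\omega_i(\Box_{\tilde m}+\gamma_0)-(n-1)\langle d\omega_i,d\,\cdot\,\rangle_{\tilde m}-\tfrac{n-1}{2}(\Box_{\tilde m}\omega_i)
\]
agrees with the one the paper reaches.

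There is, however, a genuine gap in the last step. You claim that $\langle d\omega_i,d\,\cdot\,\rangle_{\tilde m}\in\mathscr{V}_b(\Omega_j)$ because it is the first-order part of $[\Box_{\tilde m},\omega_i]$ with ``$\Box_{\tilde m}\in\mathrm{Diff}_b^2(\Omega_j)$.'' That premise is false for $j=2,3,4$: by Lemma~\ref{lem.2} of the paper, $\Box_{\tilde m}=2\partial_{\rho_1}D+\tilde\partial^2+\tilde\partial$ in those charts, and $\partial_{\rho_1}$ is transverse to $S_1$, so $\Box_{\tilde m}$ is not a b-operator there. Expanding the commutator, the problematic piece is $2(D\omega_i)\partial_{\rho_1}$, and for this to land in $\mathscr{V}_b(\Omega_j)$ one needs the \emph{specific} fact that $D\omega_i\in\rho_1 C^\infty(\overline{\Omega_j})$ -- i.e.\ $D(\tilde\rho^{-1}Z_{0i}\tilde\rho)$ vanishes at $S_1$ -- which does not follow merely from $\omega_i$ being smooth. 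The paper's proof recognises this and checks it explicitly. Your hedge that the per-chart verification is ``routine'' and ``presents no conceptual difficulty'' therefore understates the issue: the conceptual point to be verified is exactly the cancellation $D\omega_i\in\rho_1 C^\infty$, without which the statement would be false for a generic smooth $\omega_i$. If you replace the appeal to ``$\Box_{\tilde m}\in\mathrm{Diff}_b^2$'' by the decomposition $\Box_{\tilde m}=2\partial_{\rho_1}D+(\text{b-operator})$ together with the chart computation that $D\omega_i\in\rho_1 C^\infty(\overline{\Omega_j})$, the argument closes.
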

\begin{proof}
For any $u\in C^{\infty}(X)$, 
$$
\begin{aligned}
\ [\Box_{\tilde{m}},Z_{\mu\nu}] u =&\  (\Box_{\tilde{m}}+\gamma_0)Z_{\mu\nu}u -Z_{\mu\nu}(\Box_{\tilde{m}}+\gamma_0)u 
\\
=&\ \tilde{\rho}^{-\frac{n+3}{2}}\Box_m\tilde{\rho}^{\frac{n-1}{2}}Z_{\mu\nu}u - Z_{\mu\nu}\tilde{\rho}^{-\frac{n+3}{2}}\Box_{m}\tilde{\rho}^{\frac{n-1}{2}}u
\\
=&\ \tilde{\rho}^{-\frac{n+3}{2}}[\Box_m,Z_{\mu\nu}]\tilde{\rho}^{\frac{n-1}{2}}u 
+\tilde{\rho}^{-\frac{n+3}{2}}\Box_m[\tilde{\rho}^{\frac{n-1}{2}},Z_{\mu\nu}]u 
-[Z_{\mu\nu},\tilde{\rho}^{-\frac{n+3}{2}}]\Box_{m}\tilde{\rho}^{\frac{n-1}{2}}u
\\
=&\  c_{\mu\nu}(\Box_{\tilde{m}}+\gamma_0)u 
 -\tfrac{n-1}{2}(\Box_{\tilde{m}}+\gamma_0)\tilde{\rho}^{-1}Z_{\mu\nu}(\tilde{\rho})u
 +\tfrac{n+3}{2}\tilde{\rho}^{-1}Z_{\mu\nu}(\tilde{\rho})(\Box_{\tilde{m}}+\gamma_0)u
 \\
 =&\ (c_{\mu\nu}+2\tilde{\rho}^{-1}Z_{\mu\nu}(\tilde{\rho})) (\Box_{\tilde{m}}+\gamma_0)u 
 -\tfrac{n-1}{2}[\Box_{\tilde{m}},  \tilde{\rho}^{-1}Z_{\mu\nu}(\tilde{\rho})]u
\end{aligned}
$$
In the following, we study the term $\tilde{\rho}^{-1}Z_{\mu\nu}(\tilde{\rho})$ and its commutator with $\Box_{\tilde{m}}$. 
Recall that $[\Box_{m},Z_{\mu\nu}]=c_{\mu\nu}\Box_m$ with $c_{0i}=c_{ij}=0, c_{00}=2$ and
$$
\begin{gathered}
\rho^{-1} Z_{00}(\rho)=\phi^{-1} Z_{00}(\phi)=-1, \quad
\rho^{-1} Z_{ij}(\rho)=\phi^{-1} Z_{ij}(\phi)=0.
\end{gathered}
$$
Hence $[\Box_{\tilde{m}},Z_{00}]=[\Box_{\tilde{m}},Z_{ij}]=0$.  
Notice that we choose $\tilde{\rho}=\rho$ in $\Omega_1,\Omega_2,\Omega_3$ and $\tilde{\rho}=\phi$ in $\Omega_4,\Omega_5$ and
$$
\begin{gathered}
\rho^{-1} Z_{0i}(\rho) = -\frac{t\theta_i}{r}=\begin{cases}
-s\theta_i & \textrm{in $\Omega_1$} \\
-(1-a)\theta_i & \textrm{in $\Omega_2$}\\
-(1+\rho\tau)\theta_i & \textrm{in $\Omega_3$}
\end{cases}, \quad
\phi^{-1} Z_{0i}(\phi)=-\frac{x^i}{t}=\begin{cases}
-(1-\bar{a})\theta_i& \textrm{in $\Omega_4$}\\
-y^i& \textrm{in $\Omega_5$}
\end{cases}.
\end{gathered}
$$
Hence $ \tilde{\rho}^{-1}Z_{0i}(\tilde{\rho})\in C^{\infty}(\overline{\Omega_j})$ for all $1\leq i\leq n$ and $1\leq j\leq 5$. Since
$$
[\mathscr{V}_b(\Omega_j), C^{\infty}(\overline{\Omega_j})]\subset C^{\infty}(\overline{\Omega_j}),
$$
it is obviously for $j=1$ or $j=5$, 
$$
[\Box_{\tilde{m}},  \tilde{\rho}^{-1}Z_{0i}(\tilde{\rho})] \in \mathscr{V}_b(\Omega_j)+C^{\infty}(\overline{\Omega_j}).
$$
For $j=2,3,4$, by Lemma \ref{lem.2}, we only need to consider 
$$
[2\partial_{\rho_1}D,  \tilde{\rho}^{-1}Z_{0i}(\tilde{\rho})] =2\partial_{\rho_1}D(  \tilde{\rho}^{-1}Z_{0i}(\tilde{\rho})) +2D(  \tilde{\rho}^{-1}Z_{0i}(\tilde{\rho})) \partial_{\rho_1}+ 2\partial_{\rho_1}(  \tilde{\rho}^{-1}Z_{0i}(\tilde{\rho})) D
$$
where $2\partial_{\rho_1}D(  \tilde{\rho}^{-1}Z_{0i}(\tilde{\rho})),  2\partial_{\rho_1}(  \tilde{\rho}^{-1}Z_{0i}(\tilde{\rho})) \in C^{\infty}(\overline{\Omega_j})$ and
$D(  \tilde{\rho}^{-1}Z_{0i}(\tilde{\rho}))\in \rho_1 C^{\infty}(\overline{\Omega_j})$. Hence
$$
[2\partial_{\rho_1}D,  \tilde{\rho}^{-1}Z_{0i}(\tilde{\rho})]  \in \mathscr{V}_b(\Omega_j)+C^{\infty}(\overline{\Omega_j}).
$$
We finish the proof.
\end{proof}

\subsection{Conformal Transformation of Reduced Einstein Equations}\label{sec.conftransform}
With the conformal transformation (\ref{eq.11}),
the reduced Einstein vacuum equations (\ref{eq.2}) is equivalent to the following system:
\begin{equation}\label{eq.12}
\begin{gathered}
(\Box_{\tilde{g}}+\gamma)\tilde{h}_{\mu\nu} = (\rho_0\rho_2)^{\frac{n-1}{2}} \rho_1^{\frac{n-5}{2}} \tilde{F}_{\mu\nu}(\tilde{h},\tilde{\partial}\tilde{h}),
\end{gathered}
\end{equation}
where $\gamma=\gamma_0+\tilde{\rho}^{\frac{n-1}{2}}\Theta_1(\tilde{h})$ is given in Lemma \ref{lem.2} and 
\begin{equation*}
\begin{aligned}
\tilde{F}_{\mu\nu}=&\ (\rho_0\rho_2)^{-(n+1)}\rho_1^{-(n-1)}F_{\mu\nu}(\tilde{\rho}^{\frac{n-1}{2}}\tilde{h})(\partial(\tilde{\rho}^{\frac{n-1}{2}}\tilde{h}), \partial(\tilde{\rho}^{\frac{n-1}{2}}\tilde{h})).
\end{aligned}
\end{equation*}
Using the explicit formula of $F_{\mu\nu}$ given in \cite{LR1}, we have
\begin{lemma} \label{lem.4.8}
For $n\geq 3$, 
$$\tilde{F}_{\mu\nu}=\Theta_0(\tilde{h})(\tilde{\partial}\tilde{h},\tilde{\partial}\tilde{h})+\rho_1\Theta_1(\tilde{h})(\tilde{\partial}\tilde{h})+\rho_1\Theta_2(\tilde{h}).
$$
In particular, in domains $\Omega_2,\Omega_3,\Omega_4$, 
$$
\tilde{F}_{\mu\nu}=\theta_{\mu}\theta_{\nu}\big(\tfrac{1}{4}(D\mathrm{tr}_m\tilde{h})^2-\tfrac{1}{2}D\tilde{h}^{\alpha}_{\beta}D\tilde{h}^{\beta}_{\alpha}\big) +\rho_1\big(\Theta_0(\tilde{h})(\tilde{\partial}\tilde{h},\tilde{\partial}\tilde{h})+\Theta_1(\tilde{h})(\tilde{\partial}\tilde{h})\big)+\rho_1^2\Theta_2(\tilde{h})
$$
where $D\in \mathscr{V}_b(\Omega_i)$ is defined in Lemma \ref{lem.4}. 
\end{lemma}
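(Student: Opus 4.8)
The plan is to reduce the claim to local computations in the coordinate charts $\Omega_0,\dots,\Omega_5$ of Section~\ref{sec.geosetting}, exploiting the precise algebraic structure of the reduced nonlinearity due to Lindblad and Rodnianski. Concretely, I would first recall from \cite{LR1} that
\[
F_{\mu\nu}(h)(\partial h,\partial h)=P(\partial_\mu h,\partial_\nu h)+Q_{\mu\nu}(\partial h,\partial h)+G_{\mu\nu}(h)(\partial h,\partial h),
\]
where $P(\partial_\mu h,\partial_\nu h)=\tfrac14\partial_\mu(m^{\alpha\beta}h_{\alpha\beta})\,\partial_\nu(m^{\gamma\delta}h_{\gamma\delta})-\tfrac12\partial_\mu h^{\alpha\beta}\,\partial_\nu h_{\alpha\beta}$ is the semilinear term carrying the weak null structure, $Q_{\mu\nu}$ is a constant-coefficient combination of the classical null forms $Q_0(\varphi,\psi)=m^{\alpha\beta}\partial_\alpha\varphi\,\partial_\beta\psi$ and $Q_{ab}(\varphi,\psi)=\partial_a\varphi\,\partial_b\psi-\partial_b\varphi\,\partial_a\psi$ evaluated on components of $h$, and $G_{\mu\nu}(h)(\partial h,\partial h)=\Theta_1(h)(\partial h,\partial h)$ collects the contributions whose coefficients vanish at $h=0$. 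That $P,Q,G$ are genuinely quadratic in $\partial h$ with coefficients of class $\Theta_0(h)$ follows from the explicit formulas for $E_{\mu\nu},D_{\mu\nu}$ in Section~\ref{sec.initialdata}; no gauge condition is needed.

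Next I would record the form of $\partial_\mu$ near $S_1^{\pm}$. Reading off the coordinate expressions in the proof of Lemma~\ref{lem.0}, one checks that in each of $\Omega_2,\Omega_3,\Omega_4$, with $D\in\mathscr{V}_b(\Omega_i)$ as in Lemma~\ref{lem.4},
\[
\partial_\mu=\rho_0\rho_2\,(-\theta_\mu D+\rho_1 W_\mu),\qquad W_\mu\in\mathscr{V}_b(X),
\]
while $[D,\tilde\rho]=[D,\theta]=0$. Since $W_\mu\tilde\rho\in\tilde\rho\,C^\infty(X)$, this yields
\[
\partial_\mu(\tilde\rho^{\frac{n-1}{2}}\tilde h)=\rho_0\rho_2\,\tilde\rho^{\frac{n-1}{2}}\big(-\theta_\mu D\tilde h+\rho_1\Xi_\mu\big),\qquad \Xi_\mu:=W_\mu\tilde h+c_\mu\tilde h,
\]
with $c_\mu\in C^\infty(X)$, so each component of $\Xi_\mu$ has the form $\tilde\partial\tilde h+\Theta_1(\tilde h)$. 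Substituting this into the three pieces of $F_{\mu\nu}$: in $P$ the leading term is $(\rho_0\rho_2)^2\tilde\rho^{n-1}\theta_\mu\theta_\nu\big(\tfrac14(D\,\mathrm{tr}_m\tilde h)^2-\tfrac12 D\tilde h^{\alpha}_{\beta}D\tilde h^{\beta}_{\alpha}\big)$ and the remaining terms each carry a factor $\rho_1$; in every classical null form the ``$DD$'' leading term drops out because $\theta$ is $m$-null, $m^{\alpha\beta}\theta_\alpha\theta_\beta=m_{\gamma\delta}\theta^\gamma\theta^\delta=-(\theta^0)^2+\sum_{i=1}^n(\theta^i)^2=-1+1=0$ (for $Q_{ab}$ it cancels by antisymmetry as well), so $Q_{\mu\nu}$ gains an extra $\rho_1$; and $G_{\mu\nu}=\Theta_1(\tilde\rho^{\frac{n-1}{2}}\tilde h)(\cdots)$ with $\tilde\rho^{\frac{n-1}{2}}=\rho_1\,\rho_0^{\frac{n-1}{2}}\rho_1^{\frac{n-3}{2}}\rho_2^{\frac{n-1}{2}}\in\rho_1\,\mathscr{A}^{0,0,0}(X)$ for $n\ge3$, gaining one more $\rho_1$. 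Collecting the resulting $\Theta_0,\Theta_1,\Theta_2$ terms and using $(\rho_0\rho_2)^2\tilde\rho^{n-1}=(\rho_0\rho_2)^{n+1}\rho_1^{n-1}$, the prefactor $(\rho_0\rho_2)^{-(n+1)}\rho_1^{-(n-1)}$ in the definition of $\tilde F_{\mu\nu}$ cancels exactly, which is the refined formula on $\Omega_2,\Omega_3,\Omega_4$.

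On $\Omega_0,\Omega_1,\Omega_5$ one has $\rho_1\equiv1$, so no null cancellation is needed: here $\partial_\mu=\rho_0\rho_2 V_\mu$ with $V_\mu\in\mathscr{V}_b(X)$ by Lemma~\ref{lem.0}(iii), hence $\partial_\mu(\tilde\rho^{\frac{n-1}{2}}\tilde h)=\rho_0\rho_2\tilde\rho^{\frac{n-1}{2}}(\tilde\partial\tilde h+\Theta_1(\tilde h))$, and feeding this into $F_{\mu\nu}$ and cancelling powers as above produces $\tilde F_{\mu\nu}=\Theta_0(\tilde h)(\tilde\partial\tilde h,\tilde\partial\tilde h)+\Theta_1(\tilde h)(\tilde\partial\tilde h)+\Theta_2(\tilde h)$, which is the asserted general formula since $\rho_1\equiv1$ there. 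Since on $\Omega_2,\Omega_3,\Omega_4$ the refined formula implies the general one (as $\theta_\mu\theta_\nu(\cdots)\in\Theta_0(\tilde h)(\tilde\partial\tilde h,\tilde\partial\tilde h)$ and $\rho_1$ is bounded), combining the charts proves the lemma. The principal difficulty is organizational rather than conceptual: one must import the Lindblad--Rodnianski decomposition correctly, single out the term $P(\partial_\mu h,\partial_\nu h)$ which fails the classical null condition yet survives the conformal rescaling as the explicit $\theta_\mu\theta_\nu$-coefficient quadratic form in $D\tilde h$, and then track the four weights $\rho_0,\rho_1,\rho_2,\tilde\rho$ through the substitution $\partial(\tilde\rho^{\frac{n-1}{2}}\tilde h)$ so that all powers cancel. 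The one genuinely structural input is that $\theta$ is null for $m$, which forces $Q_0,Q_{ab}$ to shed their bad leading part, together with the identification in Lemma~\ref{lem.4} of $D$ with the bad derivative $\partial_{t-r}$.
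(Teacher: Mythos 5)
Your proposal is correct and follows essentially the same route as the paper's proof: start from the Lindblad--Rodnianski decomposition $F=P+Q+G$, substitute the conformal rescaling $h=\tilde\rho^{(n-1)/2}\tilde h$, isolate the $\theta_\mu\theta_\nu$-quadratic form in $D\tilde h$ coming from $P$ as the only term that does not gain a power of $\rho_1$, and cancel the $(\rho_0\rho_2)^{n+1}\rho_1^{n-1}$ prefactor. Where you go somewhat beyond the paper is in making explicit the mechanism for the gain on $Q_{\mu\nu}$ — writing $\partial_\mu=\rho_0\rho_2(-\theta_\mu D+\rho_1 W_\mu)$ and observing that the leading $D\otimes D$ contribution to every classical null form vanishes because $m^{\alpha\beta}\theta_\alpha\theta_\beta=0$ (and by antisymmetry for $Q_{ab}$); the paper simply asserts that $Q$ satisfies the null condition and records the resulting weight. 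This extra detail is a useful clarification but does not change the argument's structure.
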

\begin{proof}
By \cite{LR1}, $F_{\mu\nu}(\partial h,\partial h)=P_{\mu\nu}(\partial h,\partial h)+Q_{\mu\nu}(\partial h,\partial h)+G(h)_{\mu\nu}(\partial h,\partial h)$, where
$$
\begin{aligned}
P_{\mu\nu}(\partial h,\partial h) = \tfrac{1}{4}\partial_{\mu}(\mathrm{tr}_mh)\partial_{\nu}(\mathrm{tr}_mh)-\tfrac{1}{2}\partial_{\mu}h_{\alpha}^{\beta}\partial_{\nu}h^{\alpha}_{\beta},
\end{aligned}
$$
$Q_{\mu\nu}(\partial h,\partial h)$ satisfies the null condition and 
$$
G(h)_{\mu\nu}(\partial h,\partial h)=\Theta_1(h)(\partial h, \partial h).
$$
First notice that $[\partial ,\tilde{\rho}^{\frac{n-1}{2}}]\in\tilde{\rho}^{\frac{n+1}{2}}\mathscr{A}^{0,0,0}(X)$. 
Hence by the conformal transformation (\ref{eq.11}), 
$$
\begin{aligned}
P_{\mu\nu}(\partial h,\partial h) =&\ \tilde{\rho}^{n-1}\big((\rho_0\rho_2)^2\Theta_0(\tilde{h})(\tilde{\partial}\tilde{h},\tilde{\partial}\tilde{h})+\rho_0\rho_2\tilde{\rho}\Theta_1(\tilde{h})(\tilde{\partial}\tilde{h})+ \tilde{\rho}^{2}\Theta_2(\tilde{h})\big),
\\
Q_{\mu\nu}(\partial h,\partial h) =&\  \tilde{\rho}^{n}\rho_0\rho_2\big( \Theta_0(\tilde{h})(\tilde{\partial}\tilde{h},\tilde{\partial}\tilde{h})+\Theta_1(\tilde{h})(\tilde{\partial}\tilde{h})\big )+ \tilde{\rho}^{n+1}\Theta_2(\tilde{h}),
\\
G(h)_{\mu\nu}(\partial h,\partial h)=&\ \tilde{\rho}^{\frac{3n-3}{2}} \big( 
(\rho_0\rho_2)^2\Theta_1(\tilde{h})(\tilde{\partial}\tilde{h},\tilde{\partial}\tilde{h}) + \rho_0\rho_2\tilde{\rho}\Theta_2(\tilde{h})(\tilde{\partial}\tilde{h}) +\tilde{\rho}^2\Theta_3(\tilde{h})
\big).
\end{aligned}
$$
In particular, in domain $\Omega_2,\Omega_3,\Omega_4$, 
$$
\begin{aligned}
P_{\mu\nu}(\partial h,\partial h) =&\ \tilde{\rho}^{n-1}
(\rho_0\rho_2)^2 \theta_{\mu}\theta_{\nu}\big(\tfrac{1}{4}(D\mathrm{tr}_m\tilde{h})^2-\tfrac{1}{2}D\tilde{h}^{\alpha}_{\beta}D\tilde{h}^{\beta}_{\alpha}\big)
\\
&\ +\tilde{\rho}^n\rho_0\rho_2\big(\Theta_0(\tilde{h})(\tilde{\partial}\tilde{h},\tilde{\partial}\tilde{h})+\Theta_1(\tilde{h})(\tilde{\partial}\tilde{h})\big)+\tilde{\rho}^{n+1}\Theta_2(\tilde{h}).
\end{aligned}
$$
We finish the proof.
\end{proof}

Suppose $h$ is a solution of the equation (\ref{eq.12}). Then we can insert vector fields which are tangent to the boundary of $X$ and have the following lemma.
\begin{lemma}\label{lem.6}
Let $\tilde{D}$ be any vector field in $\mathscr{B}_j$ for $1\leq j\leq 5$ and $I$ a multi-index. Then in each domain $\Omega_j$ for $1\leq j\leq 5$, the equations (\ref{eq.12}) implies that
$$
(\Box_{\tilde{g}}+\gamma)\tilde{D}^I\tilde{h}_{\mu\nu}= \sum_{0\leq i\leq |I|+1}\tilde{\partial}^i \tilde{h}_{\mu\nu}+ \tilde{f}^I_{\mu\nu}(\tilde{h}),
$$
where in $\Omega_1,\Omega_5$,
$$
\tilde{f}^I_{\mu\nu}(\tilde{h}) =\tilde{\rho}^{\frac{n-1}{2}}\left(\sum _{\alpha_1+\cdots\alpha_l\leq |I|+2,\ 2\leq l\leq |I|+2,\ 0\leq \alpha_i\leq |I|+1} \Theta_0(\tilde{h})(\tilde{\partial}^{\alpha_1}\tilde{h},\cdots,\tilde{\partial}^{\alpha_l}\tilde{h})\right);
$$
 in $\Omega_2, \Omega_3, \Omega_4$,
$$
\begin{aligned}
\tilde{f}^I_{\mu\nu}(\tilde{h}) =\ & (\rho_0\rho_2)^{\frac{n-1}{2}}\rho_1^{\frac{n-5}{2}}\left(\sum_{\alpha_1+\alpha_2\leq |I|}\Theta_0(\tilde{h})(\tilde{D}^{\alpha_1}D\tilde{h},\tilde{D}^{\alpha_2}D\tilde{h})+\sum_{\alpha_1+\alpha_2\leq |I|+2}\Theta_0(\tilde{h})(\tilde{D}^{\alpha_1}\tilde{h}_{\rho\rho},\tilde{D}^{\alpha_2}\tilde{h})\right)
\\&+(\rho_0\rho_2)^{\frac{n-1}{2}}\rho_1^{\frac{n-3}{2}}
\left(\sum _{l\geq 2,\ \alpha_1+\cdots\alpha_l\leq |I|+2,\ \alpha_i\leq |I|+1} \Theta_0(\tilde{h})(\tilde{D}^{\alpha_1}\tilde{h},\cdots,
\tilde{D}^{\alpha_l}\tilde{h})\right).
\end{aligned}
$$
\end{lemma}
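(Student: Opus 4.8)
The plan is to prove the commutator identity by induction on the length $|I|$ of the multi-index, using Lemma \ref{lem.4.7} as the base mechanism and Lemma \ref{lem.4.8} to handle the nonlinear source term. First I would treat the case $|I|=0$: this is exactly equation (\ref{eq.12}), where the right-hand side $(\rho_0\rho_2)^{\frac{n-1}{2}}\rho_1^{\frac{n-5}{2}}\tilde F_{\mu\nu}(\tilde h,\tilde\partial\tilde h)$ is rewritten via Lemma \ref{lem.4.8}. In $\Omega_1,\Omega_5$ the factor $\rho_1=1$, so $(\rho_0\rho_2)^{\frac{n-1}{2}}\rho_1^{\frac{n-5}{2}}=\tilde\rho^{\frac{n-1}{2}}$ up to equivalence, and the expression $\Theta_0(\tilde h)(\tilde\partial\tilde h,\tilde\partial\tilde h)+\rho_1\Theta_1(\tilde h)(\tilde\partial\tilde h)+\rho_1\Theta_2(\tilde h)$ collapses into the claimed sum $\sum_{2\le l\le 2}\Theta_0(\tilde h)(\tilde\partial^{\alpha_1}\tilde h,\dots,\tilde\partial^{\alpha_l}\tilde h)$ with $\alpha_i\le 1$, after absorbing the $\Theta_1,\Theta_2$ pieces into $\Theta_0$-terms (recall $\Theta_1(\tilde h)=\Theta_0(\tilde h)\cdot\tilde h$, so a product of $\tilde h$ with lower-order derivative factors). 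In $\Omega_2,\Omega_3,\Omega_4$ the second (sharp) form of Lemma \ref{lem.4.8} gives precisely the $\ell=2$, $\alpha_1=\alpha_2=0$ term $\Theta_0(\tilde h)(D\tilde h,D\tilde h)$ together with the $\rho_1$-weighted remainder, matching the stated $\tilde f^0_{\mu\nu}$ with $|I|=0$; the $\tilde h_{\rho\rho}$ term comes from writing $(D\mathrm{tr}_m\tilde h)^2 - 2D\tilde h^\alpha_\beta D\tilde h^\beta_\alpha$ and noting $D\tilde h_{\rho\rho}$ is itself $O(\rho_1)$ plus $\Theta_1$-terms by the harmonic-gauge Lemma following Lemma \ref{lem.4}, which lets the cross-terms with $\tilde h_{\rho\rho}$ be displayed at weight $\rho_1^{\frac{n-5}{2}}$ as written.

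Next I would carry out the inductive step. Assume the identity holds for all multi-indices of length $\le k$, and let $I$ have length $k+1$, so $\tilde D^I = \tilde D\, \tilde D^{I'}$ with $|I'|=k$ and $\tilde D\in\mathscr{B}_j$. Apply $\tilde D$ to the equation for $\tilde D^{I'}\tilde h_{\mu\nu}$ and commute:
\begin{equation*}
(\Box_{\tilde g}+\gamma)\tilde D^I\tilde h_{\mu\nu} = \tilde D\big[(\Box_{\tilde g}+\gamma)\tilde D^{I'}\tilde h_{\mu\nu}\big] + [\Box_{\tilde g}+\gamma,\tilde D]\tilde D^{I'}\tilde h_{\mu\nu}.
\end{equation*}
For the commutator term I would split $\Box_{\tilde g}+\gamma = (\Box_{\tilde m}+\gamma_0) + (\text{perturbation})$ using Lemma \ref{lem.2}. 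The flat part contributes $[\Box_{\tilde m},\tilde D]$, which by Lemma \ref{lem.4.7} is of the form $c(\Box_{\tilde m}+\gamma_0)+\tilde\partial + c'$ (in $\Omega_3$ similarly, with $\rho\partial_\rho$ producing $\Box_{\tilde m}$ plus a second-order b-operator); feeding $\Box_{\tilde m}+\gamma_0$ back through the induction hypothesis again yields terms of the allowed type (derivatives of $\tilde h$ up to order $|I|+1$ plus a source of the same structural form, with the $\Theta_0$-coefficients and weights preserved since $c,c'\in C^\infty$). The perturbation part of $\Box_{\tilde g}$ in $\Omega_1,\Omega_5$ is $\tilde\rho^{\frac{n-1}{2}}\Theta_1(\tilde h)(\tilde\partial^2+\tilde\partial)$, and in $\Omega_2,\Omega_3,\Omega_4$ is $-(\rho_0\rho_2)^{\frac{n-1}{2}}\rho_1^{\frac{n-5}{2}}\tilde h_{\rho\rho}(D^2+(3-j)D) + (\rho_0\rho_2)^{\frac{n-1}{2}}\rho_1^{\frac{n-3}{2}}\Theta_1(\tilde h)(\tilde\partial^2+\tilde\partial)$; commuting any $\tilde D\in\mathscr{B}_j$ past these produces, by the Leibniz rule, either a coefficient of the same $\Theta$-type times $\tilde D$-derivatives of $\tilde h$ up to order $|I|+1$ (these get absorbed into $\sum\tilde\partial^i\tilde h$ on the right), or new multilinear $\Theta_0$-terms of the same weight with one more $\tilde h$-factor and the derivative budget $\alpha_1+\cdots+\alpha_l\le |I|+2$, $\alpha_i\le|I|+1$ exactly as claimed. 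The term $\tilde D\big[\tilde f^{I'}_{\mu\nu}(\tilde h)\big]$ is handled directly by the Leibniz rule, which raises the number of factors by at most one and the total derivative count by one, staying within the stated range.

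The main obstacle, and the point requiring care rather than routine bookkeeping, is tracking the weight factors $\rho_1^{\frac{n-5}{2}}$ versus $\rho_1^{\frac{n-3}{2}}$ in $\Omega_2,\Omega_3,\Omega_4$ and verifying that differentiating by b-vector fields never degrades a $\rho_1^{\frac{n-3}{2}}$-term into the borderline $\rho_1^{\frac{n-5}{2}}$ slot except through the specific channel $\tilde h_{\rho\rho}$ and $D\tilde h$. The key structural facts that make this work are: (i) $\tilde D\in\mathscr{B}_j$ is b-tangent, so $\tilde D\rho_1\in\rho_1 C^\infty$, hence $\tilde D$ commutes with $\rho_1^c$ up to harmless $C^\infty$ factors and never lowers a $\rho_1$-power; (ii) $D$ itself (from Lemma \ref{lem.4}) is b-tangent, so iterated $D$-derivatives stay at the same weight; and (iii) by the harmonic-gauge consequence after Lemma \ref{lem.4}, $D\tilde h_{\rho\rho}$ and the analogous quantities carry an extra $\rho_1$, so the only genuinely borderline contributions are the quadratic-in-$D\tilde h$ and the $\tilde h_{\rho\rho}\cdot\tilde h$ terms displayed explicitly in the statement — everything else can be assigned the better weight $\rho_1^{\frac{n-3}{2}}$. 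Once these three observations are in hand, the induction closes and the claimed form of $\tilde f^I_{\mu\nu}(\tilde h)$ follows.
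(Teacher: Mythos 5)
Your proposal is correct and takes essentially the same route as the paper: the base case $|I|=0$ is Lemma 4.8, and the inductive step goes through the commutator $[\Box_{\tilde g}+\gamma,\tilde D]$, split into $[\Box_{\tilde m},\tilde D]$ (Lemma 4.7) plus the perturbation $[\Box_{\tilde g}-\Box_{\tilde m}+\gamma-\gamma_0,\tilde D]$ (Lemma 4.5), with $\Box_{\tilde m}$ reexpressed in terms of $\Box_{\tilde g}+\gamma$ to close the induction. The paper writes the whole $\tilde D^I$ commutator at once via $\sum_{|K|+|J|=|I|-1}C_{K,J}\tilde D^K[\Box_{\tilde m},\tilde D]\tilde D^J$, while you peel off one $\tilde D$ at a time; these are equivalent. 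One small misattribution: in the base case for $\Omega_2,\Omega_3,\Omega_4$, the $\tilde h_{\rho\rho}$-quadratic sum in the displayed $\tilde f^I$ is not extracted from the $D\tilde h$-quadratic term $\theta_\mu\theta_\nu\big(\tfrac14(D\mathrm{tr}_m\tilde h)^2-\tfrac12 D\tilde h^\alpha_\beta D\tilde h^\beta_\alpha\big)$ in Lemma 4.8 — that term is exactly the first sum at $\alpha_1=\alpha_2=0$, and for $|I|=0$ the $\tilde h_{\rho\rho}$-sum is simply vacuous. Those $\tilde h_{\rho\rho}$ terms enter only under induction, generated by commuting $\tilde D$ past the $-\rho_1^{\frac{n-5}{2}}\tilde h_{\rho\rho}(D^2+(3-i)D)$ piece of $\Box_{\tilde g}-\Box_{\tilde m}$ in Lemma 4.5. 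This does not affect the validity of your inductive step; your three structural observations (b-tangency of $\tilde D$ and $D$, the harmonic-gauge gain on $D\tilde h_{\rho\rho}$) are the right ones for the weight bookkeeping the paper leaves implicit.
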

\begin{proof}
It is obviously true if $|I|=0$ by Lemma \ref{lem.4.8}. For $|I|\geq 1$, 
$$
(\Box_{\tilde{g}}+\gamma)\tilde{D}^I\tilde{h}_{\mu\nu}
= [\Box_{\tilde{g}}-\Box_{\tilde{m}} +\gamma-\gamma_0,\tilde{D}^I] \tilde{h}_{\mu\nu}+\tilde{D}^I\tilde{f}^0_{\mu\nu}(\tilde{h}) +[\Box_{\tilde{m}},\tilde{D}^I]\tilde{h}_{\mu\nu}.
$$
It is obvious the first two terms on the right hand side are nonlinear and can be expressed as above. For the third term, from Lemma \ref{lem.4.7}, we have
$$
[\Box_{\tilde{m}},\tilde{D}]=c\Box_{\tilde{m}}+\tilde{\partial}^2+\tilde{\partial}+c'
$$
for some $c,c'\in C^{\infty}(\overline{\Omega}_i)$. Then for $|I|>0$
$$
\begin{aligned}
\  [\Box_{\tilde{m}},\tilde{D}^I]\tilde{h}_{\mu\nu}
 =&\ \sum_{|K|+|J|=|I|-1}C_{K,J}\tilde{D}^{K}[\Box_{\tilde{m}},\tilde{D}]\tilde{D}^{J}\tilde{h}_{\mu\nu}
 \\
 =&\ \sum_{|K|+|J|=|I|-1}C_{K,J}\tilde{D}^{K}\big(c(\Box_{\tilde{g}}+\gamma)+c(\Box_{\tilde{m}}-\Box_{\tilde{g}}+\gamma_0-\gamma)+\tilde{\partial}^2+\tilde{\partial}+c'-c\gamma_0\big)\tilde{D}^{J}\tilde{h}_{\mu\nu}
 \end{aligned}
$$
where $C_{K,J}$ are constants. By induction on $I$ and Lemma \ref{lem.2} , $ [\Box_{\tilde{m}},\tilde{D}^I]\tilde{h}_{\mu\nu}$ is equal to $\sum_{0\leq i\leq |I|+1}\tilde{\partial}^i \tilde{h}_{\mu\nu} $ plus a nonlinear term which can be expressed as above. 
We finish the proof. 
\end{proof}

\vspace{0.2in}
\section{Time-like Functions.} \label{sec.timelikefun}
Time-like function is the most important concept associated to a Lorentzian metric: we use it to define
space-like hypersurfaces, positive quadratic forms as well as energy norms. In this section, we consider the time-like functions w.r.t. $\tilde{m}=\tilde{\rho}^2m$ and $\tilde{g}=\tilde{\rho}^2g=\tilde{m}+\tilde{\rho}^{\frac{n+3}{2}}\tilde{h}$ with $\tilde{h}$ small in the interior of $X$.  

\begin{definition}
Suppose $\tilde{g}$ extends to a Lorentz b-metric on $[X]^2$ of signature $(n,1)$, $T\in C^1(\mathrm{Int}([X]^2))$ and $p\in \mathrm{Int}([X]^2)$. We say $T$  is \textit{time-like} w.r.t. $\tilde{g}$ at $p$ iff
\begin{equation*}
\langle \nabla T,\nabla T\rangle_{\tilde{g}}<0 \quad\textrm{at $p$}, 
\end{equation*}
\textit{and null} w.r.t. $\tilde{g}$ at $p$ iff
\begin{equation*}
\langle \nabla T,\nabla T\rangle_{\tilde{g}}=0 \quad\textrm{at $p$}\ .
\end{equation*}
A hypersurface $\Sigma\subset X$ is called \textit{space-like} (resp. \textit{null}) w.r.t. $\tilde{g}$ iff $\Sigma$ has a defining function $T\in C^1(\mathrm{Int}([X]^2))$ such that $T$ is time-like (resp. null) on $\Sigma$. 
\end{definition}
\begin{definition}\label{def.4}
For a time like function $T$ w.r.t. $\tilde{g}$, define a vector field $\mathcal{F}_{\tilde{g}}(T,v)$ associate to $T$ and quadratic in $v$ by
\begin{equation*}
\mathcal{F}_{\tilde{g}}(T,v)= \langle \nabla T,\nabla v\rangle_{\tilde{g}} \nabla v -\tfrac{1}{2} \langle \nabla v,\nabla v\rangle_{\tilde{g}} \nabla T +\tfrac{1}{2}\gamma_0 v^2\nabla T . 
\end{equation*}
Here $\nabla$ is the covariant derivative w.r.t. $\tilde{g}$ and $\gamma_0<0$ is some constant. 
\end{definition}
\begin{lemma}
Suppose $T,T'$ are two time-like functions w.r.t. a Lorentzian metric $\tilde{g}$ at $p\in \mathrm{Int}([X]^2)$ such that $\langle \nabla T,\nabla T'\rangle_{\tilde{g}}<0$ at $p$, then $\langle \mathcal{F}_{\tilde{g}}(T,v),\nabla T'\rangle_{\tilde{g}}$ is a strict positive quadratic form of $(\nabla v, v)$ at $p$. 
\end{lemma}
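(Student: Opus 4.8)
The plan is to expand $\langle \mathcal{F}_{\tilde{g}}(T,v),\nabla T'\rangle_{\tilde{g}}$ according to Definition \ref{def.4} into three scalar terms and treat them separately. Writing $W=\nabla T$, $W'=\nabla T'$, $V=\nabla v$, and taking all inner products w.r.t. $\tilde g$ at the point $p$, one has
\[
\langle \mathcal{F}_{\tilde{g}}(T,v),\nabla T'\rangle_{\tilde{g}} = \langle W,V\rangle\langle V,W'\rangle - \tfrac12\langle V,V\rangle\langle W,W'\rangle + \tfrac12\gamma_0 v^2\langle W,W'\rangle .
\]
The last term is handled immediately: since $\gamma_0<0$ and, by hypothesis, $\langle W,W'\rangle<0$, we get $\tfrac12\gamma_0 v^2\langle W,W'\rangle\geq 0$ with equality iff $v=0$. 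It then remains to prove that the quadratic form $Q[V]:=\langle W,V\rangle\langle V,W'\rangle - \tfrac12\langle V,V\rangle\langle W,W'\rangle$ in $V$ is positive definite.

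To see this I would work in a $\tilde g$-orthonormal frame $e_0,\dots,e_n$ at $p$ chosen so that the timelike vector $W$ is proportional to $e_0$, say $W=w\,e_0$ with $w=\sqrt{-\langle W,W\rangle}>0$. Decompose $W'=a\,e_0+b$ and $V=v_0\,e_0+\vec v$, with $b,\vec v$ in the spacelike span of $e_1,\dots,e_n$. From $\langle W,W'\rangle=-wa<0$ we get $a>0$, and from $\langle W',W'\rangle=-a^2+|b|^2<0$ (where $|b|^2=\langle b,b\rangle\geq0$) we get $a>|b|\geq 0$, strictly. A direct substitution of these decompositions gives
\[
Q[V] = w\left(\tfrac{a}{2}v_0^2 - v_0\langle b,\vec v\rangle + \tfrac{a}{2}|\vec v|^2\right),
\]
and Cauchy--Schwarz together with the arithmetic--geometric mean inequality yields $|v_0\langle b,\vec v\rangle|\leq |b|\,\tfrac12(v_0^2+|\vec v|^2)$, whence $Q[V]\geq \tfrac{w}{2}(a-|b|)(v_0^2+|\vec v|^2)\geq 0$, strictly positive unless $v_0=0$ and $\vec v=0$, i.e. unless $V=\nabla v=0$. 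Combining with the first paragraph, $\langle \mathcal{F}_{\tilde{g}}(T,v),\nabla T'\rangle_{\tilde{g}}\geq 0$ with equality only if both $\nabla v=0$ and $v=0$, which is the claimed strict positivity of the quadratic form in $(\nabla v,v)$.

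The argument is entirely pointwise and elementary, so I do not expect a genuine obstacle; the one place that requires care is extracting the strict inequality $a>|b|$ from the three sign conditions $\langle W,W\rangle,\langle W',W'\rangle,\langle W,W'\rangle<0$, since this strictness is exactly what upgrades $Q$ from semidefinite to definite and lets the cross term $v_0\langle b,\vec v\rangle$ be absorbed. This is the familiar dominant-energy-condition estimate for the stress tensor $\partial_\mu v\,\partial_\nu v-\tfrac12\tilde g_{\mu\nu}\langle\nabla v,\nabla v\rangle$ evaluated on two future timelike covectors, with the extra $\tfrac12\gamma_0 v^2\nabla T$ term in $\mathcal{F}_{\tilde g}$ contributing the positive zeroth-order part thanks to $\gamma_0<0$.
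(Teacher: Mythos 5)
Your proof is correct, and it fills a gap the paper leaves: the paper gives no argument for this lemma and simply cites Chapter 6 of H\"ormander's book. The expansion of $\langle \mathcal{F}_{\tilde g}(T,v),\nabla T'\rangle_{\tilde g}$ into the three scalar terms is exactly what Definition~\ref{def.4} dictates; the zeroth-order term is handled correctly since $\gamma_0<0$ and $\langle\nabla T,\nabla T'\rangle_{\tilde g}<0$ force $\tfrac12\gamma_0\langle\nabla T,\nabla T'\rangle_{\tilde g}v^2\geq0$ with equality only at $v=0$; and the frame computation of $Q[V]$ checks out: with $W=we_0$, $W'=ae_0+b$, $V=v_0e_0+\vec v$ one indeed gets $Q[V]=w\bigl(\tfrac{a}{2}v_0^2-v_0\langle b,\vec v\rangle+\tfrac{a}{2}|\vec v|^2\bigr)$, and the strict inequality $a>|b|$ extracted from $\langle W,W'\rangle<0$ and $\langle W',W'\rangle<0$ is precisely what makes the Cauchy--Schwarz/AM--GM absorption of the cross term yield $Q[V]\geq\tfrac{w}{2}(a-|b|)(v_0^2+|\vec v|^2)>0$ for $V\neq0$. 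You are also right to flag that this strictness is the crux --- if either of the two inner products were allowed to vanish, $Q$ would degenerate. This is the standard dominant-energy-condition computation for the wave stress tensor evaluated on a pair of future-directed timelike gradients (plus a lower-order term), and it is essentially what H\"ormander's cited argument does; so while the paper's "proof" is a citation and yours is self-contained, the two are morally the same.
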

See Chapter 6 in \cite{Ho} for proof in details. Moreover, we have the following formulas from the divergence theorem directly, which implies the energy estimates if choosing $T$ and $\Omega$ properly.

\begin{lemma}\label{lem.3}
Suppose that the domain $\Omega\subset \mathring{X}$ has piecewise smooth boundary $\partial\Omega$ with defining function $T'$ and $T$ is a $C^2$ time-like function w.r.t. $\tilde{g}$ on $\overline{\Omega}\cap \mathring{X}$. Assume $T''\in C^1(\overline{\Omega}\cap \mathring{X})$. Then 
\begin{equation*}
\int_{\Omega}\mathrm{div}_{\tilde{g}} (e^{-2T''}\mathcal{F}_{\tilde{g}}(T,v))dvol_{\tilde{g}} = \int_{\partial\Omega} e^{-2T''}\langle \mathcal{F}_{\tilde{g}}(T,v), \nabla T'\rangle_{\tilde{g}} d\mu_{\tilde{g}}^{T'}.
\end{equation*}
Here $d\mu_{\tilde{g}}^{T'}$ is the volume form on $\partial\Omega$  such that $dT'\wedge d\mu_{\tilde{g}}^{T'}= dvol_{\tilde{g}}$ and $\partial\Omega$ is oriented by $d\mu_{\tilde{g}}^{T'}>0$.  Notice that  $\nabla$ is the covariant derivative w.r.t. $\tilde{g}$ and 
\begin{equation}\label{eq.div}
\begin{gathered}
\mathrm{div}_{\tilde{g}}(e^{-2T''}\mathcal{F}_{\tilde{g}}(T,v))=e^{-2T''}\big( -2\langle \mathcal{F}_{\tilde{g}}(T,v), \nabla T''\rangle_{\tilde{g}}+\langle\nabla v, \nabla T\rangle_{\tilde{g}} (\Box_{\tilde{g}}+\gamma_0)v + \mathcal{Q}_{\tilde{g}}(T,v)\big),
\\
\mathrm{where}\quad
\mathcal{Q}_{\tilde{g}}(T,v)= \tfrac{1}{2}\Box_{\tilde{g}}T(\gamma_0v^2-\langle\nabla v,\nabla v\rangle_{\tilde{g}})+\nabla^2T(dv,dv).
\end{gathered}
\end{equation}
\end{lemma}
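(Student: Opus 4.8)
The plan is to obtain both displayed formulas from two independent ingredients: Stokes' theorem applied to the vector field $W:=e^{-2T''}\mathcal{F}_{\tilde{g}}(T,v)$ on $\Omega$, and a direct Leibniz-rule computation of $\mathrm{div}_{\tilde{g}}W$ carried out in the interior $\mathring{X}$, where $\tilde{g}$ is a genuine smooth Lorentzian metric so that the Levi-Civita connection and the divergence theorem apply verbatim.

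For the boundary identity I would recall that for any $C^{1}$ vector field $W$ one has $(\mathrm{div}_{\tilde{g}}W)\,dvol_{\tilde{g}}=d(\iota_{W}\,dvol_{\tilde{g}})$, so that Stokes' theorem on the domain with piecewise smooth boundary gives $\int_{\Omega}(\mathrm{div}_{\tilde{g}}W)\,dvol_{\tilde{g}}=\int_{\partial\Omega}\iota_{W}\,dvol_{\tilde{g}}$. Writing $dvol_{\tilde{g}}=dT'\wedge d\mu_{\tilde{g}}^{T'}$ and contracting with $W$ gives $\iota_{W}\,dvol_{\tilde{g}}=(WT')\,d\mu_{\tilde{g}}^{T'}-dT'\wedge\iota_{W}d\mu_{\tilde{g}}^{T'}$; the second term pulls back to zero on the level set $\partial\Omega=\{T'=0\}$, while $WT'=\langle\nabla T',W\rangle_{\tilde{g}}$ by the definition of the gradient. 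Since $T\in C^{2}$, $v\in C^{2}$ and $T''\in C^{1}$ on $\overline{\Omega}\cap\mathring{X}$, the field $W=e^{-2T''}\mathcal{F}_{\tilde{g}}(T,v)$ is $C^{1}$ there, so the above applies and, with the stated orientation $d\mu_{\tilde{g}}^{T'}>0$, yields the first formula.

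For the pointwise divergence formula I would first use $\mathrm{div}_{\tilde{g}}(e^{-2T''}\mathcal{F})=e^{-2T''}\bigl(\mathrm{div}_{\tilde{g}}\mathcal{F}-2\langle\nabla T'',\mathcal{F}\rangle_{\tilde{g}}\bigr)$, and then expand $\mathrm{div}_{\tilde{g}}\mathcal{F}_{\tilde{g}}(T,v)$ term by term via $\mathrm{div}_{\tilde{g}}(f\nabla w)=f\,\Box_{\tilde{g}}w+\langle\nabla f,\nabla w\rangle_{\tilde{g}}$ and $X\langle\nabla u,\nabla w\rangle_{\tilde{g}}=\langle\nabla_{X}\nabla u,\nabla w\rangle_{\tilde{g}}+\langle\nabla u,\nabla_{X}\nabla w\rangle_{\tilde{g}}$. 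The term $\langle\nabla T,\nabla v\rangle_{\tilde{g}}\nabla v$ contributes $\langle\nabla T,\nabla v\rangle_{\tilde{g}}\Box_{\tilde{g}}v+\nabla^{2}T(\nabla v,\nabla v)+\langle\nabla T,\nabla_{\nabla v}\nabla v\rangle_{\tilde{g}}$; the term $-\tfrac12\langle\nabla v,\nabla v\rangle_{\tilde{g}}\nabla T$ contributes $-\tfrac12\langle\nabla v,\nabla v\rangle_{\tilde{g}}\Box_{\tilde{g}}T-\nabla^{2}v(\nabla T,\nabla v)$; and $\tfrac12\gamma_{0}v^{2}\nabla T$ contributes $\tfrac12\gamma_{0}v^{2}\Box_{\tilde{g}}T+\gamma_{0}v\langle\nabla v,\nabla T\rangle_{\tilde{g}}$. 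The decisive cancellation is that the Hessian of $v$ is symmetric since the connection is torsion-free, so $\langle\nabla T,\nabla_{\nabla v}\nabla v\rangle_{\tilde{g}}=\nabla^{2}v(\nabla v,\nabla T)=\nabla^{2}v(\nabla T,\nabla v)$ and the two $\nabla^{2}v$ terms drop out. Collecting what remains gives $\mathrm{div}_{\tilde{g}}\mathcal{F}=\langle\nabla v,\nabla T\rangle_{\tilde{g}}(\Box_{\tilde{g}}+\gamma_{0})v+\mathcal{Q}_{\tilde{g}}(T,v)$ with $\mathcal{Q}_{\tilde{g}}(T,v)$ exactly as stated, and inserting this into the factored expression for $\mathrm{div}_{\tilde{g}}(e^{-2T''}\mathcal{F})$ produces (\ref{eq.div}).

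The computation is entirely routine and I expect no genuine obstacle; the only points requiring a little care are keeping everything in $\mathring{X}$ so that no b-geometry near $\partial X$ enters (this is why $\Omega\subset\mathring{X}$ is assumed), checking that the regularity hypotheses are precisely what is needed for each differentiation and for Stokes' theorem across the corners of $\partial\Omega$, and tracking the orientation convention attached to $d\mu_{\tilde{g}}^{T'}$ so that no stray sign survives in the boundary term.
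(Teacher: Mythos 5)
Your proof is correct and is exactly the calculation the paper has in mind: the paper itself offers no written proof for this lemma, merely remarking that the formulas follow ``from the divergence theorem directly,'' and your two ingredients (Stokes' theorem for the boundary identity, then the Leibniz expansion of $\mathrm{div}_{\tilde g}\mathcal{F}$ with the cancellation of the $\nabla^2 v$ terms by symmetry of the Hessian) supply precisely the missing details. One small notational remark: the paper writes $\nabla^2 T(dv,dv)$ where you write $\nabla^2 T(\nabla v,\nabla v)$, but these agree under the metric identification, as confirmed by the paper's own expansion in the proof of Lemma~\ref{lem.5.1}.
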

\begin{figure}[htp]
\centering
\includegraphics[totalheight=2.5in,width=3.75in]{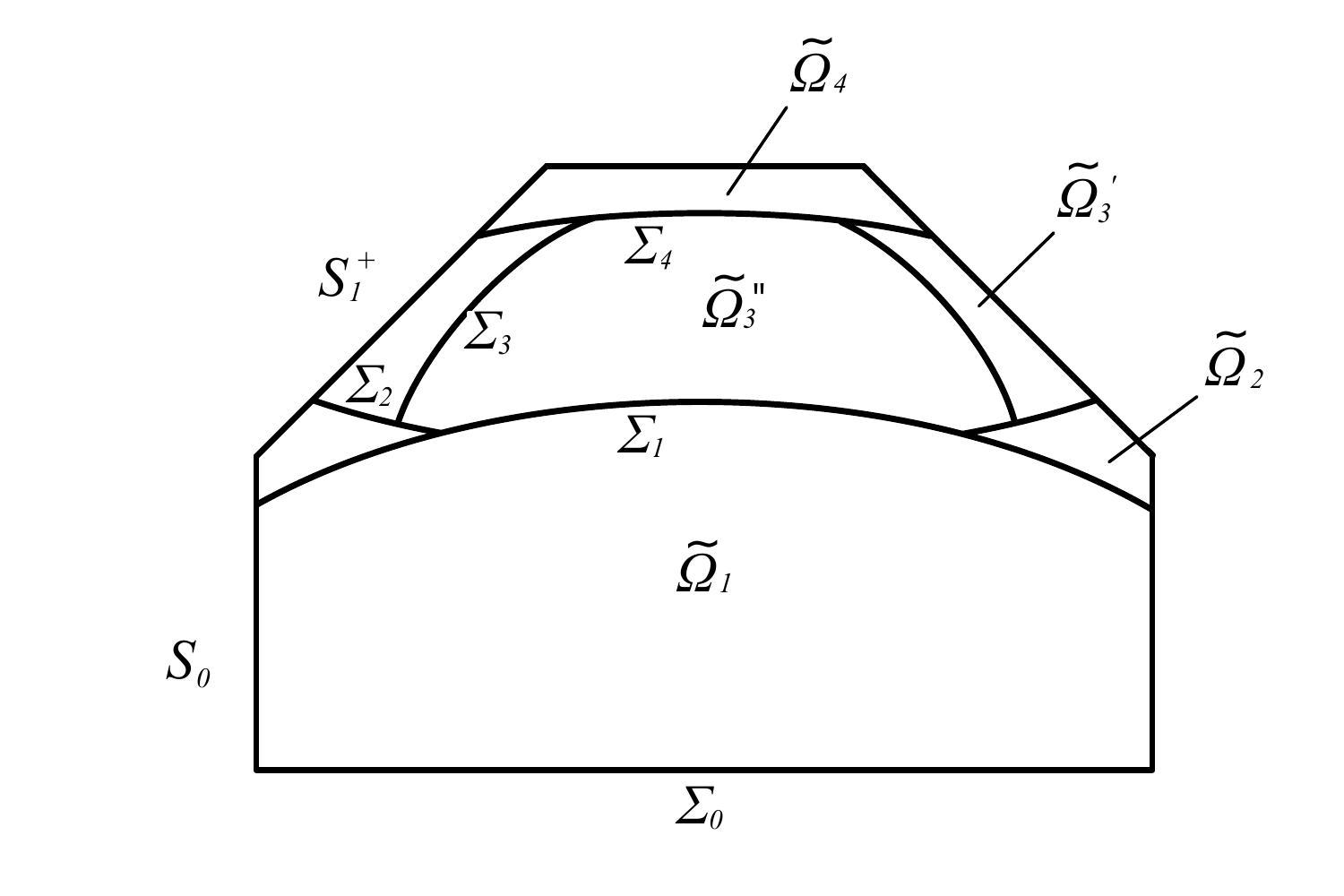}
\caption{Division of $X_{t\geq 0}$ by space like hypersurfaces and null hypersurfaces.}\label{fig.2}
\end{figure}
For the purpose of doing energy estimates, we divide $X_{t\geq 0}$ into four domains $\{\widetilde{\Omega}_i\}_{1\leq k\leq 4}$ by space-like hypersurfaces and null hypersurfaces w.r.t. $\tilde{m}$, or equivalently w.r.t. $\tilde{g}$. See Figure \ref{fig.2}. We list the time-like functions used to define those surfaces, which are also used to define the energy norms in next section, domain by domain.  For $1\leq i\leq 4$, we use $T_i,T_i'$ to denote time-like functions in $\widetilde{\Omega}_i\cap \mathring{X}$ such that $T_i'$  extends to a null function on $S_1^{+}\cap \widetilde{\Omega}_i$. 
We use $T_i''$ to denote the weight functions if necessary. We also compute $\mathcal{F}_{\tilde{g}}(T_i,v)$,  $\langle \mathcal{F}_{\tilde{g}}(T_i,v),\nabla T_i'\rangle_{\tilde{g}}$,  $\langle \mathcal{F}_{\tilde{g}}(T_i,v),\nabla T_i''\rangle_{\tilde{g}}$ and $\mathcal{Q}_{\tilde{g}}(T_i,v)$ for $1\leq i\leq 4$.  

\textbf{Assumption.} Through out this section, we assume the following:
\begin{itemize}
\item [(A1)]
 $(t,x)$ are harmonic coordinates w.r.t. $g=m+\tilde{\rho}^{\frac{n-1}{2}}\tilde{h}$.
 \item [(A2)]
 $|(\rho_0\rho_2)^{\frac{n-1}{2}}\tilde{h}|$, $|(\rho_0\rho_2)^{\frac{n-1}{2}}\tilde{\partial}\tilde{h}|$, $|(\rho_0\rho_2)^{\frac{n-1}{2}}\rho_1^{-\delta}\tilde{h}_{\rho\rho}|$, $|(\rho_0\rho_2)^{\frac{n-1}{2}}\rho_1^{-\delta}\tilde{\partial}\tilde{h}_{\rho\rho}|$ are small enough for some 
 $0<\delta<\tfrac{1}{2}$. This means $\tilde{h}_{\rho\rho}$ is bounded by $C\rho_1^{\delta}$ when approaching the null infinity $S_1^{\pm}$. 
 \item [(A3)]  $\delta' \in (0,\delta)$ satisfies $\frac{1}{2}-\delta' < 1-2\delta$ and $\alpha >1$. 
\end{itemize}

In Section 6, we will prove given small initial data $(h^0, h^1)=(O(\rho_0^{\frac{n-1}{2}+\delta}),O( \rho_0^{\frac{n+1}{2}+\delta}))$ with enough regularity the Einstein vacuum solutions will satisfy (A1)  and (A2). And with the choice of $\delta'$ satisfy (A3), the time-like functions we choose in the following will always be time-like.

\subsection{In $\widetilde{\Omega}_1$.}
In $\Omega_1\cup \Omega_0$, define
\begin{equation*}
\begin{gathered}
T_1=\frac{t}{\psi_1(r)};
\\
\widetilde{\Omega}_1=\{0\leq T_1\leq \tfrac{3}{4}\} ,\quad \Sigma_0=\{T_1=0\},\quad
\Sigma_1=\{T_1=\tfrac{3}{4}\}.
\end{gathered}
\end{equation*}
where $\psi_1\in C^2(\widetilde{\Omega}_1)$ is defined by 
$$
\psi_1(r)= \begin{cases}\tfrac{3}{8}+\tfrac{3}{4}r^2-\tfrac{1}{8}r^4 & \textrm{for $r\leq 1$} \\ r &\textrm{for $r>1$}\end{cases}
$$
Hence $0\leq \psi_1'\leq 1$. 
Then in $\widetilde{\Omega}_1$,
\begin{equation*}
\begin{gathered}
\langle \nabla T_1,\nabla T_1\rangle_{\tilde{m}} = (\tilde{\rho}\psi_1(r))^{-2}(-1+T^2(\psi'_1(r))^2)
\leq -\tfrac{7}{16}(\tilde{\rho}\psi_1(r))^{-2}<-C 
\end{gathered}
\end{equation*}
for some constant $C>0$. Hence $T_1$ is a regular time-like function w.r.t. $\tilde{m}$ all over $\widetilde{\Omega}_1$ and bounded. We choose $\tilde{\rho}=\rho_0=\psi_1^{-1}$ and $\rho_1=\rho_2=1$ in $\widetilde{\Omega}_1$. 
Here $\widetilde{\Omega}_1\cap \Omega_0$ is bounded in the interior of $X$. 
Near the boundary of $\widetilde{\Omega}_1$, i.e. in $\widetilde{\Omega}_1\cap \Omega_1$, recall that we use the following coordinates and boundary defining functions:
$$
(s,\rho,\theta)=(\frac{t}{r},\frac{1}{r},\frac{x}{r}); \quad
\tilde{\rho}=\rho_0=\rho,\ \rho_1=\rho_2=1.
$$
Let $\gamma_0<0$ be a constant and we have the following two lemmas. 
\begin{lemma}\label{lem.5.1}
In $\widetilde{\Omega}_1\cap \Omega_1$, 
\begin{equation*} 
\begin{aligned}
\langle \mathcal{F}_{\tilde{m}}(T_1,v),\nabla T_1\rangle_{\tilde{m}}
=&\ \tfrac{1}{2}|(1-s^2)\partial_sv-\rho\partial_{\rho}v|^2 +\tfrac{1}{2}|\rho\partial_{\rho}v|^2+\tfrac{1}{2}(1-s^2)(|\slashpar_{\theta}v|^2-\gamma_0v^2), \\
\mathcal{Q}_{\tilde{m}}(T_1,v)=&\  -s(|\slashpar_{\theta}v|^2-\gamma_0 v^2).
\end{aligned}
\end{equation*}
Here $\nabla$ is the covariant derivative w.r.t. $\tilde{m}$.
\end{lemma}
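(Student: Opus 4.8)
The plan is a direct computation in the coordinate chart $\Omega_1$, based on the form of $\tilde{m}$ recorded in the proof of Proposition \ref{prop.4}, on the definitions of $\mathcal{F}_{\tilde{g}}$ and $\mathcal{Q}_{\tilde{g}}$ in Definition \ref{def.4} and Lemma \ref{lem.3}, and on the expression for $\Box_{\tilde{m}}$ in $\Omega_1$ from Lemma \ref{lem.2}. First I would note that on $\widetilde{\Omega}_1\cap\Omega_1$ one has $\rho=1/r<1$, hence $r>1$ and $\psi_1(r)=r$, so $T_1=t/\psi_1(r)=t/r=s$; thus $dT_1=ds$, and $\nabla T_1$ is obtained by raising the index of $ds$ with the inverse of $\tilde{m}$. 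From the $\Omega_1$ form of $\tilde{m}$, the block in the coframe $(ds,\,d\rho/\rho)$ has Gram matrix $\left(\begin{smallmatrix}-1 & s\\ s & 1-s^2\end{smallmatrix}\right)$, of determinant $-1$, orthogonal to the round sphere block, so in the dual frame $(\partial_s,\,\rho\partial_\rho,\,\slashpar_\theta)$ one gets $\tilde{m}^{ss}=-(1-s^2)$, $\tilde{m}^{s(\rho\partial_\rho)}=s$, $\tilde{m}^{(\rho\partial_\rho)(\rho\partial_\rho)}=1$, together with the round dual metric on $\mathbb{S}^{n-1}$. This yields at once
\begin{equation*}
\langle\nabla T_1,\nabla T_1\rangle_{\tilde{m}}=-(1-s^2),\qquad
\langle\nabla T_1,\nabla v\rangle_{\tilde{m}}=-(1-s^2)\partial_s v+s\rho\partial_\rho v,
\end{equation*}
\begin{equation*}
\langle\nabla v,\nabla v\rangle_{\tilde{m}}=-(1-s^2)(\partial_s v)^2+2s(\partial_s v)(\rho\partial_\rho v)+(\rho\partial_\rho v)^2+|\slashpar_\theta v|^2 .
\end{equation*}

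Substituting these into Definition \ref{def.4} and using $\langle\nabla T_1,\nabla T_1\rangle_{\tilde{m}}=-(1-s^2)$, the quantity $\langle\mathcal{F}_{\tilde{m}}(T_1,v),\nabla T_1\rangle_{\tilde{m}}$ reduces to $\langle\nabla T_1,\nabla v\rangle_{\tilde{m}}^2+\tfrac12(1-s^2)\langle\nabla v,\nabla v\rangle_{\tilde{m}}-\tfrac12(1-s^2)\gamma_0 v^2$. Inserting the two identities above, collecting the quadratic form in $(\partial_s v,\rho\partial_\rho v)$ and completing the square then exhibits it as a perfect square plus $\tfrac12|\rho\partial_\rho v|^2$, together with the spherical term $\tfrac12(1-s^2)(|\slashpar_\theta v|^2-\gamma_0 v^2)$, which is the first formula of the lemma.

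For the second formula I would start from $\mathcal{Q}_{\tilde{m}}(T_1,v)=\tfrac12(\Box_{\tilde{m}}T_1)\big(\gamma_0 v^2-\langle\nabla v,\nabla v\rangle_{\tilde{m}}\big)+\nabla^2 T_1(dv,dv)$. From Lemma \ref{lem.2} one reads off $\Box_{\tilde{m}}s=2s$. For the Hessian, since $s$ is a coordinate $(\nabla^2 s)_{\mu\nu}=-\Gamma^s_{\mu\nu}$, and a short computation of the Christoffel symbols of $\tilde{m}$ in the $(s,\rho,\theta)$ coordinates gives $\Gamma^s_{ss}=s$, $\Gamma^s_{s(\rho\partial_\rho)}=-s^2$, $\Gamma^s_{(\rho\partial_\rho)(\rho\partial_\rho)}=-s(1-s^2)$, while every Christoffel symbol carrying a sphere index has vanishing $s$-component, the round metric being independent of $s$ and $\rho$. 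Inserting $\nabla v$ and simplifying with the repeated identity $(1-s^2)+s^2=1$, the $(\partial_s v)^2$, $(\partial_s v)(\rho\partial_\rho v)$ and $(\rho\partial_\rho v)^2$ contributions of $\nabla^2 s(dv,dv)$ cancel exactly against those coming from $-\tfrac12(\Box_{\tilde{m}}s)\langle\nabla v,\nabla v\rangle_{\tilde{m}}$, leaving $\mathcal{Q}_{\tilde{m}}(T_1,v)=-s(|\slashpar_\theta v|^2-\gamma_0 v^2)$.

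The computation is entirely mechanical, so there is no analytic obstacle. The only points requiring care are the identification $T_1=s$ on this chart (which is what makes $\nabla^2 T_1$ reduce to minus a single column of the Christoffel symbols and makes $\Box_{\tilde{m}}T_1$ so simple) and the algebraic bookkeeping in the Hessian term, where the cancellations that eliminate all $\partial_s v$ and $\rho\partial_\rho v$ dependence from $\mathcal{Q}_{\tilde{m}}(T_1,v)$ hinge on repeated use of $(1-s^2)+s^2=1$.
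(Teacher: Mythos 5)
Your setup and method coincide with the paper's: both compute directly in the chart $\Omega_1$, using the block form of $\tilde{m}$ in the $(s,\,d\rho/\rho,\,d\theta)$ coframe (the paper simply substitutes $\xi=-\ln\rho$ so $\rho\partial_\rho=-\partial_\xi$, which only flips signs). Your identification $T_1=s$ on $\widetilde{\Omega}_1\cap\Omega_1$, the inverse metric entries, the three scalar products, $\Box_{\tilde m}s=2s$, the Christoffel symbols $\Gamma^s_{ss}=s$, $\Gamma^s_{s(\rho\partial_\rho)}=-s^2$, $\Gamma^s_{(\rho\partial_\rho)(\rho\partial_\rho)}=-s(1-s^2)$, and the cancellation in $\mathcal{Q}_{\tilde m}(T_1,v)$ are all correct and agree with the paper's computation after the sign flip.

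However, your assertion that completing the square reproduces the first formula of the lemma does not survive the algebra. Collecting coefficients, the $(\partial_s v,\rho\partial_\rho v)$ block of $\langle\mathcal{F}_{\tilde m}(T_1,v),\nabla T_1\rangle_{\tilde m}$ is
\begin{equation*}
\tfrac12(1-s^2)^2(\partial_s v)^2 - s(1-s^2)\,\partial_s v\,\rho\partial_\rho v + \tfrac12(1+s^2)(\rho\partial_\rho v)^2,
\end{equation*}
and the unique decomposition into ``perfect square $+$ $\tfrac12|\rho\partial_\rho v|^2$'' forces the square to be $\tfrac12\big|(1-s^2)\partial_s v - s\,\rho\partial_\rho v\big|^2$, with a factor of $s$ that does not appear in the lemma as printed. (A quick check at $s=0$ already shows the mismatch: the direct computation gives $\tfrac12(\partial_s v)^2+\tfrac12(\rho\partial_\rho v)^2+\dots$, while the lemma's formula expands to $\tfrac12(\partial_s v)^2-\partial_sv\,\rho\partial_\rho v+(\rho\partial_\rho v)^2+\dots$.) The paper's own proof stops one step short of writing this out (``the formula\dots follows directly''), which is presumably how the typo survived. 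Your plan is otherwise sound and would, if carried through, produce the correct identity $\langle\mathcal{F}_{\tilde m}(T_1,v),\nabla T_1\rangle_{\tilde m}=\tfrac12|(1-s^2)\partial_sv-s\rho\partial_\rho v|^2+\tfrac12|\rho\partial_\rho v|^2+\tfrac12(1-s^2)(|\slashpar_\theta v|^2-\gamma_0 v^2)$; the step that needs repair in your write-up is the unchecked claim that what comes out matches the displayed statement verbatim.
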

\begin{proof}
In $\widetilde{\Omega}_1\cap \Omega_1$, change variable to $\xi=-\ln\rho\in (0,\infty)$ and hence $\frac{d\rho}{\rho}=-d\xi$ and $\rho\partial_{\rho}=-\partial_{\xi}$. Then in coordinates $\{s,\xi,\theta\}$, 
\begin{equation*}
\tilde{m}=-d^2s-2sdsd\xi+(1-s^2)d^2\xi+d^2\theta.
\end{equation*}
It is a product type metric. WLOG, at any fixed point $p=(s,\xi,\theta)$, let $\theta$ denote the normal coordinate w.r.t. standard spherical metric $d^2\theta$ on $\mathbb{S}^{n-1}$. Then at $p$, we can write the metric components as follows:
\begin{equation*}
\begin{gathered}
\tilde{m}=\left[\begin{array}{ccc}-1 & -s & 0\\ -s & 1-s^2& 0\\0&0&\mathbbm{1}_{n-1}\end{array}\right], \quad 
\tilde{m}^{-1}=\left[\begin{array}{ccc}-1+s^2 & -s & 0\\ -s & 1 &0\\ 0&0& \mathbbm{1}_{n-1}\end{array}\right].
\end{gathered}
\end{equation*}
For the quadratic form, notice that
\begin{equation*}
\begin{gathered}
\langle \mathcal{F}_{\tilde{m}}(T_1,v),\nabla T_1\rangle_{\tilde{m}} =  
 \langle\nabla T_1,\nabla v\rangle^2_{\tilde{m}}-\tfrac{1}{2} \langle\nabla v,\nabla v\rangle_{\tilde{m}} \langle\nabla T_1,\nabla T_1\rangle_{\tilde{m}}+\tfrac{1}{2}\gamma_0 \langle\nabla T_1,\nabla T_1\rangle_{\tilde{m}}v^2,
\end{gathered}
\end{equation*}
where
\begin{equation*}
\begin{aligned}
 \langle\nabla T_1,\nabla T_1\rangle_{\tilde{m}}=&\ -1+s^2,
\\
 \langle\nabla T_1,\nabla v\rangle_{\tilde{m}} =&\
(-1+s^2)\partial_sv-s\partial_{\xi}v,
\\
\langle\nabla v,\nabla v\rangle_{\tilde{m}}
=&\ (-1+s^2)|\partial_sv|^2-2s\partial_sv\partial_{\xi}v+|\partial_{\xi}v|^2+|\slashpar_{\theta}v|^2.
\end{aligned}
\end{equation*} 
Then the formula for $\langle F(T_1,v),\nabla T_1\rangle_{\tilde{m}}$ follows directly.
For $\mathcal{Q}_{\tilde{m}}(T_1,v)$,  by Lemma \ref{lem.3} we only  need to compute $\Box_{\tilde{m}} T_1$ and $\nabla^2 T_1(dv,dv)$. Using to the formula of $\Box_{\tilde{m}}$ in the proof of Lemma \ref{lem.2}, we have
\begin{equation*}
\begin{gathered}
\Box_{\tilde{m}} T_1=2s,\\
\begin{aligned}
 \nabla^2T_1(dv,dv)
= (\partial_{I}\partial_{J}T_1-\Gamma^{K}_{IJ}\partial_{K}T_1)\nabla^{I}v\nabla^{J}v.
\end{aligned}
\end{gathered}
\end{equation*}
Here $I,J,K\in\{s,\xi,\theta\}$. 
At $p$, the non-zero connection components are 
\begin{equation*}
\begin{gathered}
\Gamma_{ss}^s(\tilde{m})=s,\quad \Gamma_{s\xi}^s(\tilde{m})=s^2,\quad 
\Gamma_{\xi\xi}^s(\tilde{m}) =-s(1-s^2),
\\
\Gamma_{ss}^{\xi}(\tilde{m})=-1,\quad \Gamma_{s\xi}^{\xi}(\tilde{m})=-s, \quad \Gamma_{\xi\xi}^{\xi}(\tilde{m})=-s^2.
\end{gathered}
\end{equation*}
Hence
\begin{equation*}
\begin{aligned}
 \nabla^2T_1(dv,dv)
= -s(1-s^2)|\partial_sv|^2-2s^2\partial_sv\partial_{\xi}v+s|\partial_{\xi}v|^2
\end{aligned}
\end{equation*}
%
%
%
%
and $\mathcal{Q}_{\tilde{m}} (T_1,v)$ follows. 
\end{proof}

Those two formulae do not change much  in the following sense when we take the background metric to be $\tilde{g}$.
\begin{lemma}\label{lem.5.2}
In $\widetilde{\Omega}_1$, 
\begin{equation*} 
\begin{gathered}
dvol_{\tilde{g}}=(1+\rho_0^{\frac{n-1}{2}}\Theta_1(\tilde{h})) dvol_{\tilde{m}},
\\
\langle  \mathcal{F}_{\tilde{g}}(T_1,v),\nabla T_1\rangle_{\tilde{g}}-\langle  \mathcal{F}_{\tilde{m}}(T_1,v),\nabla T_1\rangle_{\tilde{m}}= 
\rho_0^{\frac{n-1}{2}}\Theta_1(\tilde{h})(v, \tilde{\partial}v),
\\
\mathcal{Q}_{\tilde{g}} (T_1,v)-\mathcal{Q}_{\tilde{m}} (T_1,v)
= \rho_0^{\frac{n-1}{2}}\Theta_1(\tilde{h})(v,\tilde{\partial}v)+\rho_0^{\frac{n-1}{2}}\Theta_0(\tilde{h})(\tilde{\partial}\tilde{h})(\tilde{\partial}v,\tilde{\partial}v).
\end{gathered}
\end{equation*}
\end{lemma}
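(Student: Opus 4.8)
The plan is to prove Lemma \ref{lem.5.2} by a direct perturbative comparison of the tensorial quantities built from $\tilde g$ against those built from $\tilde m$, using that $\tilde g = \tilde m + \tilde\rho^{\frac{n+3}{2}}\tilde h$ and that in $\widetilde\Omega_1$ we have chosen $\tilde\rho = \rho_0$, $\rho_1=\rho_2=1$, so the perturbation is of size $\rho_0^{\frac{n+3}{2}}\tilde h$ at the level of the metric and of size $\rho_0^{\frac{n-1}{2}}\tilde h$ after the two factors of $\rho_0^2$ that the inverse metric / gradient raising operations produce. Concretely, in the coordinates $(s,\xi,\theta)$ (or $(s,\rho,\theta)$) used in the proof of Lemma \ref{lem.5.1}, $\tilde g_{IJ} = \tilde m_{IJ} + \rho_0^{\frac{n-1}{2}}\Theta_1(\tilde h)$ as a bounded $(n+1)\times(n+1)$-matrix-valued function on $\widetilde\Omega_1$ (the extra powers of $\rho_0$ relative to $\tilde\rho^{(n+3)/2}h$ being absorbed because the coordinate components of $h$ transform with bounded weights, exactly as displayed in the proof of Proposition \ref{prop.4} for $\Omega_1$). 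Since the perturbation is small by assumption (A2) and $\tilde m^{-1}$ is a fixed bounded matrix with $\det \tilde m = -1$ in these coordinates, Cramer's rule gives $\tilde g^{IJ} = \tilde m^{IJ} + \rho_0^{\frac{n-1}{2}}\Theta_1(\tilde h)$ and $|\tilde g|^{1/2} = 1 + \rho_0^{\frac{n-1}{2}}\Theta_1(\tilde h)$, which is precisely the volume-form statement $dvol_{\tilde g} = (1+\rho_0^{\frac{n-1}{2}}\Theta_1(\tilde h))\,dvol_{\tilde m}$.

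First I would establish the volume form identity as above. Next, for the energy-density comparison, I would recall from Definition \ref{def.4} that
$$
\langle \mathcal{F}_{\tilde g}(T_1,v),\nabla T_1\rangle_{\tilde g}
= \langle\nabla T_1,\nabla v\rangle_{\tilde g}^2 - \tfrac12\langle\nabla v,\nabla v\rangle_{\tilde g}\langle\nabla T_1,\nabla T_1\rangle_{\tilde g} + \tfrac12\gamma_0 v^2\langle\nabla T_1,\nabla T_1\rangle_{\tilde g},
$$
and each inner product $\langle\nabla a,\nabla b\rangle_{\tilde g} = \tilde g^{IJ}\partial_I a\,\partial_J b$ differs from its $\tilde m$-counterpart by $\rho_0^{\frac{n-1}{2}}\Theta_1(\tilde h)$ contracted with the relevant first derivatives, since $\partial_I T_1$ are smooth bounded functions on $\widetilde\Omega_1$ ($T_1=t/\psi_1(r)$ and $\psi_1'$ is bounded) and $\partial_I v$ ranges over $\tilde\partial v$. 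Expanding the difference and collecting terms that are quadratic in $(v,\tilde\partial v)$ with coefficients $\rho_0^{\frac{n-1}{2}}\Theta_1(\tilde h)$ gives the second displayed formula. For the third, I would use the formula \eqref{eq.div} for $\mathcal{Q}_{\tilde g}(T_1,v) = \tfrac12 \Box_{\tilde g}T_1\,(\gamma_0 v^2 - \langle\nabla v,\nabla v\rangle_{\tilde g}) + \nabla^2 T_1(dv,dv)$, compute $\Box_{\tilde g}T_1 = \Box_{\tilde m}T_1 + (\Box_{\tilde g}-\Box_{\tilde m})T_1$, note by Lemma \ref{lem.2} that $\Box_{\tilde g}-\Box_{\tilde m} = \rho_0^{\frac{n-1}{2}}\Theta_1(\tilde h)(\tilde\partial^2+\tilde\partial)$ in $\Omega_1$ so that $(\Box_{\tilde g}-\Box_{\tilde m})T_1 = \rho_0^{\frac{n-1}{2}}\Theta_1(\tilde h)$ (acting on the smooth bounded function $T_1$), and that the Hessian $\nabla^2 T_1$ w.r.t. $\tilde g$ differs from that w.r.t. $\tilde m$ through the Christoffel symbols $\Gamma^K_{IJ}(\tilde g) - \Gamma^K_{IJ}(\tilde m)$, which are schematically $\tilde g^{-1}\partial(\tilde g-\tilde m) = \rho_0^{\frac{n-1}{2}}\Theta_0(\tilde h)(\tilde\partial\tilde h) + \rho_0^{\frac{n-1}{2}}\Theta_1(\tilde h)$ (the $\Theta_1(\tilde h)$ piece coming from $\tilde\partial$ hitting the weight $\rho_0^{\frac{n-1}{2}}$). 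Contracting with $dv\otimes dv$ produces exactly the two types of terms $\rho_0^{\frac{n-1}{2}}\Theta_1(\tilde h)(v,\tilde\partial v)$ and $\rho_0^{\frac{n-1}{2}}\Theta_0(\tilde h)(\tilde\partial\tilde h)(\tilde\partial v,\tilde\partial v)$ claimed.

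The one point requiring genuine care — and the main obstacle — is bookkeeping the powers of $\rho_0$ so that the perturbative terms come out weighted by $\rho_0^{\frac{n-1}{2}}$ and not a different power. The metric perturbation is $\tilde\rho^{\frac{n+3}{2}}\tilde h = \rho_0^{\frac{n+3}{2}}\tilde h$ in coordinates $(t,x)$, but when written in $(s,\xi,\theta)$-coordinates the components $h_{\mu\nu}$ get multiplied by $\tilde\rho^2 = \rho_0^2$ worth of Jacobian factors (cf. the $\Omega_1$ display in Proposition \ref{prop.4}'s proof where $\tilde\rho^2 h$ has bounded coefficients built from the $h_{\mu\nu}$), giving $\tilde g_{IJ} - \tilde m_{IJ} = \rho_0^{\frac{n-1}{2}}\Theta_1(\tilde h)$ — note $\Theta_1$, not $\Theta_0$, because these are homogeneous-degree-$\geq 1$ in $\tilde h$. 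Then forming $\tilde g^{IJ}$, $|\tilde g|^{1/2}$, $\Gamma(\tilde g)$ preserves this $\rho_0^{\frac{n-1}{2}}$ weight (no further powers are gained or lost because the coordinate vector fields $\partial_s,\rho\partial_\rho,\slashpar_i$ are a b-basis and $T_1$, $\psi_1'$ are smooth up to the boundary). I would present the argument uniformly: reduce everything to the statement that $\tilde g^{IJ}\partial_I\partial_J$, $|\tilde g|^{1/2}$, and the connection coefficients of $\tilde g$ all differ from those of $\tilde m$ by expressions of the form $\rho_0^{\frac{n-1}{2}}\Theta_1(\tilde h) + \rho_0^{\frac{n-1}{2}}\Theta_0(\tilde h)(\tilde\partial\tilde h)$, which follows from the analyticity of matrix inversion together with (A2), and then the three claimed formulas are immediate by substitution into Definitions \ref{def.4} and the identities of Lemma \ref{lem.3}.
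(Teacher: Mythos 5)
Your proposal is correct and follows essentially the same route as the paper: compute everything in the $(s,\xi,\theta)$ (equivalently $(s,\rho,\theta)$) coordinates of Lemma~\ref{lem.5.1}, observe from the $\Omega_1$ display in the proof of Proposition~\ref{prop.4} that $\tilde g_{IJ}-\tilde m_{IJ}=\rho_0^{\frac{n-1}{2}}\Theta_1(\tilde h)$ (and hence the same for $\tilde g^{IJ}$ and $|\tilde g|^{1/2}$), then feed this through Definition~\ref{def.4}, the divergence formula \eqref{eq.div}, and the $\Box_{\tilde g}-\Box_{\tilde m}$ formula from Lemma~\ref{lem.2}, with the Christoffel difference splitting into the $\Theta_1(\tilde h)$ piece (from $\tilde\partial$ hitting $\rho_0^{\frac{n-1}{2}}$) and the $\Theta_0(\tilde h)(\tilde\partial\tilde h)$ piece. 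Your accounting of the $\rho_0$ weights and your identification of which term is $\Theta_1$ versus $\Theta_0(\tilde h)(\tilde\partial\tilde h)$ both match the paper's calculation.
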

\begin{proof}
We only need to concern the region near boundary.  In $\widetilde{\Omega}_1\cap\Omega_1$, let us do the similar computation as in the proof of Lemma \ref{lem.5.1} by using the same coordinate $(s,\rho,\theta)$.  First, by the proof of Proposition \ref{prop.4}, write 
\begin{equation*}
\tilde{g}= \tilde{m}+\Theta_1(h)= \tilde{m}+\rho_0^{\frac{n-1}{2}}\Theta_1(\tilde{h}),\quad \tilde{g}^{-1}=\tilde{m}^{-1}+\Theta_1(h)= \tilde{m}^{-1}+\rho_0^{\frac{n-1}{2}}\Theta_1(\tilde{h}).
\end{equation*}
Hence the first formula follows directly by
\begin{equation*}
 \begin{aligned}
  \langle\nabla T_1,\nabla T_1\rangle_{\tilde{g}}-\langle\nabla T_1,\nabla T_1\rangle_{\tilde{m}}
  =&\ \rho_0^{\frac{n-1}{2}}\Theta_1(\tilde{h}),
\\
 \langle\nabla T_1,\nabla v\rangle_{\tilde{g}}-\langle\nabla T_1,\nabla v\rangle_{\tilde{m}}=&\ 
  \rho_0^{\frac{n-1}{2}}\Theta_1(\tilde{h}) (\tilde{\partial}v),
\\
  \langle\nabla v,\nabla v\rangle_{\tilde{g}}-\langle\nabla v,\nabla v\rangle_{\tilde{m}}=&\ 
   \rho_0^{\frac{n-1}{2}}\Theta_1(\tilde{h}) (\tilde{\partial}v,\tilde{\partial}v).
 \end{aligned}
\end{equation*}
According to the formula of $\Box_{\tilde{g}}$ in the proof of Lemma \ref{lem.2}, 
\begin{equation*}
\begin{gathered}
 \Box_{\tilde{g}}T_1-\Box_{\tilde{m}}T_1=\rho_0^{\frac{n-1}{2}}\Theta_1(\tilde{h})(\tilde{\partial}^2+\tilde{\partial})T_1=\rho_0^{\frac{n-1}{2}}\Theta_1(\tilde{h});
 \\
 \Gamma^{K}_{IJ}(\tilde{g})=\Gamma^{K}_{IJ}(\tilde{m})
+\rho_0^{\frac{n-1}{2}}(\Theta_1(\tilde{h}) +\Theta_0(\tilde{h})(\tilde{\partial}\tilde{h})), 
\\
\tilde{g}^{IJ}\partial_{I}v =\tilde{m}^{IJ}\partial_{I}v +  \rho_0^{\frac{n-1}{2}}\Theta_1(\tilde{h})(\tilde{\partial}v),
\end{gathered}
\end{equation*}
which implies the second formula. Here $I,J,K\in\{s,\xi,\theta\}$.
\end{proof}


 \subsection{In $\widetilde{\Omega}_2$.}
In $\Omega_2$, recall that the coordinates and boundary defining functions are as follows:
$$
(a,b,\theta)=(1-\frac{t}{r}, \frac{1}{r-t},\frac{x}{r}); \quad \rho_0=b,\ \rho_1=a,\ \rho_2=1,\ \tilde{\rho}=ab.$$
Define for $\tau_0>8$
\begin{equation*}
\begin{gathered}
T_2=-\tfrac{2}{1+2\delta'}a^{\delta'+\frac{1}{2}}+\log b, 
\quad T_2'=-a, \quad T_2''=\log b;
\\
\widetilde{\Omega}_2=\{ -\tfrac{1}{4}\leq T_2'\leq 0,\ -\infty< T_2\leq -\ln\tau_0\},\quad 
\Sigma_2=\{T_2=-\ln\tau_0\}\cap \widetilde{\Omega}_2.
\end{gathered}
\end{equation*}
Here $T_2'$ is a natural extension of $T_1-1$ into $\Omega_2$.
Then in $\widetilde{\Omega}_2$,
\begin{equation*}
\begin{aligned}
&\langle \nabla T_2,\nabla T_2\rangle_{\tilde{m}}=-a^{\delta'-\frac{1}{2}}(2+a^{\delta'+\frac{1}{2}}(2-a)),
\\
&\langle \nabla T'_2,\nabla T'_2\rangle_{\tilde{m}}= -a(2-a). 
\end{aligned}
\end{equation*}
Hence $T_2, T_2'$ are time-like functions all over $\widetilde{\Omega}_2\cap \mathring{X}$ and $T'_2$ is null on $\widetilde{\Omega}_2\cap S_1$ w.r.t. $\tilde{m}$.  In $\widetilde{\Omega}_2$, with $\gamma_0<0$ a constant, we have the following two lemmas. 

\begin{lemma}\label{lem.5.3}
In $\widetilde{\Omega}_2$,
\begin{equation*}
\begin{aligned}
\langle  \mathcal{F}_{\tilde{m}}(T_2,v),\nabla T_2' \rangle_{\tilde{m}} =&\ 
\tfrac{1}{2} a^{\delta'-\frac{1}{2}}(|b\partial_bv|^2+|b\partial_bv-a(2-a)\partial_av|^2)
+\tfrac{1}{2}a(2-a)|\partial_av|^2 
\\
&\ +\tfrac{1}{2}(1+a^{\delta'+\frac{1}{2}}(2-a))(|\slashpar_{\theta}v|^2-\gamma_0v^2),
\\
\langle  \mathcal{F}_{\tilde{m}}(T_2,v),\nabla T_2'' \rangle_{\tilde{m}}=&\ 
|\partial_a v|^2+\tfrac{1}{2}a^{\delta'-\frac{1}{2}}(a(2-a)|\partial_a v|^2+|\slashpar_{\theta}v|^2-\gamma_0v^2), 
\\
\mathcal{Q}_{\tilde{m}} (T_2,v)=&\ 
\tfrac{1}{2}(\tfrac{1}{2}-\delta')a^{\delta'-\frac{3}{2}}
(|b\partial_bv|^2+|b\partial_bv-a(2-a)\partial_av|^2)
+(1-a)|\partial_av|^2
 \\
 &\ - \tfrac{1}{2}a^{\delta'-\frac{1}{2}}
(1+2\delta'-(\tfrac{3}{2}+\delta')a)(|\slashpar_{\theta}v|^2-\gamma_0v^2).
\end{aligned}
\end{equation*}
\end{lemma}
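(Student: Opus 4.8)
The plan is to work entirely in the coordinate chart $\Omega_2$ with coordinates $(a,b,\theta)$ and the metric $\tilde m$ written out as in the proof of Proposition \ref{prop.4}, namely
\[
\tilde m = 2\,da\,\tfrac{db}{b} + a(2-a)\bigl(\tfrac{db}{b}\bigr)^2 + d^2\theta,
\qquad \tilde m|_{\rho_1=0} = d^2\theta,
\]
and compute the three quantities directly from Definition \ref{def.4} and formula (\ref{eq.div}). First I would invert $\tilde m$: in the frame $(da, db/b, d\theta)$ the $(a,b)$-block is $\left(\begin{smallmatrix}0&1\\1&a(2-a)\end{smallmatrix}\right)$, whose inverse is $\left(\begin{smallmatrix}-a(2-a)&1\\1&0\end{smallmatrix}\right)$, so that $\langle\nabla a,\nabla a\rangle_{\tilde m}=-a(2-a)$, $\langle\nabla a,\nabla(b\partial_b)\cdot\rangle$-type pairings are $1$, and $\langle \nabla(\log b),\nabla(\log b)\rangle_{\tilde m}=0$; the angular block is the standard sphere metric. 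This immediately gives $\langle\nabla T_2',\nabla T_2'\rangle_{\tilde m}=-a(2-a)$ and, using $dT_2 = -a^{\delta'-\frac12}\,da + d(\log b)$, also $\langle\nabla T_2,\nabla T_2\rangle_{\tilde m}=-a^{\delta'-\frac12}\bigl(2+a^{\delta'+\frac12}(2-a)\bigr)$, confirming time-likeness under Assumption (A3) since $\delta'-\tfrac12<0$ makes the leading term negative.

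Next I would expand $\mathcal{F}_{\tilde m}(T_2,v)=\langle\nabla T_2,\nabla v\rangle_{\tilde m}\nabla v - \tfrac12\langle\nabla v,\nabla v\rangle_{\tilde m}\nabla T_2 + \tfrac12\gamma_0 v^2\nabla T_2$ and pair it against $\nabla T_2'=-\nabla a$ and against $\nabla T_2''=\nabla(\log b)$. The key algebraic inputs are the three scalar products $\langle\nabla T_2,\nabla v\rangle_{\tilde m}$, $\langle\nabla v,\nabla v\rangle_{\tilde m}$, and $\langle\nabla a,\nabla v\rangle_{\tilde m}$, $\langle\nabla(\log b),\nabla v\rangle_{\tilde m}$, each read off from the inverse metric: $\langle\nabla v,\nabla v\rangle_{\tilde m}=-a(2-a)|\partial_a v|^2 + 2\partial_a v\,(b\partial_b v) + |\slashpar_\theta v|^2$ (using $b\partial_b = -Z_{00}$ so that $\tfrac{\partial}{\partial(\log b)}=b\partial_b$), etc. Substituting and collecting terms, the cross-terms should reorganize into the perfect squares $|b\partial_b v|^2 + |b\partial_b v - a(2-a)\partial_a v|^2$ exactly as stated; the coefficient bookkeeping is the routine part. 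The factor $(1+a^{\delta'+\frac12}(2-a))$ in front of $|\slashpar_\theta v|^2-\gamma_0 v^2$ in the $\nabla T_2'$ pairing comes from $-\langle\nabla T_2,\nabla a\rangle_{\tilde m}= 1 + a^{\delta'+\frac12}(2-a)$ after simplification.

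For $\mathcal{Q}_{\tilde m}(T_2,v)=\tfrac12\Box_{\tilde m}T_2\,(\gamma_0 v^2 - \langle\nabla v,\nabla v\rangle_{\tilde m}) + \nabla^2 T_2(dv,dv)$ I would use the explicit formula for $\Box_{\tilde m}$ in $\Omega_2$ from Lemma \ref{lem.2}, namely $\Box_{\tilde m} = 2\partial_a(b\partial_b - a\partial_a) + (a\partial_a)^2 + a\partial_a + \triangle_\theta$, applied to $T_2 = -\tfrac{2}{1+2\delta'}a^{\delta'+\frac12}+\log b$. Since $\partial_a T_2 = -a^{\delta'-\frac12}$, one gets $\Box_{\tilde m}T_2 = -2\partial_a(a\,\partial_a T_2) + \cdots = (1-2\delta')a^{\delta'-\frac12}\cdot(\text{something}) + \text{lower order}$; the constant $(\tfrac12-\delta')$ and the combination $1+2\delta'-(\tfrac32+\delta')a$ appearing in the final answer trace back to differentiating the power $a^{\delta'+\frac12}$ twice. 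The Hessian term $\nabla^2 T_2(dv,dv) = (\partial_I\partial_J T_2 - \Gamma^K_{IJ}\partial_K T_2)\nabla^I v\,\nabla^J v$ requires the Christoffel symbols of $\tilde m$ in $(a,b,\theta)$; I would compute the nonzero ones (only the $(a,b)$-block and the angular contractions are relevant, since $d^2\theta$ is a product factor), contract with $dT_2$, and combine with the $\Box_{\tilde m}T_2$ term. The main obstacle is purely organizational: keeping the $a^{\delta'\pm 1/2}$ powers, the $(2-a)$ factors, and the sphere-Laplacian contributions straight so that everything collapses into the two named perfect squares plus the stated angular and $|\partial_a v|^2$ terms — there is no conceptual difficulty, but the Christoffel computation and the final regrouping must be done carefully to match the coefficients $(1-a)$ and $\tfrac12(\tfrac12-\delta')a^{\delta'-\frac32}$ exactly. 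Finally, I would note that all error terms from $\Omega_0$ are irrelevant here since $\widetilde\Omega_2\subset\Omega_2$, so no patching between charts is needed.
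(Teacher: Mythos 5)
Your proposal follows essentially the same route as the paper: write $\tilde m$ in $(a,\log b,\theta)$ (the paper uses $\xi=-\log b$, an immaterial sign choice), invert the $(a,\log b)$ block, read off the inner products $\langle\nabla T_2,\nabla v\rangle_{\tilde m}$, $\langle\nabla v,\nabla v\rangle_{\tilde m}$, $\langle\nabla T_2,\nabla T_2'\rangle_{\tilde m}$, $\langle\nabla T_2,\nabla T_2''\rangle_{\tilde m}$, and then compute $\Box_{\tilde m}T_2$ together with the nonzero Christoffel symbols to get the Hessian term in $\mathcal{Q}_{\tilde m}$. One small slip in your write-up: since $T_2'=-a$, the coefficient of $|\slashpar_\theta v|^2-\gamma_0 v^2$ is $-\tfrac12\langle\nabla T_2,\nabla T_2'\rangle_{\tilde m}=\tfrac12\langle\nabla T_2,\nabla a\rangle_{\tilde m}$, and one has $\langle\nabla T_2,\nabla a\rangle_{\tilde m}=1+a^{\delta'+\frac12}(2-a)$ (without the extra minus sign you wrote); the final coefficient $\tfrac12(1+a^{\delta'+\frac12}(2-a))$ is nonetheless what you stated.
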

\begin{proof}
In $\widetilde{\Omega}_2$, change variable to $\xi=-\ln b\in (\ln\tau_0,\infty)$ and hence $\frac{db}{b}=-d\xi, b\partial_b=-\partial_{\xi}$. Then in coordinate $(a,b,\theta)$, 
$$
\tilde{m}=-2dad\xi+a(2-a)d^2\xi+d^2\theta. 
$$
 With $\theta$ the normal coordinates w.r.t. standard spherical metric $d^2\theta$ at $p\in \widetilde{\Omega}_2\cap\mathrm{Int}(X^2)$,  we
 can write the metric components at $p$ as follows:
$$
\tilde{m}=\left[\begin{array}{ccc}0 & -1 & 0\\ -1& a(2-a) &0\\ 0&0&\mathbbm{1}_{n-1}\end{array}\right], \quad 
\tilde{m}^{-1}=\left[\begin{array}{ccc}-a(2-a) &-1&0 \\ -1 & 0&0\\0&0&\mathbbm{1}_{n-1}\end{array}\right].
$$
Then $\langle  \mathcal{F}_{\tilde{m}}(T_2,v),\nabla T_2' \rangle_{\tilde{m}}$ and $\langle  \mathcal{F}_{\tilde{m}}(T_2,v),\nabla T_2'' \rangle_{\tilde{m}}$ follow from the following computation:
$$ 
\begin{aligned}
&\langle \nabla T_2,\nabla T_2'\rangle_{\tilde{m}} =-1-a^{\delta'+\frac{1}{2}}(2-a), 
\\
&\langle \nabla T_2,\nabla T_2''\rangle_{\tilde{m}} =-a^{\delta'-\frac{1}{2}},
\\
&\langle \nabla T_2,\nabla v\rangle_{\tilde{m}} = a^{\delta'-\frac{1}{2}}(a(2-a)\partial_av+\partial_{\xi}v)+\partial_av,
\\
&\langle \nabla T'_2,\nabla v\rangle_{\tilde{m}}=a(2-a)\partial_av+\partial_{\xi}v,
\\
&\langle \nabla T''_2,\nabla v\rangle_{\tilde{m}}=\partial_av, 
\\
&\langle \nabla v,\nabla v\rangle_{\tilde{m}} =-2\partial_{a}v\partial_{\xi}v-a(2-a)|\partial_{a}v|^2+|\slashpar_{\theta}v|^2. 
\end{aligned}
$$
For $\mathcal{Q}_{\tilde{m}}(T_2,v)$,  we have
$$
\Box_{\tilde{m}}T_2=a^{\delta'-\frac{1}{2}} (1+2\delta'-a(\tfrac{3}{2}+\delta'))
$$
and the non-zero connection components at $p$ are 
\begin{equation*}
\begin{gathered}
\Gamma^a_{a\xi}(\tilde{m})=-(1-a),\quad
\Gamma^a_{\xi\xi}(\tilde{m})=a(1-a)(2-a),
\quad\Gamma^{\xi}_{\xi\xi}(\tilde{m})=1-a,
\end{gathered}
\end{equation*}
which implies that  
$$
\begin{aligned}
\nabla^2T_2(dv,dv)=&(\tfrac{1}{2}-\delta')a^{\delta'-\frac{3}{2}}(a(2-a)\partial_av+\partial_{\xi}v)^2\\
&\quad+(1-a)[(\partial_av)^2 -a^{\delta'-\frac{1}{2}}(2\partial_av\partial_{\xi}v+a(2-a)(\partial_av)^2)].
\end{aligned}
$$
Then $\mathcal{Q}_{\tilde{m}}(T_2,v)$ follows. 
\end{proof}

\begin{lemma}\label{lem.5.4}
For $n\geq 4$ and in $\widetilde{\Omega}_2$, 
$$
\begin{gathered}
dvol_{\tilde{g}}=(1+a^{\frac{n-1}{2}}b^{\frac{n-1}{2}}\Theta_1(\tilde{h})) dvol_{\tilde{m}},
\\
\begin{aligned}
\langle  \mathcal{F}_{\tilde{g}}(T_2,v),\nabla T'_2 \rangle_{\tilde{g}} -\langle  \mathcal{F}_{\tilde{m}}(T_2,v),\nabla T'_2 \rangle_{\tilde{m}}
=&\ a^{\frac{n-5}{2}}b^{\frac{n-1}{2}}\big(\Theta_1(\tilde{h}_{\rho\rho})(a\partial_av,\partial_{\xi}v)+a\Theta_1(\tilde{h})(\tilde{\partial}v,v)\big)
\\&\ 
+a^{n-4}b^{n-1}\big(\Theta_2(\tilde{h}_{\rho\rho})(a\partial_av,\partial_{\xi}v)+a\Theta_2(\tilde{h})(\tilde{\partial}v,\tilde{\partial}v)\big),
\\
\langle  \mathcal{F}_{\tilde{g}}(T_2,v),\nabla T_2'' \rangle_{\tilde{g}}-\langle  \mathcal{F}_{\tilde{m}}(T_2,v),\nabla T''_2 \rangle_{\tilde{m}}
=&\ a^{\frac{n-7}{2}}b^{\frac{n-1}{2}}\big(\Theta_1(\tilde{h}_{\rho\rho})(a\partial_av,\partial_{\xi}v)+a\Theta_1(\tilde{h})(\tilde{\partial}v,v)\big)
\\&\ 
+a^{n-5}b^{n-1}\big(\Theta_2(\tilde{h}_{\rho\rho})(a\partial_av,\partial_{\xi}v)+a\Theta_2(\tilde{h})(\tilde{\partial}v,\tilde{\partial}v)\big),
\end{aligned}
\\
\begin{aligned}
\mathcal{Q}_{\tilde{g}} (T_2,v)-\mathcal{Q}_{\tilde{m}} (T_2,v)
= &\ a^{\frac{n-7}{2}}b^{\frac{n-1}{2}}\big(\Theta_1(\tilde{h}_{\rho\rho})(a\partial_av,\partial_{\xi}v)+ 
\Theta_1(\tilde{h}_{\rho\rho})(\tilde{\partial}\tilde{h}_{\rho\rho})(a\partial_av,\partial_{\xi}v)
\\&\
+a\Theta_1(\tilde{h})(\tilde{\partial}v,v)
 +a\Theta_0(\tilde{h})(\tilde{\partial}\tilde{h})(\tilde{\partial}v,v)\big).
\end{aligned}
\end{gathered}
$$
\end{lemma}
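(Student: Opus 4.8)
The plan is to write $\tilde g=\tilde m+P$ with $P:=\tilde\rho^{\frac{n+3}{2}}\tilde h$ (which is exactly $\tilde\rho^2h$), work in the coordinates $(a,\xi,\theta)$ with $\xi=-\log b$ introduced in the proof of Lemma \ref{lem.5.3}, and run first--order perturbation theory of $\tilde m$ by $P$; everything then reduces to bookkeeping powers of $a=\rho_1$, the power of $b=\rho_0$ being always $b^{\frac{n-1}{2}}$ (linear part) or $b^{n-1}$ (quadratic remainder). Reading $P$ off the expansion of $\tilde\rho^2h$ in $\Omega_2$ recorded in the proof of Proposition \ref{prop.4}, and inserting $h_{\rho\rho}=(ab)^{\frac{n-1}{2}}\tilde h_{\rho\rho}$, $h_{\rho i}=(ab)^{\frac{n-1}{2}}\tilde h_{\rho i}$, $h_{0\rho}=(ab)^{\frac{n-1}{2}}\tilde h_{0\rho}$, etc., together with the harmonic--gauge estimates of Lemma \ref{lem.4} (which supply an extra power of $a$ on $\tilde h_{\rho i}$ and $\tilde h_{\rho\rho}$), one finds $P_{aa}=a^{\frac{n-5}{2}}b^{\frac{n-1}{2}}\tilde h_{\rho\rho}$, $P_{a\xi}=a^{\frac{n-3}{2}}b^{\frac{n-1}{2}}\Theta_1(\tilde h_{\rho\rho})+a^{\frac{n-1}{2}}b^{\frac{n-1}{2}}\Theta_1(\tilde h)$, and every other entry of $P$ is $a^{\frac{n-1}{2}}b^{\frac{n-1}{2}}\Theta_1(\tilde h)$; one $\partial_a$ costs one power of $a$, a derivative of $\tilde h_{\rho\rho}$ costs nothing. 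Thus $\tilde h_{\rho\rho}$ is the sole source of sub--$a^{\frac{n-1}{2}}$ behaviour.

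For the volume identity I use $dvol_{\tilde g}=\sqrt{\det(\mathbbm{1}+\tilde m^{-1}P)}\,dvol_{\tilde m}=\bigl(1+\tfrac12\mathrm{tr}(\tilde m^{-1}P)+(\text{quadratic in }\tilde h)\bigr)dvol_{\tilde m}$ and the explicit $\tilde m^{-1}$ from the proof of Lemma \ref{lem.5.3} ($\tilde m^{aa}=-a(2-a)$, $\tilde m^{a\xi}=-1$, $\tilde m^{\xi\xi}=0$): the only two terms in $\mathrm{tr}(\tilde m^{-1}P)=\tilde m^{\mu\nu}P_{\mu\nu}$ that separately carry the weight $a^{\frac{n-3}{2}}\tilde h_{\rho\rho}$, namely $\tilde m^{aa}P_{aa}$ and $2\tilde m^{a\xi}P_{a\xi}$, add up to $a^{\frac{n-1}{2}}b^{\frac{n-1}{2}}(1-a)^2\tilde h_{\rho\rho}$ because of the algebraic cancellation forced by the null form of $\tilde m$, so $\mathrm{tr}(\tilde m^{-1}P)=a^{\frac{n-1}{2}}b^{\frac{n-1}{2}}\Theta_1(\tilde h)$, giving the first identity. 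The same $\tilde m^{-1}$ yields $G:=\tilde g^{-1}-\tilde m^{-1}=-\tilde m^{-1}P\tilde m^{-1}+(\text{quadratic in }\tilde h)$ with $G^{\xi\xi}=-P_{aa}+\cdots=-a^{\frac{n-5}{2}}b^{\frac{n-1}{2}}\tilde h_{\rho\rho}+\cdots$, $G^{a\xi}=a^{\frac{n-3}{2}}b^{\frac{n-1}{2}}\Theta_1(\tilde h_{\rho\rho})+a^{\frac{n-1}{2}}b^{\frac{n-1}{2}}\Theta_1(\tilde h)$, $G^{aa}=a^{\frac{n-1}{2}}b^{\frac{n-1}{2}}\Theta_1(\tilde h_{\rho\rho})+a^{\frac{n+1}{2}}b^{\frac{n-1}{2}}\Theta_1(\tilde h)$, and every entry of $G$ with a $\theta$--index equal to $a^{\frac{n-1}{2}}b^{\frac{n-1}{2}}\Theta_1(\tilde h)$; in short, $G$ gains a power of $a$ for each tensor index lying along $da$ rather than $d\xi$, and its $\tilde h_{\rho\rho}$--part is the only piece with weight below $a^{\frac{n-1}{2}}$.

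For the remaining three identities I expand $\mathcal F_{\tilde g}(T_2,v)$ into its three summands and write each pairing $\langle\mathcal F_{\tilde g}(T_2,v),\nabla T_2'\rangle_{\tilde g}$ and $\langle\mathcal F_{\tilde g}(T_2,v),\nabla T_2''\rangle_{\tilde g}$ as a sum of products of inner products, replacing in every product $\langle\cdot,\cdot\rangle_{\tilde g}$ by $\langle\cdot,\cdot\rangle_{\tilde m}$ plus a $G$--contraction. With $dT_2'=-da$, $dT_2''=-d\xi$, $dT_2=-a^{\delta'-\frac12}da-d\xi$, the $\tilde m$--inner products involving $\nabla T_2$ already contain the singular coefficient $a^{\delta'-\frac12}$ on $\partial_\xi$, as in the proof of Lemma \ref{lem.5.3}; each resulting term is then a product of an $a$--bounded factor, at most one of the weights $a^{\delta'-\frac12}$ (from $dT_2$) or $a^{-1}$ (produced when rewriting $\partial_a v$ as $a^{-1}(a\partial_a v)$, with $a\partial_a v$ and $\partial_\xi v$ b-derivatives), one weight coming from $G$, and two first derivatives of $v$, and one checks term by term — grouping all $\tilde h_{\rho\rho}$--contributions of $G$ into the $\Theta_1(\tilde h_{\rho\rho})$ term — that the extremal case, $G^{\xi\xi}$ contracted twice with the $a^{\delta'-\frac12}\partial_\xi v$ part of $\langle\nabla T_2,\nabla v\rangle_{\tilde m}$, is $\le a^{\frac{n-7}{2}}b^{\frac{n-1}{2}}|\tilde h_{\rho\rho}|\,|\partial_\xi v|^2$ for the $T_2''$--pairing and one power of $a$ better for the $T_2'$--pairing; the $a^{n-4}b^{n-1}$ and $a^{n-5}b^{n-1}$ remainders are exactly the contributions of the quadratic--in--$\tilde h$ part of $G$ and of $dvol_{\tilde g}/dvol_{\tilde m}-1$. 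For the last identity I add the three pieces of $\mathcal Q_{\tilde g}-\mathcal Q_{\tilde m}$: the $\Box_{\tilde g}T_2-\Box_{\tilde m}T_2$ piece, controlled by Lemma \ref{lem.2} once one evaluates $DT_2$, $D^2T_2$, $\tilde{\partial}T_2$, $\tilde{\partial}^2T_2$ with $D=b\partial_b-a\partial_a$; the piece $G^{\mu\nu}\partial_\mu v\,\partial_\nu v$; and the Hessian difference $\nabla^2_{\tilde g}T_2-\nabla^2_{\tilde m}T_2$, which involves $\Gamma(\tilde g)-\Gamma(\tilde m)\sim\tilde g^{-1}\partial P$ — the extra $\partial_a$ landing on $P_{aa}$ is what produces the weight $a^{\frac{n-7}{2}}$, and since the Christoffel symbols are quadratic in $(\tilde g^{-1},\partial\tilde g)$ it also produces the $\Theta_1(\tilde h_{\rho\rho})(\tilde{\partial}\tilde h_{\rho\rho})$ and $\Theta_0(\tilde h)(\tilde{\partial}\tilde h)$ factors displayed in the statement.

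I expect the bookkeeping of the last identity to be the main obstacle: the covariant Hessian $\nabla^2_{\tilde g}T_2$ forces one to track simultaneously the singular weight $a^{\delta'-\frac12}$ from $dT_2$, the loss of one power of $a$ in differentiating the $\tilde h_{\rho\rho}$--entries of $P$, and the cancellations (in $\mathrm{tr}(\tilde m^{-1}P)$ and in the $\Gamma^\xi$--type symbols) that keep the net weight from dropping below $a^{\frac{n-7}{2}}$; the positivity of $\delta'$ is what keeps each combined weight no worse than the claimed one, while the remaining inequalities in Assumption (A3), together with $\tilde h_{\rho\rho}=O(\rho_1^{\delta})$ from (A2), are what make all these weights integrable against $dvol_{\tilde g}$ and genuinely lower order than the main terms of Lemma \ref{lem.5.3}, so that the lemma can be fed into the energy estimates of Section 6. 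Everything else is a routine, if lengthy, substitution.
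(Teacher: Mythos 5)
Your proposal follows essentially the same route as the paper: both work in the $(a,\xi,\theta)$ chart with $\xi=-\log b$, write out the matrix of $\tilde g=\tilde m+P$ and $\tilde g^{-1}=\tilde m^{-1}+G$, then read off the weight of each inner product, Christoffel difference, and Hessian difference. The paper simply tabulates all entries of $P$, $G$ and all $\Gamma^K_{IJ}(\tilde g)-\Gamma^K_{IJ}(\tilde m)$ explicitly, whereas you argue by a general weight-counting pattern, but the computation is the same one.

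Two of your stated bookkeeping claims are slightly off, though neither damages the argument: (i) the $\tilde h_{\rho\rho}$-part of $\tilde m^{aa}P_{aa}+2\tilde m^{a\xi}P_{a\xi}$ is $a^{\frac{n-1}{2}}b^{\frac{n-1}{2}}\tilde h_{\rho\rho}$ (the cancellation produces coefficient $-(2-a)+2=a$, not $(1-a)^2$); (ii) the claim that every entry of $P$ other than $P_{aa},P_{a\xi}$, and every entry of $G$ with a $\theta$-index, sits at weight $a^{\frac{n-1}{2}}b^{\frac{n-1}{2}}\Theta_1(\tilde h)$ is not quite right — from the $\Omega_2$ expansion of $\tilde\rho^2h$, $P_{a\theta}=a^{\frac{n-3}{2}}b^{\frac{n-1}{2}}\Theta_1(\tilde h)$, and consequently $G^{\xi\theta}=-\tilde m^{\xi a}P_{a\theta}=a^{\frac{n-3}{2}}b^{\frac{n-1}{2}}\Theta_1(\tilde h)$ as well. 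So $\tilde h_{\rho\rho}$ is not literally the sole source of sub-$a^{\frac{n-1}{2}}$ weight; it is merely the sole source of sub-$a^{\frac{n-3}{2}}$ weight. Your claimed conclusions still hold because the lemma's $a\Theta_1(\tilde h)(\cdot,\cdot)$ term already absorbs weight $a^{\frac{n-5}{2}}\cdot a=a^{\frac{n-3}{2}}$, but the term-by-term check you defer to must in fact account for these $a^{\frac{n-3}{2}}\Theta_1(\tilde h)$ entries, not just for $\tilde h_{\rho\rho}$.
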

\begin{proof}
We can do the similar computation as in the proof of Lemma \ref{lem.5.1} by writing out the metric components in local coordinates $(a,\xi,\theta)$ with $\xi=-\log b$ and $\theta$ is normal w.r.t. the standard spherical metric $d^2\theta$ at $p\in \widetilde{\Omega}_2\cap \mathrm{Int}(X^2)$: 
$$
\begin{gathered}
\tilde{g}=\tilde{m}+a^{\frac{n-5}{2}}b^{\frac{n-1}{2}}
\left[\begin{array}{ccc}
\tilde{h}_{\rho\rho} 
& -a(\tilde{h}_{\rho\rho}-a\tilde{h}_{0\rho})
& a[\Theta_1(\tilde{h})]_{1\times (n-1)}
 \\
 -a(\tilde{h}_{\rho\rho}-a\tilde{h}_{0\rho})
 &  a^{2}(\tilde{h}_{\rho\rho} -
2a\tilde{h}_{0\rho} +a^2\tilde{h}_{00})
&a^{2}[\Theta_1(\tilde{h})]_{1\times (n-1)}
\\ 
a[\Theta_1(\tilde{h})]_{(n-1)\times 1}
&a^{2}[\Theta_1(\tilde{h})]_{(n-1)\times 1}
&a^{2}[\Theta_1(\tilde{h})]_{(n-1)\times (n-1)}
\end{array}\right].
\\
\tilde{g}^{-1}=\tilde{m}^{-1}+ a^{\frac{n-5}{2}}b^{\frac{n-1}{2}}
\left[\begin{array}{ccc}
-a^{2}(\tilde{h}_{\rho\rho} +a\Theta_1(\tilde{h}))
& -a(\tilde{h}_{\rho\rho}+a\Theta_1(\tilde{h}))
& a^{2}[\Theta_1(\tilde{h})]_{1\times (n-1)}
 \\
 -a(\tilde{h}_{\rho\rho}+a\Theta_1(\tilde{h}))
 &  -(\tilde{h}_{\rho\rho} +a^{\frac{n-1}{2}} \Theta_2(\tilde{h}))
&a[\Theta_1(\tilde{h})]_{1\times (n-1)}
\\ 
a^{2}[\Theta_1(\tilde{h})]_{(n-1)\times 1}
&a[\Theta_1(\tilde{h})]_{(n-1)\times 1}
&a^{2}[\Theta_1(\tilde{h})]_{(n-1)\times (n-1)}
\end{array}\right]. 
\end{gathered}
$$
Hence 
$$ 
\begin{aligned}
&\langle \nabla T_2,\nabla T_2\rangle_{\tilde{g}} -\langle \nabla T_2,\nabla T_2\rangle_{\tilde{m}} =-a^{\frac{n-5}{2}} b^{\frac{n-1}{2}}(\tilde{h}_{\rho\rho}(1+a^{\delta'+\frac{1}{2}})^2+a\Theta_1(\tilde{h})),
\\
&\langle \nabla T'_2,\nabla T'_2\rangle_{\tilde{g}} -\langle \nabla T'_2,\nabla T'_2\rangle_{\tilde{m}} 
=-a^{\frac{n-1}{2}} b^{\frac{n-1}{2}}(\tilde{h}_{\rho\rho}+a\Theta_1(\tilde{h})),
\\
&\langle \nabla T_2,\nabla T'_2\rangle_{\tilde{g}} -\langle \nabla T_2,\nabla T'_2\rangle_{\tilde{m}} 
=-a^{\frac{n-3}{2}} b^{\frac{n-1}{2}}(\tilde{h}_{\rho\rho}(1+a^{\delta'+\frac{1}{2}})+a\Theta_1(\tilde{h})), 
\\
&\langle \nabla T_2,\nabla T_2''\rangle_{\tilde{g}}- \langle \nabla T_2,\nabla T_2''\rangle_{\tilde{m}} =-a^{\frac{n-5}{2}} b^{\frac{n-1}{2}}(\tilde{h}_{\rho\rho}(1+a^{\delta'+\frac{1}{2}})+a\Theta_1(\tilde{h})),
\\
&\langle \nabla T_2,\nabla v\rangle_{\tilde{g}} -\langle \nabla T_2,\nabla v\rangle_{\tilde{m}} 
=a^{\frac{n-5}{2}}b^{\frac{n-1}{2}}\big(\tilde{h}_{\rho\rho}(1+a^{\delta'+\frac{1}{2}})(a\partial_av+\partial_{\xi}v)+a\Theta_1(\tilde{h})(\tilde{\partial}v)\big) , 
\\
&\langle \nabla T'_2,\nabla v\rangle_{\tilde{g}} -\langle \nabla T'_2,\nabla v\rangle_{\tilde{m}}=
a^{\frac{n-3}{2}}b^{\frac{n-1}{2}}\big(\tilde{h}_{\rho\rho}(a\partial_av+\partial_{\xi}v)+a\Theta_1(\tilde{h})(\tilde{\partial}v)\big) ,
\\
&\langle \nabla T''_2,\nabla v\rangle_{\tilde{g}} -\langle \nabla T''_2,\nabla v\rangle_{\tilde{m}}=
a^{\frac{n-5}{2}}b^{\frac{n-1}{2}}\big(\tilde{h}_{\rho\rho}(a\partial_av+\partial_{\xi}v)+a\Theta_1(\tilde{h})(\tilde{\partial}v)\big) ,
\\
&\langle \nabla v,\nabla v\rangle_{\tilde{g}} -\langle \nabla v,\nabla v\rangle_{\tilde{m}} =
-a^{\frac{n-5}{2}}b^{\frac{n-1}{2}}\big(\tilde{h}_{\rho\rho}(a\partial_av+\partial_{\xi}v)^2+a\Theta_1(\tilde{h})(\tilde{\partial}v, \tilde{\partial}v)\big). 
\end{aligned}
$$
Then the estimates of $\langle  \mathcal{F}_{\tilde{g}}(T_2,v),\nabla T_2' \rangle_{\tilde{g}}$ and $\langle  \mathcal{F}_{\tilde{g}}(T_2,v),\nabla T_2'' \rangle_{\tilde{g}}$ follow directly. For $\mathcal{Q}_{g}(T_2,v)$, first by the formula of $\Box_{\tilde{g}}$ in the proof of Proposition \ref{prop.4}, we have
$$
\Box_{\tilde{g}}T_2-\Box_{\tilde{m}}T_2 =-a^{\frac{n-5}{2}}b^{\frac{n-1}{2}}\tilde{h}_{\rho\rho}(1+(\tfrac{1}{2}-\delta')a^{\delta'+\frac{1}{2}})+ a^{\frac{n-3}{2}}b^{\frac{n-1}{2}} \Theta_1(\tilde{h}).
$$
Hence
$$
\begin{aligned}
&(\gamma_0v^2-\langle \nabla v,\nabla v\rangle_{\tilde{g}})\Box_{\tilde{g}}T_2
-(\gamma_0v^2-\langle \nabla v,\nabla v\rangle_{\tilde{m}})\Box_{\tilde{m}}T_2
\\
=&\ 
a^{\frac{n-7}{2}}b^{\frac{n-1}{2}}\big(\Theta_1(\tilde{h}_{\rho\rho})(a\partial_av,\partial_{\xi}v)+a\Theta_1(\tilde{h})(\tilde{\partial}v,v)\big).
\end{aligned}
$$
And moreover,
$$
\begin{aligned}
\Gamma_{aa}^a(\tilde{g})-\Gamma_{aa}^a(\tilde{m}) 
=&\  a^{\frac{n-5}{2}}b^{\frac{n-1}{2}}\big(
\Theta_1(\tilde{h}_{\rho\rho})+\Theta_1(\tilde{\partial}\tilde{h}_{\rho\rho})+a\Theta_1(\tilde{h})+a\Theta_0(\tilde{h})(\tilde{\partial}\tilde{h})\big)
\\
&\ +a^{n-4}b^{n-1}\big( \Theta_2(\tilde{h}_{\rho\rho})+\Theta_1(\tilde{h}_{\rho\rho})(\tilde{\partial}\tilde{h}_{\rho\rho})+a\Theta_2(\tilde{h})+a\Theta_1(\tilde{h})(\tilde{\partial}\tilde{h})\big), 
\\
\Gamma_{a\xi}^a(\tilde{g})-\Gamma_{a\xi}^a(\tilde{m}) 
=&\  a^{\frac{n-3}{2}}b^{\frac{n-1}{2}}\big(
\Theta_1(\tilde{h}_{\rho\rho})+\Theta_1(\tilde{\partial}\tilde{h}_{\rho\rho})+a\Theta_1(\tilde{h})+a\Theta_0(\tilde{h})(\tilde{\partial}\tilde{h})\big)
\\
&\ +a^{n-3}b^{n-1}\big( \Theta_2(\tilde{h}_{\rho\rho})+\Theta_1(\tilde{h}_{\rho\rho})(\tilde{\partial}\tilde{h}_{\rho\rho})+a\Theta_2(\tilde{h})+a\Theta_1(\tilde{h})(\tilde{\partial}\tilde{h})\big), 
\\
\Gamma_{\xi\xi}^a(\tilde{g})-\Gamma_{\xi\xi}^a(\tilde{m}) 
=&\  a^{\frac{n-1}{2}}b^{\frac{n-1}{2}}\big(
\Theta_1(\tilde{h}_{\rho\rho})+\Theta_1(\tilde{\partial}\tilde{h}_{\rho\rho})+a\Theta_1(\tilde{h})+a\Theta_0(\tilde{h})(\tilde{\partial}\tilde{h})\big)
\\
&\ +a^{n-2}b^{n-1}\big( \Theta_2(\tilde{h}_{\rho\rho})+\Theta_1(\tilde{h}_{\rho\rho})(\tilde{\partial}\tilde{h}_{\rho\rho})+a\Theta_2(\tilde{h})+a\Theta_1(\tilde{h})(\tilde{\partial}\tilde{h})\big), 
\\
\Gamma_{a\theta}^a(\tilde{g})-\Gamma_{a\theta}^{a}(\tilde{m}) 
=&\  a^{\frac{n-3}{2}}b^{\frac{n-1}{2}}\big(
\Theta_1(\tilde{h})+\Theta_0(\tilde{h})(\tilde{\partial}\tilde{h})\big)
 +a^{n-3}b^{n-1}\big(\Theta_2(\tilde{h})+\Theta_1(\tilde{h})(\tilde{\partial}\tilde{h})\big), 
\\
\Gamma_{\xi\theta}^{a}(\tilde{g})-\Gamma_{\xi\theta}^{a}(\tilde{m}) 
=&\  a^{\frac{n-1}{2}}b^{\frac{n-1}{2}}\big(
\Theta_1(\tilde{h})+\Theta_0(\tilde{h})(\tilde{\partial}\tilde{h})\big)
 +a^{n-2}b^{n-1}\big(\Theta_2(\tilde{h})+\Theta_1(\tilde{h})(\tilde{\partial}\tilde{h})\big), 
\\
\Gamma_{\theta\theta}^{a}(\tilde{g})-\Gamma_{\theta\theta}^{a}(\tilde{m}) 
=&\  a^{\frac{n-1}{2}}b^{\frac{n-1}{2}}\big(
\Theta_1(\tilde{h})+\Theta_0(\tilde{h})(\tilde{\partial}\tilde{h})\big)
 +a^{n-2}b^{n-1}\big(\Theta_2(\tilde{h})+\Theta_1(\tilde{h})(\tilde{\partial}\tilde{h})\big), 
\\
\Gamma_{aa}^{\xi}(\tilde{g})-\Gamma_{aa}^{\xi}(\tilde{m}) 
=&\  a^{\frac{n-7}{2}}b^{\frac{n-1}{2}}\big(
\Theta_1(\tilde{h}_{\rho\rho})+\Theta_1(\tilde{\partial}\tilde{h}_{\rho\rho})+a\Theta_1(\tilde{h})+a\Theta_0(\tilde{h})(\tilde{\partial}\tilde{h})\big)
\\
&\ +a^{n-5}b^{n-1}\big( \Theta_2(\tilde{h}_{\rho\rho})+\Theta_1(\tilde{h}_{\rho\rho})(\tilde{\partial}\tilde{h}_{\rho\rho})+a\Theta_2(\tilde{h})+a\Theta_1(\tilde{h})(\tilde{\partial}\tilde{h})\big), 
\\
\Gamma_{a\xi}^{\xi}(\tilde{g})-\Gamma_{a\xi}^{\xi}(\tilde{m}) 
=&\  a^{\frac{n-5}{2}}b^{\frac{n-1}{2}}\big(
\Theta_1(\tilde{h}_{\rho\rho})+\Theta_1(\tilde{\partial}\tilde{h}_{\rho\rho})+a\Theta_1(\tilde{h})+a\Theta_0(\tilde{h})(\tilde{\partial}\tilde{h})\big)
\\
&\ +a^{n-4}b^{n-1}\big( \Theta_2(\tilde{h}_{\rho\rho})+\Theta_1(\tilde{h}_{\rho\rho})(\tilde{\partial}\tilde{h}_{\rho\rho})+a\Theta_2(\tilde{h})+a\Theta_1(\tilde{h})(\tilde{\partial}\tilde{h})\big), 
\\
\Gamma_{\xi\xi}^{\xi}(\tilde{g})-\Gamma_{\xi\xi}^{\xi}(\tilde{m}) 
=&\  a^{\frac{n-3}{2}}b^{\frac{n-1}{2}}\big(
\Theta_1(\tilde{h}_{\rho\rho})+\Theta_1(\tilde{\partial}\tilde{h}_{\rho\rho})+a\Theta_1(\tilde{h})+a\Theta_0(\tilde{h})(\tilde{\partial}\tilde{h})\big)
\\
&\ +a^{n-3}b^{n-1}\big( \Theta_2(\tilde{h}_{\rho\rho})+\Theta_1(\tilde{h}_{\rho\rho})(\tilde{\partial}\tilde{h}_{\rho\rho})+a\Theta_2(\tilde{h})+a\Theta_1(\tilde{h})(\tilde{\partial}\tilde{h})\big), 
\\
\Gamma_{a\theta}^{\xi}(\tilde{g})-\Gamma_{a\theta}^{\xi}(\tilde{m}) 
=&\  a^{\frac{n-5}{2}}b^{\frac{n-1}{2}}\big(
\Theta_1(\tilde{h})+\Theta_0(\tilde{h})(\tilde{\partial}\tilde{h})\big)
 +a^{n-4}b^{n-1}\big(\Theta_2(\tilde{h})+\Theta_1(\tilde{h})(\tilde{\partial}\tilde{h})\big), 
\\
\Gamma_{\xi\theta}^{\xi}(\tilde{g})-\Gamma_{\xi\theta}^{\xi}(\tilde{m}) 
=&\  a^{\frac{n-3}{2}}b^{\frac{n-1}{2}}\big(
\Theta_1(\tilde{h})+\Theta_0(\tilde{h})(\tilde{\partial}\tilde{h})\big)
 +a^{n-3}b^{n-1}\big(\Theta_2(\tilde{h})+\Theta_1(\tilde{h})(\tilde{\partial}\tilde{h})\big), 
\\
\Gamma_{\theta\theta}^{\xi}(\tilde{g})-\Gamma_{\theta\theta}^{\xi}(\tilde{m}) 
=&\  a^{\frac{n-3}{2}}b^{\frac{n-1}{2}}\big(
\Theta_1(\tilde{h})+\Theta_0(\tilde{h})(\tilde{\partial}\tilde{h})\big)
 +a^{n-3}b^{n-1}\big(\Theta_2(\tilde{h})+\Theta_1(\tilde{h})(\tilde{\partial}\tilde{h})\big).
\end{aligned}
$$
This implies that
$$
\begin{aligned}
&\nabla^2T_2(dv,dv)_{\tilde{g}} - \nabla^2T_2(dv,dv)_{\tilde{m}} 
\\
=&\ a^{\frac{n-7}{2}}b^{\frac{n-1}{2}}\big((\Theta_1(\tilde{h}_{\rho\rho})+ 
\Theta_1(\tilde{h}_{\rho\rho})(\tilde{\partial}\tilde{h}_{\rho\rho}))(a\partial_av,\partial_{\xi}v)+a(\Theta_1(\tilde{h})+\Theta_0(\tilde{h})(\tilde{\partial}\tilde{h}))(\tilde{\partial}v,\tilde{\partial}v)\big)
\\ &\ 
+a^{n-5}b^{n-1}\big((\Theta_2(\tilde{h}_{\rho\rho})+ 
\Theta_1(\tilde{h}_{\rho\rho})(\tilde{\partial}\tilde{h}_{\rho\rho}))(a\partial_av,\partial_{\xi}v)+a(\Theta_2(\tilde{h})+\Theta_1(\tilde{h})(\tilde{\partial}\tilde{h}))(\tilde{\partial}v,\tilde{\partial}v)\big).
\end{aligned}
$$
We prove the last formula. 
\end{proof}

\subsection{In $\widetilde{\Omega}_3$.}
In $\Omega_3\cup \Omega_0$, define for $\tau_0\geq 8$
\begin{equation*}
\begin{gathered}
T_3=t-\psi_2(r), \quad
T_3'=\frac{t-2\tau_0}{\psi_3(r)}-1;
\\
\widetilde{\Omega}_3=\overline{\{ T_3\leq \tau_0\}\backslash (\widetilde{\Omega}_1\cup\widetilde{\Omega}_2)},\quad
\Sigma_3=\{T_3'=-1\}\cap \widetilde{\Omega}_3,
\quad \Sigma_4=\{T_3=\tau_0\},
\end{gathered}
\end{equation*}
where $\psi_2,\psi_3\in C^{2}(\widetilde{\Omega}_3)$ are defined by
$$
\begin{aligned}
&\psi_2(r)=\ \begin{cases} r+ \frac{2}{2\delta'+1}r^{-\delta'-\frac{1}{2}} &\quad \textrm{if $r>1$}\\
1+\frac{2}{2\delta'+1}+(\delta'+\tfrac{3}{2})(\tfrac{1}{4}-\tfrac{1}{4}r^2+\tfrac{1}{8}r^4) &\quad \textrm{if $r\leq 1$}\end{cases},
\\
&\psi_3(r)=\ \begin{cases} r &\quad \textrm{if $r>1$}\\
\tfrac{3}{8}+\tfrac{3}{4}r^2-\tfrac{1}{8}r^4&\quad \textrm{if $r\leq 1$}\end{cases}.
\end{aligned}
$$
Hence $-1\leq\psi_2'\leq 1, 0\leq\psi_3'\leq 1$. 
Then in $\widetilde{\Omega}_3$ 
$$
\begin{aligned}
\langle\nabla T_3,\nabla T_3\rangle_{\tilde{m}} =&\ 
\tilde{\rho}^{-2}(-1+(\psi'_2(r))^2)<0.
\end{aligned}
$$
We divide $\widetilde{\Omega}_3$ into two parts: 
$$
\widetilde{\Omega}'_3 =\widetilde{\Omega}_3\cap \{-1\leq T'_3\leq 0\}, \quad
 \widetilde{\Omega}''_3 =\widetilde{\Omega}_3\cap \{T'_3\leq -1\}.
$$
where $\widetilde{\Omega}''_3\subset \mathring{X}$ is compact and $\widetilde{\Omega}'_3\subset \Omega_3$.  Recall that in $\Omega_3$ we use the coordinates 
$$
(\tau,\rho,\theta)=(t-r, \frac{1}{r}, \frac{x}{r}), \quad \tilde{\rho}=\rho_1=\rho,\  \rho_0=\rho_2=1.
$$
Hence in $\widetilde{\Omega}'_3$, 
$$
\begin{gathered}
T_3=\tau- \tfrac{2}{2\delta'+1}\rho^{\delta'+\frac{1}{2}}, \quad T_3'= -\rho(2\tau_0-\tau);
\\
\begin{aligned}
&\langle\nabla T_3,\nabla T_3\rangle_{\tilde{m}} =-\rho^{\delta'-\frac{1}{2}}(2-\rho^{\delta'+\frac{3}{2}}),
\\
&\langle\nabla T'_3,\nabla T'_3\rangle_{\tilde{m}} =-\rho(2\tau_0-\tau)(2-\rho(2\tau_0-\tau)).
\end{aligned}
\end{gathered}
$$
Here $T_3$ is time-like functions all over $\widetilde{\Omega}_3 \cap \mathring{X}$; $T_3'$ is time-like in the interior of $\widetilde{\Omega}_3'$ and null on $\widetilde{\Omega}_3\cap S_1$. 
With $\gamma_0<0$ a constant, we have the following two lemmas. 
\begin{lemma}\label{lem.5.5}
In $ \widetilde{\Omega}'_3$,
$$
\begin{gathered}
\begin{aligned}
\langle  \mathcal{F}_{\tilde{m}}(T_3,v),\nabla T_3\rangle_{\tilde{m}}  =&\  
\tfrac{1}{2}\rho^{2\delta'-1}(|\partial_{\tau}v|^2+|\partial_{\tau}v+\rho^2\partial_{\rho}v|^2) +(1-\rho^{\delta'+\frac{3}{2}})|\partial_{\rho}v|^2  
\\&\ 
+\rho^{\delta'-\frac{1}{2}}(1-\tfrac{1}{2}\rho^{\delta'+\frac{3}{2}})(|\partial_{\theta}v|^2 -\gamma_0 |v|^2),
\\
\langle  \mathcal{F}_{\tilde{m}}(T_3,v),\nabla T'_3\rangle_{\tilde{m}}  =&\  
\tfrac{1}{2}\rho^{\delta'-\frac{1}{2}}(2\tau_0-\tau)(|\partial_{\tau}v|^2+|\partial_{\tau}v+\rho^2\partial_{\rho}v|^2) 
\\ &\
+(1-\tfrac{1}{2}\rho(2\tau_0-\tau)-\tfrac{1}{2}\rho^{\delta'+\frac{3}{2}})\rho|\partial_{\rho}v|^2
\\ &\ 
+\tfrac{1}{2}\big(2\tau_0-\tau+\rho^{\delta'+\frac{1}{2}}(1-\rho(2\tau_0-\tau))\big)(|\partial_{\theta}v|^2 -\gamma_0 |v|^2),
 \\
 \mathcal{Q}_{\tilde{m}}(T_3,v) =&\
 \tfrac{1}{2}(\tfrac{1}{2}-  \delta')\rho^{\delta'-\frac{3}{2}}(|\partial_{\tau}v|^2+|\partial_{\tau}v+\rho^2\partial_{\rho}v|^2) 
 \\&\ 
 -\rho|\partial_{\rho}v|^2 +\tfrac{1}{2}(\delta'+\tfrac{3}{2})\rho^{\delta'+\tfrac{1}{2}}(|\partial_{\theta}v|^2 -\gamma_0 |v|^2).
  \end{aligned}
\end{gathered}
$$
\end{lemma}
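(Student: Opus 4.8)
The plan is to carry out the computation entirely in the single coordinate patch $\widetilde{\Omega}_3'\subset\Omega_3$, imitating the proofs of Lemma~\ref{lem.5.1} and Lemma~\ref{lem.5.3}. Fix a base point $p$ in the interior of $\widetilde{\Omega}_3'$ and choose $\theta$ to be normal coordinates for the round metric $d^2\theta$ at $p$, so that in the coordinates $(\tau,\rho,\theta)$ the explicit matrix forms of $\tilde m$ and $\tilde m^{-1}$ following from $\tilde m=2\,d\tau\,d\rho-\rho^2\,d^2\tau+d^2\theta$ (see the proof of Proposition~\ref{prop.4}) are available at $p$; concretely $\tilde m^{\tau\rho}=1$, $\tilde m^{\rho\rho}=\rho^2$, $\tilde m^{\theta\theta}=\mathbbm{1}$ and the remaining entries vanish. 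Using $\partial_\tau T_3=1$, $\partial_\rho T_3=-\rho^{\delta'-\frac12}$ (since $\tfrac{2(\delta'+1/2)}{2\delta'+1}=1$), and $\partial_\tau T_3'=\rho$, $\partial_\rho T_3'=-(2\tau_0-\tau)$, the first step is to read off the five bilinear forms $\langle\nabla T_3,\nabla T_3\rangle_{\tilde m}$, $\langle\nabla T_3,\nabla T_3'\rangle_{\tilde m}$, $\langle\nabla T_3,\nabla v\rangle_{\tilde m}$, $\langle\nabla T_3',\nabla v\rangle_{\tilde m}$ and $\langle\nabla v,\nabla v\rangle_{\tilde m}$ by contracting with $\tilde m^{-1}$.

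Next I would insert these into the definition of $\mathcal{F}_{\tilde m}$ in Definition~\ref{def.4}, using $\langle\mathcal{F}_{\tilde m}(T_3,v),\nabla T_3\rangle_{\tilde m}=\langle\nabla T_3,\nabla v\rangle^2-\tfrac12\langle\nabla v,\nabla v\rangle\langle\nabla T_3,\nabla T_3\rangle+\tfrac12\gamma_0v^2\langle\nabla T_3,\nabla T_3\rangle$ and the analogous identity with $\nabla T_3'$ in the last slot. The remaining work is purely algebraic: complete the square in the variables $\partial_\tau v$ and $\rho^2\partial_\rho v$ so that the $\partial_\tau v$–$\partial_\rho v$ cross terms reassemble into the combination $|\partial_\tau v|^2+|\partial_\tau v+\rho^2\partial_\rho v|^2$, and check that the leftover $(\partial_\rho v)^2$-coefficient is exactly $1-\rho^{\delta'+\frac32}$ in the first formula and $(1-\tfrac12\rho(2\tau_0-\tau)-\tfrac12\rho^{\delta'+\frac32})\rho$ in the second, while the angular/undifferentiated part assembles into the stated multiple of $|\partial_\theta v|^2-\gamma_0|v|^2$.

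For $\mathcal{Q}_{\tilde m}(T_3,v)$ I would use formula~(\ref{eq.div}), $\mathcal{Q}_{\tilde m}(T_3,v)=\tfrac12\Box_{\tilde m}T_3(\gamma_0v^2-\langle\nabla v,\nabla v\rangle_{\tilde m})+\nabla^2T_3(dv,dv)$. The scalar $\Box_{\tilde m}T_3$ is obtained from the expression $\Box_{\tilde m}=2\partial_\rho\partial_\tau+(\rho\partial_\rho)^2+\rho\partial_\rho+\triangle_\theta$ valid in $\Omega_3$ (Lemma~\ref{lem.2}), giving $\Box_{\tilde m}T_3=-(\delta'+\tfrac32)\rho^{\delta'+\frac12}$. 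For the Hessian I would compute the Christoffel symbols of $\tilde m$ at $p$: the only nonvanishing first derivative of the metric is $\partial_\rho\tilde m_{\tau\tau}=-2\rho$, whence $\Gamma^\tau_{\tau\tau}=\rho$, $\Gamma^\rho_{\tau\tau}=\rho^3$, $\Gamma^\rho_{\tau\rho}=-\rho$ and all others vanish at $p$; then evaluate $(\nabla^2T_3)_{IJ}=\partial_I\partial_JT_3-\Gamma^K_{IJ}\partial_KT_3$ and contract with the raised differentials $(\tilde m^{-1}dv)^\tau=\partial_\rho v$, $(\tilde m^{-1}dv)^\rho=\partial_\tau v+\rho^2\partial_\rho v$, $(\tilde m^{-1}dv)^\theta=\partial_\theta v$. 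Adding the two pieces and once more completing squares produces the stated identity.

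The main obstacle is not conceptual but organizational: the calculation is routine, and the delicate point is the bookkeeping needed to make the $\partial_\tau v$, $\rho^2\partial_\rho v$ and $\partial_\rho v$ contributions collapse into precisely the advertised combinations, so that the output is manifestly a sum of squares plus a positive multiple of $|\partial_\theta v|^2-\gamma_0|v|^2$. It is this normal form that makes the later energy estimates work, and one must verify that the relevant coefficients stay positive on $\widetilde{\Omega}_3'$: this uses $\delta'<\tfrac12$ from assumption (A3) (so that $\tfrac12-\delta'>0$ appears with the right sign in $\mathcal{Q}_{\tilde m}(T_3,v)$), the defining inequality $0\le\rho(2\tau_0-\tau)\le 1$ of $\widetilde{\Omega}_3'$ (so that $1-\tfrac12\rho(2\tau_0-\tau)-\tfrac12\rho^{\delta'+\frac32}>0$ for $\rho$ small), and the fact that $T_3'$ degenerates to a null function on $S_1=\{\rho=0\}$, which is reflected in the vanishing of the coefficient $\rho$ in front of $|\partial_\rho v|^2$ in $\langle\mathcal{F}_{\tilde m}(T_3,v),\nabla T_3'\rangle_{\tilde m}$.
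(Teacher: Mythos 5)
Your proposal is correct and takes essentially the same route as the paper: passing to the coordinates $(\tau,\rho,\theta)$ with $\theta$ normal at a base point, reading off $\tilde m^{-1}$, the derivatives of $T_3,T_3'$, the Christoffel symbols $\Gamma^\tau_{\tau\tau}=\rho$, $\Gamma^\rho_{\tau\rho}=-\rho$, $\Gamma^\rho_{\tau\tau}=\rho^3$, and $\Box_{\tilde m}T_3=-(\delta'+\tfrac32)\rho^{\delta'+\frac12}$, then contracting into Definition~\ref{def.4} and formula~(\ref{eq.div}) and completing squares. The only extra content you add—the discussion of coefficient positivity on $\widetilde\Omega_3'$—is not needed for the lemma itself, which is a pure algebraic identity; that positivity is used later, in Proposition~\ref{prop.est.3}.
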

\begin{proof}
In $\widetilde{\Omega}'_3$ under coordinates $(\tau,\rho,\theta)$ with $\theta$ normal w.r.t. standard spherical metric at $p\in\widetilde{\Omega}'_3 $, 
we can write the metric components of $\tilde{m}=\rho^2m$ as follows: 
$$
\tilde{m}=\left[\begin{array}{ccc} -\rho^2 &1 &0 \\1&0&0\\0&0&\mathbbm{1}_{n-1}\end{array}\right], \quad
\tilde{m}^{-1}=\left[\begin{array}{ccc} 0 &1 &0 \\1&\rho^2&0\\0&0&\mathbbm{1}_{n-1}\end{array}\right].
$$
Then $\langle  \mathcal{F}_{\tilde{m}}(T_3,v),\nabla T_3\rangle_{\tilde{m}}$ and $\langle  \mathcal{F}_{\tilde{m}}(T_3,v),\nabla T'_3\rangle_{\tilde{m}}$ follow from
$$
\begin{aligned}
&\langle \nabla T_3,\nabla T'_3\rangle_{\tilde{m}} = -(2\tau_0-\tau)-\rho^{\delta'+\frac{1}{2}}(1-\rho(2\tau_0-\tau)),
\\
&\langle \nabla T_3,\nabla v\rangle_{\tilde{m}} = \partial_{\rho}v -\rho^{\delta'-\frac{1}{2}}(\partial_{\tau}v+\rho^2\partial_{\rho}v),\\
&\langle \nabla T'_3,\nabla v\rangle_{\tilde{m}}=\rho\partial_{\rho}v-(2\tau_0-\tau)(\partial_{\tau}v+\rho^2\partial_{\rho}v),\\
&\langle \nabla v,\nabla v\rangle_{\tilde{m}} =2\partial_{\tau}v\partial_{\rho}v+|\rho\partial_{\rho}v|^2+|\slashpar_{\theta}v|^2.
\end{aligned}
$$
For $\mathcal{Q}_{\tilde{m}}(T_3,v)$, we have
$$
\Box_{\tilde{m}} T_3=-(\tfrac{3}{2}+\delta')\rho^{\delta'+\frac{1}{2}}, 
$$
and the non-zero connection components at $p$ are
$$
\Gamma^{\tau}_{\tau\tau}(\tilde{m}) =\rho, \quad
\Gamma^{\rho}_{\rho\tau}(\tilde{m}) =-\rho,\quad \Gamma^{\rho}_{\tau\tau}(\tilde{m}) =\rho^3,
$$
which implies that
$$
\nabla^2T(dv,dv)=(\tfrac{1}{2}-\delta')\rho^{\delta'-\frac{3}{2}}|\partial_{\tau}v+\rho^2\partial_{\rho}v|^2-\rho|\partial_{\rho}v|^2-\rho^{\delta'+\frac{1}{2}}(2\partial_{\tau}v\partial_{\rho}v+\rho^2|\partial_{\rho}v|^2).
$$
Then $\mathcal{Q}_{\tilde{m}}(T_3,v)$ follows. 
\end{proof}

\begin{lemma}\label{lem.5.6}
For $n\geq 4$ and in $\widetilde{\Omega}'_3$, 
$$
\begin{gathered}
dvol_{\tilde{g}}=(1+\rho^{\frac{n-1}{2}}\Theta_1(\tilde{h})) dvol_{\tilde{m}},
\\
\begin{aligned}
&\langle \mathcal{F}_{\tilde{g}}(T_3,v),\nabla T'_3\rangle_{\tilde{g}} -\langle \mathcal{F}_{\tilde{m}}(T_3,v),\nabla T'_3\rangle_{\tilde{m}} =
\rho^{\frac{n-5}{2}}\big(\Theta_1(\tilde{h}_{\rho\rho})(\partial_{\tau}v,\rho\partial_{\rho}v)+
\rho\Theta_1(\tilde{h})(\tilde{\partial}v,v)\big),
\\
& \mathcal{Q}_{\tilde{g}}(T_3,v)- \mathcal{Q}_{\tilde{m}}(T_3,v)=
 \rho^{\frac{n-7}{2}}\big( (\Theta_1(\tilde{h}_{\rho\rho})+\Theta_1(\tilde{\partial}\tilde{h}_{\rho\rho}))(\partial_{\tau}v,\rho\partial_{\rho}v)+\rho(\Theta_1(\tilde{h})+\Theta_0(\tilde{h})(\tilde{\partial}\tilde{h}))(\tilde{\partial}v,v) \big) .
\end{aligned}
\end{gathered}
$$
\end{lemma}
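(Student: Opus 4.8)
\textbf{Proof proposal for Lemma \ref{lem.5.6}.}

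The plan is to follow the scheme of Lemma \ref{lem.5.4} verbatim, now in the single coordinate chart $(\tau,\rho,\theta)$ on $\widetilde{\Omega}'_3\subset\Omega_3$ with $\tilde\rho=\rho_1=\rho$, $\rho_0=\rho_2=1$, and with $\theta$ taken to be normal coordinates for the round metric $d^2\theta$ at the point $p$ under consideration. First I would record the matrices of $\tilde g=\tilde m+\tilde\rho^2h=\tilde m+\rho^{\frac{n+3}{2}}\tilde h$ and of $\tilde g^{-1}$ in these coordinates, reading the entries of $\tilde\rho^2h$ off the $\Omega_3$ expansion in Proposition \ref{prop.4} and the entries of $\tilde g^{-1}-\tilde m^{-1}$ off the $\Omega_3$ formula in Lemma \ref{lem.2}. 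The structural point, exactly as in $\widetilde\Omega_2$, is that every entry of $\tilde g^{-1}-\tilde m^{-1}$ is $O(\rho^{\frac{n-1}{2}}\Theta_1(\tilde h))$ except the coefficient of $\partial_\tau^2$, which carries the slower weight $\rho^{\frac{n-5}{2}}$ but is multiplied precisely by $\tilde h_{\rho\rho}$; this is the content of Lemma \ref{lem.2} and is ultimately forced by the harmonic gauge identities following Lemma \ref{lem.4} together with the $\tilde F$-structure of Lemma \ref{lem.4.8}.

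The volume form identity is then immediate: $\det\tilde g/\det\tilde m=\det g/\det m=\det(I+m^{-1}h)$, and since $h=\rho^{\frac{n-1}{2}}\tilde h$ while $m^{-1}$ has bounded entries, expanding the determinant gives $dvol_{\tilde g}=(1+\rho^{\frac{n-1}{2}}\Theta_1(\tilde h))\,dvol_{\tilde m}$. For the $\langle\mathcal F_{\tilde g}(T_3,v),\nabla T_3'\rangle_{\tilde g}$ identity I would assemble the elementary pairings $\langle\nabla T_3,\nabla T_3\rangle_{\tilde g}$, $\langle\nabla T_3,\nabla T_3'\rangle_{\tilde g}$, $\langle\nabla T_3,\nabla v\rangle_{\tilde g}$, $\langle\nabla T_3',\nabla v\rangle_{\tilde g}$, $\langle\nabla v,\nabla v\rangle_{\tilde g}$, subtract their $\tilde m$-counterparts from Lemma \ref{lem.5.5} by contracting $\tilde g^{-1}-\tilde m^{-1}$ against $dT_3,dT_3',dv$, and insert the results into Definition \ref{def.4}; tracking the weights shows the slow power $\rho^{\frac{n-5}{2}}$ survives only in the $(\partial_\tau v,\rho\partial_\rho v)$ channel and there only against $\tilde h_{\rho\rho}$, which yields the stated formula.

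For $\mathcal Q_{\tilde g}(T_3,v)$ I would use $\Box_{\tilde g}T_3-\Box_{\tilde m}T_3=-\rho^{\frac{n-5}{2}}\tilde h_{\rho\rho}\,\partial_\tau^2T_3+\rho^{\frac{n-3}{2}}\Theta_1(\tilde h)(\tilde\partial^2+\tilde\partial)T_3$ from Lemma \ref{lem.2}; since $\partial_\tau^2T_3=0$ and $(\tilde\partial^2+\tilde\partial)T_3$ is a bounded smooth function of $\rho$ (recall $T_3=\tau-\tfrac{2}{2\delta'+1}\rho^{\delta'+\frac12}$ in $\widetilde\Omega'_3$), this difference is $\rho^{\frac{n-3}{2}}\Theta_1(\tilde h)$, which combined with the already-computed difference $\langle\nabla v,\nabla v\rangle_{\tilde g}-\langle\nabla v,\nabla v\rangle_{\tilde m}$ handles the $\tfrac12\Box_{\tilde g}T_3(\gamma_0v^2-\langle\nabla v,\nabla v\rangle_{\tilde g})$ term. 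For $\nabla^2T_3(dv,dv)$ I would expand the Christoffel differences $\Gamma^K_{IJ}(\tilde g)-\Gamma^K_{IJ}(\tilde m)$ schematically as $(\tilde g^{-1}-\tilde m^{-1})\,\partial\tilde m+\tilde m^{-1}\,\partial(\tilde g-\tilde m)+(\tilde g^{-1}-\tilde m^{-1})\,\partial(\tilde g-\tilde m)$, exactly as in Lemma \ref{lem.5.4}, which produces the $\Theta_1(\tilde h_{\rho\rho})+\Theta_1(\tilde\partial\tilde h_{\rho\rho})$ and $\rho(\Theta_1(\tilde h)+\Theta_0(\tilde h)(\tilde\partial\tilde h))$ structure with weight $\rho^{\frac{n-7}{2}}$ after contraction with $dv\otimes dv$. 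Summing the two pieces gives the last formula. The only real difficulty is the weight bookkeeping --- verifying that $\rho^{\frac{n-5}{2}}$, respectively $\rho^{\frac{n-7}{2}}$, occurs only in the degenerate $\partial_\tau$ direction and always paired with $\tilde h_{\rho\rho}$ or $\tilde\partial\tilde h_{\rho\rho}$, never with a generic component, since otherwise such a term could not be absorbed near $S_1^{\pm}$ under assumption (A2); the accounting is in fact lighter than in Lemma \ref{lem.5.4} because $\rho_0\rho_2\equiv1$ on $\widetilde\Omega'_3$, so no additional $(\rho_0\rho_2)$-weights enter.
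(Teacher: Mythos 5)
Your proposal is correct and follows essentially the same route as the paper: write out $\tilde g$ and $\tilde g^{-1}$ in $(\tau,\rho,\theta)$ with $\theta$ normal at $p$, compute the pairing differences and feed them into Definition~\ref{def.4}, then estimate $\Box_{\tilde g}T_3-\Box_{\tilde m}T_3$ and the Christoffel differences for $\mathcal Q$. Two of your shortcuts are in fact cleaner than what is written in the paper: the determinant identity $\det\tilde g/\det\tilde m=\det(I+m^{-1}h)$ for the volume form, and observing $\partial_\tau^2T_3=0$ in combination with the $\Omega_3$ formula in Lemma~\ref{lem.2} to get $\Box_{\tilde g}T_3-\Box_{\tilde m}T_3=\rho^{\frac{n-3}{2}}\Theta_1(\tilde h)$ in one line, which the paper simply asserts and which your derivation explains.

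One small imprecision worth flagging: you assert that every entry of $\tilde g^{-1}-\tilde m^{-1}$ is $O(\rho^{\frac{n-1}{2}}\Theta_1(\tilde h))$ except the $\tau\tau$ entry. That is not literally true --- in the $(\tau,\rho,\theta)$ frame the $\tau\theta$ entry is $\rho^{\frac{n-5}{2}}\cdot\rho\,\Theta_1(\tilde h)=\rho^{\frac{n-3}{2}}\Theta_1(\tilde h)$, so several off-diagonal entries carry the intermediate weight $\rho^{\frac{n-3}{2}}$ rather than $\rho^{\frac{n-1}{2}}$. This does not break the argument, since the statement you are proving already allows a $\rho^{\frac{n-5}{2}}\cdot\rho\,\Theta_1(\tilde h)(\tilde\partial v,v)=\rho^{\frac{n-3}{2}}\Theta_1(\tilde h)(\tilde\partial v,v)$ term, but the correct structural heuristic is: the $\tau\tau$ entry is $\rho^{\frac{n-5}{2}}\tilde h_{\rho\rho}$ (slow weight, forced to be the special component) and every other entry gains at least one extra factor of $\rho$. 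With that small correction the rest of your bookkeeping goes through as you describe.
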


\begin{proof}
We can do the similar computation as in the proof of Lemma \ref{lem.5.1} by writing out the metric components in local coordinates $(\tau,\rho,\theta)$ with $\theta$ normal w.r.t. the standard spherical metric at $p\in \widetilde{\Omega}'_3$: 
$$
\begin{gathered}
\tilde{g}=\tilde{m}+\rho^{\frac{n-5}{2}}
\left[\begin{array}{ccc}
\rho^4\Theta_1(\tilde{h}) & \rho^2\Theta_1(\tilde{h}) &\rho^3[\Theta_1(\tilde{h})]_{1\times(n-1)}\\
\rho^2\Theta_1(\tilde{h}) & \tilde{h}_{\rho\rho} &\rho[\Theta_1(\tilde{h})]_{1\times (n-1)} \\
\rho^3[\Theta_1(\tilde{h})]_{(n-1)\times 1} & \rho[\Theta_1(\tilde{h})]_{(n-1)\times (n-1)} & \rho^2[\Theta_1(\tilde{h})]_{(n-1)\times (n-1)}
\end{array}\right].
\\
\tilde{g}^{-1}=\tilde{m}^{-1}+\rho^{\frac{n-5}{2}}
\left[\begin{array}{ccc}
-\tilde{h}_{\rho\rho}+\rho^{\frac{n-1}{2}}\Theta_2(\tilde{h}) & \rho^2\Theta_1(\tilde{h}) &\rho[\Theta_1(\tilde{h})]_{1\times(n-1)}\\
\rho^2\Theta_1(\tilde{h}) &\rho^4\Theta_1(\tilde{h}) &\rho^3[\Theta_1(\tilde{h})]_{1\times (n-1)} \\
\rho[\Theta_1(\tilde{h})]_{(n-1)\times 1} & \rho^3[\Theta_1(\tilde{h})]_{(n-1)\times (n-1)} & \rho^2[\Theta_1(\tilde{h})]_{(n-1)\times (n-1)}
\end{array}\right]. 
\end{gathered}
$$
Hence $\langle  \mathcal{F}_{\tilde{g}}(T_3,v),\nabla T'_3\rangle_{\tilde{g}}$ can be estimated by the following
$$
\begin{aligned}
&\langle \nabla T_3,\nabla T'_3\rangle_{\tilde{g}} -\langle \nabla T_3, \nabla T'_3\rangle_{\tilde{m}} 
=-\rho^{\frac{n-3}{2}}\tilde{h}_{\rho\rho}+\rho^{\frac{n-1}{2}}\Theta_1(\tilde{h}),
\\
&\langle \nabla T_3,\nabla v\rangle_{\tilde{g}} -\langle \nabla T_3, \nabla v\rangle_{\tilde{m}} 
=-\rho^{\frac{n-5}{2}}\tilde{h}_{\rho\rho}\partial_{\tau}v+\rho^{\frac{n-3}{2}}\Theta_1(\tilde{h})(\tilde{\partial}v),
\\
&\langle \nabla T'_3,\nabla v\rangle_{\tilde{g}} -\langle \nabla T'_3, \nabla v\rangle_{\tilde{m}} 
=-\rho^{\frac{n-3}{2}}\tilde{h}_{\rho\rho}\partial_{\tau}v+\rho^{\frac{n-1}{2}}\Theta_1(\tilde{h})(\tilde{\partial}v),
\\
&\langle \nabla v,\nabla v\rangle_{\tilde{g}} -\langle \nabla T_3, \nabla v\rangle_{\tilde{m}} 
=-\rho^{\frac{n-5}{2}}\tilde{h}_{\rho\rho}|\partial_{\tau}v|^2+\rho^{\frac{n-3}{2}}\Theta_1(\tilde{h})(\tilde{\partial}v,\tilde{\partial}v),
\end{aligned}
$$
For $\mathcal{Q}_{\tilde{g}}(T_3,v)-\mathcal{Q}_{\tilde{m}}(T_3,v)$, we first have
$$
\Box_{\tilde{g}}T_3-\Box_{\tilde{m}}T_3 = \rho^{\frac{n-3}{2}}\Theta_1(\tilde{h}).
$$
Hence
$$
\begin{aligned}
&(\gamma_0v^2-\langle \nabla v,\nabla v\rangle_{\tilde{g}})\Box_{\tilde{g}}T_3 -
(\gamma_0v^2-\langle \nabla v,\nabla v\rangle_{\tilde{m}})\Box_{\tilde{m}}T_3
\\
=&\ 
\rho^{\frac{n-3}{2}}\Theta_1(\tilde{h})(\partial_{\tau}v)(\partial_{\rho}v)+ \rho^{\frac{n-4}{2}+\delta}\Theta_1(\tilde{h})(\tilde{\partial}v, v)+ \rho^{n-4}\Theta_2(\tilde{h})(\tilde{\partial}v, v). 
\end{aligned}
$$
Moreover, 
$$
\begin{aligned}
&\Gamma^{\rho}_{\rho\rho}(\tilde{g})-\Gamma^{\rho}_{\rho\rho}(\tilde{m}) = 
\rho^{\frac{n-5}{2}}\big(\Theta_1(\tilde{h}_{\rho\rho})+\Theta_1(\tilde{\partial}\tilde{h}_{\rho\rho})+\rho\Theta_1(\tilde{h})+\rho\Theta_0(\tilde{h})(\tilde{\partial}\tilde{h})  \big),
\\
&\Gamma^{\rho}_{\rho\tau}(\tilde{g})-\Gamma^{\tau}_{\rho\tau}(\tilde{m})=
\rho^{\frac{n-1}{2}}\big(\Theta_1(\tilde{h}_{\rho\rho})+\Theta_1(\tilde{\partial}\tilde{h}_{\rho\rho})+\rho\Theta_1(\tilde{h})+\rho\Theta_0(\tilde{h})(\tilde{\partial}\tilde{h})  \big),
\\
&\Gamma^{\rho}_{\tau\tau}(\tilde{g})-\Gamma^{\tau}_{\tau\tau}(\tilde{m})=
\rho^{\frac{n+3}{2}}\big(\Theta_1(\tilde{h})+\Theta_0(\tilde{h})(\tilde{\partial}\tilde{h})  \big),
\\
&\Gamma^{\rho}_{\rho\theta}(\tilde{g})-\Gamma^{\tau}_{\rho\theta}(\tilde{m})=
\rho^{\frac{n-3}{2}}\big(\Theta_1(\tilde{h})+\Theta_0(\tilde{h})(\tilde{\partial}\tilde{h})  \big),
\\
&\Gamma^{\rho}_{\tau\theta}(\tilde{g})-\Gamma^{\tau}_{\tau\theta}(\tilde{m})=
\rho^{\frac{n+1}{2}}\big(\Theta_1(\tilde{h})+\Theta_0(\tilde{h})(\tilde{\partial}\tilde{h})  \big),
\\
&\Gamma^{\rho}_{\theta\theta}(\tilde{g})-\Gamma^{\tau}_{\theta\theta}(\tilde{m})=
\rho^{\frac{n-1}{2}}\big(\Theta_1(\tilde{h})+\Theta_0(\tilde{h})(\tilde{\partial}\tilde{h})  \big),
\\
&\Gamma^{\tau}_{\rho\rho}(\tilde{g})-\Gamma^{\tau}_{\rho\rho}(\tilde{m}) = 
\rho^{\frac{n-7}{2}}\big(\Theta_1(\tilde{h}_{\rho\rho})+\Theta_1(\tilde{\partial}\tilde{h}_{\rho\rho})+\rho\Theta_1(\tilde{h})+\rho\Theta_0(\tilde{h})(\tilde{\partial}\tilde{h})  \big),
\\
&\Gamma^{\tau}_{\rho\tau}(\tilde{g})-\Gamma^{\tau}_{\rho\tau}(\tilde{m})=
\rho^{\frac{n-5}{2}}\big(\Theta_1(\tilde{h}_{\rho\rho})+\Theta_1(\tilde{\partial}\tilde{h}_{\rho\rho})+\rho\Theta_1(\tilde{h})+\rho\Theta_0(\tilde{h})(\tilde{\partial}\tilde{h})  \big),
\\
&\Gamma^{\tau}_{\tau\tau}(\tilde{g})-\Gamma^{\tau}_{\tau\tau}(\tilde{m})=
\rho^{\frac{n-1}{2}}\big(\Theta_1(\tilde{h})+\Theta_0(\tilde{h})(\tilde{\partial}\tilde{h})  \big),
\\
&\Gamma^{\tau}_{\rho\theta}(\tilde{g})-\Gamma^{\tau}_{\rho\theta}(\tilde{m})=
\rho^{\frac{n-5}{2}}\big(\Theta_1(\tilde{h})+\Theta_0(\tilde{h})(\tilde{\partial}\tilde{h})  \big),
\\
&\Gamma^{\tau}_{\tau\theta}(\tilde{g})-\Gamma^{\tau}_{\tau\theta}(\tilde{m})=
\rho^{\frac{n-3}{2}}\big(\Theta_1(\tilde{h})+\Theta_0(\tilde{h})(\tilde{\partial}\tilde{h})  \big),
\\
&\Gamma^{\tau}_{\theta\theta}(\tilde{g})-\Gamma^{\tau}_{\theta\theta}(\tilde{m})=
\rho^{\frac{n-3}{2}}\big(\Theta_1(\tilde{h})+\Theta_0(\tilde{h})(\tilde{\partial}\tilde{h})  \big),
\end{aligned}
$$
Hence 
$$
\begin{aligned}
&\nabla^2T_3(dv,dv)_{\tilde{g}} - \nabla^2T_3(dv,dv)_{\tilde{m}} 
\\ 
=&\ 
\rho^{\frac{n-7}{2}}\big( (\Theta_1(\tilde{h}_{\rho\rho})+\Theta_1(\tilde{\partial}\tilde{h}_{\rho\rho}))(\partial_{\tau}v,\rho\partial_{\rho}v)+\rho(\Theta_1(\tilde{h})+\Theta_0(\tilde{h})(\tilde{\partial}\tilde{h}))(\tilde{\partial}v,v) \big) 
\\ &\ 
+ \rho^{n-5}\big( (\Theta_2(\tilde{h}_{\rho\rho})+\Theta_1(\tilde{h}_{\rho\rho})(\tilde{\partial}\tilde{h}_{\rho\rho}))(\partial_{\tau}v,\rho\partial_{\rho}v)+\rho(\Theta_2(\tilde{h})+\Theta_1(\tilde{h})(\tilde{\partial}\tilde{h}))(\tilde{\partial}v,v) \big).
\end{aligned}
$$
Then $\mathcal{Q}_{\tilde{g}}(T_3,v)-\mathcal{Q}_{\tilde{m}}(T_3,v)$ follows. 
\end{proof}

\subsection{In $\widetilde{\Omega}_4$.}
In $\Omega_4$,  recall that we have the coordinates and boundary defining functions
$$
(\bar{a},\bar{b},\theta)=(1-\frac{r}{t}, \frac{1}{t-r},\frac{x}{r});\quad \rho_0=1,\ \rho_1=\bar{a},\ \rho_2=\bar{b},\ \tilde{\rho}=\bar{a}\bar{b};
$$
and in $\Omega_5$, we have
$$
(\phi,y)=(\frac{1}{t}, \frac{x}{t}),\ R=|y|; \quad \rho_0=\rho_1=1, \rho_2=\tilde{\rho}=\phi.
$$
Both sets of coordinates can be extended to $\Omega_4\cup\Omega_5$ with the transition map
$$
\phi=\bar{a}\bar{b}, \ y=(1-\bar{a})\theta.
$$
Define
$$
\begin{gathered}
\begin{aligned}
T_4=&\ -\tfrac{2}{2\delta'+1}(\tfrac{\bar{a}(2-\bar{a})}{2})^{\delta'+\frac{1}{2}}
+\ln(2-\bar{a})-\ln \bar{b}\\
=&\  -\tfrac{2}{2\delta'+1}(\tfrac{1-|y|^2}{2})^{\delta'+\frac{1}{2}}+\ln(1-|y|^2)-\ln \phi,
\\
T'_4=&\ -(\tfrac{2-\bar{a}}{2})^{1-\alpha}\bar{a}\bar{b}^{\alpha}=-(\tfrac{1-|y|^2}{2})^{1-\alpha}\phi^{\alpha},
\\
\end{aligned}
\end{gathered}
$$
and let 
$$
\widetilde{\Omega}_4=
\overline{(\Omega_4\cup\Omega_5)\backslash\widetilde{\Omega}_3}.
$$
Then in  $\widetilde{\Omega}_4\cap \mathring{X}$, 
$$
\begin{aligned}
&\langle \nabla T_4,\nabla T_4\rangle_{\tilde{m}}
= -1-2(1-\bar{a})^2(\tfrac{\bar{a}(2-\bar{a})}{2})^{\delta'-\frac{1}{2}}[1-(\tfrac{\bar{a}(2-\bar{a}}{2}))^{\delta'+\frac{1}{2}}]<0,
\\
&\langle \nabla T'_4,\nabla T'_4\rangle_{\tilde{m}} =
-2\bar{a}(\tfrac{2-\bar{a}}{2})^{2-2\alpha}\bar{b}^{2\alpha}
(\tfrac{2-\bar{a}}{2}\alpha-1+\bar{a})(1+\tfrac{\alpha-1}{2-\bar{a}}\bar{a})< 0, 
\\
&\langle \nabla T'_4,\nabla T_4\rangle_{\tilde{m}} =
-(\tfrac{2-\bar{a}}{2})^{1-\alpha}\bar{b}^{\alpha}\big(
\alpha\bar{a}+\tfrac{2(1-\bar{a})^2}{2-\bar{a}}(1-
(2-\alpha)(\tfrac{\bar{a}(2-\bar{a})}{2})^{\delta'+\frac{1}{2}})
\big)<0.
\end{aligned}
$$
Hence $T_4, T_4'$ are  time-like in $\widetilde{\Omega}_4\cap \mathring{X}$ and $T'_4$ is null on $S_1^+\cap \widetilde{\Omega}_4$. 
With $\gamma_0<0$ a constant, we have the following two lemmas.
\begin{lemma}\label{lem.5.7}
In $\widetilde{\Omega}_4$, 
$$
\begin{aligned}
\langle \mathcal{F}_{\tilde{m}}(T_4,v),\nabla T'_4\rangle_{\tilde{m}} 
=&\   (\tfrac{2-\bar{a}}{2})^{1-\alpha}\bar{b}^{\alpha}\big(
C_1
(|\bar{a}(2-\bar{a})\partial_{\bar{a}}v-\bar{b}\partial_{\bar{b}}v|^2+|\bar{b}\partial_{\bar{b}}v|^2)
+C_2|\partial_{\bar{a}}v|^2
\\ &\ 
+C_3(\tfrac{|\slashpar_{\theta}v|^2}{(1-\bar{a})^2}-\gamma_0v^2)\big),
\\
\bar{a}\mathcal{Q}_{\tilde{m}}(T_4,v)=&\ 
D_1
(|\bar{a}(2-\bar{a})\partial_{\bar{a}}v-\bar{b}\partial_{\bar{b}}v|^2+ |\bar{b}\partial_{\bar{b}}v|^2)
+D_2|\partial_{\bar{a}}v|^2
\\&\ 
+D_3(|\bar{a}(2-\bar{a})\partial_{\bar{a}}v-\bar{b}\partial_{\bar{b}}v|^2-| \bar{b}\partial_{\bar{b}}v|^2)
+D_4\tfrac{|\slashpar_{\theta}v|^2}{(1-\bar{a})^2}+D_5(-\gamma_0v^2),
\end{aligned}
$$
where
$$
\begin{aligned}
&C_1=\tfrac{(1-\bar{a})^2}{2-\bar{a}}(\tfrac{\bar{a}(2-\bar{a})}{2})^{\delta'-\frac{1}{2}}
+\tfrac{1-\bar{a}}{(2-\bar{a})^2}
+ \tfrac{\alpha}{(2-\bar{a})^2}(\tfrac{\bar{a}(2-\bar{a})}{2})^{\delta'+\frac{1}{2}}
+\tfrac{\alpha\bar{a}}{(2-\bar{a})^2}(\tfrac{1}{2}-(\tfrac{\bar{a}(2-\bar{a})}{2})^{\delta'+\frac{1}{2}}),
\\
&C_2=\alpha\bar{a}(1-\bar{a})(1-(\tfrac{\bar{a}(2-\bar{a})}{2})^{\delta'+\frac{1}{2}})-\bar{a}(1-\bar{a}),
\\
&C_3=\tfrac{\alpha\bar{a}}{2} +\tfrac{(1-\bar{a})^2}{2-\bar{a}}+ \tfrac{(\alpha-2)(1-\bar{a})^2}{2-\bar{a}} (\tfrac{\bar{a}(2-\bar{a})}{2})^{\delta'+\frac{1}{2}},
\\
&D_1=(\tfrac{1}{2}-\delta')\tfrac{(1-\bar{a})^2}{2-\bar{a}}(\tfrac{\bar{a}(2-\bar{a})}{2})^{\delta'-\frac{1}{2}}-\tfrac{\bar{a}}{2(2-\bar{a})^2}+\tfrac{1}{2-\bar{a}}(\tfrac{\bar{a}(2-\bar{a})}{2})^{\delta'+\frac{1}{2}},
\\
&D_2=-\bar{a}(1-\bar{a}),
\\
&D_3=\tfrac{n-1}{2-\bar{a}}\big(\tfrac{1}{2}-(\tfrac{\bar{a}(2-\bar{a})}{2})^{\delta'+\frac{1}{2}}\big),
\\
&D_4=(1+2\delta')\tfrac{(1-\bar{a})^2}{2-\bar{a}}(\tfrac{\bar{a}(2-\bar{a})}{2})^{\delta'+\frac{1}{2}}
+(n-2)\bar{a}\big(\tfrac{1}{2}-(\tfrac{\bar{a}(2-\bar{a})}{2})^{\delta'+\frac{1}{2}}\big),
\\
&D_5=(1+2\delta')\tfrac{(1-\bar{a})^2}{2-\bar{a}}(\tfrac{\bar{a}(2-\bar{a})}{2})^{\delta'+\frac{1}{2}}
+n\bar{a}\big(\tfrac{1}{2}-(\tfrac{\bar{a}(2-\bar{a})}{2})^{\delta'+\frac{1}{2}}\big).
\end{aligned}
$$
\end{lemma}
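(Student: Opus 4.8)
The plan is to follow verbatim the template of the proofs of Lemmas~\ref{lem.5.1}, \ref{lem.5.3} and \ref{lem.5.5}: pass to coordinates in which $\tilde m$ is of product type with $\theta$-independent coefficients at the base point, read off the pairings, the d'Alembertian $\Box_{\tilde m}T_4$ and the Hessian $\nabla^2 T_4$, and then reassemble by completing squares. Concretely, in $\widetilde\Omega_4\cap\Omega_4$ I would set $\xi=-\ln\bar b$, so that by the expression for $\tilde m$ recorded in the proof of Proposition~\ref{prop.4},
\[
\tilde m = 2\,d\bar a\,d\xi - \bar a(2-\bar a)\,d^2\xi + (1-\bar a)^2\,d^2\theta ,
\]
and, taking $\theta$ normal for $d^2\theta$ at a fixed $p\in\widetilde\Omega_4\cap\mathrm{Int}(X^2)$ with $y\ne 0$, the metric and its inverse at $p$ are
\[
\tilde m=\begin{bmatrix}0&1&0\\1&-\bar a(2-\bar a)&0\\0&0&(1-\bar a)^2\mathbbm{1}_{n-1}\end{bmatrix},\qquad
\tilde m^{-1}=\begin{bmatrix}\bar a(2-\bar a)&1&0\\1&0&0\\0&0&(1-\bar a)^{-2}\mathbbm{1}_{n-1}\end{bmatrix}.
\]
In this chart $T_4=-\tfrac{2}{2\delta'+1}\big(\tfrac{\bar a(2-\bar a)}{2}\big)^{\delta'+\frac12}+\ln(2-\bar a)+\xi$ and $T'_4=-\big(\tfrac{2-\bar a}{2}\big)^{1-\alpha}\bar a\,e^{-\alpha\xi}$, and the vector fields $\bar b\partial_{\bar b}=-\partial_\xi$, $\bar a(2-\bar a)\partial_{\bar a}-\bar b\partial_{\bar b}$, $\partial_{\bar a}$, $\slashpar_\theta$ appearing in the statement form the natural null--lapse frame of this metric.

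For the first identity I would substitute $\langle\nabla T_4,\nabla T'_4\rangle_{\tilde m}$, $\langle\nabla T_4,\nabla v\rangle_{\tilde m}$, $\langle\nabla T'_4,\nabla v\rangle_{\tilde m}$ and $\langle\nabla v,\nabla v\rangle_{\tilde m}$ into
\[
\langle\mathcal F_{\tilde m}(T_4,v),\nabla T'_4\rangle_{\tilde m}=\langle\nabla T_4,\nabla v\rangle_{\tilde m}\langle\nabla T'_4,\nabla v\rangle_{\tilde m}-\tfrac12\langle\nabla v,\nabla v\rangle_{\tilde m}\langle\nabla T_4,\nabla T'_4\rangle_{\tilde m}+\tfrac12\gamma_0 v^2\langle\nabla T_4,\nabla T'_4\rangle_{\tilde m},
\]
and regroup: the first term is rank one in $(\partial_{\bar a}v,\partial_\xi v)$, the second contributes $|\slashpar_\theta v|^2/(1-\bar a)^2$ plus a sign-indefinite $(\partial_{\bar a}v,\partial_\xi v)$-piece, and completing the square exactly as in Lemma~\ref{lem.5.3} splits the $(\partial_{\bar a}v,\partial_\xi v)$-block into $C_1\big(|\bar a(2-\bar a)\partial_{\bar a}v-\bar b\partial_{\bar b}v|^2+|\bar b\partial_{\bar b}v|^2\big)+C_2|\partial_{\bar a}v|^2$ --- which pins down $C_1$ and $C_2$ --- while the $\theta$-block together with the $v^2$ term combine into $C_3\big(|\slashpar_\theta v|^2/(1-\bar a)^2-\gamma_0 v^2\big)$.

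For the second identity I would use $\mathcal Q_{\tilde m}(T_4,v)=\tfrac12\Box_{\tilde m}T_4\,(\gamma_0 v^2-\langle\nabla v,\nabla v\rangle_{\tilde m})+\nabla^2 T_4(dv,dv)$, computing $\Box_{\tilde m}T_4$ from the formula for $\Box_{\tilde m}$ in $\Omega_4$ recorded in Lemma~\ref{lem.2} and $\nabla^2 T_4(dv,dv)=(\partial_I\partial_J T_4-\Gamma^K_{IJ}\partial_K T_4)\nabla^I v\,\nabla^J v$ from the nonzero Christoffel symbols of $\tilde m$ at $p$ in the $(\bar a,\xi,\theta)$ coordinates. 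Here the non-constant conformal factor $(1-\bar a)^2$ on the spherical block produces extra symbols $\Gamma^{\bar a}_{\theta\theta}$, $\Gamma^{\xi}_{\theta\theta}$ proportional to $n-1$, which is the source of the $n$-dependence in $D_3,D_4,D_5$. Assembling these, multiplying through by $\bar a$ to clear the $\big(\tfrac{\bar a(2-\bar a)}{2}\big)^{\delta'-\frac12}$ factors, and once more completing squares in the $(\partial_{\bar a}v,\partial_\xi v)$-block yields the stated decomposition with $D_1,\dots,D_5$; the separate summand $D_3\big(|\bar a(2-\bar a)\partial_{\bar a}v-\bar b\partial_{\bar b}v|^2-|\bar b\partial_{\bar b}v|^2\big)$ is kept because this combination is genuinely sign-indefinite and cannot be absorbed into $D_1$.

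Since $\widetilde\Omega_4$ straddles the corner $\Omega_4\cap\Omega_5$, I would finally check that the densities extend smoothly across $y=0$ by repeating the three steps in the $(\phi,y)$ chart, using the $\Omega_5$-form of $\tilde m$ from the proof of Proposition~\ref{prop.4} and the dictionary $\phi=\bar a\bar b$, $y=(1-\bar a)\theta$, $\tfrac{\bar a(2-\bar a)}{2}=\tfrac{1-|y|^2}{2}$, and observing that the two coordinate descriptions agree on the overlap. \textbf{The main obstacle} is precisely this combination of features: the non-constant spherical conformal factor forces extra curvature terms in both $\Box_{\tilde m}T_4$ and $\nabla^2 T_4$, the free parameter $\alpha>1$ in $T'_4$ multiplies the number of surviving monomials, and the algebra must be organised so that the completion of squares reproduces exactly the $C_i$ and $D_i$ displayed --- a grouping that is not unique but is dictated by the later requirement (under assumption (A3)) that each coefficient be manifestly sign-definite near $S_1^+$.
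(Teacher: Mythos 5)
Your proposal matches the paper's own proof: the paper also works in the $(\bar a,\xi,\theta)$ chart with $\xi=-\log\bar b$ and $\theta$ normal at $p$, records the same matrix form of $\tilde m$ and $\tilde m^{-1}$, substitutes the six pairings into the definition of $\mathcal F_{\tilde m}$, computes $\Box_{\tilde m}T_4$ and the nonzero Christoffel symbols (including $\Gamma^{\bar a}_{\theta\theta}$ and $\Gamma^\xi_{\theta\theta}$, whose trace over the $n-1$ spherical directions furnishes the $n$-dependence in $D_3,D_4,D_5$), and assembles $\mathcal Q_{\tilde m}$ from $\tfrac12\Box_{\tilde m}T_4(\gamma_0 v^2-\langle\nabla v,\nabla v\rangle)+\nabla^2 T_4(dv,dv)$. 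The only extra point in your sketch — verifying smoothness across $y=0$ in the $(\phi,y)$ chart — is a reasonable precaution the paper leaves implicit, since the displayed coefficients involve polar data, but it does not change the computation.
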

\begin{proof}
Under coordinates $(\bar{a},\xi,\theta)$ with $\xi=-\log \bar{b}$ and $\theta $ normal w.r.t. the standard spherical metric $d^2\theta$ at $p\in \widetilde{\Omega}_4\cap \mathrm{Int}(X^2)$, the metric components are:
$$
\tilde{m}=
\left[\begin{array}{ccc}
0&1&0\\1& -\bar{a}(2-\bar{a}) &0\\ 0&0& (1-\bar{a})^2\mathbbm{1}_{n-1}
\end{array}\right],\quad
\tilde{m}^{-1}=
\left[\begin{array}{ccc}
\bar{a}(2-\bar{a})&1&0\\1& 0 &0\\ 0&0& \frac{1}{(1-\bar{a})^{2}}\mathbbm{1}_{n-1}
\end{array}\right].
$$
Then $\langle \mathcal{F}_{\tilde{m}}(T_4,v),\nabla T'_4\rangle_{\tilde{m}}$  follows from
$$
\begin{aligned}
\langle \nabla T_4,\nabla T'_4\rangle_{\tilde{m}} =&\ 
-\bar{b}^{\alpha}(\tfrac{2-\bar{a}}{2})^{-\alpha}
\big(1-(2-\alpha)[\tfrac{\bar{a}(2-\bar{a})}{2}+(1-\bar{a})^2(\tfrac{\bar{a}(2-\bar{a})}{2})^{\delta'+\frac{1}{2}}]\big),
\\
\langle \nabla T_4,\nabla T''_4\rangle_{\tilde{m}} =&\ 
-1-(1-\bar{a})^2(\tfrac{\bar{a}(2-\bar{a})}{2})^{\delta'-\frac{1}{2}},
\\
\langle \nabla T_4,\nabla v\rangle_{\tilde{m}} =&\ 
-((\tfrac{\bar{a}(2-\bar{a})}{2})^{\delta'-\frac{1}{2}}+\tfrac{1}{2-\bar{a}})\partial_{\xi}v+
(1-\bar{a}-2(\tfrac{\bar{a}(2-\bar{a})}{2})^{\delta'+\frac{1}{2}})\partial_{\bar{a}}v,
\\
\langle \nabla T'_4,\nabla v\rangle_{\tilde{m}} =&\ 
\bar{b}^{\alpha}(\tfrac{2-\bar{a}}{2})^{-\alpha}\big(
-(\tfrac{2-\bar{a}}{2}+\tfrac{\alpha-1}{2}\bar{a})\partial_{\xi}v
+\tfrac{\alpha-2}{2}(1-\bar{a})(2-\bar{a})\bar{a}\partial_{\bar{a}}v\big),
\\
\langle \nabla T''_4,\nabla v\rangle_{\tilde{m}} =&\ 
-\tfrac{1}{2-\bar{a}}\partial_{\xi}v+(1-\bar{a})\partial_{\bar{a}}v,
\\
\langle \nabla v,\nabla v\rangle_{\tilde{m}} =&\ 
2\partial_{\bar{a}}v\partial_{\xi}v+\bar{a}(2-\bar{a})\partial_{\bar{a}}v+\tfrac{|\slashpar_{\theta}v|^2}{(1-\bar{a})^2}.
\end{aligned}
$$
For $\mathcal{Q}_{\tilde{m}}(T_4,v)$, we have
$$
\Box_{\tilde{m}}T_4 =
-(1+2\delta')(1-\bar{a})^2 (\tfrac{\bar{a}(2-\bar{a})}{2})^{\delta'-\frac{1}{2}}-n+2n(\tfrac{\bar{a}(2-\bar{a})}{2})^{\delta'+\frac{1}{2}} 
$$
and the non-zero connection components at $p$ are
$$
\begin{gathered}
\Gamma^{\bar{a}}_{\bar{a}\xi}=-(1-\bar{a}),\quad \Gamma^{\bar{a}}_{\xi\xi}=\bar{a}(1-\bar{a})(2-\bar{a}),\quad \Gamma^{\bar{a}}_{\theta\theta}=\bar{a}(1-\bar{a})(2-\bar{a}),
\\
\Gamma^{\xi}_{\xi\xi}=1-\bar{a},\quad \quad \Gamma^{\xi}_{\theta\theta}=1-\bar{a},
\end{gathered}
$$
which implies $\mathcal{Q}_{\tilde{m}}(T_4,v)$.
\end{proof}

\begin{lemma} \label{lem.5.8}
For $n\geq 4$ and  $\lambda \in [\frac{1}{2}-\delta',1-2\delta]$, if $\alpha\in(\frac{n-1}{\lambda}-\frac{1}{8},\tfrac{n-1}{\lambda})$ then in $\widetilde{\Omega}_4$, 
$$
|T'_4|\mathcal{Q}_{\tilde{m}}(T_4,v) \leq  \lambda \langle\mathcal{F}_{\tilde{m}}(T_4,v),\nabla T'_4\rangle_{\tilde{m}}. 
$$
\end{lemma}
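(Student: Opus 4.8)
The plan is to strip a common positive prefactor from both sides, thereby reducing the assertion to a pointwise inequality between two quadratic forms in the b-derivatives of $v$ whose coefficients depend only on the single variable $\bar a$, and then to verify that inequality from the explicit coefficients recorded in Lemma \ref{lem.5.7} together with the parameter constraints (A3).

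First I would note that on $\widetilde{\Omega}_4$ one has $|T'_4|=(\tfrac{2-\bar a}{2})^{1-\alpha}\bar a\,\bar b^{\,\alpha}>0$, so that
$$
|T'_4|\,\mathcal{Q}_{\tilde{m}}(T_4,v)=(\tfrac{2-\bar a}{2})^{1-\alpha}\bar b^{\,\alpha}\cdot\bigl(\bar a\,\mathcal{Q}_{\tilde{m}}(T_4,v)\bigr),
$$
while by Lemma \ref{lem.5.7} the quantity $\langle\mathcal{F}_{\tilde{m}}(T_4,v),\nabla T'_4\rangle_{\tilde{m}}$ carries exactly the same positive prefactor $(\tfrac{2-\bar a}{2})^{1-\alpha}\bar b^{\,\alpha}$ multiplying $C_1P+C_2Q+C_3S$. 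Cancelling it and writing $p=\partial_{\bar a}v$, $q=\bar b\partial_{\bar b}v$, the claim becomes: for every $\bar a\in[0,1]$ the quadratic form
$$
\lambda\bigl(C_1P+C_2Q+C_3S\bigr)-\bar a\,\mathcal{Q}_{\tilde{m}}(T_4,v)
$$
is nonnegative, where $P=|\bar a(2-\bar a)p-q|^2+|q|^2$, $Q=|p|^2$, $S=\tfrac{|\slashpar_{\theta}v|^2}{(1-\bar a)^2}-\gamma_0v^2\ge0$, and the $C_i=C_i(\bar a)$, $D_i=D_i(\bar a)$ are as in Lemma \ref{lem.5.7}.

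Since $v$ and $\slashpar_{\theta}v$ enter only through the nonnegative quantities $-\gamma_0v^2$ and $\tfrac{|\slashpar_{\theta}v|^2}{(1-\bar a)^2}$, that part of the form splits off and amounts to the scalar inequalities $D_5\le\lambda C_3$ and $D_4\le\lambda C_3$ on $[0,1]$; these are elementary once one sets $w=(\tfrac{\bar a(2-\bar a)}{2})^{\delta'+\frac12}\in(0,2^{-1/2})$, observes $D_4,D_5\to0$ and $C_3\to\tfrac12$ as $\bar a\to0$, and uses $\lambda\alpha>n-1-\tfrac{\lambda}{8}$ together with $n\ge4$ near $\bar a=1$. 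For the remaining block I would expand $P$, $Q$ and the $D_3$-term of $\bar a\,\mathcal{Q}_{\tilde{m}}(T_4,v)$ as quadratic forms in $(p,q)$; the difference is then a $2\times2$ symmetric form whose positive semidefiniteness is equivalent to
$$
\lambda C_1\ge D_1,\qquad \lambda C_2\ge D_2,\qquad (E^2-D_3^2)\,\bar a^2(2-\bar a)^2+2BE\ge0,
$$
where $E=\lambda C_1-D_1$ and $B=\lambda C_2-D_2$. Here $D_2\le\lambda C_2$ holds for every $\bar a$ directly, since $\lambda C_2-D_2=\bar a(1-\bar a)\bigl((1-\lambda)+\lambda\alpha(1-w)\bigr)\ge0$ using $0<\lambda<1$; and $\lambda C_1\ge D_1$ holds because as $\bar a\to0$ the $\bar a^{\delta'-1/2}$-singular parts of $\lambda C_1$ and $D_1$ combine to $\tfrac12(\lambda-\tfrac12+\delta')\bar a^{\delta'-1/2}\ge0$ (this is where $\lambda\ge\tfrac12-\delta'$ is used), with bounded remainder tending to $\tfrac\lambda4>0$, while for $\bar a$ bounded away from $0$ it follows from continuity and the windows for $\lambda,\alpha$. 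Finally the determinant condition is verified in two regimes: near $\bar a=0$ the factor $\bar a^2(2-\bar a)^2$ annihilates the possibly negative term $E^2-D_3^2$, leaving $2BE$ bounded below by a positive constant; for $\bar a$ away from $0$ all $C_i,D_i$ stay bounded and the condition becomes a finite family of inequalities that the choice $\lambda\in[\tfrac12-\delta',1-2\delta]$, $\alpha\in(\tfrac{n-1}{\lambda}-\tfrac18,\tfrac{n-1}{\lambda})$, $n\ge4$ is tuned to satisfy (for instance at $\bar a=1$ it reduces to $\lambda\alpha\ge 2n(w-\tfrac12)$, which holds since $\lambda\alpha>n-1-\tfrac18$ and $w<2^{-1/2}$).

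The main obstacle is this determinant condition: one must simultaneously track the singular growth $C_1,D_1\sim\bar a^{\delta'-1/2}$ as $\bar a\to0$ against the vanishing weight $\bar a^2(2-\bar a)^2$, and control the sign of $E^2-D_3^2$, which turns negative precisely when $\lambda$ sits at its lower endpoint $\tfrac12-\delta'$; the narrow admissible interval for $\alpha$, combined with $n\ge4$, is exactly what keeps the $(p,q)$-form positive semidefinite uniformly over the whole range of $\bar a$. Everything else is bookkeeping with the explicit expressions of Lemma \ref{lem.5.7}.
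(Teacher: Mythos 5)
Your overall strategy matches the paper's: cancel the common positive prefactor $(\tfrac{2-\bar a}{2})^{1-\alpha}\bar b^{\alpha}$, split the resulting pointwise inequality into the $(v,\slashpar_\theta v)$ block (yielding $D_4,D_5\le\lambda C_3$) and the $(\partial_{\bar a}v,\bar b\partial_{\bar b}v)$ block, and reduce the latter to the PSD condition whose determinant reads $(E_1^2-D_3^2)\bar a^2(2-\bar a)^2+2E_1E_2\ge 0$ with $E_1=\lambda C_1-D_1$, $E_2=\lambda C_2-D_2$ — exactly the inequality the paper establishes. Where you diverge is that the paper splits on the sign of $D_3$: when $(\tfrac{\bar a(2-\bar a)}{2})^{\delta'+1/2}\ge\tfrac12$ so $D_3\le 0$, it verifies the stronger coefficientwise condition $-D_3\le E_1$, $E_2\ge0$; when $D_3>0$ it proves the determinant inequality directly. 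Your single PSD formulation unifies the two cases and is a genuine streamlining of the bookkeeping, though you then in effect reintroduce a similar two-regime split ($\bar a$ near $0$ versus away).

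There are two gaps in your verification, though. Near $\bar a=0$ you assert that $\bar a^2(2-\bar a)^2$ "annihilates the possibly negative term $E_1^2-D_3^2$, leaving $2BE$ bounded below by a positive constant." But $E_2=\bar a(1-\bar a)\bigl[\lambda\alpha(1-w)+1-\lambda\bigr]\to 0$ as $\bar a\to 0$, so $2E_1E_2$ is not bounded below by a positive constant — it vanishes linearly. What actually saves the inequality is the rate comparison: when $\lambda=\tfrac12-\delta'$ the possibly negative term $(E_1^2-D_3^2)\bar a^2(2-\bar a)^2$ is $O(\bar a^2)$ while $2E_1E_2\sim c\bar a$ with $c=\tfrac{\lambda}{2}[\lambda\alpha+1-\lambda]>0$; when $\lambda>\tfrac12-\delta'$ the singular prefactor $\bar a^{\delta'-1/2}$ in $E_1$ makes $E_1^2-D_3^2$ itself positive for small $\bar a$, but one must still track the balance $\bar a^{1+2\delta'}$ versus $\bar a^{1/2+\delta'}$. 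Your phrasing obscures that this is a delicate rate comparison, not an outright domination by a constant. Second, for $\bar a$ bounded away from $0$ you dismiss the verification as "a finite family of inequalities that the choice $\lambda,\alpha,n$ is tuned to satisfy," with only the endpoint $\bar a=1$ checked. That is not an argument: $\bar a$ ranges over a continuum, and the determinant inequality must be shown uniformly. The paper does this by explicit algebraic manipulation — it lower-bounds $2E_2+\bar a^2(2-\bar a)^2E_1$ and then shows $E_1(2E_2+\bar a^2(2-\bar a)^2E_1)\ge\bar a^2(2-\bar a)^2D_3^2$ via the chain of estimates ending with $\alpha'[2\alpha'-1]\ge(2-\bar a)^2D_3^2/(\tfrac12-(\tfrac{\bar a(2-\bar a)}{2})^{\delta'+\frac12})^2$, which is where the narrow window $\alpha'\in(n-1-\tfrac\lambda8,n-1)$ is used. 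Your outline would need these uniform estimates filled in to stand as a proof.
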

\begin{proof}
Let $\alpha'=\lambda\alpha\in(n-1-\frac{\lambda}{8}, n-1)$. We only need to show that
\begin{equation}\label{eq.5.4.1}
\begin{aligned}
D_4\tfrac{|\slashpar_{\theta}v|^2}{(1-\bar{a})^2}+D_5(-\gamma_0v^2) 
&\leq \lambda C_3(\tfrac{|\slashpar_{\theta}v|^2}{(1-\bar{a})^2}-\gamma_0v^2),
\\
D_3(|\bar{a}(2-\bar{a})\partial_{\bar{a}}v-\bar{b}\partial_{\bar{b}}v|^2
-| \bar{b}\partial_{\bar{b}}v|^2)
&\leq E_1 ( \bar{a}(2-\bar{a})\partial_{\bar{a}}v-\bar{b}\partial_{\bar{b}}v|^2 +| \bar{b}\partial_{\bar{b}}v|^2)
+ E_2|\partial_{\bar{a}}v|^2, 
\end{aligned}
\end{equation}
where 
$$
\begin{aligned}
E_1=&\ \lambda C_1-D_1=
(\lambda-\tfrac{1}{2}+\delta')\tfrac{(1-\bar{a})^2}{2-\bar{a}}(\tfrac{\bar{a}(2-\bar{a})}{2})^{\delta'-\frac{1}{2}} 
+\tfrac{\bar{a}+2\lambda(1-\bar{a})}{2(2-\bar{a})^2}
\\ &\ +
\tfrac{\alpha'-2+\bar{a}}{(2-\bar{a})^2}(\tfrac{\bar{a}(2-\bar{a})}{2})^{\delta'+\frac{1}{2}} 
+ \tfrac{\alpha'\bar{a}}{(2-\bar{a})^2}(\tfrac{1}{2}-(\tfrac{\bar{a}(2-\bar{a})}{2})^{\delta'+\frac{1}{2}}  ) ,
\\
E_2=&\ \lambda C_2-D_2
=
\alpha'\bar{a}(1-\bar{a})(1-(\tfrac{\bar{a}(2-\bar{a})}{2})^{\delta'+\frac{1}{2}})
+(1-\lambda)\bar{a}(1-\bar{a}) . 
\end{aligned}
$$
First, 
$$
\begin{aligned}
\lambda C_3-D_4
=&\ 
\tfrac{(\alpha'-n+2)\bar{a}}{2}+(n-2)\bar{a}(\tfrac{\bar{a}(2-\bar{a})}{2})^{\delta'+\frac{1}{2}}+\tfrac{(\alpha'-2)(1-\bar{a})^2}{2-\bar{a}}(\tfrac{\bar{a}(2-\bar{a})}{2})^{\delta'+\frac{1}{2}}
\\&\ 
+\tfrac{(1-\bar{a})^2}{2-\bar{a}}[\lambda+(1-2\delta'-2\lambda)(\tfrac{\bar{a}(2-\bar{a})}{2})^{\delta'+\frac{1}{2}}]
\\
\geq&\ 
\lambda \tfrac{(1-\bar{a})^2}{2-\bar{a}}[1-(\tfrac{\bar{a}(2-\bar{a})}{2})^{\delta'+\frac{1}{2}}] \geq 0.
\end{aligned}
$$
Similarly,
$$
\begin{aligned}
\lambda C_3-D_5=&\ 
\tfrac{(\alpha'-n)\bar{a}}{2}+n\bar{a}(\tfrac{\bar{a}(2-\bar{a})}{2})^{\delta'+\frac{1}{2}}+\tfrac{(1-\bar{a})^2}{2-\bar{a}}(\alpha'-2)(\tfrac{\bar{a}(2-\bar{a})}{2})^{\delta'+\frac{1}{2}}
\\ &\ 
+\tfrac{(1-\bar{a})^2}{2-\bar{a}}[\lambda+(1-2\delta'-2\lambda)(\tfrac{\bar{a}(2-\bar{a})}{2})^{\delta'+\frac{1}{2}}]
\\
\geq &\ 
\tfrac{\bar{a}}{2}[\alpha'-n+n\bar{a}(2-\bar{a})+(\alpha'-2)(1-\bar{a})^2] + \lambda \tfrac{(1-\bar{a})^2}{2-\bar{a}}[1-(\tfrac{\bar{a}(2-\bar{a})}{2})^{\delta'+\frac{1}{2}}]
\\
\geq &\ 
\tfrac{\bar{a}(1-\bar{a})^2}{2}[2\alpha'-n-2+\tfrac{\lambda}{4}] 
\geq   0.
\end{aligned}
$$
 Hence we prove the first inequality in (\ref{eq.5.4.1}). 
 
 For the second inequality in (\ref{eq.5.4.1}), there are two cases. First, if $(\tfrac{\bar{a}(2-\bar{a})}{2})^{\delta'+\frac{1}{2}}\geq \frac{1}{2}$, then $D_3\leq 0$. So we only need to show $-D_3\leq E_1$. In this case, notice that $0\leq 1-\bar{a}\leq \frac{\sqrt{2}}{2}$. Then
$$
\begin{aligned}
E_1+ D_3\geq &\ \tfrac{n+\alpha'+(n-2-\alpha'+2\lambda)(1-\bar{a})}{2(2-\bar{a})^2}
-\tfrac{n+(n-\alpha')(1-\bar{a})}{(2-\bar{a})^2}(\tfrac{\bar{a}(2-\bar{a})}{2})^{\delta'+\frac{1}{2}}
\\ \geq &\ 
\tfrac{n+\alpha'+(n-1-\alpha'-2\delta')(1-\bar{a})}{2(2-\bar{a})^2}
-\tfrac{n+(n-\alpha')(1-\bar{a})}{(2-\bar{a})^2}(\tfrac{\bar{a}(2-\bar{a})}{2})^{\delta'+\frac{1}{2}}
:= f(\delta').
\end{aligned}
$$
Here $f'(\delta')\geq 0$ and hence
$$
\begin{aligned}
E_1+ D_3
\geq&\  \tfrac{1}{(2-\bar{a})^2}
[\tfrac{n+\alpha'+(n-1-\alpha')(1-\bar{a})}{2}
-(n+(n-\alpha')(1-\bar{a}))(\tfrac{\bar{a}(2-\bar{a})}{2})^{\frac{1}{2}}]
\\
\geq&\  
\tfrac{1}{(2-\bar{a})^2}[
\tfrac{n+\alpha'}{2}-\tfrac{n}{\sqrt{2}}-(n-\alpha'-\tfrac{n-1-\alpha'}{\sqrt{2}})(1-\bar{a})
(\tfrac{\bar{a}(2-\bar{a})}{2})^{\frac{1}{2}}
]
\\ 
\geq &\ 
\tfrac{1}{4(2-\bar{a})^2}[ (4-2\sqrt{2})n-\tfrac{3\sqrt{2}}{2}-\tfrac{5}{2}
]
\geq 0
\end{aligned}
$$
for all $n\geq 4$.  
Second, if $(\tfrac{\bar{a}(2-\bar{a})}{2})^{\delta'+\frac{1}{2}}< \frac{1}{2}$, 
then the second inequality in (\ref{eq.5.4.1})  is equivalent to 
\begin{equation}\label{eq.5.4}
2(D_3-E_1)\bar{a}(2-\bar{a}) \partial_{\bar{a}}v\bar{b}\partial_{\bar{b}}v 
\leq 2E_1|\bar{b}\partial_{\bar{b}}v |^2 +(E_2+\bar{a}^2(2-\bar{a}) ^2(E_1-D_3))|\partial_{\bar{a}}v|^2.
\end{equation}
In this case, $E_1\geq 0$ and 
$$
\begin{aligned}
&E_2+\bar{a}^2(2-\bar{a}) ^2(E_1-D_3)
\\
\geq &\ 
\alpha'\bar{a}(1-\bar{a})(1-(\tfrac{\bar{a}(2-\bar{a})}{2})^{\delta'+\frac{1}{2}})+(1-\lambda)\bar{a}(1-\bar{a})
\\ &\ 
+\bar{a}^2[(\alpha'-2+\bar{a})(\tfrac{\bar{a}(2-\bar{a})}{2})^{\delta'+\frac{1}{2}}+\tfrac{\bar{a}+2\lambda(1-\bar{a})}{2}+(\alpha'\bar{a}-(n-1)(2-\bar{a}))(\tfrac{1}{2}-(\tfrac{\bar{a}(2-\bar{a})}{2})^{\delta'+\frac{1}{2}})]
\\
\geq &
\bar{a}^2[
\alpha'(1-\bar{a})(1-(\tfrac{\bar{a}(2-\bar{a})}{2})^{\delta'+\frac{1}{2}}) +\tfrac{2-\bar{a}}{2} +
(\alpha'\bar{a}-(n-1)(2-\bar{a}))(\tfrac{1}{2}-(\tfrac{\bar{a}(2-\bar{a})}{2})^{\delta'+\frac{1}{2}})
]
\\
\geq &\ 
\bar{a}^2
(\alpha'+2-n)(2-\bar{a})(\tfrac{1}{2}-(\tfrac{\bar{a}(2-\bar{a})}{2})^{\delta'+\frac{1}{2}})
\geq 0.
\end{aligned}
$$
Hence (\ref{eq.5.4}) follows by
$$
\begin{gathered}
2E_1 (E_2+\bar{a}^2(2-\bar{a}) ^2(E_1-D_3))\geq 
(D_3-E_1)^2\bar{a}^2(2-\bar{a})^2
\\ \Longleftrightarrow \quad
E_1(2E_2+\bar{a}^2(2-\bar{a}) ^2E_1)\geq \bar{a}^2(2-\bar{a}) ^2D_3^2. 
\end{gathered}
$$
By direct computation, 
$$
\begin{aligned}
2E_2+\bar{a}^2(2-\bar{a}) ^2E_1 \geq 
\alpha' \bar{a}(2-\bar{a})^2(\tfrac{1}{2}-(\tfrac{\bar{a}(2-\bar{a})}{2})^{\delta'+\frac{1}{2}}).
\end{aligned}
$$
Hence 
$$
\begin{aligned}
E_1(2E_2+\bar{a}^2(2-\bar{a}) ^2E_1)
\geq&\ 
\alpha' [\alpha'+1+(\alpha'-2+\bar{a})(2-\bar{a})] \bar{a}^2
(\tfrac{1}{2}-(\tfrac{\bar{a}(2-\bar{a})}{2})^{\delta'+\frac{1}{2}})^2
\\ \geq &\ 
\alpha' [2\alpha'-1] \bar{a}^2
(\tfrac{1}{2}-(\tfrac{\bar{a}(2-\bar{a})}{2})^{\delta'+\frac{1}{2}})^2
\\
\geq&\  \bar{a}^2(2-\bar{a})^2D_3^2.
\end{aligned}
$$
We finish the proof. 
\end{proof}

\begin{lemma}\label{lem.5.9}
For $n\geq 4$ and in $\widetilde{\Omega}_4$
$$
\begin{gathered}
dvol_{\tilde{g}}=(1+\bar{a}^{\frac{n-1}{2}}\bar{b}^{\frac{n-1}{2}}\Theta_1(\tilde{h})) dvol_{\tilde{m}},
\\
\begin{aligned}
\langle F(T_4,v),\nabla T'_4 \rangle_{\tilde{g}} -\langle F(T_4,v),\nabla T'_4 \rangle_{\tilde{m}}
=&\ \bar{a}^{\frac{n-5}{2}}\bar{b}^{\frac{n-1}{2}+\alpha}\big(\Theta_1(\tilde{h}_{\rho\rho})(\bar{a}\partial_{\bar{a}}v,\partial_{\xi}v)+\bar{a}\Theta_1(\tilde{h})(\tilde{\partial}v,v)\big)
\\&\ 
+\bar{a}^{n-4}\bar{b}^{n-1+\alpha}\big(\Theta_2(\tilde{h}_{\rho\rho})(\bar{a}\partial_{\bar{a}}v,\partial_{\xi}v)+\bar{a}\Theta_2(\tilde{h})(\tilde{\partial}v,\tilde{\partial}v)\big),
\end{aligned}
\\
\begin{aligned}
\mathcal{Q}_{\tilde{g}} (T_4,v)-\mathcal{Q}_{\tilde{m}} (T_4,v)
= &\ \bar{a}^{\frac{n-7}{2}}\bar{b}^{\frac{n-1}{2}}\big(\Theta_1(\tilde{h}_{\rho\rho})(\bar{a}\partial_{\bar{a}}v,\partial_{\xi}v)+ 
\Theta_1(\tilde{h}_{\rho\rho})(\tilde{\partial}\tilde{h}_{\rho\rho})(\bar{a}\partial_{\bar{a}}v,\partial_{\xi}v)
\\&\
+\bar{a}\Theta_1(\tilde{h})(\tilde{\partial}v,v)
+\bar{a}\Theta_0(\tilde{h})(\tilde{\partial}\tilde{h})(\tilde{\partial}v,v)\big).
\end{aligned}
\end{gathered}
$$
\end{lemma}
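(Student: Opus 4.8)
The proof follows the same scheme as Lemmas~\ref{lem.5.4} and~\ref{lem.5.6}. First I would fix a point $p\in\widetilde{\Omega}_4\cap\mathrm{Int}(X^2)$, pass to the coordinate $\xi=-\log\bar b$ (so $\tfrac{d\bar b}{\bar b}=-d\xi$ and $\bar b\partial_{\bar b}=-\partial_\xi$), and choose $\theta$ to be normal coordinates for the round metric $d^2\theta$ at $p$, exactly as in the proof of Lemma~\ref{lem.5.7}; there $\tilde m$ already has the simple matrix form recorded. From the expression for $\tilde\rho^2 h$ in $\Omega_4$ given in the proof of Proposition~\ref{prop.4} one reads off $\tilde g=\tilde m+\bar a^{\frac{n-5}{2}}\bar b^{\frac{n-1}{2}}[\,\cdots]$ with the same block structure as in Lemma~\ref{lem.5.4} (the $(\bar a,\bar a)$ entry equal to $\tilde h_{\rho\rho}$, the mixed spherical entries of size $\bar a\,\Theta_1(\tilde h)$, etc.), and then invert to obtain $\tilde g^{-1}=\tilde m^{-1}+\bar a^{\frac{n-5}{2}}\bar b^{\frac{n-1}{2}}[\,\cdots]$, the essential point being that the $(\xi,\xi)$ entry of the correction is $-\tilde h_{\rho\rho}+\bar a^{\frac{n-1}{2}}\Theta_2(\tilde h)$ while all entries involving $\partial_{\bar a}$ come with an extra factor $\bar a^{-1}$ relative to this one.

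The volume identity follows immediately from $\det\tilde g=\det\tilde m\,(1+\bar a^{\frac{n-1}{2}}\bar b^{\frac{n-1}{2}}\Theta_1(\tilde h))$. For the flux term I would compute the differences $\langle\nabla T_4,\nabla T_4'\rangle$, $\langle\nabla T_4,\nabla v\rangle$, $\langle\nabla T_4',\nabla v\rangle$ and $\langle\nabla v,\nabla v\rangle$ between $\tilde g$ and $\tilde m$ by contracting the explicit correction matrix with $dT_4$, $dT_4'$, $dv$; each difference picks up a factor $\tilde h_{\rho\rho}$ (or $\bar a\,\Theta_1(\tilde h)$) times the indicated power of $\bar a$, and the extra factor $\bar b^{\alpha}$ in the claimed formulas is precisely the $\bar b^{\alpha}$ in $T_4'=-(\tfrac{2-\bar a}{2})^{1-\alpha}\bar a\bar b^{\alpha}$. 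Substituting into
\[
\mathcal F_{\tilde g}(T_4,v)=\langle\nabla T_4,\nabla v\rangle_{\tilde g}\,\nabla v-\tfrac12\langle\nabla v,\nabla v\rangle_{\tilde g}\,\nabla T_4+\tfrac12\gamma_0 v^2\nabla T_4
\]
and pairing with $\nabla T_4'$ gives the stated expression, the terms quadratic in $\tilde h$ yielding the $\bar a^{n-4}\bar b^{n-1+\alpha}\Theta_2$ remainder.

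For $\mathcal Q_{\tilde g}(T_4,v)-\mathcal Q_{\tilde m}(T_4,v)$ I would use $\mathcal Q_{\tilde g}(T,v)=\tfrac12\Box_{\tilde g}T\,(\gamma_0 v^2-\langle\nabla v,\nabla v\rangle_{\tilde g})+\nabla^2T(dv,dv)$. The first piece is controlled by the $\Omega_4$ computation in the proof of Proposition~\ref{prop.4} (cf. Lemma~\ref{lem.2}), which gives $\Box_{\tilde g}T_4-\Box_{\tilde m}T_4=\bar a^{\frac{n-5}{2}}\bar b^{\frac{n-1}{2}}\Theta_1(\tilde h)$ up to a $\tilde h_{\rho\rho}$ contribution of the same order; combined with the $\langle\nabla v,\nabla v\rangle$ difference this produces an $\bar a^{\frac{n-7}{2}}\bar b^{\frac{n-1}{2}}$ remainder of the required shape. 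The second piece needs the Christoffel differences $\Gamma^{K}_{IJ}(\tilde g)-\Gamma^{K}_{IJ}(\tilde m)$ for $I,J,K\in\{\bar a,\xi,\theta\}$, computed from the explicit $\tilde g$ and $\tilde g^{-1}$ exactly as in the proof of Lemma~\ref{lem.5.4}; the component $\Gamma^{\xi}_{\bar a\bar a}$ carries the lowest power $\bar a^{\frac{n-7}{2}}\bar b^{\frac{n-1}{2}}$ with coefficient $\Theta_1(\tilde h_{\rho\rho})+\Theta_1(\tilde\partial\tilde h_{\rho\rho})$, so that $\nabla^2T_4(dv,dv)_{\tilde g}-\nabla^2T_4(dv,dv)_{\tilde m}$ has the claimed structure, and collecting the two pieces finishes the proof.

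The only genuine bookkeeping obstacle is to check that every correction term indeed carries at least the power $\bar a^{\frac{n-7}{2}}\bar b^{\frac{n-1}{2}}$ (for $\mathcal Q$) or $\bar a^{\frac{n-5}{2}}\bar b^{\frac{n-1}{2}+\alpha}$ (for the $T_4'$-flux), and that whenever the power of $\bar a$ drops to $\bar a^{\frac{n-7}{2}}$ the accompanying coefficient involves only $\tilde h_{\rho\rho}$ and $\tilde\partial\tilde h_{\rho\rho}$, never a generic component of $\tilde h$. This is what makes the estimate usable in Section~6, where Assumption~(A2) lets $\tilde h_{\rho\rho}=O(\rho_1^{\delta})$ absorb the singular $\rho_1^{-1}$ hidden in the factor $\partial_{\rho_1}$ of $\Box_{\tilde m}$. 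It holds automatically because each loss of a power of $\bar a$ in passing from $\tilde\rho^2h$ to $\tilde g^{-1}$, and again in forming a $\partial_{\bar a}$-Christoffel symbol, is paired with the loss of the factor $\bar a^{-1}$ that multiplies exactly the $(\bar a,\bar a)$-block — i.e. the $\tilde h_{\rho\rho}$ entry — just as in $\widetilde{\Omega}_2$.
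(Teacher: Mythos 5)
Your approach matches the paper's: the paper proves this lemma by a one-line appeal to the computation in Lemma~\ref{lem.5.4} (with $(a,b)$ replaced by $(\bar a,\bar b)$ in $\Omega_4$) and in Lemma~\ref{lem.5.2} (for the $\Omega_5$ patch), and your outline fills in exactly that bookkeeping, including the origin of the $\bar b^{\alpha}$ factor from $T_4'$ and the role of $\tilde h_{\rho\rho}$ in controlling the lowest power of $\bar a$. One small inaccuracy in the exposition: in $\tilde g^{-1}$ the correction entries paired with the $\bar a$-index carry \emph{extra} powers of $\bar a$ relative to the $(\xi,\xi)$ entry (the $(\bar a,\xi)$-entry is $\sim -\bar a\,\tilde h_{\rho\rho}$ and the $(\bar a,\bar a)$-entry is $\sim -\bar a^{2}\tilde h_{\rho\rho}$, mirroring Lemma~\ref{lem.5.4}), not an extra $\bar a^{-1}$; the $\bar a^{-1}$ losses you have in mind arise only on rewriting $\partial_{\bar a}v=\bar a^{-1}(\bar a\partial_{\bar a}v)$ and cancel these extra powers, which is why your final conclusion — leading coefficient $\tilde h_{\rho\rho}$ at the stated powers of $\bar a$ and $\bar b$ — still comes out correct.
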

\begin{proof}
The proof is similar as the proof of Lemma \ref{lem.5.4} by substituting $(a,b)$ by $(\bar{a},\bar{b})$ if restricting to $\Omega_4$ and similar as the proof of Lemma \ref{lem.5.2} if restricting to $\Omega_5$.
\end{proof}

\vspace{0.2in}
\section{Energy Estimates}\label{sec.energy}

In this section, we mainly prove the following theorem by energy estimates.
\begin{theorem}\label{thm.est}
 For $n\geq 4$ if given Cauchy data
\begin{equation}\label{cauchy}
(h^0,h^1)\in \widetilde{\mathcal{V}}^{N,\delta+\frac{n-1}{2}}_{\epsilon}
\quad\mathrm{with}\quad N>n+6,\ \delta\in(0,\tfrac{1}{2}) \textrm{ and } \epsilon>0\textrm{ small},
\end{equation}
where $\widetilde{\mathcal{V}}^{N,\delta+\frac{n-1}{2}}_{\epsilon}$ is defined in (\ref{initial.2}), then there exists a global Einstein vacuum solution $g=m+h$ such that 
$\tilde{h}=\tilde{\rho}^{-\frac{n-1}{2}}h$ is $C^{0,\delta}$ up to $S^+_1$ 
and hence the radiation field is well defined: 
\begin{equation}\label{def.RF}
\mathscr{R_F} (h^0,h^1)=\tilde{h}|_{S^{\pm}_1},
\end{equation}
which provides the leading term of $h$ along the bicharacteristic curves. 
Moreover, 
$$
\|\tilde{h}|_{S^{\pm}_1}\| _{\rho_0^{\delta}\rho_2^{\sigma-\frac{n-1}{2}} H_b^{N}(S^{\pm}_1)} \leq 
C\epsilon
$$
for some $\sigma>0$ and $C>0$. 
\end{theorem}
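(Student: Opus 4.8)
The plan is to run a bootstrap/continuity argument on the conformal equation \eqref{eq.12} using the time-like functions and the divergence identities from Section~\ref{sec.timelikefun}. First I would set up the bootstrap assumptions: on the forward region $X_{t\geq0}$ one assumes that $(\rho_0\rho_2)^{\frac{n-1}{2}}\tilde{h}$ together with its $\mathscr{V}_b$-derivatives up to order $\sim N$ is small, and that $\tilde{h}_{\rho\rho}$ (the component singled out in Lemma~\ref{lem.4} and Lemma~\ref{lem.4.8}) is bounded by $C\rho_1^{\delta}$ near $S_1^{\pm}$ — i.e.\ precisely assumptions (A1)--(A3). These assumptions make the perturbed quantities $\tilde{g}$, $\Box_{\tilde{g}}$, $\mathcal{F}_{\tilde{g}}$, $\mathcal{Q}_{\tilde{g}}$ differ from their $\tilde{m}$-counterparts by the controlled error terms catalogued in Lemmas~\ref{lem.5.2}, \ref{lem.5.4}, \ref{lem.5.6}, \ref{lem.5.9}, and they also guarantee (via Proposition~\ref{prop.4}) that $\tilde g$ is a genuine Lorentzian $b$-metric with $S_1^{\pm}$ characteristic. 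The initial data hypothesis $(h^0,h^1)\in\widetilde{\mathcal{V}}^{N,\delta+\frac{n-1}{2}}_\epsilon$ from Section~\ref{sec.initialdata} feeds the energy flux through $\Sigma_0$ and, crucially, the harmonic gauge conditions \eqref{constraint.6}+\eqref{constraint.7} propagate (by the standard fact that $\Box_g\Gamma_\mu$ is linear in $\Gamma$) so that Lemma~\ref{lem.4} and the $\tilde F_{\mu\nu}$ structure of Lemma~\ref{lem.4.8} remain valid along the evolution.

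The core is a hierarchy of weighted energy estimates, region by region, along the foliation $X_{t\geq0}=\widetilde\Omega_1\cup\widetilde\Omega_2\cup\widetilde\Omega_3\cup\widetilde\Omega_4$. For each commuted equation from Lemma~\ref{lem.6}, I would apply Lemma~\ref{lem.3} with $(T_i,T_i',T_i'')$ on each $\widetilde\Omega_i$: the boundary term on the space-like/incoming faces is controlled by data or by the previous region's flux, the term $\langle\mathcal{F}_{\tilde g}(T_i,v),\nabla T_i'\rangle_{\tilde g}$ on $S_1^{\pm}$ is the positive null-flux we want to bound, the bulk term $\mathrm{div}_{\tilde g}(e^{-2T_i''}\mathcal{F}_{\tilde g}(T_i,v))$ splits via \eqref{eq.div} into (i) the good weighted Gronwall term $-2\langle\mathcal{F},\nabla T_i''\rangle$, (ii) the source term $\langle\nabla v,\nabla T_i\rangle(\Box_{\tilde g}+\gamma_0)v$ which by Lemma~\ref{lem.6} is the commuted $\tilde f^I_{\mu\nu}$ plus lower-order $\tilde\partial^i\tilde h$, and (iii) $\mathcal{Q}_{\tilde g}(T_i,v)$. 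The whole point of the explicit formulas in Section~\ref{sec.timelikefun} — in particular Lemma~\ref{lem.5.8} which is exactly the inequality $|T_4'|\mathcal{Q}_{\tilde m}(T_4,v)\le\lambda\langle\mathcal{F}_{\tilde m}(T_4,v),\nabla T_4'\rangle_{\tilde m}$ with $\lambda\in[\tfrac12-\delta',1-2\delta]$ — is that $\mathcal{Q}$ can be absorbed into the flux with a definite spare weight, which produces the $\rho_1^{\delta}$ gain near $S_1^{\pm}$ and, combined with $n\ge4$ (so that the power $\rho_1^{\frac{n-5}{2}}$ in \eqref{eq.12} is at least $\rho_1^{-1/2}$, i.e.\ the quadratic nonlinearity with its $\theta_\mu\theta_\nu(\tfrac14(D\,\mathrm{tr}_m\tilde h)^2-\tfrac12 D\tilde h^\alpha_\beta D\tilde h^\beta_\alpha)$ leading part hitting only the $\tilde h_{\rho\rho}$-type slot), closes the bootstrap. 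After summing over all commutators up to order $N$ and all four regions, a Gronwall argument in the "time" variables ($T_1$, then $\xi=-\log b$, then $T_3$, then $\xi=-\log\bar b$) yields a uniform bound $\lesssim\epsilon$, improving the bootstrap constants; by continuity the assumptions hold globally on $X_{t\ge0}$.

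The qualitative conclusions follow: the uniform $b$-Sobolev control with $N>n+6>\tfrac12\dim X$ plus Lemma~\ref{lem.2.3} gives $(\rho_0\rho_2)^{\frac{n-1}{2}}\tilde h\in\mathscr{A}^{0,0,0}$ away from $S_1^{\pm}$ and, with the $\rho_1^{\delta}$ flux gain, shows $\tilde h$ extends $C^{0,\delta}$ up to $S_1^{\pm}$ (using the coordinate change $\tau=\tau'-f(\rho)$ from the proof of Proposition~\ref{prop.4} to absorb the $\tilde h_{\rho\rho}$ singularity); hence the restriction $\tilde h|_{S_1^{\pm}}$ is well defined, which is the definition \eqref{def.RF} of the radiation field, and it records the leading coefficient of $h\sim\tilde\rho^{\frac{n-1}{2}}\tilde h$ along the bicharacteristics. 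Finally, reading off the weights that survive the energy estimate — the $\rho_0^{\delta}$ from the initial decay rate $\delta+\tfrac{n-1}{2}$ and the $\rho_2^{\alpha}$-type weight built into $T_4'$ and carried by $\langle\mathcal F_{\tilde g}(T_4,v),\nabla T_4'\rangle$ — gives a $\sigma>0$ with $\|\tilde h|_{S_1^{\pm}}\|_{\rho_0^{\delta}\rho_2^{\sigma-\frac{n-1}{2}}H^N_b(S_1^{\pm})}\le C\epsilon$. The main obstacle is the bookkeeping at the corners $S_0\cap S_1^{\pm}$ and $S_1^{\pm}\cap S_2^{\pm}$, where two regions must be patched and one must verify the flux on the common characteristic face $S_1^{\pm}$ is consumed with matching weights on both sides; making the weight exponents in the four regions consistent (which forces the precise relations in (A3) and the range of $\alpha$ in Lemma~\ref{lem.5.8}) is the delicate part, and is the reason the time-like functions were engineered as they are.
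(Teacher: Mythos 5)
Your proposal matches the paper's strategy closely: region-by-region weighted energy estimates on $\widetilde\Omega_1,\dots,\widetilde\Omega_4$ with the time-like functions of Section~\ref{sec.timelikefun}, absorption of $\mathcal{Q}_{\tilde g}$ into the null flux via Lemma~\ref{lem.5.8}, the harmonic-gauge decay of $\tilde h_{\rho\rho}$ from Lemma~\ref{lem.4} controlling the $\rho_1^{\frac{n-5}{2}}$ weight for $n\geq 4$, and a continuation argument patched across the corners, leading to exactly the $C^{0,\delta}$ regularity and weighted radiation-field bound you describe. The one piece not made explicit in your account is how existence is seeded: the paper first constructs the solution in $\widetilde\Omega_1$ by a Picard iteration $\Box_{g^l}h^{l+1}_{\mu\nu}=F_{\mu\nu}(h^l)(\partial h^l,\partial h^{l+1})$ (Proposition~\ref{prop.est.1}), and in $\widetilde\Omega_2,\widetilde\Omega_3,\widetilde\Omega_4$ the continuation hinges on the quantitative a priori rate $M_i^N(\mathfrak t;\tilde h,\tilde g)\lesssim\epsilon|\mathfrak t|^{\delta-\frac12}$ (Propositions~\ref{prop.est.2}--\ref{prop.est.4}), which is precisely what gives the $\rho_1^{\delta}$ gain and hence the H\"older continuity and the $\rho_0^{\delta}\rho_2^{\sigma-\frac{n-1}{2}}$ weights at $S_1^{\pm}$.
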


The theorem follows from Proposition \ref{prop.est.1}, \ref{prop.est.2}, \ref{prop.est.3}, \ref{prop.est.4} and Corollary \ref{cor.6.2}, \ref{cor.6.3}, \ref{cor.6.4}.  We do energy estimates for $\tilde{h}$ satisfying the conformal transformation of reduced Einstein equations (\ref{eq.12}) by using the time-like functions defined in Section $5$. The boundary defining functions are given in Section $2$ in each domain $\widetilde{\Omega}_i$ for $1\leq i\leq 4$.  Here $N,\delta$ are fixed all over this section and $\epsilon$ is chosen to be small enough. 
The image space of $\mathscr{R_F}$ will be refined in Section \ref{sec.mollerop}.

\subsection{In $\widetilde{\Omega}_1$.} \label{sec.est1} 
To solve the equations (\ref{eq.12}) in $\widetilde{\Omega}_1$, it is in fact a local existence theorem. We prove the local existence and uniqueness theorem in the conformal setting for $T_1\in[0,\frac{3}{4}]$. The local well-posedness is a classical result. See \cite{CB1} or \cite{Ho} for more details. Here we restate it in weighted b-Sobolev spaces, which will lead to the asymptotical analysis of Einstein vacuum solutions. 

\begin{figure}[htp]
\centering
\includegraphics[totalheight=2in,width=3in]{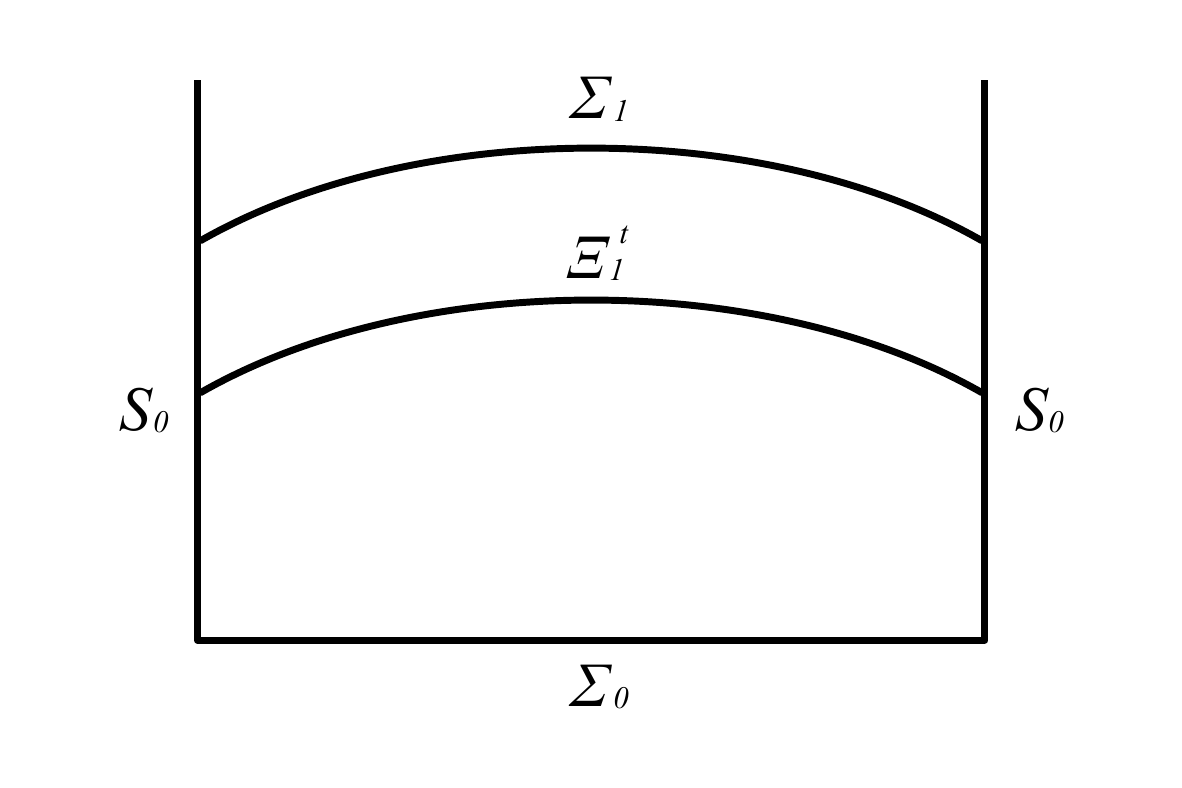}
\caption{$\Omega_1^{\mathfrak{t}}$ is bounded by $\Sigma_0$, $\Xi_1^{\mathfrak{t}}$ and $S_0$.}\label{fig.6.1}
\end{figure}

Recall that $\widetilde{\Omega}_1$ is bounded by $\Sigma_0=\{T_1=0\}$, $\Sigma_1=\{T_1=\frac{3}{4}\}$ and $S_0$. We use $T_1$ to slice the domain. For $\mathfrak{t}\in [0,\frac{3}{4}]$,  denote
$$
\Xi_1^{\mathfrak{t}}=\{T_1=\mathfrak{t}\}.
$$
The space-like hypersurface $\Xi_1^{\mathfrak{t}}$ is a p-submanifold of $X$ with induced boundary defining function $\rho_0$. Let $\Omega_1^{\mathfrak{t}}$ be the domain bounded by $\Sigma_0$, $\Xi_1^{\mathfrak{t}}$ and $S_0$. In particular, $\Omega_1^{\frac{3}{4}}=\widetilde{\Omega}_1$.
 
 Define the high energy norm on $\Xi_1^{\mathfrak{t}}$ by
\begin{equation*}
\begin{aligned}
M^N_1(\mathfrak{t};v,\tilde{g})=\ & \left(\int_{\Xi_1^{\mathfrak{t}}} \sum_{|I|\leq N} e^{-2\delta\log \rho_0}\langle F(T_1,\tilde{D}^Iv),\nabla T_1\rangle_{\tilde{g}} d\mu^{T_1}_{\tilde{g}}   \right)^{\frac{1}{2}}, \quad \forall\ \mathfrak{t}\in[0,\tfrac{3}{4}].
\end{aligned}
\end{equation*}
Here $d\mu^{T_1}_{\tilde{g}}\wedge dT_1=dvol_{\tilde{g}}$ and 
$\tilde{D}\in \chi(r)\mathscr{B}_0+(1-\chi(r))\mathscr{B}_1$
where $0\leq\chi\leq 1$ is a smooth cutoff function such that $\chi(r)=1$ if $r>1$ and $\chi(r)=0$ if $r<\frac{1}{2}$. Here
$$
\mathscr{B}_0=\{\partial_{\mu}: \mu=0,...,n\}, \quad \mathscr{B}_1=\{Z_{\mu\nu}:\mu,\nu=0,...,n\}.
$$
We choose $\gamma_0=-\frac{(n+1)(n-3)}{4}$ to define $\mathcal{F}_{\tilde{g}}(T_1,v)$.

With above notation and given initial data (\ref{cauchy}),  we have for some constant $C_0>0$ such that
\begin{equation}
M^N_1(0;\tilde{h},\tilde{m})<C_0\epsilon.
\end{equation}
\begin{lemma}\label{lem.6.1}
There exists $\epsilon_1>0$ such that if $\|\rho_0^{\frac{n-1}{2}}\tilde{h}\|_{L^{\infty}(\Xi_1^{\mathfrak{t}})}<\epsilon_1$ then on $\Xi_1^{\mathfrak{t}}$
\begin{equation*}
\begin{gathered}
\tfrac{1}{\sqrt{2}}M^N_1(\mathfrak{t};v,\tilde{m}) \leq M^N_1(\mathfrak{t};v,\tilde{g})\leq \sqrt{2}M^N_1(\mathfrak{t};v,\tilde{m}),
\\
\sum_{|I|\leq \frac{N}{2}+1} |\tilde{D}^Iv| \leq C\rho_0^{\delta}M_1^N(\mathfrak{t};v,\tilde{g}),
\end{gathered}
\end{equation*}
where $C$ is independent of $\epsilon_1$ if it is small enough. 
\end{lemma}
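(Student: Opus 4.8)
\textbf{Proof proposal for Lemma \ref{lem.6.1}.}

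The plan is to establish the two claimed inequalities separately: the first (equivalence of the $\tilde g$- and $\tilde m$-energy norms) by a pointwise comparison of the quadratic forms $\langle \mathcal{F}_{\tilde g}(T_1,\tilde D^I v),\nabla T_1\rangle_{\tilde g}$ and $\langle \mathcal{F}_{\tilde m}(T_1,\tilde D^I v),\nabla T_1\rangle_{\tilde m}$, and the second (the Sobolev-type pointwise bound) by a standard b-Sobolev embedding on the slice $\Xi_1^{\mathfrak t}$ together with the already-established positivity of the $\tilde m$-quadratic form.

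First I would treat the norm equivalence. By Lemma \ref{lem.5.2}, applied with $v$ replaced by each $\tilde D^I v$, one has
\[
\langle \mathcal{F}_{\tilde g}(T_1,\tilde D^I v),\nabla T_1\rangle_{\tilde g}
-\langle \mathcal{F}_{\tilde m}(T_1,\tilde D^I v),\nabla T_1\rangle_{\tilde m}
=\rho_0^{\frac{n-1}{2}}\Theta_1(\tilde h)(\tilde D^I v,\tilde\partial \tilde D^I v),
\]
and similarly $dvol_{\tilde g}=(1+\rho_0^{\frac{n-1}{2}}\Theta_1(\tilde h))dvol_{\tilde m}$. From Lemma \ref{lem.5.1} the $\tilde m$-quadratic form $\langle \mathcal{F}_{\tilde m}(T_1,w),\nabla T_1\rangle_{\tilde m}$ is, on $\widetilde\Omega_1$ where $|s|\leq\frac34$, a positive-definite quadratic form in $(w,\tilde\partial w)$ with constants bounded above and below uniformly (using $\gamma_0<0$ and $1-s^2\geq\frac{7}{16}$); here $\tilde\partial$ ranges over a fixed spanning set of $\mathscr{V}_b$ adapted to $(s,\rho,\theta)$, which by Lemma \ref{lem.0}(ii) is comparable to the operators $\tilde D$ appearing in the norm. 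Hence the error term is controlled by $C\rho_0^{\frac{n-1}{2}}\|\tilde h\|_{L^\infty(\Xi_1^{\mathfrak t})}$ times the $\tilde m$-form itself. Choosing $\epsilon_1$ so small that this relative error and the volume-form discrepancy are each below $\tfrac14$, summing over $|I|\leq N$ and integrating over $\Xi_1^{\mathfrak t}$ yields $\tfrac12 M_1^N(\mathfrak t;v,\tilde m)^2\leq M_1^N(\mathfrak t;v,\tilde g)^2\leq 2\,M_1^N(\mathfrak t;v,\tilde m)^2$, which is the first claim after taking square roots; the constant in the threshold is visibly independent of $\epsilon_1$.

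For the second inequality, I would work with the $\tilde m$-norm (legitimate by the equivalence just proved, at the cost of a harmless factor $\sqrt2$). The slice $\Xi_1^{\mathfrak t}$ is a $p$-submanifold of $X$ diffeomorphic to $\overline{\mathbb{R}^n}$ with boundary defining function $\rho_0$, and $M_1^N(\mathfrak t;v,\tilde m)^2$ controls, from the explicit positive form in Lemma \ref{lem.5.1}, the quantity $\sum_{|I|\leq N}\int_{\Xi_1^{\mathfrak t}}\rho_0^{-2\delta}(|\tilde D^I v|^2+|\tilde\partial\tilde D^I v|^2)\,\rho_0^{-1}\,dvol$ up to uniform constants (here $d\mu_{\tilde m}^{T_1}$ is comparable to $\rho_0^{-1}$ times the Euclidean-type volume on the slice, and the weight $e^{-2\delta\log\rho_0}=\rho_0^{-2\delta}$). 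Thus $\rho_0^{\delta}v$ lies in a weighted b-Sobolev space of order $N$ on an $n$-dimensional manifold with boundary, and since $N>n+6$ in particular $\tfrac{N}{2}+1<N-\tfrac{n}{2}$, Lemma \ref{lem.2.3} (b-Sobolev embedding, applied on $\Xi_1^{\mathfrak t}$) gives $\sum_{|I|\leq \frac N2+1}|\tilde D^I v|\leq C\rho_0^{\delta}M_1^N(\mathfrak t;v,\tilde m)\leq C\rho_0^{\delta}M_1^N(\mathfrak t;v,\tilde g)$, with $C$ depending only on $N,n$ and the fixed geometry, not on $\epsilon_1$. One minor point to check is that differentiating with $\tilde D$ (which is a cutoff interpolation of $\mathscr{B}_0$ and $\mathscr{B}_1$) interacts correctly with the b-structure: near $r\leq 1$ the $\mathscr{B}_1$ vector fields are genuine b-vector fields and in the interior $r\sim 1$ everything is smooth and compact, so commutators only produce lower-order terms already absorbed in the sum over $|I|\leq N$.

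The main obstacle is not any single estimate but the bookkeeping of the two different spanning sets: the norm $M_1^N$ is built from the interpolated family $\tilde D$, while Lemmas \ref{lem.5.1}--\ref{lem.5.2} and the embedding Lemma \ref{lem.2.3} are phrased in terms of the coordinate b-vector fields $\tilde\partial\in\mathscr{V}_b(\Omega_1)$. Reconciling these requires invoking Lemma \ref{lem.0}(ii) to express each $\tilde\partial$ as a $C^\infty(X)$-combination of the $Z_{\mu\nu}$ and conversely, and checking that the weight $\rho_0^{-2\delta}$ and the singular factor $(1-s^2)^{-1}$ in $\langle\nabla T_1,\nabla T_1\rangle_{\tilde m}$ stay uniformly bounded on $\widetilde\Omega_1\cap\Omega_1$ where $|s|\leq\frac34$ — which they do. Once this correspondence is fixed, both inequalities follow from the cited lemmas by the routine argument above.
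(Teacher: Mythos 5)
Your proposal is correct and follows essentially the same route as the paper: you use Lemma~\ref{lem.5.2} to compare the $\tilde g$- and $\tilde m$-quadratic forms and volume forms pointwise, absorb the $\rho_0^{\frac{n-1}{2}}\Theta_1(\tilde h)$ error into the positive-definite $\tilde m$-form by taking $\epsilon_1$ small, identify $M_1^N(\mathfrak t;\cdot,\tilde m)$ with the weighted b-Sobolev norm $\rho_0^{\delta}H_b^{N+1}(\Xi_1^{\mathfrak t})$ via the explicit form of $d\mu_{\tilde m}^{T_1}$, and conclude the pointwise bound by the b-Sobolev embedding of Lemma~\ref{lem.2.3}, which applies since $N\geq n+6>n$. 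The paper's own proof is a more compressed version of exactly this argument.
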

\begin{proof}
Near the boundary of $\Xi_1^{\mathfrak{t}}$, we can choose the coordinates $(\rho,\theta)$ and with this coordiantes, 
$$
d\mu_{\tilde{m}}^{T_1}=V(T_1,\rho,\theta)\frac{d\rho}{\rho}d\theta
$$
where $0<c<V\in C^{\infty}([0,\frac{3}{4}]_{T_1}\times [0,1]_{\rho}\times \mathbb{S}^{n-1})$ is bounded above and below. Hence $M^N_1(\mathfrak{t};v,\tilde{m})$ gives the b-Sobolev norm $\rho_0^{\delta}H_b^{N+1}(\Xi_1^{\mathfrak{t}})$.
The equivalence of $M^N_1(\mathfrak{t};v,\tilde{m})$ and $M^N_1(\mathfrak{t};v,\tilde{g})$ comes from the equivalence between 
$\langle F(T_1,\tilde{D}^Iv),\nabla T_1\rangle_{\tilde{m}}$ and $\langle F(T_1,\tilde{D}^Iv),\nabla T_1\rangle_{\tilde{g}}$, as well as the equivalence between the volume forms $dvol_{\tilde{m}}$ and $dvol_{\tilde{g}}$ by Lemma \ref{lem.5.2}. 
Since 
$$
(1/C')M_1^N(\mathfrak{t};v,\tilde{m})\leq \sum_{|I|\leq \frac{N}{2}+1}\|\tilde{D}^I v\|_{\rho_0^{\delta}H_b^{N+1-|I|}(\Xi_1^{\mathfrak{t}})}
\leq C'M_1^N(\mathfrak{t};v,\tilde{m}),
$$
for some $C'>0$, the second inequality comes from Sobolev embedding theorem. See Lemma \ref{lem.2.3}. 
\end{proof}

We find out the existence of $h$ to the equations (\ref{eq.2}) (resp. $\tilde{h}$ to the euqations (\ref{eq.12})) by considering it as the limit of a sequence $h^l$ (resp. $\tilde{h}^l$), which are solutions to the linear equations (resp. their conformal transformation) for $l\geq 0$ starting with $h^{-1}=0$ (resp. $\tilde{h}^{-1}=0$): 
\begin{equation*}
\begin{gathered}
\Box_{g^l} h^{l+1}_{\mu\nu}= F_{\mu\nu}(h^l)(\partial h^l,\partial h^{l+1}),
\\
\big(\Box_{\tilde{g}^l}+\gamma(\tilde{h}^l)\big) \tilde{h}^{l+1} = \tilde{\rho}_0^{\frac{n-1}{2}}\tilde{F}(\tilde{h}^{l},\tilde{h}^{l+1}).
\end{gathered}
\end{equation*}
Here $g^l=m+h^l, \tilde{g}^l=\tilde{\rho}^2g^l, h^l=\tilde{\rho}^{\frac{n-1}{2}}\tilde{h}^l$ and $h^l$ has Cauchy data $(h^0,h^1)$. Notice that there is no guarantee that $(t,x)$ are wave coordinates w.r.t.  $g^l$. However, we don't need this property here. In the conformal transformation of equations, 
\begin{equation*}
\begin{gathered}
 \gamma(\tilde{h}^l)=\gamma_0+\rho_0^{\frac{n-1}{2}} \Theta_1(\tilde{h}^l) ,
\\
 \tilde{F}(\tilde{h}^l,\tilde{h}^{l+1})=\Theta_0(\tilde{h}^l)(\tilde{\partial}\tilde{h}^l,\tilde{\partial}\tilde{h}^{l+1}) +\Theta_0(\tilde{h}^l)(\tilde{\partial}\tilde{h}^l,\tilde{h}^{l+1}) +\Theta_0(\tilde{h}^l)(\tilde{h}^l,\tilde{\partial}\tilde{h}^{l+1}) +\Theta_0(\tilde{h}^l)(\tilde{h}^l,\tilde{h}^{l+1}).
\end{gathered}
\end{equation*}
We also study the equation for $h^{l+1}-h^l$ and $\tilde{h}^{l+1}-\tilde{h}^l$ to get the convergence of $h^l$ and $\tilde{h}^l$:
\begin{equation}\label{eq.29}
\begin{gathered}
\Box_{g^l} (h^{l+1}-h^l)= (\Box_{g^{l-1}}-\Box_{g^{l}})h^l
+(F(\partial h^l,\partial h^{l+1})-F(\partial h^{l-1},\partial h^{l})),
\\
\begin{aligned}
\big(\Box_{\tilde{g}^l}+\gamma(\tilde{h}^l)\big)(\tilde{h}^{l+1}-\tilde{h}^{l}) =&\ 
 (\Box_{\tilde{g}^{l-1}}-\Box_{\tilde{g}^l})\tilde{h}^l
+\big(\gamma(\tilde{h}^{l-1})-\gamma(\tilde{h}^l)\big)\tilde{h}^l
\\
&\ 
+ \tilde{\rho}_0^{\frac{n-1}{2}}\big(\tilde{F}(\tilde{h}^l,\tilde{h}^{l+1}) -\tilde{F}(\tilde{h}^{l-1},\tilde{h}^{l})\big).
\end{aligned}
\end{gathered}
\end{equation}
\begin{lemma}\label{lem.6.2}
There exists $\epsilon_1'>0$ such that if 
$$
M^N_1(\mathfrak{t};\tilde{h}^i,\tilde{g}^{i-1})<\epsilon_1', \quad\forall\ \mathfrak{t}\in[0,\frac{3}{4}]
$$ 
is true for $i=0,...,l$ then 
\begin{equation*}
M^N_1(\mathfrak{t};\tilde{h}^{l+1},\tilde{g}^{l})\leq CM^N_1(0;\tilde{h}^{l+1},\tilde{g}^{l}),\quad 
\forall\ \mathfrak{t}\in[0,\frac{3}{4}].
\end{equation*}
Here $C$ is a constant independent of $l, \mathfrak{t}, \epsilon_1'$ and $M^N_1(0;\tilde{h}^{l+1},\tilde{g}^{l})$ for $\epsilon_1'$ small enough. 
\end{lemma}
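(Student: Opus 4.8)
The plan is to run a standard Gr\"onwall-type energy estimate for the linear wave equation satisfied by $\tilde h^{l+1}$ with background metric $\tilde g^{l}$, using the time-like function $T_1$ to foliate $\widetilde\Omega_1$ by the space-like slices $\Xi_1^{\mathfrak t}$. First I would apply the divergence identity of Lemma \ref{lem.3} to the vector field $e^{-2\delta\log\rho_0}\mathcal F_{\tilde g^l}(T_1,\tilde D^I\tilde h^{l+1})$ on the domain $\Omega_1^{\mathfrak t}$, whose boundary consists of $\Sigma_0$, $\Xi_1^{\mathfrak t}$ and a piece of $S_0$. Because $T_1$ is time-like w.r.t.\ $\tilde m$ uniformly in $\widetilde\Omega_1$ (shown in Section \ref{sec.timelikefun}) and $\tilde g^l$ is a small perturbation once $\|\rho_0^{\frac{n-1}{2}}\tilde h^l\|_{L^\infty}<\epsilon_1$, Lemma \ref{lem.6.1} gives that $\langle\mathcal F_{\tilde g^l}(T_1,\cdot),\nabla T_1\rangle_{\tilde g^l}$ is a positive definite quadratic form comparable to its $\tilde m$-analogue; the boundary term on $S_0$ is controlled because $S_0$ is space-like (or can be discarded by a sign once the orientation is fixed), and the term on $\Sigma_0$ is exactly $M_1^N(0;\tilde h^{l+1},\tilde g^l)^2$ up to a constant. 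So the identity reads, schematically,
\begin{equation*}
M_1^N(\mathfrak t;\tilde h^{l+1},\tilde g^l)^2 \le C\,M_1^N(0;\tilde h^{l+1},\tilde g^l)^2 + C\int_{\Omega_1^{\mathfrak t}} \sum_{|I|\le N} e^{-2\delta\log\rho_0}\big| \mathrm{div}_{\tilde g^l}\big(e^{-2\delta\log\rho_0}\mathcal F_{\tilde g^l}(T_1,\tilde D^I\tilde h^{l+1})\big)\big|\,\frac{dvol_{\tilde g^l}}{e^{-2\delta\log\rho_0}}.
\end{equation*}

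Next I would expand the divergence using \eqref{eq.div}: it produces the weight term $-2\langle\mathcal F_{\tilde g^l},\nabla(\delta\log\rho_0)\rangle_{\tilde g^l}$, which since $T_1''=\delta\log\rho_0$ is itself controlled by $M_1^N$ up to a constant (this uses that $\rho_0\partial_{\rho_0}$-type derivatives of $\log\rho_0$ are bounded and, on $\widetilde\Omega_1$, that the sign works in our favor or at worst contributes a bounded multiple of the energy density); the term $\langle\nabla\tilde D^I\tilde h^{l+1},\nabla T_1\rangle(\Box_{\tilde g^l}+\gamma_0)\tilde D^I\tilde h^{l+1}$, where by Lemma \ref{lem.6} applied to the linearized equation the right-hand side is $\sum_{i\le N+1}\tilde\partial^i\tilde h^{l+1}$ plus nonlinear terms in $\tilde h^l$, so it is bounded by $C(M_1^N(\mathfrak t;\tilde h^{l+1},\tilde g^l)^2 + \text{(nonlinear)})$ using Cauchy--Schwarz and the commutator structure; and the term $\mathcal Q_{\tilde g^l}(T_1,\tilde D^I\tilde h^{l+1})$, which by Lemma \ref{lem.5.1} and Lemma \ref{lem.5.2} is a bounded quadratic form in $(\tilde D^I\tilde h^{l+1},\tilde\partial\tilde D^I\tilde h^{l+1})$ plus $\epsilon_1'$-small corrections, hence again bounded by $CM_1^N(\mathfrak t;\tilde h^{l+1},\tilde g^l)^2$. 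The nonlinear contributions are handled by the $L^\infty$ bound of Lemma \ref{lem.6.1} on the lower-order derivatives (which uses $N>n+6$ so that $N/2+1$ derivatives control $L^\infty$) together with the smallness hypothesis $M_1^N(\mathfrak t;\tilde h^i,\tilde g^{i-1})<\epsilon_1'$ for $i\le l$: every nonlinear term carries at least one factor estimated in $L^\infty$ by $C\epsilon_1'$, so it is absorbed into $C\epsilon_1' M_1^N(\mathfrak t;\tilde h^{l+1},\tilde g^l)^2$, and after shrinking $\epsilon_1'$ this is a small fraction of the main energy.

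Putting these bounds together yields the integral inequality
\begin{equation*}
M_1^N(\mathfrak t;\tilde h^{l+1},\tilde g^l)^2 \le C\,M_1^N(0;\tilde h^{l+1},\tilde g^l)^2 + C\int_0^{\mathfrak t} M_1^N(s;\tilde h^{l+1},\tilde g^l)^2\,ds,
\end{equation*}
with $C$ independent of $l$, $\mathfrak t$, $\epsilon_1'$ once $\epsilon_1'$ is small; the reduction of the spacetime integral to an iterated integral $\int_0^{\mathfrak t}(\cdots)ds$ over the slices uses the coarea formula with $T_1$ as the slicing function, and that $d\mu^{T_1}_{\tilde g^l}\wedge dT_1 = dvol_{\tilde g^l}$. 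Gr\"onwall's inequality on $[0,\tfrac34]$ then gives $M_1^N(\mathfrak t;\tilde h^{l+1},\tilde g^l)\le C e^{C/2}M_1^N(0;\tilde h^{l+1},\tilde g^l)$, which is the claim after renaming the constant.

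The main obstacle I anticipate is bookkeeping the commutator and nonlinear terms after inserting the vector fields $\tilde D^I$: one must check that none of the terms arising from Lemma \ref{lem.6} and from $[\Box_{\tilde m},\tilde D^I]$ (Lemma \ref{lem.4.7}) produces a genuinely top-order loss — i.e.\ that every term with $N+1$ derivatives of $\tilde h^{l+1}$ is either a first-order term controllable by Cauchy--Schwarz against $\langle\nabla\tilde D^I\tilde h^{l+1},\nabla T_1\rangle$ or carries a small factor. This is where the precise form $[\Box_{\tilde m},\tilde D]=c\Box_{\tilde m}+\tilde\partial^2+\tilde\partial+c'$ matters: the $c\Box_{\tilde m}$ piece is reabsorbed into $(\Box_{\tilde g^l}+\gamma_0)$ modulo lower-order and nonlinear terms, so no spurious second-order term survives. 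A secondary delicate point is the sign of the weight term $-2\langle\mathcal F_{\tilde g^l}(T_1,\cdot),\nabla(\delta\log\rho_0)\rangle$; since in $\widetilde\Omega_1$ we chose $\tilde\rho=\rho_0=\psi_1^{-1}$ and $T_1''$ here plays no essential role (it can be taken $\equiv$ const on the compact part and $\delta\log\rho_0$ near $S_0$, contributing a bounded — in fact favorably-signed for $\delta>0$ — term as in the analogous $\widetilde\Omega_2,\widetilde\Omega_3$ computations), this is routine but must be stated carefully. Everything else is the standard continuity-in-the-slicing-parameter argument combined with the elliptic-type positivity already packaged in Lemmas \ref{lem.5.1}, \ref{lem.5.2}, \ref{lem.6.1}.
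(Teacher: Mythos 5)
Your proposal follows essentially the same route as the paper: establish the $L^\infty$ control via Lemma \ref{lem.6.1}, commute $\tilde D^I$ through the linearized equation (Lemmas \ref{lem.4.7}, \ref{lem.6}), apply the divergence identity of Lemma \ref{lem.3} on $\Omega_1^{\mathfrak t}$ with the positivity of the quadratic form from Lemmas \ref{lem.5.1}, \ref{lem.5.2}, and close with a Gr\"onwall inequality, absorbing the nonlinear contributions using the smallness $\epsilon_1'$ of the lower-order $L^\infty$ norms. The paper differentiates the energy identity in $\mathfrak t$ to get a pointwise differential inequality $\partial_{\mathfrak t}M_1^N(\mathfrak t)^2\le 2C''(1+2\epsilon_1')M_1^N(\mathfrak t)^2$ rather than writing the integral form, but that is cosmetic; the ingredients are identical.

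One small imprecision worth fixing: your schematic energy inequality carries a redundant factor of $e^{-2\delta\log\rho_0}$ both outside the divergence and divided out in the measure (these cancel, so no harm, but it obscures that you are using the divergence identity as in Lemma \ref{lem.3} directly). Also, your remark that the $S_0$ boundary term is "controlled because $S_0$ is space-like or can be discarded by a sign" is not quite right geometrically; the correct reason it drops out is the decay built into the weighted $H_b$ norm (the $\rho_0^\delta$ factor), which is why the paper presents the divergence identity on $\Omega_1^{\mathfrak t}$ with no residual $S_0$ contribution. Neither point affects the validity of the argument.
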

\begin{proof}
When $i=0$, since $\tilde{h}^{-1}=0$, the condition in Lemma \ref{lem.6.1} is automatically satisfied. 
Hence $\|\tilde{h}^0\|_{L^{\infty}(\Xi_1^{\mathfrak{t}})}<C'\epsilon_1'$. Here $C'$ is the constant in Lemma \ref{lem.6.1} independent of $l, \mathfrak{t}$.
Choose $\epsilon_1'$ such that $C'\epsilon'<\epsilon_1$ and we can apply Lemma \ref{lem.6.1} until $i=l+1$, i.e.  on $\Xi_1^{\mathfrak{t}}$, 
\begin{equation*}
\sum_{|I|\leq \frac{N}{2}+1} |\tilde{D}^I\tilde{h}^{i}|  \leq C' M_1^N(\mathfrak{t};\tilde{h}^{i},\tilde{g}^{i-1})
\end{equation*}
for $i=0,...,l+1$. 
To get the high energy estimate for $\tilde{h}^{l+1}$, we first deduce the equation for $\tilde{D}^I\tilde{h}^{l+1}$ for $|I|=k\leq N$:
\begin{equation*}
\begin{aligned}
\big(\Box_{\tilde{g}^l}+\gamma(\tilde{h}^l)\big)\tilde{D}^I \tilde{h}^{l+1} =&\  \tilde{D}^I\rho_0^{\frac{n-1}{2}}\tilde{F}(\tilde{h}^l)(\tilde{h}^l,\tilde{h}^{l+1}) + [\Box_{\tilde{g}^l}+\gamma(\tilde{h}^l),\tilde{D}^I]\tilde{h}^{l+1}
\\
=&\ 
\rho_0^{\frac{n-1}{2}} \sum_{|\alpha|\leq k+1,i\leq k+1,|\alpha|+i\leq k+2,1\leq j}\Theta_0(\tilde{h}^{l})(\tilde{\partial}^{\alpha_1}\tilde{h}^{l},..,\tilde{\partial}^{\alpha_j}\tilde{h}^{l}, \tilde{\partial}^{i}\tilde{h}^{l+1})
\end{aligned}
\end{equation*}
By Lemma \ref{lem.3}, 
\begin{equation*}
(M_1^N(\mathfrak{t};\tilde{h}^{l+1},\tilde{g}^l)^2-(M_1^N(0;\tilde{h}^{l+1},\tilde{g}^l))^2 =\int_{\Omega_1^{\mathfrak{t}}}\sum_{|I|\leq N} \mathrm{div}_{\tilde{g}^l}(e^{-2\delta \log\rho_0 } F(T_1,\tilde{D}^I\tilde{h}^{l+1}))dvol_{\tilde{g}^l}.
\end{equation*}
By Lemma \ref{lem.5.1} and Lemma \ref{lem.5.2},  we have
\begin{equation*}
\begin{aligned}
\partial_{\mathfrak{t}}(M_1^N(\mathfrak{t};\tilde{h}^{l+1},\tilde{g}^l))^2=&\ \int_{\Xi_1^{\mathfrak{t}}}\sum_{|I|\leq N} \mathrm{div}_{\tilde{g}^l}(e^{-2\delta \log\rho_1 } \mathcal{F}_{\tilde{g}^l}(T_1,\tilde{D}^I\tilde{h}))d\mu^{T_1}_{\tilde{g}^l}
\\
\leq &\ 2C''(M_1^N(\mathfrak{t};\tilde{h}^{l},\tilde{g}^l)+1)(M_1^N(\mathfrak{t};\tilde{h}^{l+1},\tilde{g}^l))^2
\\
\leq &\  2C''(1+2\epsilon_1') \big(M_1^N(\mathfrak{t};\tilde{h}^{l+1},\tilde{g}^l)\big)^2,
\end{aligned}
\end{equation*}
which implies that
\begin{equation*}
\begin{gathered}
M_1^N(\mathfrak{t};\tilde{h}^{l+1},\tilde{g}^l)\leq e^{C''(1+2\epsilon_1') \mathfrak{t}}M_1^N(0;\tilde{h}^{l+1},\tilde{g}^l)<CM_1^N(0;\tilde{h}^{l+1},\tilde{g}^l),
\quad \forall \mathfrak{t}\in [0,\tfrac{3}{4}]. 
\end{gathered}
\end{equation*}
Here $C>0$ is a constant independent of $\epsilon_1'$ if it is small enough. We finish the proof.
\end{proof}

\begin{proposition}\label{prop.est.1}
There exists $\epsilon_1''>0$ such that if $\epsilon <\epsilon_1''$ then the equations (\ref{eq.12}) have a unique solution in $\widetilde{\Omega}_1$ such that
\begin{equation}\label{eq.6.1}
M^N_1(\mathfrak{t};\tilde{h},\tilde{m}) \leq C\epsilon , \quad \forall\ \mathfrak{t}\in[0,\tfrac{3}{4}],
\end{equation}
where $C>0$ is a constant independent of $\epsilon_1''$ if it is small enough. 
\end{proposition}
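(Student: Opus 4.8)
The plan is to establish Proposition \ref{prop.est.1} by the classical Picard iteration scheme, combined with the energy estimates for the conformal equations prepared in Lemma \ref{lem.6.1} and Lemma \ref{lem.6.2}. First I would fix $\epsilon_1''>0$ so small that $C_0\epsilon_1''<\epsilon_1'$, where $\epsilon_1'$ is the threshold from Lemma \ref{lem.6.2} and $C_0$ is the constant from the initial-data bound $M_1^N(0;\tilde h,\tilde m)<C_0\epsilon$. The iterates $\tilde h^l$ all have the \emph{same} Cauchy data $(h^0,h^1)$, so by Lemma \ref{lem.6.1} (equivalence of $M_1^N(\cdot;v,\tilde m)$ and $M_1^N(\cdot;v,\tilde g)$) one has $M_1^N(0;\tilde h^{l+1},\tilde g^l)\leq \sqrt 2\,M_1^N(0;\tilde h,\tilde m)<\sqrt2\,C_0\epsilon$ for every $l$, provided the relevant smallness hypothesis has been propagated to step $l$.

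The core of the argument is an induction on $l$ showing that $M_1^N(\mathfrak t;\tilde h^i,\tilde g^{i-1})<\epsilon_1'$ for all $\mathfrak t\in[0,\tfrac34]$ and all $i\leq l$. The base case $i=0$ is immediate since $\tilde h^{-1}=0$ forces $\tilde g^{-1}=\tilde m$ and the initial bound gives $M_1^N(\mathfrak t;\tilde h^0,\tilde m)<C_0\epsilon<\epsilon_1'$ after applying Lemma \ref{lem.6.2} with $l=0$ (the hypothesis there is vacuously satisfied for $i=0$, and the constant $C$ there is uniform). For the inductive step, assuming the bound through $i=l$, Lemma \ref{lem.6.2} yields $M_1^N(\mathfrak t;\tilde h^{l+1},\tilde g^l)\leq C\,M_1^N(0;\tilde h^{l+1},\tilde g^l)\leq C\sqrt2\,C_0\epsilon$, and choosing $\epsilon_1''$ with $C\sqrt2 C_0\epsilon_1''<\epsilon_1'$ closes the induction. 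Thus the entire sequence $\{\tilde h^l\}$ is uniformly bounded in the norm $M_1^N(\cdot;\cdot,\tilde m)$, and in particular, via the Sobolev-type embedding in Lemma \ref{lem.6.1}, uniformly bounded in $C^1$ with the decay $|\tilde D^I\tilde h^l|\leq C\rho_0^\delta$ for $|I|\leq \tfrac N2+1$.

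Next I would prove convergence using the difference equations (\ref{eq.29}). One runs the same energy estimate for $\tilde h^{l+1}-\tilde h^l$, which satisfies a linear equation with source consisting of $(\Box_{\tilde g^{l-1}}-\Box_{\tilde g^l})\tilde h^l$, $(\gamma(\tilde h^{l-1})-\gamma(\tilde h^l))\tilde h^l$, and the difference of the nonlinearities; each of these is controlled, using the uniform bounds just obtained and the fact that all differences vanish at $\mathfrak t=0$ (same Cauchy data), by a small constant $\kappa<1$ times $\sup_{\mathfrak t}\big(M_{1}^{N-1}(\mathfrak t;\tilde h^l-\tilde h^{l-1},\tilde g^{l-1})\big)$ in a norm of one lower order. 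Gr\"onwall then gives a geometric contraction at the level $N-1$, so $\tilde h^l\to \tilde h$ in $M_1^{N-1}$, hence in $C^0$; the uniform $M_1^N$ bound is preserved in the limit by weak lower semicontinuity of the norm, giving (\ref{eq.6.1}). Uniqueness follows by the identical difference estimate applied to two solutions. The main obstacle I anticipate is the bookkeeping in the difference estimate: one must check that the loss of one derivative is harmless (which it is since $N>n+6$ leaves room for the Sobolev embedding at level $N-1$) and that the weight $e^{-2\delta\log\rho_0}$ together with the coefficient structure $\Theta_0(\tilde h)(\cdots)$ from Lemma \ref{lem.6.2}'s commutator computation really produces the needed factor of $\epsilon_1'$; all of this is routine once the commutator identities of Section \ref{sec.conf} and the positivity in Lemma \ref{lem.5.1}--\ref{lem.5.2} are invoked, but it is the step requiring the most care.
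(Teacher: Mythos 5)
Your plan follows essentially the same route as the paper: Picard iteration with the linearized scheme $\Box_{\tilde g^l}\tilde h^{l+1}=\cdots$, uniform boundedness of the iterates propagated inductively through Lemma~\ref{lem.6.2}, a difference estimate for $\tilde h^{l+1}-\tilde h^l$ built from equation~(\ref{eq.29}), Gr\"onwall, and the analogous difference estimate for uniqueness. The one place you genuinely diverge is the order at which the contraction is closed. The paper works directly at order $N$, setting $\mu_l=\sup_{\mathfrak t}M_1^N(\mathfrak t;\tilde h^l-\tilde h^{l-1},\tilde m)$ and showing $\mu_l\leq\tfrac12\mu_{l-1}$, whereas you contract at order $N-1$ and recover the full $M_1^N$ bound on the limit by weak lower semicontinuity. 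Your choice is the more robust one for a quasilinear system: the coupling source $\tilde D^I\bigl[(\Box_{\tilde g^{l-1}}-\Box_{\tilde g^l})\tilde h^l\bigr]$ has a top-order contribution of the schematic form $(\tilde h^{l-1}-\tilde h^l)\,\tilde D^N\tilde\partial^2\tilde h^l$, which carries $N+2$ derivatives of $\tilde h^l$ and is therefore not controlled by $M_1^N(\cdot;\tilde h^l,\tilde m)$ alone; the paper's compressed estimate states the coupling bound directly in the $M_1^N$ norm without spelling out how that loss is absorbed, while your one-derivative retreat handles it cleanly at the harmless cost of invoking weak compactness at the very end. Since $N>n+6$ leaves plenty of room, the Sobolev embeddings at level $N-1$ that you need for the product estimates still hold, so nothing else in the argument is disturbed. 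The remaining elements of your write-up (using the common Cauchy data to bound $M_1^N(0;\tilde h^{l+1},\tilde g^l)$, the induction closing the smallness hypothesis of Lemma~\ref{lem.6.2}, the uniqueness via the difference of two solutions) match the paper.
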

\begin{proof}
Since $h^{-1}=0$ and $h^0$ is the solution of standard wave equation on Minkowski space-time, we can apply Lemma \ref{lem.6.2} and get $M^N_1(\mathfrak{t};\tilde{h}^{0},\tilde{g}^{-1})\leq C'M^N_1(0;\tilde{h}^{0},\tilde{g}^{-1})< C'C_0\epsilon_1''$. Here $C'>0$ is the constant in Lemma \ref{lem.6.2} independent of $\mathfrak{t}\in[0,\frac{3}{4}]$. 
Choose $\epsilon_1''>0$ small such that $C'C_0\epsilon_1''<\epsilon_1'$ and
\begin{equation*}
M^N_1(\mathfrak{t};\tilde{h}^{l},\tilde{g}^{l-1})<C'M^N_1(0;\tilde{h}^{l},\tilde{g}^{l-1})<\epsilon_1' , \quad\forall\ l\geq 0,\ \mathfrak{t}\in[0,\frac{3}{4}]. 
\end{equation*}
We do energy estimates for $\tilde{h}^{l+1}-\tilde{h}^{l}$ which satisfies equations (\ref{eq.29}) and hence 
\begin{equation*}
\begin{aligned}
\big(\Box_{\tilde{g}^l}+\gamma(\tilde{h}^l)\big)& \tilde{D}^I (\tilde{h}^{l+1}-\tilde{h}^l) 
=  \tilde{D}^I \rho_0^{\frac{n-1}{2}}\big(\tilde{F}(\tilde{h}^l)(\tilde{h}^l,\tilde{h}^{l+1}) -\tilde{F}(\tilde{h}^{l-1})(\tilde{h}^{l-1},\tilde{h}^{l})\big)
\\&\
+ [\Box_{\tilde{g}^l}+\gamma(\tilde{h}^l),\tilde{D}^I](\tilde{h}^{l+1}-\tilde{h}^l) +\tilde{D}^I\big((\Box_{\tilde{g}^{l-1}}-\Box_{\tilde{g}^l})\tilde{h}^l +(\gamma(\tilde{h}^{l-1})-\gamma(\tilde{h}^{l}))\tilde{h}^l\big).
\end{aligned}
\end{equation*}
Here the estimates for $\tilde{h}^{l+1}-\tilde{h}^l$ are deduced in the same way as in the proof of Lemma \ref{lem.6.2} :
\begin{equation*}
\begin{aligned}
&\partial_{\mathfrak{t}}(M_1^N(\mathfrak{t};\tilde{h}^{l+1}-\tilde{h}^l,\tilde{g}^l))^2
\\
=&\  \int_{\Xi_1^{\mathfrak{t}}} \sum_{|I|\leq N} \mathrm{div}_{\tilde{g}^l}(e^{-2\delta \log\rho_0 } \mathcal{F}_{\tilde{g}^l}(T_1,\tilde{D}^I(\tilde{h}^{l+1}-\tilde{h}^l)))d\mu^{T_1}_{\tilde{g}^l}
\\
\leq&\   2C''\big(M_1^N(\mathfrak{t};\tilde{h}^{l+1}-\tilde{h}^{l},\tilde{g}^l)\big)^2+ 2C''\epsilon_1' M_1^N(\mathfrak{t};\tilde{h}^{l}-\tilde{h}^{l-1},\tilde{g}^l)M_1^N(\mathfrak{t};\tilde{h}^{l+1}-\tilde{h}^l,\tilde{g}^l)
\end{aligned}
\end{equation*}
where $C''>0$ is a constant independent of $\epsilon_1'$. 
This implies that
\begin{equation*}
\begin{aligned}
M_1^N(\mathfrak{t};\tilde{h}^{l+1}-\tilde{h}^{l},\tilde{g}^l) 
\leq&\  C''\epsilon_1'e^{C''\mathfrak{t}} \int_0^{\mathfrak{t}} e^{-C''\mathfrak{t}'}M_1^N(\mathfrak{t}';\tilde{h}^{l}-\tilde{h}^{l-1},\tilde{g}^l) d\mathfrak{t}'
\\
\leq&\  2C''\epsilon_1'e^{C''\mathfrak{t}} \int_0^{\mathfrak{t}} e^{-C''\mathfrak{t}'}M_1^N(\mathfrak{t}';\tilde{h}^{l}-\tilde{h}^{l-1},\tilde{g}^{l-1}) d\mathfrak{t}'.
\end{aligned}
\end{equation*}
For $l\geq 0$, let
\begin{equation*}
\mu_l = \sup_{\mathfrak{t}\in[0,\frac{3}{4}]} M_1^N(\mathfrak{t};\tilde{h}^{l}-\tilde{h}^{l-1},\tilde{m}). 
\end{equation*}
Then $\mu_0\leq \epsilon_1'$ and for $l\geq 1$, $\mu_{l} \leq 2C''\epsilon_1' e^{\frac{3}{4}C''}\mu_{l-1}.$
Choose $\epsilon_1'$ and hence $\epsilon_1''$ even smaller if necessary such that $2C''\epsilon_1'e^{\frac{3}{4}C''}\leq \frac{1}{2}$.  Then for any $\mathfrak{t}\in[0,\frac{3}{4}]$, $l,k\geq 1$
\begin{equation*}
M_1^N(\mathfrak{t};\tilde{h}^{l+k}-\tilde{h}^l,\tilde{m})
=\sum_{i=0}^{k-1}M_1^N(\mathfrak{t};\tilde{h}^{l+i+1}-\tilde{h}^{l+i},\tilde{m})
\leq (\tfrac{1}{2})^{l+1} \sum_{i=0}^{\infty} (\tfrac{1}{2})^i\mu_0\leq (\tfrac{1}{2})^l \epsilon_1'.
\end{equation*}
Hence $\tilde{h}^l$ converges to $\tilde{h}$ with $M_1^N(\mathfrak{t};\tilde{h},\tilde{m})\leq 2\epsilon_1'$ for all $\mathfrak{t}\in[0,\frac{3}{4}]$. Here $\tilde{h}$ provides a solution to equations (\ref{eq.12}). Choose $\epsilon_1'$ and hence $\epsilon_1''$ even smaller if necessary such that Lemma \ref{lem.6.2} holds for $\tilde{h}$: 
$$
M_1^N(\mathfrak{t};\tilde{h},\tilde{m})\leq 2C'M_1^N(0;\tilde{h},\tilde{m}).
$$
We finish proving the existence. 

For the uniqueness, suppose there is another solutions $\tilde{h}'$ with the same Cauchy data such that $M_1^N(\mathfrak{t};\tilde{h}',\tilde{m})\leq C M_1^N(0;\tilde{h}',\tilde{m})$ for some constant $C>0$. Then choose $\epsilon_1''$ even smaller if necessary such that $M_1^N(\mathfrak{t};\tilde{h}',\tilde{m})<\epsilon_1'$. Notice that $\tilde{h}-\tilde{h}'$ satisfy the following equations:
\begin{equation*}
\begin{gathered}
(\Box_{\tilde{g}}+\gamma(\tilde{h}))(\tilde{h}-\tilde{h}') = (\Box_{\tilde{g}'}-\Box_{\tilde{g}})\tilde{h}'+ (\gamma(\tilde{h}')-\gamma(\tilde{h}))\tilde{h}' +\rho_0^{\frac{n-1}{2}}(\tilde{F}(\tilde{h})(\tilde{h},\tilde{h})- \tilde{F}(\tilde{h}')(\tilde{h}',\tilde{h}')),
\\
\begin{aligned}
\ \big(\Box_{\tilde{g}}+\gamma(\tilde{h})\big)\tilde{D}^I (\tilde{h}-\tilde{h}') 
=&\  \tilde{D}^I\rho_0^{\frac{n-1}{2}}\big(\tilde{F}(\tilde{h})(\tilde{h},\tilde{h}) -\tilde{F}(\tilde{h}')(\tilde{h}',\tilde{h}')\big)
+ [\Box_{\tilde{g}^l}+\gamma(\tilde{h}^l),\tilde{D}^I](\tilde{h}^{l+1}-\tilde{h}^l)
\\
&\ + D^I\big((\Box_{\tilde{g}'}-\Box_{\tilde{g}})\tilde{h}'+(\gamma(\tilde{h}')-\gamma(\tilde{h}))\tilde{h}'\big).
\end{aligned}
\end{gathered}
\end{equation*}
A similar estimates as above shows that
\begin{equation*}
\begin{gathered}
\partial_{\mathfrak{t}}M_1^N(\mathfrak{t};\tilde{h}-\tilde{h}',\tilde{g}) 
\leq  C'''M_1^N(\mathfrak{t};\tilde{h}-\tilde{h}',\tilde{g}),
\\
\Longrightarrow\quad M_1^N(\mathfrak{t};\tilde{h}-\tilde{h}',\tilde{g}) \leq e^{C'''\mathfrak{t}} M_1^N(0;\tilde{h}-\tilde{h}',\tilde{g})=0.
\end{gathered}
\end{equation*}
for some constant $C'''>0$. 
Therefore $\tilde{h}=\tilde{h}'$. 
\end{proof}

\begin{remark}
When restricting to $\widetilde{\Omega}_1$, all the proofs work for spacial dimension $n\geq 3$.
\end{remark}

\subsection{In $\widetilde{\Omega}_2$.}
To solve the equations (\ref{eq.12}) up to $\widetilde{\Omega}_2$ with given Cauchy data (\ref{cauchy}), we first notice that the solution we find in $\widetilde{\Omega}_1$ can be extended a bit over $\Sigma_1$ since $\widetilde{\Omega}_1$ is closed. If the Cauchy data is small enough, we show that this extension will never stop in $\widetilde{\Omega}_2$ until it arrives $S^+_1$. 

\begin{figure}[htp]
\centering
\includegraphics[totalheight=2in,width=3in]{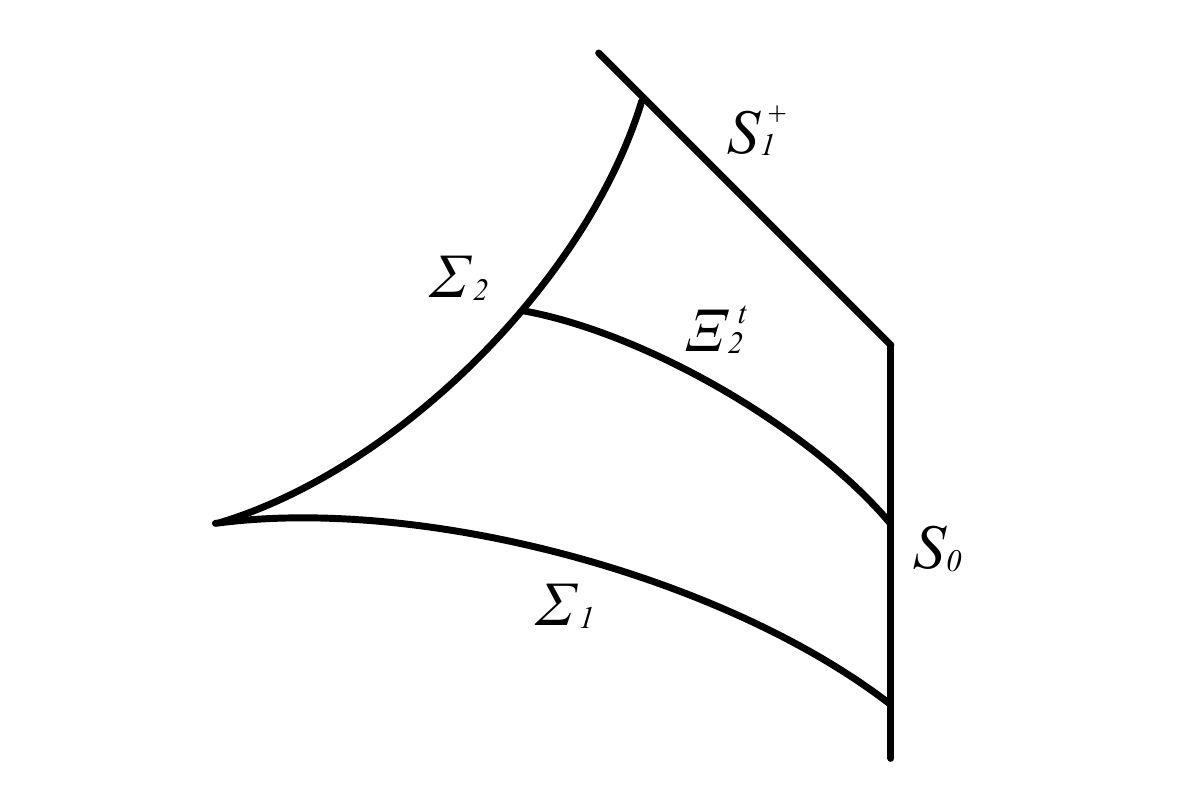}
\caption{$\Omega_2^{\mathfrak{t}}$ is bounded by $\Sigma_1$, $\Sigma_2$, $\Xi_1^{\mathfrak{t}}$ and $S_0$.}\label{fig.6.2}
\end{figure}

Recall that $\widetilde{\Omega}_2$ is bounded by $\Sigma_1=\{T_1=\frac{3}{4}\}$,  $\Sigma_2=\{T_2=-\ln \tau_0\}\cap \widetilde{\Omega}_2$ for $\tau_0>8$ and $S_0, S^+_1$.  We use $T_2'$ to slice the domain.  For $\mathfrak{t}\in[-\frac{1}{4},0]$, denote by
$$
\Xi_2^{\mathfrak{t}}=\{T_2'=\mathfrak{t}\}\cap \widetilde{\Omega}_2
$$
the space-like hypersurface w.r.t. $\tilde{m}$ with defining function $T_2'$. Notice that $\Xi_2^{\mathfrak{t}}$ can be viewed as a closed subset of a p-submanifold of $X$, which has induced boundary defining function $\rho_0=b$. 
Let $\Omega_2^{\mathfrak{t}}$ be the domain bounded by $\Sigma_1$, $\Sigma_2$, $S_0$ and $\Xi_2^{\mathfrak{t}} $. Then $\Omega_2^0=\widetilde{\Omega}_2$. 

Define the high energy norm on $\Xi_2^{\mathfrak{t}}$ and $\Sigma_2$ by 
\begin{equation*}
\begin{aligned}
M_2^N(\mathfrak{t};v,\tilde{g}) =&\ \left(\int_{
\Xi_{2}^{\mathfrak{t}}} \sum_{|I|\leq N} e^{-2\delta\log \rho_0}\langle \mathcal{F}_{\tilde{g}}(T_2,\tilde{D}^Iv),\nabla T_2'\rangle_{\tilde{g}} d\mu^{T_2'}_{\tilde{g}}   \right)^{\frac{1}{2}},
\\
L_2^N(\mathfrak{t};v,\tilde{g}) =&\ \left(\int_{\Sigma_2\cap \{-\frac{1}{4}\leq T_2'\leq \mathfrak{t}\}} \sum_{|I|\leq N} e^{-2\delta\log \rho_0}\langle \mathcal{F}_{\tilde{g}}(T_2,\tilde{D}^Iv),\nabla T_2\rangle_{\tilde{g}} d\mu^{T_2}_{\tilde{g}}   \right)^{\frac{1}{2}}.
\end{aligned}
\end{equation*}
Here $d\mu^{T_2'}_{\tilde{g}}\wedge dT_2'=dvol_{\tilde{g}}$, $d\mu^{T_2}_{\tilde{g}}\wedge dT_2=dvol_{\tilde{g}}$ and $\tilde{D}\in \mathscr{B}_2=\{Z_{\mu\nu}: \mu,\nu=0,...,n\}$.
We choose $\gamma_0=-\frac{(n-3)(n-1)}{4}$ in the definition of $\mathcal{F}_{\tilde{g}}(T_2,v)$. Recall that we choose $\delta'\in (0,\delta)$ such that $\frac{1}{2}-\delta'<1-2\delta$ in the definition of $T_2$. 

With above notation, Proposition \ref{prop.est.1} implies that for some $C_1>0$,
$$
M_2^N(-\tfrac{1}{4};\tilde{h},\tilde{m})<C_1\epsilon, \quad L_2^N(-\tfrac{1}{4};\tilde{h},\tilde{m}) =0.
$$

\begin{lemma}\label{lem.6.3}
For $n\geq 4$, there exists $\epsilon_2>0$ such that if $\|\rho_0^{\frac{n-1}{2}}\tilde{h}\|_{L^{\infty}(\Xi_2^{\mathfrak{t}})}+ \|\rho_0^{\frac{n-1}{2}}\rho_1^{-\delta}\tilde{h}_{\rho\rho}\|_{L^{\infty}(\Xi_2^{\mathfrak{t}})}<\epsilon_2$ then
\begin{equation*}
\begin{gathered}
 \tfrac{1}{\sqrt{2}}M_2^N(\mathfrak{t};v,\tilde{m}) \leq  M_2^N(\mathfrak{t};v,\tilde{g})\leq \sqrt{2}M_2^N(\mathfrak{t};v,\tilde{m}),
\\
\sum_{|I|\leq \frac{N}{2}+2} |\tilde{D}^Iv| \leq C\rho_0^{\delta} M_2^N(\mathfrak{t};v,\tilde{g}),
\\
\sum_{|I|\leq \frac{N}{2}+1} |\tilde{D}^I\partial_a v| \leq C\rho_0^{\delta} \rho_1^{-\frac{1}{2}}M_2^N(\mathfrak{t};v,\tilde{g}),
\end{gathered}
\end{equation*}
for some constant $C$ independent of $\epsilon_2$ if it is small enough. 
Moreover, there exists $\epsilon_2'>0$ such that  if $\sum_{|I|\leq \frac{N}{2}+1}\|\rho_0^{\frac{n-1}{2}}\tilde{D}^I\tilde{h}\|_{L^{\infty}(\Xi_2^{\mathfrak{t}})}<\epsilon_2'$, then
$$
\begin{gathered}
\sum_{|I|\leq \frac{N}{2}+1} |\tilde{D}^I\tilde{h}_{\rho\rho}| \leq C' \rho_0^{\delta}\rho_1^{\frac{1}{2}} M_2^N(\mathfrak{t};\tilde{h},\tilde{m}),
\\
\sum_{|I|\leq N}\|\rho_0^{\frac{n-1}{2}}\tilde{D}^I\tilde{h}_{\rho\rho}\|_{\rho_0^{\delta}H_b^0(\Xi_2^{\mathfrak{t}})} \leq C' \rho_1^{\frac{1}{2}} M_2^N(\mathfrak{t};\tilde{h},\tilde{m}),
\end{gathered}
$$
for some constant $C'$ independent of $\epsilon'_2$ if it is small enough. 
\end{lemma}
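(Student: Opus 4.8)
The plan is to prove Lemma \ref{lem.6.3} by the same two-part structure as Lemma \ref{lem.6.1}: first establish the equivalence of the b-energy norms $M_2^N(\mathfrak{t};v,\tilde g)$ and $M_2^N(\mathfrak{t};v,\tilde m)$ together with the weighted $L^\infty$ bounds (Sobolev embedding), and then, assuming a slightly stronger smallness hypothesis on derivatives of $\tilde h$, bootstrap the improved decay of the component $\tilde h_{\rho\rho}$ from the harmonic gauge transport equation of Lemma \ref{lem.4}. First I would fix local coordinates $(a,b,\theta)$ on a neighborhood of $\partial\Xi_2^{\mathfrak{t}}$ and rewrite $d\mu_{\tilde m}^{T_2'}$ using the explicit metric from the proof of Lemma \ref{lem.5.3}: one checks $d\mu_{\tilde m}^{T_2'} = V(T_2',b,\theta)\tfrac{db}{b}\,d\theta$ with $0<c<V\in C^\infty$ bounded above and below (here $\Xi_2^{\mathfrak t}=\{a=-\mathfrak t\}$, so $a$ is constant on the slice and $b,\theta$ are the coordinates). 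Comparing with the quadratic form $\langle\mathcal F_{\tilde m}(T_2,v),\nabla T_2'\rangle_{\tilde m}$ from Lemma \ref{lem.5.3}, which for $n\geq 4$ is a strictly positive combination of $|b\partial_b v|^2$, $|b\partial_b v - a(2-a)\partial_a v|^2$, $a(2-a)|\partial_a v|^2$ and $|\slashpar_\theta v|^2-\gamma_0 v^2$, one sees that $M_2^N(\mathfrak t;v,\tilde m)$ controls (and is controlled by) $\sum_{|I|\le N}\|\tilde D^I v\|_{\rho_0^\delta H^0_b}$ in the tangential directions plus $\rho_1^{1/2}$ times the $\partial_a$-derivative; explicitly $M_2^N$ is equivalent to $\sum_{|I|\le N}\|\tilde D^I v\|_{\rho_0^\delta H_b^1(\Xi_2^{\mathfrak t})}+\sum_{|I|\le N}\|\rho_1^{1/2}\partial_a\tilde D^I v\|_{\rho_0^\delta H^0_b(\Xi_2^{\mathfrak t})}$. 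The metric-equivalence statement then follows from Lemma \ref{lem.5.4}: $dvol_{\tilde g}=(1+a^{\frac{n-1}2}b^{\frac{n-1}2}\Theta_1(\tilde h))dvol_{\tilde m}$ and the difference $\langle\mathcal F_{\tilde g}(T_2,v),\nabla T_2'\rangle_{\tilde g}-\langle\mathcal F_{\tilde m}(T_2,v),\nabla T_2'\rangle_{\tilde m}$ is $a^{\frac{n-5}2}b^{\frac{n-1}2}(\Theta_1(\tilde h_{\rho\rho})(a\partial_a v,\partial_\xi v)+a\Theta_1(\tilde h)(\tilde\partial v,v))+\cdots$, so when $\|(\rho_0\rho_2)^{\frac{n-1}2}\tilde h\|_\infty+\|(\rho_0\rho_2)^{\frac{n-1}2}\rho_1^{-\delta}\tilde h_{\rho\rho}\|_\infty<\epsilon_2$ the extra terms are bounded by $\epsilon_2$ times the main quadratic form — here one uses $a^{\frac{n-5}2}b^{\frac{n-1}2}\cdot a\partial_a v=(\rho_0\rho_2)^{\frac{n-1}2}\rho_1^{\frac{n-3}2}\cdot a\partial_a v$ and $n\ge 4$ so $\rho_1^{\frac{n-3}2}$ is bounded, while the $\tilde h_{\rho\rho}$ coefficient pairs with $a\partial_a v$ and the extra $a$ supplies the half-power needed to absorb $\rho_1^{-\delta}$ against $\rho_1^{1/2}$ for $\delta<1/2$. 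That gives the factor-$\sqrt2$ equivalence for $\epsilon_2$ small. The $L^\infty$ bounds $\sum_{|I|\le N/2+2}|\tilde D^I v|\le C\rho_0^\delta M_2^N$ and $\sum_{|I|\le N/2+1}|\tilde D^I\partial_a v|\le C\rho_0^\delta\rho_1^{-1/2}M_2^N$ then follow from the b-Sobolev embedding Lemma \ref{lem.2.3} applied on $\Xi_2^{\mathfrak t}$ (note $N>n+6$ so $N/2+2<N-\tfrac{n}2$), using that $\rho_1=a$ is constant along the slice so $\partial_a$ is transverse and the $\rho_1^{1/2}$-weighted $\partial_a$-control in $M_2^N$ converts to the $\rho_1^{-1/2}$ loss after embedding.

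For the second, harder half — the improved $\rho_1^{1/2}$ decay of $\tilde h_{\rho\rho}$ — I would integrate the transport identity from the conformal version of Lemma \ref{lem.4}, namely $D(2\tilde h_{\mu\rho}-\theta_\mu\mathrm{tr}_m\tilde h)=\rho_1\tilde\partial\tilde h+\rho_1\Theta_1(\tilde h)+\Theta_1(\tilde\rho^{\frac{n-1}2}\tilde h)(\tilde\partial\tilde h)+\Theta_1(\tilde\rho^{\frac{n-1}2}\tilde h)(\tilde h)$, where in $\Omega_2$ we have $D=b\partial_b-a\partial_a$ and $[D,\tilde\rho]=[D,\theta]=0$. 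Applying tangential vector fields $\tilde D^I$ with $|I|\le N$ (and $|I|\le N/2+1$ for the pointwise version) and commuting them past $D$ produces $D(\tilde D^I(2\tilde h_{\mu\rho}-\theta_\mu\mathrm{tr}_m\tilde h))=\rho_1\,(\text{b-derivatives of }\tilde h)+\rho_1\Theta_1(\tilde h)(\cdots)+\Theta_0(\rho_0^{\frac{n-1}2}\tilde h)(\cdots)$. One then integrates along the integral curves of $D$ inward from $S_1^+$ — since $D$ is the generator transverse-to-null-direction in $\Omega_2$ and annihilates $\rho_1=a$ only tangentially... rather, $D$ moves in the $b$-direction while $a$ decreases, so integrating from the point where $a=0$ (i.e. from $S_1^+$, where $\tilde h_{\rho\rho}$ vanishes by the $\rho_1^{-\delta}$ hypothesis forcing $\tilde h_{\rho\rho}=O(\rho_1^\delta)\to 0$, actually we get the cleaner vanishing from the gauge transport itself): the right-hand side carries an explicit factor $\rho_1=a$ (or $\Theta_0(\rho_0^{\frac{n-1}2}\tilde h)$ which is small), so $\int_0^a(\text{RHS})\,da'$ gains a full power $a$ beyond whatever is in the integrand, hence $\tilde h_{\rho\rho}=O(a)=O(\rho_1)$ modulo the error terms; iterating/bootstrapping this once more with the quadratic terms absorbed via smallness $\epsilon_2'$ yields $\sum_{|I|\le N/2+1}|\tilde D^I\tilde h_{\rho\rho}|\le C'\rho_0^\delta\rho_1^{1/2}M_2^N(\mathfrak t;\tilde h,\tilde m)$ and the $H^0_b$-version $\sum_{|I|\le N}\|\rho_0^{\frac{n-1}2}\tilde D^I\tilde h_{\rho\rho}\|_{\rho_0^\delta H^0_b}\le C'\rho_1^{1/2}M_2^N$. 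The half-power (rather than full power) is what actually comes out because $M_2^N$ only controls $\rho_1^{1/2}\partial_a\tilde h$, so the transport integral of an integrand involving $\partial_a\tilde h\sim\rho_1^{-1/2}\rho_0^\delta M_2^N$ times $\rho_1$ gives $\int\rho_1^{1/2}\,da'\sim\rho_1^{3/2}$ for that piece but the purely tangential b-derivative piece gives only $\rho_1^1$, and one must also carry the $\Theta_0(\rho_0^{\frac{n-1}2}\tilde h)(\tilde h)$ term which contributes $\epsilon'\cdot\tilde h_{\rho\rho}$ and is absorbed into the left side; reconciling these, the sharp common bound is $\rho_1^{1/2}$ after using $a^{1/2}\le 1$.

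The main obstacle I expect is bookkeeping the weights in the transport step so that the bound is genuinely $\rho_1^{1/2}$ and not worse: one must be careful that every application of $\tilde D^I$ and every commutator $[\tilde D^I,D]$ (which by Lemma \ref{lem.4.7}-type identities costs only $\mathscr V_b+C^\infty$ and hence stays within $M_2^N$) does not eat into the decay, and that the quadratic error $\Theta_0(\rho_0^{\frac{n-1}2}\tilde h)(\tilde h)$ — which has no explicit $\rho_1$ — is handled by moving it to the left-hand side using smallness of $\sum_{|I|\le N/2+1}\|\rho_0^{\frac{n-1}2}\tilde D^I\tilde h\|_\infty<\epsilon_2'$ together with a Grönwall argument along the $D$-flow. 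A secondary subtlety is that $\Xi_2^{\mathfrak t}$ is only a closed subset of a p-submanifold (it has a boundary at $S_1^+$ coming from its intersection with the null hypersurface $T_2'=-\tfrac14$ region), so one integrates the transport equation along $D$-curves that terminate on $S_1^+\cap\widetilde\Omega_2$ where $\tilde h_{\rho\rho}$ is controlled by the induction from $\widetilde\Omega_1$/the prior slice, and one must check these curves foliate $\Xi_2^{\mathfrak t}$ and reach $S_1$ within the coordinate patch $\Omega_2$ — which they do since $D=b\partial_b-a\partial_a$ is tangent to the level sets of $\tilde\rho=ab$ and points toward $a=0$. Once these weight and domain issues are pinned down the rest is the routine Sobolev-embedding and quadratic-estimate machinery already used in Lemmas \ref{lem.6.1} and \ref{lem.6.2}.
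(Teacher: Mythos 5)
The first half of your proposal --- the equivalence of $M_2^N(\mathfrak{t};v,\tilde{g})$ and $M_2^N(\mathfrak{t};v,\tilde{m})$ via Lemmas \ref{lem.5.3}, \ref{lem.5.4} and the volume-form comparison, followed by the weighted $L^\infty$ estimates from b-Sobolev embedding on the slice with coordinates $(b,\theta)$ --- is in line with the paper's argument and essentially correct, including your observation that the $\rho_1^{-\delta}$ weight on $\tilde{h}_{\rho\rho}$ in the hypothesis is exactly what absorbs the $a^{\frac{n-5}{2}}$ prefactor in the perturbation of the quadratic form.

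The second half --- the improved decay of $\tilde{h}_{\rho\rho}$ from the gauge condition --- goes wrong in a way that is not repairable along the route you sketch. You propose to integrate $D\tilde{h}_{\rho\rho}$ along the $D$-flow inward from $S_1^+$, asserting at the end that the flow curves ``foliate $\Xi_2^{\mathfrak{t}}$ and reach $S_1$ within $\Omega_2$.'' Neither holds. Since $[D,\tilde{\rho}]=0$, the $D$-flow is constrained to the level sets $\{ab=\mathrm{const}>0\}$; as $a\to 0$ along such a curve, $b\to\infty$, so the curve leaves $\Omega_2$ long before it could reach $a=0$. No $D$-flow curve from an interior point meets $S_1^+$. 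Moreover the slice $\Xi_2^{\mathfrak{t}}=\{a=-\mathfrak{t}\}$ satisfies $Da=-a\neq 0$, so $D$ is transverse to it, not tangent: the flow leaves the slice immediately. Your fallback of writing $\int_0^a(\mathrm{RHS})\,da'$ at fixed $b$ hits a deeper problem: that integral crosses the slices $\Xi_2^{\mathfrak{t}'}$ for $\mathfrak{t}'\in(\mathfrak{t},0)$, and bounding the integrand there requires $M_2^N(\mathfrak{t}';\cdot\,,\cdot)$, which is exactly what Proposition \ref{prop.est.2} --- which relies on this lemma --- is going to estimate. The argument becomes circular. Finally, your proposed integration has no natural source of the $\rho_0^\delta$ factor: nothing in the $a$-direction produces it.

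The paper's integration is entirely within the slice $\Xi_2^{\mathfrak{t}}$ (fixed $a$), in the $b$-direction starting from $b'=0$, i.e.\ from spatial infinity $S_0$, where $\tilde{h}_{\rho\rho}$ vanishes because $\tilde{h}=O(\rho_0^\delta)$. One writes $\tilde{h}_{\rho\rho}=\int_0^b b'\partial_{b'}\tilde{h}_{\rho\rho}\,\tfrac{db'}{b'}$ and only then uses the transport identity of Lemma \ref{lem.4} in the form $b\partial_b\tilde{h}_{\rho\rho}=D\tilde{h}_{\rho\rho}+a\partial_a\tilde{h}_{\rho\rho}$. The dominant term $a\partial_a\tilde{h}_{\rho\rho}$ is pointwise $\leq C\,b'^{\delta}a^{\frac12}M_2^N$ by your third $L^\infty$ bound, while $D\tilde{h}_{\rho\rho}$ contributes $O(a)$ plus nonlinear errors that are small by the hypothesis on $\|\rho_0^{\frac{n-1}{2}}\tilde{D}^I\tilde{h}\|_\infty$. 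The integral $\int_0^b b'^{\delta}\tfrac{db'}{b'}\sim b^{\delta}$ then supplies the $\rho_0^\delta$, the constant factor $a^{\frac12}=\rho_1^{\frac12}$ survives untouched, and everything happens on a single slice with no reference to boundary data on $S_1^+$. The higher-order pointwise and $H_b^0$ statements follow by the same mechanism after distributing $(a\partial_a)^i(b\partial_b)^j\slashpar^I_{\theta}$ through the transport equation, separating the cases $j>0$ (no integral needed) and $j=0$ (one $b$-integral as above).
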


\begin{proof}
Similar as the proof of Lemma \ref{lem.6.1}, the equivalence between $M_2^N(\mathfrak{t};v,\tilde{m}) $ and $M_2^N(\mathfrak{t};v,\tilde{g})$ comes from the equivalence between the quadratic forms $\langle \mathcal{F}_{\tilde{g}}(T_2,v),\nabla T'_2\rangle_{\tilde{m}}$ and $\langle \mathcal{F}_{\tilde{g}}(T_2,v),\nabla T'_2\rangle_{\tilde{g}}$, as well as it between the volume forms $dvol_{\tilde{m}}$, $dvol_{\tilde{g}}$ by  Lemma \ref{lem.5.3} and Lemma \ref{lem.5.4}. Here on $\Xi_2^{\mathfrak{t}}$, we can choose the coordinates $(b,\theta)$, so
$$
d\mu^{T_2'}_{\tilde{m}}=\frac{db}{b}d\theta.
$$
Hence $M_2^N(\mathfrak{t};v,\tilde{m}) $ gives the b-Sobolev norm: for some $C''>0$
$$
\sum_{|I|\leq N+1}\|\tilde{D}^Iv\|_{\rho_0^{\delta}H_b^{N+1-|I|}(\Xi_2^{\mathfrak{t}})}
\leq C''M_2^N(\mathfrak{t};\tilde{h},\tilde{m}),
$$
The  second and  third inequalities come from Sobolev embedding theorem. See Lemma \ref{lem.2.3}

Next we show that  the harmonic gauge conditions make $\tilde{h}_{\rho\rho}$ have a bit more decay than $\tilde{h}$ when approaching $S_1^+$. By Lemma \ref{lem.4},
$$
\begin{aligned}
\tilde{h}_{\rho\rho}=&\  \int_0^{b} b'\partial_{b'}\tilde{h}_{\rho\rho} \frac{db'}{b'}
\\
= &\ 
\int_0^{b} a\partial_a\tilde{h}_{\rho\rho}+ a\tilde{\partial}\tilde{h}+\Theta_1((ab')^{\frac{n-1}{2}}\tilde{h})(\tilde{D}\tilde{h})+\Theta_1((ab')^{\frac{n-1}{2}}\tilde{h})(\tilde{h}) \frac{db'}{b'}
\\
\leq &\ 
\left(\int_0^{b}b'^{\delta}\frac{db'}{b'}\right) 
a^{\frac{1}{2}}\big(1+\Theta_1(\|(ab)^{\frac{n-1}{2}}\tilde{h}\|_{\infty}) \big)M_2^N(\mathfrak{t};\tilde{h},\tilde{m})
\\
\leq &\  C\rho_0^{\delta}\rho_1^{\frac{1}{2}} M_2^N(\mathfrak{t};\tilde{h},\tilde{m}).
\end{aligned}
$$
So for $i+j+|I|\leq \frac{N}{2}+1, j>0$, 
$$
\begin{aligned}
|(a\partial a)^{i}(b\partial b)^{j}\slashpar_{\theta}^{I}\tilde{h}_{\rho\rho}|
=&\ \left|(a\partial a)^{i}(b\partial b)^{j-1}\slashpar_{\theta}^{I}
\left(a\partial_a\tilde{h}_{\rho\rho}+ a\tilde{\partial}\tilde{h}+\Theta_1((ab)^{\frac{n-1}{2}}\tilde{h})(\tilde{D}\tilde{h})+\Theta_1((ab)^{\frac{n-1}{2}}\tilde{h})(\tilde{h})\right)\right|
\\
\leq&\ Ca^{\frac{1}{2}}b^{\delta} M_2^N(\mathfrak{t};\tilde{h},\tilde{m});
\end{aligned}
$$
and for $i+|I|\leq \frac{N}{2}+1$, 
$$
\begin{aligned}
|(a\partial a)^{i}\slashpar_{\theta}^{I}\tilde{h}_{\rho\rho}|
=&\ \int_0^{b}  \left|(a\partial a)^{i}\slashpar_{\theta}^{I}\left(a\partial_a\tilde{h}_{\rho\rho}+ a\tilde{\partial}\tilde{h}+\Theta_1((ab')^{\frac{n-1}{2}}\tilde{h})(\tilde{D}\tilde{h})+\Theta_1((ab')^{\frac{n-1}{2}}\tilde{h})(\tilde{h})\right)\right| \frac{db'}{b'}
\\
\leq &\ 
 Ca^{\frac{1}{2}} M_2^N(\mathfrak{t};\tilde{h},\tilde{m})\left(\int_0^{b}b'^{\delta}\frac{db'}{b'}\right).
\end{aligned}
$$
And the last inequality is proved similarly as above: for $i+j+|I|\leq N, j>0$,
$$
\begin{aligned}
&\int_{\mathbb{S}^{n-1}}\int_0^{\rho_0} \left|(a\partial a)^{i}(b\partial b)^{j}\slashpar_{\theta}^{I}\tilde{h}_{\rho\rho}\right|^2 b^{n-1-2\delta}\frac{db}{b}d\theta
\\
= &\int_{\mathbb{S}^{n-1}}\int_0^{\rho_0} 
\left| (a\partial a)^{i}(b\partial b)^{j-1}\slashpar_{\theta}^{I} 
[(a\partial a\tilde{h}_{\rho\rho}+a\tilde{\partial}\tilde{h}+\Theta_1((ab)^{\frac{n-1}{2}}\tilde{h})(\tilde{D}\tilde{h})+\Theta_1((ab)^{\frac{n-1}{2}}\tilde{h})(\tilde{h})] \right|^2 b^{n-1-2\delta}\frac{db}{b}d\theta
\\
\leq &\ C' a\left(M_2^N(\mathfrak{t};\tilde{h},\tilde{m})\right)^2;
\end{aligned}
$$
and for $i+|I|\leq N$, 
$$
\begin{aligned}
&\int_{\mathbb{S}^{n-1}}\int_0^{\rho_0} \left|(a\partial a)^{i}\slashpar_{\theta}^{I}\tilde{h}_{\rho\rho}\right|^2 b^{n-1-2\delta}\frac{db}{b}d\theta
\\
= &\int_{\mathbb{S}^{n-1}}\int_0^{\rho_0} 
\left|(a\partial a)^{i}\slashpar_{\theta}^{I} 
\int_0^b (a\partial a\tilde{h}_{\rho\rho}+a\tilde{\partial}\tilde{h}+\Theta_1((ab)^{\frac{n-1}{2}}\tilde{h})(\tilde{D}\tilde{h})+\Theta_1((ab)^{\frac{n-1}{2}}\tilde{h})(\tilde{h})\frac{db'}{b'} \right|^2 b^{n-1-2\delta}\frac{db}{b}d\theta
\\
\leq& C'a\left(M_2^N(\mathfrak{t};\tilde{h},\tilde{m})\right)^2  \int_0^{\rho_0}
\left( \int_0^b b'^{2\delta}\frac{db'}{b'}\right) b^{n-1-2\delta}\frac{db}{b}
\\
\leq &\ C'' a\left(M_2^N(\mathfrak{t};\tilde{h},\tilde{m})\right)^2. 
\end{aligned}
$$
We finish the proof. 
\end{proof}

\begin{proposition}\label{prop.est.2}
For $n\geq 4$, there exists $\epsilon_2''>0$ such that if $\epsilon < \epsilon_2''$, then the equations (\ref{eq.12}) have a unique solution in $\widetilde{\Omega}_2$ such that
\begin{equation}\label{eq.6.2}
M_2^N(\mathfrak{t};\tilde{h},\tilde{g})+L_2^N(\mathfrak{t};\tilde{h},\tilde{g}) 
\leq C\epsilon |\mathfrak{t}|^{\delta-\frac{1}{2}} ,
\quad\forall\ \mathfrak{t}\in[-\tfrac{1}{4},0],
\end{equation}
where $C>0$ is a constant depending on $\delta, \delta'$ but independent of $\epsilon_2''$ if it is small enough. 
\end{proposition}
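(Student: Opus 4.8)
The plan is to reproduce, in the domain $\widetilde{\Omega}_2$, the continuation-plus-Picard-iteration scheme of Proposition~\ref{prop.est.1}, now slicing by the level sets $\Xi_2^{\mathfrak t}=\{T_2'=\mathfrak t\}$ of the function $T_2'$ which is time-like in the interior of $\widetilde{\Omega}_2$ and null on $S_1^+$, and carrying along both the weighted energy $M_2^N(\mathfrak t;\cdot,\tilde g)$ on $\Xi_2^{\mathfrak t}$ and the outgoing flux $L_2^N(\mathfrak t;\cdot,\tilde g)$ across $\Sigma_2$. Since $\widetilde{\Omega}_1$ is closed, the solution of Proposition~\ref{prop.est.1} extends a little beyond $\Sigma_1$; feeding this extension as the seed of the iteration $(\Box_{\tilde g^l}+\gamma(\tilde h^l))\tilde h^{l+1}=\rho_0^{(n-1)/2}\tilde F(\tilde h^l,\tilde h^{l+1})$ of Section~\ref{sec.est1}, everything reduces to a uniform a priori bound: if $M_2^N(-\tfrac14;\tilde h^l,\tilde m)<C_1\epsilon$ and the smallness hypotheses of Lemma~\ref{lem.6.3} hold on every $\Xi_2^{\mathfrak t}$, then $M_2^N(\mathfrak t;\tilde h^{l+1},\tilde g^l)+L_2^N(\mathfrak t;\tilde h^{l+1},\tilde g^l)\le C\epsilon|\mathfrak t|^{\delta-1/2}$ on $[-\tfrac14,0)$. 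A continuity argument in $\mathfrak t$ then shows the iterates, and the limit $\tilde h$, are defined on all of $\widetilde{\Omega}_2$, i.e. up to $S_1^+$, and uniqueness follows by the same estimate run on $\tilde h-\tilde h'$.

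The heart of the matter is the differential energy inequality. Commuting (\ref{eq.12}) with $\tilde D^I$, $|I|\le N$, and using Lemma~\ref{lem.6} to write $(\Box_{\tilde g}+\gamma)\tilde D^I\tilde h_{\mu\nu}$ as $\sum_{i\le|I|+1}\tilde\partial^i\tilde h_{\mu\nu}+\tilde f^I_{\mu\nu}(\tilde h)$, one applies the divergence identity of Lemma~\ref{lem.3} to $e^{-2\delta\log\rho_0}\mathcal F_{\tilde g}(T_2,\tilde D^I\tilde h)$ over $\Omega_2^{\mathfrak t}$. The flux through $\Sigma_1$ is $\le(C_1\epsilon)^2$ by Proposition~\ref{prop.est.1}, the $S_0$-terms are handled exactly as in Proposition~\ref{prop.est.1}, and differentiating in $\mathfrak t$ leaves on the right the integral over $\Xi_2^{\mathfrak t}$ of the three terms of (\ref{eq.div}). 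By Lemma~\ref{lem.5.3} the friction term $-2\delta\langle\mathcal F_{\tilde m}(T_2,\cdot),\nabla T_2''\rangle_{\tilde m}$ and the coercive part $(\tfrac12-\delta')a^{\delta'-3/2}(|b\partial_bv|^2+|b\partial_bv-a(2-a)\partial_av|^2)+(1-a)|\partial_av|^2$ of $\mathcal Q_{\tilde m}(T_2,\cdot)$ enter with dissipative sign and absorb part of the error and nonlinear contributions; the $\tilde g-\tilde m$ discrepancies in $dvol$, in $\mathcal F$ and in $\mathcal Q$ are, by Lemma~\ref{lem.5.4}, smaller than the model terms by a factor of $\tilde h$ or $\tilde h_{\rho\rho}$ times $a^{(n-7)/2}b^{(n-1)/2}$, hence absorbed for $n\ge4$ and $\epsilon$ small. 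What remains is the semilinear source $\langle\nabla\tilde D^I\tilde h,\nabla T_2\rangle_{\tilde g}(\Box_{\tilde g}+\gamma_0)\tilde D^I\tilde h$, which by the structure of $\tilde f^I$ in Lemma~\ref{lem.6}, the explicit weak-null form of $\tilde F_{\mu\nu}$ in Lemma~\ref{lem.4.8}, and the pointwise and $L^2$ bounds of Lemma~\ref{lem.6.3} is controlled by $C(\|\cdot\|_{L^\infty})$ times $|\mathfrak t|^{-1}$ times the energy density, the extra $|\mathfrak t|^{-1}$ coming from the transverse factor $\partial_a$ hidden in $\langle\nabla\cdot,\nabla T_2\rangle_{\tilde m}$ and from the $a^{\delta'-3/2}$ versus $a^{\delta'-1/2}$ mismatch between $\mathcal Q$ and $\langle\mathcal F_{\tilde m}(T_2,\cdot),\nabla T_2'\rangle_{\tilde m}$.

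Assembling these pieces and using $\langle\mathcal F_{\tilde m}(T_2,\cdot),\nabla T_2'\rangle_{\tilde m}\approx a^{\delta'-1/2}(\cdots)$ with $a=|\mathfrak t|$, one arrives at a differential inequality $\partial_{\mathfrak t}\mathcal E(\mathfrak t)\le \tfrac{C}{|\mathfrak t|}\,\mathcal E(\mathfrak t)$ for $\mathcal E=(M_2^N)^2+(L_2^N)^2$, whose integration from $-\tfrac14$ gives $\mathcal E(\mathfrak t)\le C_1^2\epsilon^2(4|\mathfrak t|)^{-C}$; the hypotheses $\tfrac12-\delta'<1-2\delta$, the choice $\gamma_0=-\tfrac{(n-1)(n-3)}{4}$, and $n\ge4$ are precisely what keep the effective constant $C$ no larger than $1-2\delta$, yielding the claimed rate $|\mathfrak t|^{\delta-1/2}$ at the level of norms. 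Convergence of $\{\tilde h^l\}$ and uniqueness are then obtained, exactly as in Proposition~\ref{prop.est.1}, by running the same weighted estimate on the differences solving (\ref{eq.29}) and on $\tilde h-\tilde h'$, the $|\mathfrak t|^{\delta-1/2}$ weight being a fixed finite factor on the relevant interval and the difference energies vanishing at $\mathfrak t=-\tfrac14$.

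I expect the main obstacle to be precisely this last bookkeeping, and within it the contribution of the "bad" component $\tilde h_{\rho\rho}$ to the semilinear source: naively $\tilde h_{\rho\rho}$ carries too few powers of $\rho_1$ and would destroy the $|\mathfrak t|^{-1}$ balance, and the resolution is that the harmonic gauge forces on $\tilde h_{\rho\rho}$ the extra half-power of $\rho_1$ proved in Lemma~\ref{lem.4} and quantified in the last two estimates of Lemma~\ref{lem.6.3}. One must check carefully that, after re-expressing every occurrence of $\tilde h_{\rho\rho}$ and of $D\tilde h$ through these gauge-improved bounds, and after using that $D\tilde h$ is a tangential derivative measured by $M_2^N$, the residual integral is genuinely of Gr\"onwall type with the sharp constant, so that the exponent $\delta-\tfrac12$, and not merely $\delta'-\tfrac12$ or a non-integrable power, is attained.
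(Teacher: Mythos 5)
Your scheme is the right one — extend the solution a little past $\Sigma_1$, slice by $T_2'$, run the divergence identity of Lemma~\ref{lem.3} with the weight $e^{-2\delta\log\rho_0}$ and the quadratic form built from $T_2$, drop the flux through $\Sigma_2$ as a positive outflow, and close by a continuity argument in $\mathfrak t$. That is precisely what the paper does. But there are two places where your sketch is vague in a way that, if left vague, would not actually produce the exponent $\delta-\tfrac12$.

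First, your reading of the flux density $\langle\mathcal F_{\tilde m}(T_2,\cdot),\nabla T_2'\rangle_{\tilde m}\approx a^{\delta'-1/2}(\cdots)$ is not what Lemma~\ref{lem.5.3} gives: the three blocks carry \emph{different} powers of $a$, namely $a^{\delta'-1/2}$ on the two "transverse" combinations $|b\partial_b v|^2$ and $|b\partial_b v-a(2-a)\partial_a v|^2$, but only $a$ on $|\partial_a v|^2$ and $O(1)$ on $|\slashpar_\theta v|^2-\gamma_0 v^2$. The whole point of the computation is the \emph{ratio}, direction by direction, between $\mathcal Q_{\tilde m}(T_2,\cdot)-2\delta\langle\mathcal F_{\tilde m}(T_2,\cdot),\nabla T_2''\rangle_{\tilde m}$ and $\langle\mathcal F_{\tilde m}(T_2,\cdot),\nabla T_2'\rangle_{\tilde m}$: that ratio is $(\tfrac12-\delta')a^{-1}$ on the transverse block, $(1-2\delta)a^{-1}$ on the $\partial_a$-block, and negative on the angular block, so the hypothesis $\tfrac12-\delta'<1-2\delta$ forces the worst ratio to be exactly $(1-2\delta)a^{-1}$. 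If you collapse the flux to a single power $a^{\delta'-1/2}$ you lose exactly this bookkeeping and can no longer identify which constant ends up in front of $a^{-1}$, and then there is no reason the Gr\"onwall produces $|\mathfrak t|^{\delta-1/2}$ rather than some worse power.

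Second, the cubic error, which in the divergence estimate appears as roughly $a^{(n-5)/2-\delta'}M_2^N\cdot(M_2^N)^2$, is not absorbable until you \emph{first} posit a bootstrap bound $M_2^N(\mathfrak t;\tilde h,\tilde g)<|\mathfrak t|^{\delta-1/2}$ on the interval where the solution is already known to exist. With that assumption the cubic factor becomes $a^{(n-5)/2-\delta'+\delta-1/2}=a^{\delta-\delta'-1}$ for $n=4$, integrable on $(0,\tfrac14]$ precisely because $\delta>\delta'$. Only after imposing this a priori hypothesis can one write the differential inequality as $\partial_{\mathfrak t}M_2^N\le\bigl((\tfrac12-\delta)a^{-1}+C'a^{-1/2}+C'a^{\delta-\delta'-1}\bigr)M_2^N$, integrate after multiplying through by $a^{1/2-\delta}$ so that the $a^{-1}$ singularity is cancelled exactly, and recover a constant that depends only on $\delta,\delta'$. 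Your phrase "a continuity argument in $\mathfrak t$" gestures at this, but it must be stated that the object being propagated is the sharp a priori bound $M_2^N<|\mathfrak t|^{\delta-1/2}$ (and the pointwise bounds of Lemma~\ref{lem.6.3} on $\rho_0^{(n-1)/2}\tilde h$ and on $\rho_0^{(n-1)/2}\rho_1^{-\delta}\tilde h_{\rho\rho}$), with $\epsilon$ chosen small enough that the bound closes with room to spare. Also worth noting: the paper does not re-run the Picard iteration in $\widetilde\Omega_2$; it applies the a priori estimate directly to the already-constructed solution extended past $\Sigma_1$, invoking local existence only to keep extending. Doing it your way works but is an unnecessary complication.
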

\begin{proof}
By the proof of existence theorem in $\widetilde{\Omega}_1$, we see that first we can choose $\epsilon_2''>0$ small such that the we can solve the equations (\ref{eq.12}) for $\mathfrak{t}<\mathfrak{t}_0<0$ with the conditions in Lemma \ref{lem.6.3} holds. 
Then by Lemma \ref{lem.3}, for $\mathfrak{t}<\mathfrak{t}_0$
$$
\begin{aligned}
&(M_2^N(\mathfrak{t};\tilde{h},\tilde{g}))^2+ (L_2^N(\mathfrak{t};\tilde{h},\tilde{g}))^2
-(M_2^N(-\tfrac{1}{4};\tilde{h},\tilde{g}))^2 \\
=& \int_{\Omega_2^{\mathfrak{t}}} \sum_{|I|\leq N}\mathrm{div}_{\tilde{g}} (e^{-2\delta \log\rho_0}
\mathcal{F}_{\tilde{g}}(T_2,\tilde{D}^I\tilde{h}))dvol_{\tilde{g}}. 
\end{aligned}
$$
Here the divergence term can be estimated according to Lemma \ref{lem.6}, \ref{lem.5.3}, \ref{lem.5.4}:
\begin{equation*}
\begin{aligned}
 &\ \int_{\Xi_{2}^{\mathfrak{t}}} \sum_{|I|\leq N}\mathrm{div}_{\tilde{g}} \big(e^{-2\delta \log\rho_0}\mathcal{F}_{\tilde{g}}(T_2,\tilde{D}^I\tilde{h})\big) dvol_{\tilde{g}}^{T_2'}
\\
=&\ \int_{\Xi_{2}^{\mathfrak{t}}} \sum_{|I|\leq N} b^{-2\delta} 
\big(\mathcal{Q}_{\tilde{g}}(T_2,\tilde{D}^I\tilde{h})
-2\delta\langle \mathcal{F}_{\tilde{g}}(T_2,\tilde{D}^I\tilde{h}),\nabla\log b\rangle_{\tilde{g}} 
 +\langle \nabla T_2, \nabla \tilde{D}^I\tilde{h} \rangle_{\tilde{g}}(\Box_{\tilde{g}}+\gamma_0)\tilde{D}^I\tilde{h}\big) dvol_{\tilde{g}}^{T_2'}
\\
\leq &\
[a^{-1}(1-2\delta)+2 C'a^{-\frac{1}{2}}+2C'a^{\frac{n-5}{2}-\delta'}M_2^N(\mathfrak{t};\tilde{h},\tilde{g}))] (M_2^N(\mathfrak{t};\tilde{h},\tilde{g}))^2
\end{aligned}
\end{equation*}
where $C''>0$ is a constant independent of $\epsilon_2''$ if it is small enough. Notice the faster decaying of $\tilde{h}_{\rho\rho}$ and $\tilde{D}^I\tilde{h}_{\rho\rho}$ in $\rho_1$ direction guarantees that the perturbation of $\mathcal{Q}_{\tilde{g}}, \mathcal{F}_{\tilde{g}}, \Box_{\tilde{g}}$ all fall into the smaller term and the leading term $a^{-1}(1-2\delta)$ is determined by the linear equations. Here $\partial_{\mathfrak{t}} (L_2^N(\mathfrak{t};\tilde{h},\tilde{g}))^2\geq 0$ since it is an increasing function of $\mathfrak{t}$. 
Choose $\epsilon''$ smaller if necessary such that 
$$M_2^N(\mathfrak{t};\tilde{h},\tilde{g})<|\mathfrak{t}|^{\delta-\frac{1}{2}},\quad \forall \mathfrak{t}\in [-\tfrac{1}{4},\mathfrak{t}_0)
.$$
Then we have 
$$
\begin{aligned}
& \partial_{\mathfrak{t}}M_2^N(\mathfrak{t};\tilde{h},\tilde{g}) \leq \left(a^{-1}(\tfrac{1}{2}-\delta)+C'a^{-\frac{1}{2}}
+C'a^{\delta-\delta'-1}\right)M_2^N(\mathfrak{t};\tilde{h},\tilde{g}),
\\ \Longrightarrow \ 
& \partial_{\mathfrak{t}}\big(a^{\frac{1}{2}-\delta}M_2^N(\mathfrak{t};\tilde{h},\tilde{g}) \big)
\leq \left(C'a^{-\frac{1}{2}}
+C'a^{\delta-\delta'-1}\right)\big(a^{\frac{1}{2}-\delta}M_2^N(\mathfrak{t};\tilde{h},\tilde{g})\big),
\\ \Longrightarrow\ 
&M_2^N(\mathfrak{t};\tilde{h},\tilde{g})
\leq \frac{e^{C'+\frac{C'}{(\delta-\delta')4^{\delta-\delta'} }}}{4^{\frac{1}{2}-\delta}} |\mathfrak{t}|^{\delta-\frac{1}{2}}M_2^N(-\tfrac{1}{4};\tilde{h},\tilde{g}). 
\end{aligned}
$$
Hence
$$
M_2^N(\mathfrak{t};\tilde{h},\tilde{g})\leq C''|\mathfrak{t}|^{\delta-\frac{1}{2}}M_2^N(-\tfrac{1}{4};\tilde{h},\tilde{g}),
\quad \forall \mathfrak{t}\in [-\tfrac{1}{4},\mathfrak{t}_0)
$$
for some $C''>0$ independent of $\epsilon_2''$ if it is small enough. Notice that
$$
\begin{gathered}
\partial_{\mathfrak{t}} (L_2^N(\mathfrak{t};\tilde{h},\tilde{g}))^2 
\leq a^{2\delta-2}[(1-2\delta)+2C'a^{\frac{1}{2}}+2C'C''a^{\delta-\delta'}M^N_2(-\tfrac{1}{4};\tilde{h},\tilde{g}) ](C'' M^N_2(-\tfrac{1}{4};\tilde{h},\tilde{g}) )^2,
\\
\Longrightarrow 
L_2^N(\mathfrak{t};\tilde{h},\tilde{g}) \leq C''' |\mathfrak{t}|^{\delta-\frac{1}{2}} M^N_2(-\tfrac{1}{4};\tilde{h},\tilde{g}),\quad \forall \mathfrak{t}\in [-\tfrac{1}{4},\mathfrak{t}_0)
\end{gathered}
$$
where $C'''>0$ is a constant independent of $\epsilon''_2$ too. 
So we prove (\ref{eq.6.2}) for $\mathfrak{t}\in [-\tfrac{1}{4},\mathfrak{t}_0)$. Moreover, 
$$
\begin{gathered}
\begin{aligned}
\sum_{|I|\leq \frac{N}{2}+1} \|\rho_0^{\frac{n-1}{2}}\tilde{D}^I\tilde{h}\|_{L^{\infty}(\Xi_{2}^{\mathfrak{t}})} 
\leq\ & \sum_{|I|\leq \frac{N}{2}+1} \left( \|\tilde{D}^I\tilde{h}\|_{L^{\infty}(\Sigma_1)} +C''''\int_{-\frac{1}{4}}^{\mathfrak{t}} |\mathfrak{t}'|^{-\frac{1}{2}}M_2^N(\mathfrak{t}';\tilde{h},\tilde{g})d\mathfrak{t}'\right)
\\
\leq\ & \sum_{|I|\leq \frac{N}{2}+1} \left( \|\tilde{D}^I\tilde{h}\|_{L^{\infty}(\Sigma_1)}+CC''''(\tfrac{1}{4})^{\delta}\epsilon\right),
\end{aligned}
\\
\|\rho_0^{\frac{n-1}{2}}\rho_0^{-\delta}\tilde{h}_{\rho\rho}\|_{L^{\infty}(\Xi_{2}^{\mathfrak{t}})} \leq 
C''''|\mathfrak{t}|^{\frac{1}{2}-\delta}M_2^N(\mathfrak{t};\tilde{h},\tilde{g})\leq CC''''\epsilon ,
\end{gathered}
$$
for some $C''''>0$ independent of $\epsilon_2''$ if it is small enough. 
Hence choosing $\epsilon_2''$ even smaller if necessary such that above two inequalities guarantee that the conditions in Lemma \ref{lem.6.3} holds up to $T_2'=\mathfrak{t}_0$, which allows us to extend the solution up to and hence a bit over $T_2'=\mathfrak{t}_0$ with all the above estimates hold without changing of constants. Therefore 
the extension will never stop until it arrives $\widetilde{\Omega}_2\cap S_1$.  We finish the proof. 
\end{proof}

\begin{corollary}\label{cor.6.2}
For $n\geq 4$, suppose $\epsilon  < \epsilon_2''$ where $\epsilon_2''$ is chosen such that Proposition \ref{prop.est.2} holds. Then $\tilde{h}$ is 
$C^{0,\delta}$ up to $\widetilde{\Omega}_2\cap S^{+}_1$ and
$$
\|\tilde{h}|_{S_1^{+}\cap\widetilde{\Omega}_2}\|_{\rho_0^{\delta}H_b^N(S_1^{+}\cap\widetilde{\Omega}_2)} \leq C\epsilon
$$
for some $C>0$ depending on $\delta,\delta' $ but independent of $\epsilon_2''$ if it is small enough. 
\end{corollary}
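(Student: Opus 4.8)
\textbf{Proof proposal for Corollary \ref{cor.6.2}.}

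The plan is to extract the trace of $\tilde h$ on $S_1^+\cap\widetilde\Omega_2$ from the uniform energy bound \eqref{eq.6.2} of Proposition \ref{prop.est.2}, using the coordinates $(a,b,\theta)$ on $\Omega_2$ with $\rho_1=a$, $\rho_0=b$, in which $S_1^+$ is the hypersurface $\{a=0\}$. First I would use the approach of slicing by the null function $T_2'=-a$: for $\mathfrak t\in[-\tfrac14,0]$ the level set $\Xi_2^{\mathfrak t}=\{a=-\mathfrak t\}$ is a space-like hypersurface that is a closed subset of a $p$-submanifold with induced boundary defining function $\rho_0=b$, and as $\mathfrak t\to 0^-$ these slices converge to $S_1^+\cap\widetilde\Omega_2$. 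The bound $M_2^N(\mathfrak t;\tilde h,\tilde g)\le C\epsilon|\mathfrak t|^{\delta-\frac12}$ controls, up to the equivalence of $M_2^N(\cdot;\tilde h,\tilde m)$ and $M_2^N(\cdot;\tilde h,\tilde g)$ from Lemma \ref{lem.6.3}, the weighted b-Sobolev norm $\sum_{|I|\le N}\|\tilde D^I\tilde h\|_{\rho_0^\delta H_b^0(\Xi_2^{\mathfrak t})}$; but this blows up like $|\mathfrak t|^{\delta-\frac12}$, so the slice norms alone do not give a trace. The fix is to use instead the incoming flux $L_2^N$ together with a Hardy/fundamental-theorem-of-calculus argument in the $a$-variable: from Lemma \ref{lem.5.3}, $\langle\mathcal F_{\tilde m}(T_2,v),\nabla T_2\rangle_{\tilde m}$ contains, with a coefficient bounded above and below, the terms $a^{\delta'-\frac12}|a(2-a)\partial_a v|^2$ and the angular term $(|\slashpar_\theta v|^2-\gamma_0 v^2)$; hence $L_2^N(\mathfrak t;\tilde h,\tilde g)$ controls $\sum_{|I|\le N}\int a^{\delta'-\frac32}b^{-2\delta}|a\partial_a\tilde D^I\tilde h|^2\,b^{n-1}\,\tfrac{da}{a}\,\tfrac{db}{b}\,d\theta$, which since $\delta'-\tfrac32<-1$ is an integrable weight in $a$ and is a genuine $L^2$-type control on $\partial_a\tilde D^I\tilde h$ near $a=0$.

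The key steps, in order, would be: (i) write $\tilde D^I\tilde h$ on $\Xi_2^{\mathfrak t}$ as $\tilde D^I\tilde h|_{\Sigma_1}$ minus $\int_{\mathfrak t}^{-1/4}\partial_a(\tilde D^I\tilde h)\,da'$ along the integral curves of $\partial_a$ (which lie in $\widetilde\Omega_2$ for $\mathfrak t$ near $0$), noting that $a\partial_a$ is one of the b-vector fields so $\partial_a = a^{-1}(a\partial_a)$; (ii) apply Cauchy--Schwarz with the weight $a^{\delta'-\frac32}$ coming from $\mathcal Q_{\tilde m}$ resp.\ $L_2^N$: since $\int_0^{c} a^{-(\delta'-\frac32)}\,a^{-2}\,da = \int_0^c a^{-\frac12-\delta'}\,da<\infty$ because $\tfrac12+\delta'<1$, the integral $\int|\partial_a\tilde D^I\tilde h|\,da$ is bounded by $C\,L_2^N(0;\tilde h,\tilde g)$ plus the boundary term, uniformly in $\mathfrak t$; (iii) conclude that $\tilde D^I\tilde h$ extends continuously to $\{a=0\}$ and the limit, which I define to be $\tilde D^I(\tilde h|_{S_1^+})$ (one checks $\tilde D$ restricts to the induced b-vector fields on $S_1^+$), satisfies the same $\rho_0^\delta H_b^0$ bound; (iv) sum over $|I|\le N$ and invoke that $M_2^N(-\tfrac14;\tilde h,\tilde g)<C_1\epsilon$ and $L_2^N(-\tfrac14;\tilde h,\tilde g)=0$, so the right-hand side is $\le C\epsilon$; (v) the Hölder-$\delta$ regularity up to $S_1^+$ follows from the $a^{\frac12}$-type gain already recorded in Lemma \ref{lem.6.3} ($|\tilde h_{\rho\rho}|\lesssim \rho_0^\delta\rho_1^{1/2}M_2^N$) combined with the pointwise bound $\sum_{|I|\le\frac N2+1}|\tilde D^I\tilde h|\le C\rho_0^\delta M_2^N(\mathfrak t;\tilde h,\tilde g)\le C\epsilon|\mathfrak t|^{\delta-\frac12}$, refined: actually the continuity-in-$a$ estimate of step (ii) at low order already gives a modulus of continuity $|\mathfrak t|^{\delta}$ (integrating $a^{-1/2}$ against the slice bound $|\mathfrak t'|^{\delta-1/2}$ produces $|\mathfrak t|^{\delta}$), which is exactly $C^{0,\delta}$ up to the boundary.

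The main obstacle will be step (ii): showing that the boundary flux $L_2^N$, rather than the blowing-up slice norms $M_2^N$, is the right object to control the trace, and correctly matching the weights $a^{\delta'-\frac32}$ in $\mathcal Q_{\tilde m}$ (Lemma \ref{lem.5.3}) and $b^{-2\delta}$ against the b-density $b^{n-1}\,\tfrac{da}{a}\,\tfrac{db}{b}$ so that the $a$-integral converges — this requires the precise inequality $\tfrac12+\delta' < 1$ from assumption (A3) and $n\ge 4$ for the $b$-weight $b^{n-1-2\delta}$ to be integrable at $b=0$, and it also requires checking that the nonlinear and metric-perturbation corrections to $\mathcal F_{\tilde g}$, $\mathcal Q_{\tilde g}$, $\Box_{\tilde g}$ (Lemma \ref{lem.5.4}) all carry extra positive powers of $a$ so they do not spoil the leading weight; this is precisely the $n\ge4$ "weak null condition" structure, and the faster decay $\tilde h_{\rho\rho}=O(\rho_1^{\delta})$ from (A2) and Lemma \ref{lem.6.3} is what makes it work. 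Everything else — the equivalence of $\tilde g$- and $\tilde m$-norms, Sobolev embedding to get the $L^\infty$ bounds needed to close Lemma \ref{lem.6.3}, and the identification of $\tilde D$ with the b-vector fields on $S_1^+$ — is routine given the lemmas already proved.
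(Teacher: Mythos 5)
Your overall plan --- obtain the trace on $S_1^+\cap\widetilde\Omega_2$ by a fundamental-theorem-of-calculus argument in the $a$-variable starting from $\Sigma_1$, and get the H\"older-$\delta$ continuity from the pointwise $a^{\delta-1}$ decay of $\partial_a\tilde h$ --- matches the paper's in spirit, and step (v) is essentially identical to what the paper does. The gap is in step (ii), which you correctly flag as the crux. $L_2^N$ is by definition a flux through the single space-like hypersurface $\Sigma_2=\{T_2=-\ln\tau_0\}\cap\widetilde\Omega_2$ (a graph $b=b(a)$ over $[-\tfrac14,0]_a\times\mathbb S^{n-1}$); it is a surface integral, not a bulk integral over $\widetilde\Omega_2$, and it carries no information about $\partial_a\tilde D^I\tilde h$ on the slices $\Xi_2^{\mathfrak t}$ near $a=0$. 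Moreover, the quadratic form $\mathcal Q_{\tilde m}(T_2,\cdot)$ of Lemma \ref{lem.5.3} that you invoke is not sign-definite (the angular term has coefficient $-\tfrac12 a^{\delta'-\frac12}(1+2\delta'-(\tfrac32+\delta')a)<0$ for small $a$), so one cannot extract coercive bulk control from the divergence identity as you suggest. Your weight $b^{n-1}$ also does not appear: the b-density in $\Omega_2$ is $\tfrac{da}{a}\tfrac{db}{b}d\theta$, and the role of $n\ge 4$ is in the null structure of the conformal equation (Lemma \ref{lem.4.8}), not in the integrability of a $b$-weight.

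The object that actually does the work is the \emph{slice} norm $M_2^N$ itself. From Lemma \ref{lem.5.3}, $\langle\mathcal F_{\tilde m}(T_2,v),\nabla T_2'\rangle_{\tilde m}$ contains the positive summand $\tfrac12 a(2-a)|\partial_a v|^2$, so on $\Xi_2^{\mathfrak t}$ (where $a=|\mathfrak t|$) one has
\begin{equation*}
\big\|\partial_a\tilde D^I\tilde h\big\|_{\rho_0^\delta H_b^0(\Xi_2^{\mathfrak t})}
\lesssim a^{-1/2}\,M_2^N(\mathfrak t;\tilde h,\tilde g)
\lesssim \epsilon\, a^{\delta-1},
\end{equation*}
which is integrable in $a$ near $0$; fed into your FTC-plus-Minkowski step (i), this closes the trace estimate with no reference to $L_2^N$. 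The paper packages exactly this as a differential inequality: it introduces the purely tangential energy $Q_2^N(\mathfrak t)=\big(\int_{\Xi_2^{\mathfrak t}}\sum_{k+|I|\le N}|(b\partial_b)^k\slashpar_\theta^I v|^2\,d\mu^{T_2'}_{\tilde g}\big)^{1/2}$, using only the b-vector fields $b\partial_b,\slashpar_\theta$ tangent to $S_1^+$, shows $\partial_{\mathfrak t}(Q_2^N)^2\lesssim |\mathfrak t|^{-1/2}M_2^N\,Q_2^N(1+Q_2^N)$, and integrates; the $(1+Q_2^N)$ factor, absorbed by Gr\"onwall, accounts for the $\mathfrak t$-dependence of $d\mu^{T_2'}_{\tilde g}$. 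Your FTC route and the paper's ODE route are the same argument in integrated versus differential form; the missing ingredient in your write-up is the replacement of $L_2^N$ by the slice bound from $M_2^N$ above.
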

\begin{proof}
By Lemma \ref{lem.6.3} and Proposition \ref{prop.est.2}, for some constant $C'>0$
$$
|\partial_a\tilde{h}|\leq C'a^{\delta-1}b^{\delta} \epsilon. 
$$
Hence $\tilde{h}$ is $C^{0,\delta}$ up to $\widetilde{\Omega}_2\cap S_1^{+}$ and the radiations field $\tilde{h}|_{\widetilde{\Omega}_2\cap S_1}$ is well defined.  Moreover, define
$$
Q_2^N(\mathfrak{t};v,\tilde{g}) = \left(\int_{\Xi_{2}^{\mathfrak{t}}} 
\sum_{k+|I|\leq N} |(b\partial_b)^k\slashpar_{\theta}^I v|^2 d\mu^{T_2'}_{\tilde{g}} \right)^{\frac{1}{2}}.
$$
Then for some $C'', C''', C''''>0$
$$
\begin{gathered}
 \partial_{\mathfrak{t}}(Q_2^N(\mathfrak{t};\tilde{h},\tilde{g}))^2 \leq 
2|\mathfrak{t}|^{-\frac{1}{2}} M_2^N(\mathfrak{t};\tilde{h},\tilde{g}) Q_2^N(\mathfrak{t};\tilde{h},\tilde{g})\big(C''+C'''Q_2^N(\mathfrak{t};\tilde{h},\tilde{g})\big),
\\
Q^N_2(-\tfrac{1}{4};\tilde{h},\tilde{g}) \leq C''''M^N_2(-\tfrac{1}{4};\tilde{h},\tilde{m}).
\end{gathered}
$$
Here the $C'''$ term comes from the derivative of the volume form $d\mu^{T_2'}_{\tilde{g}}$. Since
$$
A=\int_{-\frac{1}{4}}^0 C'''|\mathfrak{t}|^{-\frac{1}{2}} M_2^N(\mathfrak{t};\tilde{h},\tilde{g}) d\mathfrak{t}
\leq CC''' \epsilon \int_{-\frac{1}{4}}^0 |\mathfrak{t}|^{\delta-1} d\mathfrak{t}
$$
is bounded, this implies that 
$$
Q_2^N(\mathfrak{t};\tilde{h},\tilde{g})
 \leq  e^A \int_{-\frac{1}{4}}^{0} C''|\mathfrak{t}|^{-\frac{1}{2}}M_2^N(\mathfrak{t};\tilde{h},\tilde{g}) d\mathfrak{t} 
+Q_2^N(-\tfrac{1}{4};\tilde{h},\tilde{g})
\leq C'''''\epsilon,\quad \forall \mathfrak{t}\in [-\tfrac{1}{4},0]. 
$$
Finally, $Q_2^N(0;\tilde{h},\tilde{g})$ gives the norm bound of $\tilde{h}|_{S_1^+\cap\widetilde{\Omega}_2}$. 
\end{proof}

\subsection{In $\widetilde{\Omega}_3$.}
To solve the equations (\ref{eq.12}) in  $\widetilde{\Omega}_3$ we first notice that with Cauchy data small enough,  the solution we find in $\widetilde{\Omega}_1\cup \widetilde{\Omega}_2$ can be extended to $\widetilde{\Omega}_3''$ since it is a compact domain in the interior of $X$. 
We show that if the Cauchy data is small enough then this extension can continue into $\widetilde{\Omega}_3'$ and never stop until it arrives $S_1^+$. 

\begin{figure}[htp]
\centering
\includegraphics[totalheight=1.5in,width=3in]{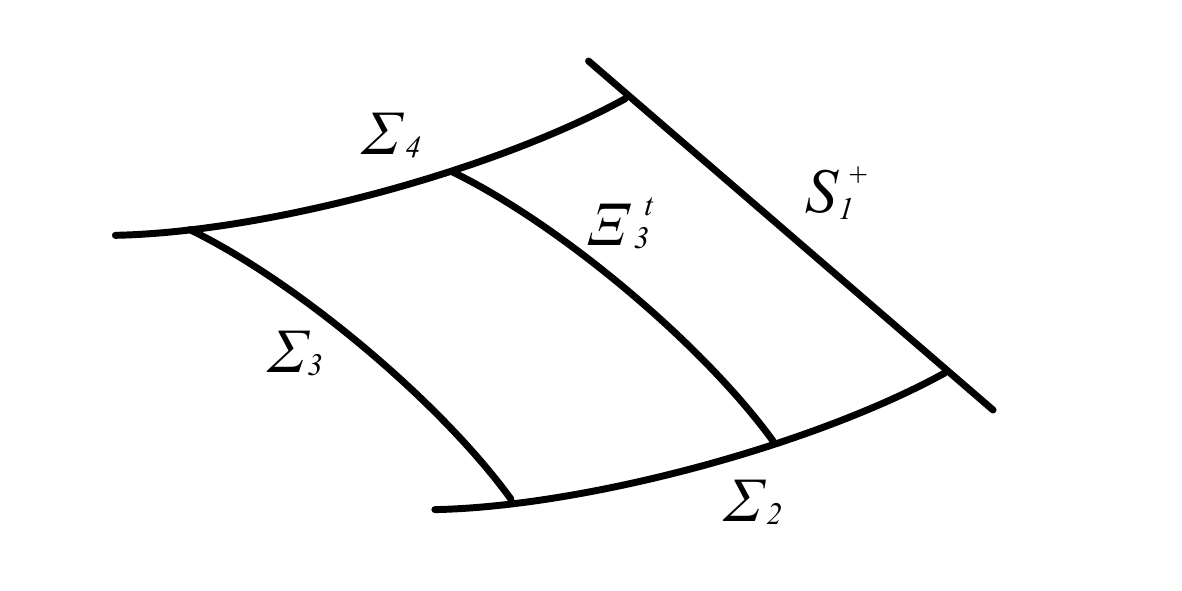}
\caption{$\Omega_3^{\mathfrak{t}}$ is bounded by $\Sigma_2$, $\Sigma_3$, $\Sigma_4$ and $\Xi_3^{\mathfrak{t}}$.}\label{fig.6.3}
\end{figure}

Recall that $\Sigma_2=\{T_2=-\ln \tau_0\}\cap \widetilde{\Omega}_2$,  $\Sigma_3=\{T_3'=-1\}\cap \widetilde{\Omega}_3$ and 
$\Sigma_4=\{T_3=\tau_0\}$. For $\mathfrak{t}\in[-1,0]$, denote by 
$$
\Xi_3^{\mathfrak{t}}=\{T_3'=\mathfrak{t}\}\cap \widetilde{\Omega}_3, 
$$ 
the space like hypersurfaces w.r.t. $\tilde{m}$ and $\Omega_3^{\mathfrak{t}}$ the domain bounded by $\Sigma_2,\Sigma_3,\Sigma_4$ and $\Xi^{\mathfrak{t}}_3$. Then $\Omega_3^0=\widetilde{\Omega}_3'$. 

Define the high energy norm on $\Sigma_2$, $\Sigma_4$ and  $\Xi^{\mathfrak{t}}_3$ by 
$$
\begin{aligned}
M_3^N(\mathfrak{t};v,\tilde{g}) =&\ \left(\int_{\Xi_3^{\mathfrak{t}}} 
\sum_{|I|\leq N} \langle \mathcal{F}_{\tilde{g}}(T_3,\tilde{D}^Iv),\nabla T_3'\rangle_{\tilde{g}} 
d\mu^{T_3'}_{\tilde{g}}   \right)^{\frac{1}{2}},
\\
L_3^N(\mathfrak{t};v,\tilde{g}) =&\ \left(\int_{\Sigma_4\cap \{-1\leq T_3'\leq\mathfrak{t}\}} 
\sum_{|I|\leq N} \langle \mathcal{F}_{\tilde{g}}(T_3,\tilde{D}^Iv),\nabla T_3\rangle_{\tilde{g}} 
d\mu^{T_3}_{\tilde{g}}   \right)^{\frac{1}{2}},
\\
\tilde{L}_3^N(\mathfrak{t};v,\tilde{g}) =&\ \left(\int_{\Sigma_2\cap \{-1\leq T_3'\leq\mathfrak{t}\}} 
\sum_{|I|\leq N} \langle \mathcal{F}_{\tilde{g}}(T_3,\tilde{D}^Iv),\nabla T_2\rangle_{\tilde{g}} 
d\mu^{T_2}_{\tilde{g}}   \right)^{\frac{1}{2}}.
\end{aligned}
$$
Here $d\mu_{\tilde{g}}^{T_3'}\wedge dT'_3=dvol_{\tilde{g}}$, 
$d\mu_{\tilde{g}}^{T_3}\wedge dT_3=dvol_{\tilde{g}}$, 
$d\mu_{\tilde{g}}^{T_2}\wedge dT_2=dvol_{\tilde{g}}$ and $\tilde{D}\in \mathscr{B}_3=\{\partial_{\tau},\rho\partial_{\rho},Z_{ij}: i,j=1,...,n\}.$
We choose 
$\gamma_0=-\frac{(n-1)(n-3)}{4}$ in the definition of $\mathcal{F}_{\tilde{g}}(T_3,v)$ 
in $\widetilde{\Omega}'_3$.  

Notice that in $\widetilde{\Omega}_2$, we use the quadratic form $\langle\mathcal{F}_{\tilde{g}}(T_2,\tilde{D}^Iv),\nabla T_2\rangle_{\tilde{g}}$ on $\Sigma_2$ to define the norm $L_2^N(T_2';v,\tilde{g})$. Here the two quadratic forms $\langle\mathcal{F}_{\tilde{g}}(T_3,\tilde{D}^Iv),\nabla T_2\rangle_{\tilde{g}}$ and $\langle\mathcal{F}_{\tilde{g}}(T_2,\tilde{D}^Iv),\nabla T_2\rangle_{\tilde{g}}$ are equivalent 
on $\Sigma_2\cap \widetilde{\Omega}'_3$, i.e.
$$
(1/C)\langle\mathcal{F}_{\tilde{g}}(T_2,\tilde{D}^Iv),\nabla T_2\rangle_{\tilde{g}}\leq 
\langle\mathcal{F}_{\tilde{g}}(T_3,\tilde{D}^Iv),\nabla T_2\rangle_{\tilde{g}}
\leq C\langle\mathcal{F}_{\tilde{g}}(T_2,\tilde{D}^Iv),\nabla T_2\rangle_{\tilde{g}}
$$
for some constant $C>0$. On the other hand,  $T_2'$ and $T_3'$ are equivalent on $\Sigma_2\cap \widetilde{\Omega}'_3$, i.e. 
$$
c<\frac{\partial T_2'}{\partial T_3'} <\frac{1}{c}
$$
for some constant $c>0$. Hence the estimate of $L_2^N(T_2';\tilde{h},\tilde{g})$ in $\widetilde{\Omega}_2$ give the estimate of $\tilde{L}_3^N(T_3';\tilde{h},\tilde{g}) $. 
By Proposition \ref{prop.est.1}, Proposition \ref{prop.est.2} and  classical theorems for wave equations in compact domain, we have
\begin{equation}\label{eq.30}
\begin{gathered}
M_3^N(-1;\tilde{h},\tilde{m})< C_3\epsilon,\quad
L_3^N(-1;\tilde{h},\tilde{m})< C_3\epsilon,\quad
\tilde{L}_3^N(-1;\tilde{h},\tilde{m})=0;
\\
\partial_{\mathfrak{t}} (\tilde{L}_3^N(\mathfrak{t};\tilde{h},\tilde{g}))^2
< C^2_3|\mathfrak{t}|^{2\delta-2}\epsilon^2, \quad \tilde{L}_3^N(\mathfrak{t};\tilde{h},\tilde{g})\leq C_3|\mathfrak{t}|^{\delta-\frac{1}{2}} \epsilon;
\\
\sum_{|I|\leq \frac{N}{2}+1}\left|\tilde{D}^I\tilde{h}_{\rho\rho}|_{\Sigma_2}\right|\leq C_3\rho_1^{\delta}\epsilon, 
\end{gathered}
\end{equation}
where $C_3>0$ independent of $\epsilon$ if it is small enough. In particular we choose 
$\delta'\in (0,\delta)$ 
 in the definition of $T_3$ such that $\frac{1}{2}-\delta'<1-2\delta$ and $\tau_0>8$ large in the definition of $T_3'$. Similar as Lemma \ref{lem.6.3}, we have
\begin{lemma}\label{lem.6.4}
Assume $n\geq 4$. 
There exists $\epsilon_3>0$ such that if $\|\tilde{h}\|_{L^{\infty}(\Xi_3^{\mathfrak{t}})}+ 
\|\rho_1^{-\delta}\tilde{h}_{\rho\rho}\|_{L^{\infty}(\Xi_3^{\mathfrak{t}})}<\epsilon_3$,  then on $\Xi_3^{\mathfrak{t}}$ for $\tilde{D}\in \mathscr{B}_3$,
\begin{equation*}
\begin{gathered}
\tfrac{1}{\sqrt{2}}M_3^N(\mathfrak{t};v,\tilde{m}) \leq  M_3^N(\mathfrak{t};v,\tilde{g})
\leq \sqrt{2}M_3^N(\mathfrak{t};v,\tilde{m}),
\\
\sum_{|I|\leq \frac{N}{2}+1} |\tilde{D}^Iv| \leq C M_3^N(\mathfrak{t};v,\tilde{g}),
\\ 
\sum_{|I|\leq \frac{N}{2}} |\tilde{D}^I\partial_a v| \leq C\rho_1^{-\frac{1}{2}}M_3^N(\mathfrak{t};v,\tilde{g}),
\end{gathered}
\end{equation*}
for some constant $C$ independent of $\epsilon_3$ if it is small enough. Moreover, There exists $\epsilon'_3>0$ such that if
$\sum_{|I|\leq \frac{N}{2}+1}\|\tilde{D}^I\tilde{h}\|_{L^{\infty}(\Xi_3^{\mathfrak{t}})}<\epsilon'_3$, then
$$
\begin{gathered}
\sum_{|I|\leq \frac{N}{2}+1} |\tilde{D}^I\tilde{h}_{\rho\rho}| \leq C' \rho_1^{\frac{1}{2}} M_3^N(\mathfrak{t};\tilde{h},\tilde{m}),
\\
\sum_{|I|\leq N}\|\tilde{D}^I\tilde{h}_{\rho\rho}\|_{H^0(\Xi_3^{\mathfrak{t}})} \leq C' \rho_1^{\frac{1}{2}} M_3^N(\mathfrak{t};\tilde{h},\tilde{m}).
\end{gathered}
$$
\end{lemma}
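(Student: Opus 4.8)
The statement is Lemma 6.4, the analog in $\widetilde{\Omega}_3'$ of Lemma 6.3 in $\widetilde{\Omega}_2$, so the plan is to follow the proof of Lemma 6.3 almost verbatim, substituting the geometry of $\widetilde{\Omega}_3'$ (coordinates $(\tau,\rho,\theta)$, with $\rho_1=\rho$, $\rho_0=\rho_2=1$) for that of $\widetilde{\Omega}_2$. First I would establish the metric-equivalence inequality $\tfrac{1}{\sqrt 2}M_3^N(\mathfrak t;v,\tilde m)\le M_3^N(\mathfrak t;v,\tilde g)\le \sqrt 2 M_3^N(\mathfrak t;v,\tilde m)$: by Lemma \ref{lem.5.6} the perturbations of both $dvol_{\tilde g}$ and the quadratic form $\langle \mathcal{F}_{\tilde g}(T_3,v),\nabla T_3'\rangle_{\tilde g}$ relative to the $\tilde m$-versions are $\rho^{\frac{n-1}{2}}$ (or $\rho^{\frac{n-5}{2}}$) times $\Theta_1$-terms in $\tilde h$ and $\tilde h_{\rho\rho}$; because $\tilde h_{\rho\rho}$ carries the extra factor $\rho_1^{\delta}$ and the bad power $\rho^{\frac{n-5}{2}}\tilde h_{\rho\rho}=\rho^{\frac{n-5}{2}+\delta}\cdot(\rho_1^{-\delta}\tilde h_{\rho\rho})$ is controlled for $n\ge 4$ by the hypothesis, all perturbation terms are dominated by $\epsilon_3$ times the positive leading quadratic form displayed in Lemma \ref{lem.5.5}, which gives the two-sided bound once $\epsilon_3$ is small.

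Second, for the Sobolev-type bounds I would observe that on $\Xi_3^{\mathfrak t}$ one may use coordinates $(\rho,\theta)$, the induced measure is $d\mu_{\tilde m}^{T_3'}=V(\mathfrak t,\rho,\theta)\,\frac{d\rho}{\rho}\,d\theta$ with $V$ bounded above and below, so $M_3^N(\mathfrak t;v,\tilde m)$ is equivalent to $\sum_{|I|\le N}\|\tilde D^I v\|_{H_b^{N+1-|I|}(\Xi_3^{\mathfrak t})}$ (no $\rho_0^\delta$ weight here, since $\rho_0=1$ in $\Omega_3$); then Lemma \ref{lem.2.3} (Sobolev embedding $H_b^N\hookrightarrow L^\infty$ for $N>\tfrac12\dim$) yields $\sum_{|I|\le \frac N2+1}|\tilde D^I v|\le C M_3^N$. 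For the $\partial_a v$ (equivalently $\partial_\rho v$, up to the bookkeeping $\partial_a=\rho^2\partial_\rho$ or the analog) estimate with the extra $\rho_1^{-\frac12}$, the point is exactly as in Lemma \ref{lem.6.3}: reading off the coefficient of $|\partial_\rho v|^2$ in $\langle\mathcal{F}_{\tilde m}(T_3,v),\nabla T_3'\rangle_{\tilde m}$ from Lemma \ref{lem.5.5} one sees it degenerates like $\rho^{1}$ near $S_1$ relative to the $|\partial_\tau v|^2$ and angular terms which appear with weight $\rho^{\delta'-\frac12}$, so a factor $\rho_1^{-1/2}$ must be paid — I would make this precise by the same normalization argument used for $\widetilde{\Omega}_2$.

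Third, for the improved-decay statements on $\tilde h_{\rho\rho}$ I would integrate the harmonic-gauge identity from Lemma \ref{lem.4}: in $\Omega_3$ with $D=\partial_\tau$ we have $\partial_\tau(2\tilde h_{\mu\rho}-\theta_\mu\operatorname{tr}_m\tilde h)=\rho_1\tilde\partial\tilde h+(\text{nonlinear})$, hence $D\tilde h_{\rho\rho}=\rho_1\tilde\partial\tilde h+(\text{nonlinear})$, and integrating in $\tau$ from the incoming boundary $\Sigma_2$ (where $\sum_{|I|\le\frac N2+1}|\tilde D^I\tilde h_{\rho\rho}|_{\Sigma_2}|\le C_3\rho_1^\delta\epsilon$ by \eqref{eq.30}) picks up a gain of $\rho_1^{1/2}$ relative to $M_3^N(\mathfrak t;\tilde h,\tilde m)$, after using the $\partial_\rho$-estimate of the previous step to control $\tilde\partial\tilde h$. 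Commuting $\tilde D^I$ through and using the first two bounds to close the nonlinear terms gives the pointwise version, and the same computation carried out under $\int b^{\,n-1-2\delta}$ — here rather $\int$ over $\Xi_3^{\mathfrak t}$ in $\rho$ with the appropriate power — together with a Hardy/Cauchy–Schwarz step gives the $H^0$ version; this is essentially the double integral estimate at the end of the proof of Lemma \ref{lem.6.3}, transcribed to the $(\tau,\rho)$ variables.

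The main obstacle, as in $\widetilde{\Omega}_2$, is the marginal power counting near $S_1^+$ for $n=4$: the dangerous term is the $\rho^{\frac{n-5}{2}}\tilde h_{\rho\rho}(\partial_\tau v)^2$-type contribution to $\langle\mathcal{F}_{\tilde g}-\mathcal{F}_{\tilde m}\rangle$ and the analogous $\rho^{\frac{n-7}{2}}$ terms in $\mathcal{Q}_{\tilde g}-\mathcal{Q}_{\tilde m}$ from Lemma \ref{lem.5.6}; one has to verify that multiplying by the $\rho_1^\delta$ gained from $\tilde h_{\rho\rho}$ (and, for the $\mathcal Q$-terms, noting they are weighted against the large coefficient $\rho^{\delta'-3/2}$ in the good $\mathcal{Q}_{\tilde m}$ via the relation $\frac12-\delta'<1-2\delta$) keeps them strictly subordinate. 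This is exactly the role of Assumption (A3) and is why the hypothesis is stated in terms of both $\|\tilde h\|_{L^\infty}$ and $\|\rho_1^{-\delta}\tilde h_{\rho\rho}\|_{L^\infty}$; once that bookkeeping is done the rest is a routine transcription of the $\widetilde{\Omega}_2$ argument, so I would present the proof as "identical to that of Lemma \ref{lem.6.3}, using Lemmas \ref{lem.5.5}, \ref{lem.5.6}, \ref{lem.4}, \ref{lem.2.3} and the incoming estimates \eqref{eq.30} in place of the corresponding ingredients."
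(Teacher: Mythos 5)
Your proposal matches the paper's own argument, which is itself presented as a near-verbatim transcription of the proof of Lemma \ref{lem.6.3}: the equivalence of the two quadratic forms and volume forms comes from Lemmas \ref{lem.5.5}–\ref{lem.5.6}, the Sobolev embedding (now without a $\rho_0^\delta$ weight since $\rho_0=\rho_2=1$ in $\Omega_3$) gives the pointwise bounds, and the extra $\rho_1^{1/2}$ decay of $\tilde h_{\rho\rho}$ comes from integrating the harmonic-gauge identity of Lemma \ref{lem.4} with $D=\partial_\tau$ along the slice $\Xi_3^{\mathfrak t}$ in the coordinates $(\mathfrak t,\tau,\theta)$, starting from $\Sigma_2$ where (\ref{eq.30}) supplies the incoming control. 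You have correctly identified all the ingredients the paper invokes and reproduced the same power-counting, including the role of $\delta'$ (assumption (A3)) in keeping the $\rho^{\frac{n-5}{2}}\tilde h_{\rho\rho}$ perturbation subordinate near $S_1^+$.
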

\begin{proof}
The proof of equivalence between $M_3^N(\mathfrak{t};v,\tilde{m})$ and $M_3^N(\mathfrak{t};v,\tilde{g})$ is the same as before, which is directly from the equivalence of quadratic forms $\langle \mathcal{F}_{\tilde{g}}(T_3,v),\nabla T_3'\rangle_{\tilde{m}} $, $\langle \mathcal{F}_{\tilde{g}}(T_3,v),\nabla T_3'\rangle_{\tilde{g}}$ and equivalence of volume forms $\mathrm{d}vol_{\tilde{m}}$, $\mathrm{d}vol_{\tilde{g}}$. And $M_3^N(\mathfrak{t};\tilde{h},\tilde{m})$ gives the Sobolev norms:
$$
\sum_{|I|\leq N+1}\|\tilde{D}^Iv\|_{H^{N+1-|I|}(\Xi_3^{\mathfrak{t}})}\leq C''M_3^N(\mathfrak{t};\tilde{h},\tilde{m})
$$
for some $C''>0$. Changing coordinates from $(\rho,\tau,\theta)$ to $(\mathfrak{t}, \tau, \theta)$, we can prove the rest inequalities similar as in $\widetilde{\Omega}_2$.  
\end{proof}

\begin{proposition}\label{prop.est.3}
For $n\geq 4$, there exists $\epsilon_3''>0$ such that if 
$\epsilon< \epsilon_3''$, then the equations (\ref{eq.12}) have a unique 
solution up to $\Sigma_4$ such that
\begin{equation}\label{eq.6.3}
M_3^N(\mathfrak{t};\tilde{h},\tilde{g})+L_3^N(\mathfrak{t};\tilde{h},\tilde{g})
\leq C\epsilon |\mathfrak{t}|^{\delta-\frac{1}{2}},
\quad\forall\ \mathfrak{t}\in[-1,0],
\end{equation}
where $C>0$ is a constant depending on $\delta, \delta'$ but independent of $\epsilon_3''$ if it is small enough. 
\end{proposition}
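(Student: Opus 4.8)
\textbf{Proof plan for Proposition \ref{prop.est.3}.}
The plan is to mimic the argument for Proposition \ref{prop.est.2} in $\widetilde{\Omega}_2$, now running the energy estimate in the foliation $\{\Xi_3^{\mathfrak{t}}\}_{\mathfrak{t}\in[-1,0]}$ of $\widetilde{\Omega}'_3$ by the time-like function $T'_3$, with the compact piece $\widetilde{\Omega}''_3$ handled by classical local theory. Concretely, I would first use the estimates already obtained in $\widetilde{\Omega}_1\cup\widetilde{\Omega}_2$ together with finite-speed-of-propagation and the standard energy method on the compact domain $\widetilde{\Omega}''_3$ to extend the solution a bit past $\Sigma_3=\{T'_3=-1\}$, recording the initial bounds \eqref{eq.30}: $M_3^N(-1;\tilde h,\tilde m)<C_3\epsilon$, $L_3^N(-1;\tilde h,\tilde m)<C_3\epsilon$, $\tilde L_3^N(-1;\tilde h,\tilde m)=0$, and the boundary control $\tilde L_3^N(\mathfrak{t};\tilde h,\tilde g)\le C_3|\mathfrak{t}|^{\delta-\frac12}\epsilon$ on $\Sigma_2$ coming from $L_2^N$ via the equivalence of the quadratic forms $\langle\mathcal{F}_{\tilde g}(T_3,\cdot),\nabla T_2\rangle_{\tilde g}$ and $\langle\mathcal{F}_{\tilde g}(T_2,\cdot),\nabla T_2\rangle_{\tilde g}$ and of $T_2',T_3'$ on $\Sigma_2\cap\widetilde{\Omega}'_3$. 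A continuity/bootstrap argument then closes the estimate: assume $M_3^N(\mathfrak{t};\tilde h,\tilde g)<|\mathfrak{t}|^{\delta-\frac12}$ for $\mathfrak{t}<\mathfrak{t}_0$, so that Lemma \ref{lem.6.4} applies and $\tilde h$ and its derivatives are small, in particular $\tilde h_{\rho\rho}=O(\rho_1^{\delta})$.

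Next I would apply the divergence identity from Lemma \ref{lem.3} on $\Omega_3^{\mathfrak{t}}$, whose boundary consists of $\Sigma_2$, $\Sigma_3$, $\Sigma_4$ and $\Xi_3^{\mathfrak{t}}$; the fluxes on $\Sigma_3$ and $\Sigma_4$ have the right sign (they give $L_3^N$ and a harmless term), the flux on $\Sigma_2$ is controlled by the already-estimated $\tilde L_3^N$, and the flux on $\Xi_3^{\mathfrak{t}}$ is $M_3^N(\mathfrak{t})^2$. The bulk term $\int_{\Xi_3^{\mathfrak{t}}}\sum_{|I|\le N}\mathrm{div}_{\tilde g}(\mathcal{F}_{\tilde g}(T_3,\tilde D^I\tilde h))\,d\mu^{T'_3}_{\tilde g}$ is expanded using \eqref{eq.div}: the $\mathcal{Q}_{\tilde g}(T_3,\cdot)$ term is handled by Lemma \ref{lem.5.5} and Lemma \ref{lem.5.6}, the source term $\langle\nabla T_3,\nabla\tilde D^I\tilde h\rangle_{\tilde g}(\Box_{\tilde g}+\gamma_0)\tilde D^I\tilde h$ is controlled by inserting the commuted equation from Lemma \ref{lem.6} and Lemma \ref{lem.4.8}. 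The crucial point, exactly as in $\widetilde{\Omega}_2$, is that the extra $\rho_1^{1/2}$ decay of $\tilde h_{\rho\rho}$ (and of $\tilde D^I\tilde h_{\rho\rho}$) forces every $\tilde g$-vs-$\tilde m$ perturbation in $\mathcal{Q}$, $\mathcal{F}$, $\Box$ and in $\tilde F$ to be a lower-order term in $\rho_1$, so that the borderline coefficient is $\rho^{-1}(1-2\delta)=a^{-1}(1-2\delta)$ determined by the linear operator $\Box_{\tilde m}$. This yields $\partial_{\mathfrak{t}}M_3^N\le(a^{-1}(\tfrac12-\delta)+C'a^{-1/2}+C'a^{\delta-\delta'-1})M_3^N$, whence $\partial_{\mathfrak{t}}(a^{1/2-\delta}M_3^N)$ is integrable since $\frac12-\delta'<1-2\delta$ and $\delta-\delta'-1>-1$, giving $M_3^N(\mathfrak{t};\tilde h,\tilde g)\le C''|\mathfrak{t}|^{\delta-\frac12}M_3^N(-1;\tilde h,\tilde g)$; the estimate for $L_3^N$ follows by integrating its (nonnegative) $\mathfrak{t}$-derivative.

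Finally, the closing step is the same continuity argument as in Proposition \ref{prop.est.2}: the bounds just obtained, combined with Lemma \ref{lem.6.4} and the representation $\tilde D^I\tilde h(\mathfrak{t})=\tilde D^I\tilde h(-1)+\int_{-1}^{\mathfrak{t}}(\cdots)$, show that $\sum_{|I|\le N/2+1}\|\tilde D^I\tilde h\|_{L^\infty(\Xi_3^{\mathfrak{t}})}$ and $\|\rho_1^{-\delta}\tilde h_{\rho\rho}\|_{L^\infty(\Xi_3^{\mathfrak{t}})}$ stay below the thresholds $\epsilon_3,\epsilon'_3$ of Lemma \ref{lem.6.4} for $\epsilon_3''$ small enough, uniformly in $\mathfrak{t}$; hence the solution extends past every $\mathfrak{t}_0<0$ with the same constants and reaches $S_1^+$, proving \eqref{eq.6.3}. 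Uniqueness is obtained by the standard difference estimate, as in Proposition \ref{prop.est.1}. I expect the main obstacle to be the careful bookkeeping in the bulk term: verifying that, after commuting $N$ b-vector fields through the equation and using the structure of $\tilde F_{\mu\nu}$ and of the metric perturbations near $S_1^+$, every dangerous contribution either carries a compensating power of $\rho_1$ from $\tilde h_{\rho\rho}$ or is genuinely quadratic and small, so that the linear borderline coefficient $a^{-1}(1-2\delta)$ is not spoiled; the condition $\frac12-\delta'<1-2\delta$ and $\alpha>1$ in Assumption (A3) are exactly what make the resulting ODE inequality integrable.
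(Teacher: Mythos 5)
The overall architecture of your plan is right and matches the paper's: propagate the estimates from $\widetilde{\Omega}_1\cup\widetilde{\Omega}_2$ and the compact piece $\widetilde{\Omega}''_3$ to get the initial bounds \eqref{eq.30}, bootstrap on $M_3^N(\mathfrak{t};\tilde h,\tilde g)<|\mathfrak{t}|^{\delta-\frac12}$ so Lemma \ref{lem.6.4} gives the extra $\rho_1^{1/2}$-decay of $\tilde h_{\rho\rho}$, run the divergence identity on $\Omega_3^{\mathfrak{t}}$ with boundary $\Sigma_2\cup\Sigma_3\cup\Sigma_4\cup\Xi_3^{\mathfrak{t}}$, and close by the continuation argument. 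However, the concrete ODE inequality you write is wrong, and the error is not cosmetic.

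You assert the borderline bulk coefficient is $a^{-1}(1-2\delta)$, i.e.\ you carry over the $\widetilde{\Omega}_2$ computation verbatim. But $M_2^N$ carries the weight $e^{-2\delta\log\rho_0}$, and it is the cross term $-2\delta\langle\mathcal{F}_{\tilde g}(T_2,\cdot),\nabla\log b\rangle_{\tilde g}$ in \eqref{eq.div} together with $\mathcal{Q}_{\tilde m}(T_2,\cdot)$ that produces $a^{-1}(1-2\delta)$. The norm $M_3^N$ on $\Xi_3^{\mathfrak{t}}$ has \emph{no} such weight (the $\rho_0^{-\delta}$ prefactor is absent from its definition). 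Consequently the borderline coefficient comes from $\mathcal{Q}_{\tilde m}(T_3,\cdot)$ alone, and Lemma \ref{lem.5.5} gives $(\tfrac12-\delta')\,|\mathfrak{t}|^{-1}$ for $(M_3^N)^2$. Since $\tfrac12-\delta'<1-2\delta$ by Assumption (A3), the homogeneous Gronwall bound is only $M_3^N\lesssim|\mathfrak{t}|^{\frac{\delta'}{2}-\frac14}\epsilon$, which is strictly weaker than the claimed $|\mathfrak{t}|^{\delta-\frac12}\epsilon$ as $\mathfrak{t}\to 0^-$. Your argument as written does not produce \eqref{eq.6.3}.

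The step you are missing is that the $\Sigma_2$-flux is not merely ``controlled'' and then discarded on the good side: it enters the energy identity as an inhomogeneous forcing term $\partial_{\mathfrak{t}}(\tilde L_3^N(\mathfrak{t}))^2\lesssim C_3^2|\mathfrak{t}|^{2\delta-2}\epsilon^2$, obtained by transferring $L_2^N$ from Proposition \ref{prop.est.2} across $\Sigma_2$ (using the equivalence of the $T_2$- and $T_3$-quadratic forms and of $T_2',T_3'$ there). After applying the integrating factor $|\mathfrak{t}|^{\frac12-\delta'}e^{-\int 4C'|s|^{\delta-\delta'-1}ds}$, it is precisely this forcing term that contributes $|\mathfrak{t}|^{2\delta-1}$ to $(M_3^N)^2$; the initial-data contribution only gives $|\mathfrak{t}|^{\delta'-\frac12}$, which is less singular because $\delta'>2\delta-\frac12$. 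In other words, in $\widetilde{\Omega}_3$ the rate $|\mathfrak{t}|^{\delta-\frac12}$ is not regenerated by the linear bulk term (as it is in $\widetilde{\Omega}_2$ via the weight) but is \emph{imported} from $\widetilde{\Omega}_2$ through the $\Sigma_2$-flux. You need to keep $\partial_{\mathfrak{t}}(\tilde L_3^N)^2$ in the Gronwall inequality, use the exponent $(\tfrac12-\delta')|\mathfrak{t}|^{-1}$ from Lemma \ref{lem.5.5}, and verify that $2\delta-\delta'-\tfrac32>-2$ (equivalently $2\delta-\delta'>-\tfrac12$, automatic) so the forced ODE integrates to the stated bound; the constraint $\tfrac12-\delta'<1-2\delta$ then ensures the forcing term dominates and the constant $C$ depends only on $\delta,\delta'$. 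Once the ODE inequality is corrected in this way, the rest of your plan (the continuation via Lemma \ref{lem.6.4} and the estimate for $L_3^N$ by integrating the bulk inequality) goes through.
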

\begin{proof}
 We first can choose $\epsilon_3''$ such that we can solve the equation in 
$\widetilde{\Omega}_1\cup \widetilde{\Omega}_2\cup \widetilde{\Omega}''_3$ and (\ref{eq.30}) 
holds. Choose $\epsilon_3''$ even smaller
if necessary such that we can extend the solution up to $\mathfrak{t}<\mathfrak{t}_0$ for some 
$\mathfrak{t}_0\in(-1,0)$ with Lemma \ref{lem.6.4}  satisfied and
$$
M_3^N(\mathfrak{t};\tilde{h},\tilde{g})<|\mathfrak{t}|^{\delta-\frac{1}{2}}.
$$
Then by Lemma \ref{lem.3},
$$
\begin{aligned}
&(M_3^N(\mathfrak{t};\tilde{h},\tilde{g}))^2 +(L_3^N(\mathfrak{t};\tilde{h},\tilde{g}))^2
-(L_3^N(-1;\tilde{h},\tilde{g}))^2-(M_3^N(-1;\tilde{h},\tilde{g}))^2-
(\tilde{L}_3^N(\mathfrak{t};\tilde{h},\tilde{g}))^2
\\
=& \int_{\Omega_3^t} \sum_{|I|\leq N}\mathrm{div}_{\tilde{g}}
(\mathcal{F}_{\tilde{g}}(T_3,\tilde{D}^I\tilde{h}))dvol_{\tilde{g}}.
\end{aligned}.
$$
Here the divergence term can be estimated by Lemma \ref{lem.5.5}, \ref{lem.5.6} as follows:
\begin{equation*}
\begin{aligned}
 &\ \int_{\Xi_3^{\mathfrak{t}}} \sum_{|I|\leq N}\mathrm{div}_{\tilde{g}} 
(\mathcal{F}_{\tilde{g}}(T_3,\tilde{D}^I\tilde{h})) dvol_{\tilde{g}}^{T_3'}
\\
=&\ \int_{\Xi_3^{\mathfrak{t}}} \sum_{|I|\leq N}  
\big(\mathcal{Q}_{\tilde{g}}(T_3,\tilde{D}^I\tilde{h})
 +\langle \nabla T_3, \nabla \tilde{D}^I\tilde{h} \rangle_{\tilde{g}}
(\Box_{\tilde{g}}+\gamma_0)\tilde{D}^I\tilde{h}\big) dvol_{\tilde{g}}^{T_3'}
\\
\leq &\
\big( (\tfrac{1}{2}-\delta')|\mathfrak{t}|^{-1}+2C'|\mathfrak{t}|^{-\frac{1}{2}}+2C'|\mathfrak{t}|^{\frac{n-5}{2}-\delta'}M_3^N(\mathfrak{t};\tilde{h},\tilde{g})\big)
 (M_3^N(\mathfrak{t};\tilde{h},\tilde{g}))^2
\\
\leq&\ \big( (\tfrac{1}{2}-\delta')|\mathfrak{t}|^{-1}+4 C'|\mathfrak{t}|^{\delta-\delta'-1}\big)
(M_3^N(\mathfrak{t};\tilde{h},\tilde{g}))^2.
\end{aligned}
\end{equation*}
Hence
$$
\begin{aligned}
&\partial_{\mathfrak{t}}(M_3^N(\mathfrak{t};\tilde{h},\tilde{g}))^2
\leq \partial{\mathfrak{t}}(\tilde{L}_3^N(\mathfrak{t};\tilde{h},\tilde{g}))^2
+\int_{\Xi_3^{\mathfrak{t}}} \sum_{|I|\leq N}\mathrm{div}_{\tilde{g}} 
(\mathcal{F}_{\tilde{g}}(T_3,\tilde{D}^I\tilde{h})) dvol_{\tilde{g}}^{T_3'}
\\
&\quad\quad\quad\quad\quad\quad\quad
\leq C^2_3|\mathfrak{t}|^{2\delta-2}\epsilon^2 + 
\big( (\tfrac{1}{2}-\delta')|\mathfrak{t}|^{-1}+4 C'|\mathfrak{t}|^{\delta-\delta'-1}\big)
(M_3^N(\mathfrak{t};\tilde{h},\tilde{g}))^2,
\\
\Longrightarrow \quad &
\partial_{\mathfrak{t}}\big(e^{-\frac{4C'}{\delta-2\delta'+\frac{1}{2}}
|\mathfrak{t}|^{\delta-2\delta'+\frac{1}{2}}}
|\mathfrak{t}|^{\frac{1}{2}-\delta'}(M_3^N(\mathfrak{t};\tilde{h},\tilde{g}))^2\big)
\leq C^2_3\epsilon^2 e^{-\frac{4C'}{\delta-\delta'}|\mathfrak{t}|^{\delta-\delta'}}
|\mathfrak{t}|^{2\delta-\delta'-\frac{3}{2}},
\\
\Longrightarrow \quad &
(M_3^N(\mathfrak{t};\tilde{h},\tilde{g}))^2 \leq 
e^{\frac{4C'}{\delta-2\delta'+\frac{1}{2}}|\mathfrak{t}|^{\delta-2\delta'+\frac{1}{2}}} 
\left(
\tfrac{C^2_3\epsilon^2}{-2\delta+\delta'+\frac{1}{2}} |\mathfrak{t}|^{2\delta-1}
+|\mathfrak{t}|^{\delta'-\frac{1}{2}}
(M_3^N(-1;\tilde{h},\tilde{g}))^2\right)
\\
&\quad\quad\quad\quad\quad\quad
\leq \left(C''\epsilon\right)^2 |\mathfrak{t}|^{2\delta-1}.
\end{aligned}
$$
Here $\delta'<\delta<\frac{1}{2}$ and
$C''>0$ depends on $\delta,\delta'$ only if $\epsilon_3''$ small enough. 
Moreover, 
$$
\begin{gathered}
\partial_{\mathfrak{t}}((L_3^N(\mathfrak{t};\tilde{h},\tilde{g}))^2) 
\leq \partial{\mathfrak{t}}(\tilde{L}_3^N(\mathfrak{t};\tilde{h},\tilde{g}))^2
+ \big( (\tfrac{1}{2}-\delta')|\mathfrak{t}|^{-1}+4 C'|\mathfrak{t}|^{\delta-\delta'-1}\big)
(M_3^N(\mathfrak{t};\tilde{h},\tilde{g}))^2
\\
\Longrightarrow\quad  
L_3^N(\mathfrak{t};\tilde{h},\tilde{g}) \leq C'''\epsilon |\mathfrak{t}|^{\delta-\frac{1}{2}}. 
\end{gathered}
$$
where $C'''>0$  is independent of $\epsilon_3''$ if it is small enough. We have proved (\ref{eq.6.3}) for all $\mathfrak{t}\in [-1,\mathfrak{t}_0)$. Notice that by Lemma \ref{lem.6.4}, 
$$
\begin{gathered}
\|\tilde{h}\|_{L^{\infty}(\Xi_3^{\mathfrak{t}})} \leq \|\tilde{h}\|_{L^{\infty}(\Xi_3^{-1})} 
+ C''''\int_{-1}^{\mathfrak{t}} |\mathfrak{t}'|^{-\frac{1}{2}}M_3^N(\mathfrak{t}';\tilde{h},\tilde{g}) d\mathfrak{t}' \leq 
 \|\tilde{h}\|_{L^{\infty}(\Sigma_3^{-1})} + \tfrac{1}{\delta}C''''C''\epsilon, 
 \\
 \|\rho_1^{-\delta}\tilde{h}_{\rho\rho}\|_{L^{\infty}(\Xi_3^{\mathfrak{t}})} 
 \leq  \|\rho_1^{-\delta}\tilde{h}_{\rho\rho}\|_{L^{\infty}(\Sigma_2)} + C''''|\mathfrak{t}|^{\frac{1}{2}-\delta}M_3^N(\mathfrak{t};\tilde{h},\tilde{g}) \leq  \|\rho_1^{-\delta}\tilde{h}_{\rho\rho}\|_{L^{\infty}(\Sigma_2)}+C''''C''\epsilon. 
 \end{gathered}
$$
Choose $\epsilon_3''$ even smaller if necessary such that the above two inequalities guarantee that Lemma 
\ref{lem.6.4} holds. Then when we extend the solution up to and a bit over $\mathfrak{t}_0$, all the  above estimates are still true with the same constants. Hence the extension does not stop until it arrives $S_1^{+}$. 
\end{proof}

\begin{corollary}\label{cor.6.3}
 For $n\geq 4$ suppose $\epsilon< \epsilon_3''$ with $\epsilon_3''$ chosen in Proposition \ref{prop.est.3}.
 Then $\tilde{h}$ is $C^{\delta}$ up to $\widetilde{\Omega}_3\cap S_1^+$ and
$$
\|\tilde{h}|_{S_1^+\cap \widetilde{\Omega}_3}\|_{H^N(S_1^+\cap \widetilde{\Omega}_3)}\leq C\epsilon
$$
for some constant $C>0$ depending on $\delta$ but independent of $\epsilon_3''$ 
if it is small enough. 
\end{corollary}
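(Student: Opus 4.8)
The argument is the exact analogue of the proof of Corollary~\ref{cor.6.2}, with the domain $\widetilde{\Omega}_3'$ and the slicing function $T_3'$ playing the roles of $\widetilde{\Omega}_2$ and $T_2'$. It splits into the pointwise $C^{0,\delta}$ statement and the tangential $H^N$ bound on $S_1^+\cap\widetilde{\Omega}_3$. The plan is first to note that on $\widetilde{\Omega}_3'$, in the coordinates $(\tau,\rho,\theta)$ with $\tilde\rho=\rho_1=\rho$, one has $T_3'=-\rho(2\tau_0-\tau)$ with $2\tau_0-\tau$ bounded above and below by positive constants, so $|T_3'|\simeq\rho$. Combining the transverse--derivative bound of Lemma~\ref{lem.6.4} (valid in the coordinates adapted to the slicing) with the decay $M_3^N(\mathfrak t;\tilde h,\tilde g)\le C\epsilon|\mathfrak t|^{\delta-\frac12}$ of Proposition~\ref{prop.est.3} yields $|\partial_\rho\tilde h|\le C\epsilon\,\rho^{\delta-1}$ near $S_1^+\cap\widetilde{\Omega}_3$; integrating in $\rho$ from $\rho=0$ shows that $\tilde h$ extends to a $C^{0,\delta}$ function up to $S_1^+\cap\widetilde{\Omega}_3$, so the radiation field $\tilde h|_{S_1^+\cap\widetilde{\Omega}_3}$ is well defined and provides the leading term of $h$ along the bicharacteristics.

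For the $H^N$ bound, since $\partial_\tau$ and $\slashpar_\theta$ are tangent to $S_1^+\cap\widetilde{\Omega}_3$ (a compact manifold with artificial boundary, not meeting any corner of $X$), define on each slice the purely $L^2$ quantity
\[
Q_3^N(\mathfrak t;v,\tilde g)=\Big(\int_{\Xi_3^{\mathfrak t}}\sum_{k+|I|\le N}\big|\partial_\tau^k\slashpar_\theta^I v\big|^2\,d\mu^{T_3'}_{\tilde g}\Big)^{\frac12},
\]
so that $Q_3^N(0;\tilde h,\tilde g)$ is equivalent to $\|\tilde h|_{S_1^+\cap\widetilde{\Omega}_3}\|_{H^N(S_1^+\cap\widetilde{\Omega}_3)}$. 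Applying $\partial_\tau^k\slashpar_\theta^I$ to the conformal equation~(\ref{eq.12}), using the structural formula of Lemma~\ref{lem.6} for $(\Box_{\tilde g}+\gamma)\tilde D^I\tilde h_{\mu\nu}$ with $\tilde D$ restricted to the tangential fields, together with Lemma~\ref{lem.3} and the coefficient formulas of Lemmas~\ref{lem.5.5} and~\ref{lem.5.6}, one derives a transport inequality
\[
\partial_{\mathfrak t}\big(Q_3^N(\mathfrak t;\tilde h,\tilde g)\big)^2\le C|\mathfrak t|^{-\frac12}M_3^N(\mathfrak t;\tilde h,\tilde g)\,Q_3^N(\mathfrak t;\tilde h,\tilde g)\big(C'+C''Q_3^N(\mathfrak t;\tilde h,\tilde g)\big),
\]
where the $|\mathfrak t|^{-\frac12}$ is the Jacobian loss of the change of variables $\rho\leftrightarrow\mathfrak t$ and the $C''$--term collects the contributions from differentiating the $\tilde g$--dependent density $d\mu^{T_3'}_{\tilde g}$ and from the nonlinearity $\tilde f^I_{\mu\nu}$ in Lemma~\ref{lem.6}.

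By Proposition~\ref{prop.est.3}, $|\mathfrak t|^{-\frac12}M_3^N(\mathfrak t;\tilde h,\tilde g)\le C\epsilon|\mathfrak t|^{\delta-1}$, which is integrable on $[-1,0]$; and (\ref{eq.30}) together with Lemma~\ref{lem.2.3} gives $Q_3^N(-1;\tilde h,\tilde g)\le C_3'\epsilon$. A Gr\"onwall argument, absorbing the (bounded) exponential factor into the constant, then gives $Q_3^N(\mathfrak t;\tilde h,\tilde g)\le C\epsilon$ uniformly for $\mathfrak t\in[-1,0]$, and in particular at $\mathfrak t=0$ this is the asserted estimate.

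As in $\widetilde{\Omega}_2$, the delicate point is that $\Box_{\tilde m}$ contains the term $2\partial_{\rho_1}D$, so even after commuting only tangential fields through~(\ref{eq.12}) a single transverse derivative $\partial_\rho$ reappears; the main work is to check that it always occurs paired with $D$, so that by Lemma~\ref{lem.4}, Lemma~\ref{lem.4.8} and the harmonic gauge it carries the extra $\rho_1$--decay that places it inside the already--controlled energy $M_3^N$, and that the worst self--interaction $\theta_\mu\theta_\nu\big(\tfrac14(D\,\mathrm{tr}_m\tilde h)^2-\tfrac12 D\tilde h^\alpha_\beta D\tilde h^\beta_\alpha\big)$ in $\tilde F_{\mu\nu}$ contributes only terms harmless for the tangential $L^2$ estimate. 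This bookkeeping is precisely the one performed for $\widetilde{\Omega}_2$ in Lemma~\ref{lem.6.3} and Corollary~\ref{cor.6.2}, transplanted to $(\tau,\rho)$--coordinates.
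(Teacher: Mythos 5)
Your proposal follows essentially the same route as the paper's proof: the pointwise $C^{0,\delta}$ statement from $|\partial_\rho\tilde h|\le C\epsilon\rho^{\delta-1}$ (Lemma \ref{lem.6.4} plus Proposition \ref{prop.est.3}), the same tangential $L^2$ quantity $Q_3^N$, the same differential inequality $\partial_{\mathfrak t}(Q_3^N)^2\lesssim|\mathfrak t|^{-1/2}M_3^N\,Q_3^N(C''+C'''Q_3^N)$, and the same Gr\"onwall conclusion, explicitly mirroring Corollary \ref{cor.6.2}. The concluding paragraph about the $\partial_{\rho_1}D$ structure in $\Box_{\tilde m}$ and the null-form part of $\tilde F_{\mu\nu}$ is a correct gloss on why the inequality has the stated form, consistent with but more explicit than the paper's own text.
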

\begin{proof}
By Lemma \ref{lem.6.4} and Proposition \ref{prop.est.3}, for some constant $C'>0$
$$
|\partial_{\rho}\tilde{h}|\leq C'\epsilon\rho^{\delta-1} .
$$
Hence $\tilde{h}$ is $C^{0,\delta}$ up to $\widetilde{\Omega}_3\cap S_1^{+}$ and 
the radiations field $\tilde{h}|_{\widetilde{\Omega}_3\cap S_1}$ is well defined.  
Moreover, define
$$
Q_3^N(\mathfrak{t};v,\tilde{g}) = \left(\int_{\Xi_3^{\mathfrak{t}}} 
\sum_{k+|I|\leq N} |\partial_{\tau}^k\slashpar_{\theta}^I v|^2 d\mu^{T_3'}_{\tilde{g}} \right)^{\frac{1}{2}}.
$$
Then for some $C'',C''',C''''>0$,
$$
\begin{gathered}
 \partial_{\mathfrak{t}}(Q_3^N(\mathfrak{t};\tilde{h},\tilde{g}))^2 \leq 
2 |\mathfrak{t}|^{-\frac{1}{2}} M_3^N(\mathfrak{t};\tilde{h},\tilde{g}) 
Q_3^N(\mathfrak{t};\tilde{h},\tilde{g})(C''+C'''Q_3^N(\mathfrak{t};\tilde{h},\tilde{g})),
\\
Q^N_3(-1;\tilde{h},\tilde{g}) \leq C''''M^N_3(-1;\tilde{h},\tilde{m}).
\end{gathered}
$$
Similar as the proof of Corollary 2, we can show that
$$
Q_3^N(\mathfrak{t};\tilde{h},\tilde{g})
\leq C\epsilon, \quad \forall \mathfrak{t}\in [-1,0]. 
$$
Finally, $Q_3^N(0;\tilde{h},\tilde{g}) $ gives the norm bound of $\tilde{h}|_{S_1^+\cap\widetilde{\Omega}_3}$.
\end{proof}

\subsection{In $\widetilde{\Omega}_4$.} 
To Solve the equation (\ref{eq.12}) in $\widetilde{\Omega}_4$, we first notice that we can extend the solution a bit over $\Sigma_4$ on a compact region. We show that if $\epsilon>0$ small enough then this extension can be taken all over $\widetilde{\Omega}_4$ and hence we solve the equation globally. 
\begin{figure}[htp]
\centering
\includegraphics[totalheight=1.5in,width=3in]{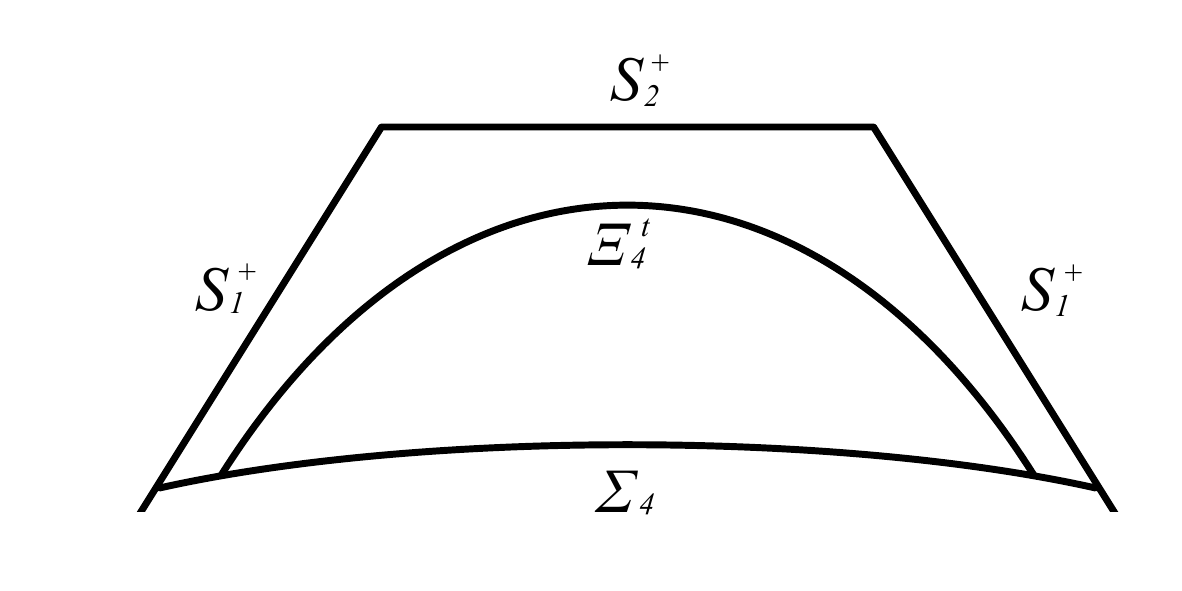}
\caption{$\Omega_4^{\mathfrak{t}}$ is bounded by $\Sigma_4$ and $\Xi_{4}^{\mathfrak{t}}$. }\label{fig.6.4}
\end{figure}

Recall that $\Sigma_4=\{T_3=\tau_0\}$. Let $\mathfrak{t}_0=\inf _{\widetilde{\Omega}_4}\{T_4'\}$. 
For $\mathfrak{t}\in [\mathfrak{t}_0, 0]$, denote by 
$$
\Xi_4^{\mathfrak{t}} =\widetilde{\Omega}_4\cap \{T_4'=\mathfrak{t}\}
$$
the space-like hypersurfaces w.r.t $\tilde{m}$ with defining function $T'_4$ and $\Omega_4^{\mathfrak{t}}$ the domain bounded by $\Sigma_4$ and $\Xi_4^{\mathfrak{t}}$. Then $\Omega_4^0=\widetilde{\Omega}_4$. 

Define the high energy norm on $\Xi_4^{\mathfrak{t}}$ and $\Sigma_4$ by
$$
\begin{aligned}
M_4^N(\mathfrak{t};v,\tilde{g}) =&\ \left( \int_{\Xi_4^{\mathfrak{t}}} \sum_{|I|\leq N}
\langle \mathcal{F}_{\tilde{g}}(T_4,\tilde{\rho}^{-\frac{n-1}{2}}\tilde{D}^I\tilde{\rho}^{\frac{n-1}{2}}v),\nabla T_4'
\rangle_{\tilde{g}} d\mu_{\tilde{g}}^{T_4'}
\right)^{\frac{1}{2}}, 
\\
L_4^N(\mathfrak{t};v,\tilde{g}) =&\ \left( \int_{\Sigma_4 \cap \{\mathfrak{t}_0\leq T_4'\leq \mathfrak{t}\}} \sum_{|I|\leq N}
\langle \mathcal{F}_{\tilde{g}}(T_4,\tilde{\rho}^{-\frac{n-1}{2}}\tilde{D}^I\tilde{\rho}^{\frac{n-1}{2}}v),\nabla T_3
\rangle_{\tilde{g}} d\mu_{\tilde{g}}^{T_3}
\right)^{\frac{1}{2}}. 
\end{aligned}
$$
Here $d\mu_{\tilde{g}}^{T_4'}\wedge dT'_4=dvol_{\tilde{g}}$, 
$d\mu_{\tilde{g}}^{T_3}\wedge dT_3=dvol_{\tilde{g}}$ and $\tilde{D}\in \mathscr{B}_4=\{Z_{\mu\nu}: \mu,\nu=0,...,n\}$. We choose 
$\gamma_0=-\frac{n^2-1}{4}$ in the definition of $\mathcal{F}_{\tilde{g}}(T_4,v)$. 
Notice that, $M_4^N$ is defined in a different way from $M_2^N$ and $M_3^N$. By the proof of Lemma \ref{lem.4.7}
$$\tilde{\rho}^{-\frac{n-1}{2}}\tilde{D}^I\tilde{\rho}^{\frac{n-1}{2}}v=\tilde{D}^Iv+\sum_{|J|<|I|}c_{IJ}\tilde{D}^Jv$$
for some $c_{IJ}\in C^{\infty}(\overline{\Omega_4\cap\Omega_5})$. This modified definition will simplify the energy estimates in Proposition \ref{prop.est.4}. (If we define $M_2^N$ and $M_3^N$ in above way, then we can erase $|\mathfrak{t}|^{-\frac{1}{2}}$ terms in the estimates of divergence terms when proving Proposition \ref{prop.est.2} and \ref{prop.est.3}.)

Notice that 
the two quadratic forms $\langle \mathcal{F}_{\tilde{g}}(T_3,\tilde{D}^Iv),\nabla T_3\rangle_{\tilde{g}}$
and $\langle\mathcal{F}_{\tilde{g}}(T_4,\tilde{D}^Iv),\nabla T_3\rangle_{\tilde{g}}$ are equivalent 
on $\Sigma_4$, i.e.  
$$
(1/C)\langle \mathcal{F}_{\tilde{g}}(T_3,\tilde{D}^Iv),\nabla T_3\rangle_{\tilde{g}} 
\leq \langle\mathcal{F}_{\tilde{g}}(T_4,\tilde{D}^Iv),\nabla T_3\rangle_{\tilde{g}}
\leq C\langle \mathcal{F}_{\tilde{g}}(T_3,\tilde{D}^Iv),\nabla T_3\rangle_{\tilde{g}}
$$
for some $C>0$. Moreover, $T_3'$ and $T_4'$ are also equivalent on $\Sigma_4$, i.e.
$$
c<\frac{\partial T_4'}{\partial T_3'}<\frac{1}{c}
$$
for some $c>0$. Hence the estimate of $L_3^N(T_3';\tilde{h},\tilde{g})$ in $\widetilde{\Omega}_3$ gives the estimate of $L_4^N(T_4';\tilde{h},\tilde{g})$. 
By Proposition \ref{prop.est.1}, \ref{prop.est.2}, \ref{prop.est.3}, for all $\mathfrak{t}\in[\mathfrak{t}_0,0]$
\begin{equation}\label{eq.6.4.1}
\begin{gathered}
M_4^N(\mathfrak{t}_0;\tilde{h},\tilde{g})=0,\quad
L_4^N(\mathfrak{t}_0;\tilde{h},\tilde{g})=0,\quad
\\
L_4^N(\mathfrak{t};\tilde{h},\tilde{g})
< C_4|\mathfrak{t}|^{\delta-\frac{1}{2}}\epsilon,\quad
\sum_{|I|\leq \frac{N}{2}+1}\left|\tilde{D}^I\tilde{h}_{\rho\rho}|_{\Sigma_4}\right|\leq C_4\rho_1^{\delta}\epsilon,
\end{gathered}
\end{equation}
where $C_4>0$ independent of $\epsilon$ if it is small enough. In particular,  in the definition of $T_4'$ we choose $\alpha \in (1, \frac{n-1}{1-2\delta})$ such that Lemma \ref{lem.5.8} holds for $\lambda=1-2\delta$, and in the definition of $T_4$ we choose $\delta'\in (0,\delta)$ which is close to $\delta$ such that $\delta-\delta'<\frac{1}{2}-\delta$ and $\alpha\in (1,\frac{n-1}{1-2\delta'})$.

Similar as Lemma \ref{lem.6.4}, we have

\begin{lemma}\label{lem.6.5}
Assume $n\geq 4$.  
There exists $\epsilon_4>0$ such that if $\|\rho_2^{\frac{n-1}{2}}\tilde{h}\|_{L^{\infty}(\Xi_4^{\mathfrak{t}})}+ 
\|\rho_1^{-\delta}\rho_2^{\frac{n-1}{2}}\tilde{h}_{\rho\rho}\|_{L^{\infty}(\Xi_4^{\mathfrak{t}})}<\epsilon_4$,  then
on $\Xi_4^{\mathfrak{t}}$ for $\tilde{D}\in \mathscr{B}_4$, 
\begin{equation*}
\begin{gathered}
\tfrac{1}{\sqrt{2}}M_4^N(\mathfrak{t};v,\tilde{m}) \leq  M_4^N(\mathfrak{t};v,\tilde{g})
\leq \sqrt{2}M_4^N(\mathfrak{t};v,\tilde{m}),
\\
\sum_{|I|\leq \frac{N}{2}+1} |\tilde{D}^Iv| \leq C M_4^N(\mathfrak{t};v,\tilde{g}),
\\ 
\sum_{|I|\leq \frac{N}{2}} |\tilde{D}^I\partial_a v| \leq C\rho_1^{-\frac{1}{2}}M_4^N(\mathfrak{t};v,\tilde{g}),
\end{gathered}
\end{equation*}
for some constant $C$ independent of $\epsilon_4$ if it is small enough. Moreover, there exists $\epsilon_4'>0$ such that if $\sum_{|I|\leq \frac{N}{2}}\|\rho_2^{\frac{n-1}{2}}\tilde{h}\|_{L^{\infty}(\Xi_4^{\mathfrak{t}})}<\epsilon_4'$ then
$$
\begin{gathered}
\sum_{|I|\leq \frac{N}{2}+1} |\tilde{D}^I\tilde{h}_{\rho\rho}| \leq C' \rho_1^{\frac{1}{2}} (\ln\rho_2) M_4^N(\mathfrak{t};\tilde{h},\tilde{m}),
\\
\sum_{|I|\leq N}\|\rho_2^{\frac{\alpha}{2}}\tilde{D}^I\tilde{h}_{\rho\rho}\|_{L^2(\Xi_4^{\mathfrak{t}},\rho_2^{\alpha}d\mu_{\tilde{m}}^{T_4'})} \leq C' |\mathfrak{t}|^{\frac{1}{2}} \left|\ln|t|\right| M_4^N(\mathfrak{t};\tilde{h},\tilde{m}).
\end{gathered}
$$
\end{lemma}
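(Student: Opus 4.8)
\textbf{Proof plan for Lemma \ref{lem.6.5}.}

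The plan is to follow the template already established in the proofs of Lemma \ref{lem.6.3} and Lemma \ref{lem.6.4}, adapting it to the domain $\widetilde{\Omega}_4$ where the relevant boundary defining functions are $\rho_1=\bar a$ and $\rho_2=\bar b$ (equivalently $\phi$ in the $\Omega_5$ part), and where the modified energy norm $M_4^N$ is built from $\tilde\rho^{-\frac{n-1}{2}}\tilde D^I\tilde\rho^{\frac{n-1}{2}}v$ rather than $\tilde D^I v$. First I would establish the equivalence of the quadratic forms: by Lemma \ref{lem.5.7} the Minkowski quadratic form $\langle\mathcal{F}_{\tilde m}(T_4,v),\nabla T_4'\rangle_{\tilde m}$ is, after restricting to the $\bar a$-ranges considered, a positive definite combination of $|\bar a(2-\bar a)\partial_{\bar a}v-\bar b\partial_{\bar b}v|^2$, $|\bar b\partial_{\bar b}v|^2$, $|\partial_{\bar a}v|^2$ and $|\slashpar_\theta v|^2-\gamma_0 v^2$ with coefficients $C_1,C_2,C_3>0$; the perturbation terms in Lemma \ref{lem.5.9} carry extra powers $\bar a^{\frac{n-5}{2}}\bar b^{\frac{n-1}{2}+\alpha}$ with coefficients $\Theta_1(\tilde h_{\rho\rho})$ and $\bar a\,\Theta_1(\tilde h)$, so the smallness hypotheses $\|\rho_2^{\frac{n-1}{2}}\tilde h\|_\infty,\|\rho_1^{-\delta}\rho_2^{\frac{n-1}{2}}\tilde h_{\rho\rho}\|_\infty<\epsilon_4$ (together with $n\geq 4$, which makes $\bar a^{\frac{n-5}{2}}$ at worst $\bar a^{-1/2}$, absorbed by the $\tilde h_{\rho\rho}\sim\bar a^\delta$ gain) force these perturbations to be bounded by $\epsilon_4$ times the leading form. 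This gives the first displayed inequality. Then $M_4^N(\mathfrak t;v,\tilde m)$ is comparable to $\sum_{|I|\leq N+1}\|\tilde D^Iv\|_{H^{N+1-|I|}(\Xi_4^{\mathfrak t})}$ — here there is no $\rho_0$-weight since $\rho_0=1$ on $\widetilde\Omega_4$ — because near $S_1^+$ the form controls one extra $\partial_{\bar a}$ derivative with the singular weight $\bar a^{-1/2}$ coming from the $|\partial_{\bar a}v|^2$ term with coefficient $C_2\sim\bar a$ against the $C_1\sim\bar a^{\delta'-1/2}$ term; the relation $\tilde\rho^{-\frac{n-1}{2}}\tilde D^I\tilde\rho^{\frac{n-1}{2}}v=\tilde D^Iv+\sum_{|J|<|I|}c_{IJ}\tilde D^Jv$ from Lemma \ref{lem.4.7} shows the modified and unmodified norms are equivalent. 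The second and third displayed inequalities are then Sobolev embedding (Lemma \ref{lem.2.3}) applied on $\Xi_4^{\mathfrak t}$, using $\dim\Xi_4^{\mathfrak t}=n$ and $N>n+6$ so that $\frac N2+1>\frac n2$.

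For the last two inequalities I would reproduce the integration argument of Lemma \ref{lem.6.3}, using the harmonic gauge improvement from Lemma \ref{lem.4}: along the $D=\bar a\partial_{\bar a}-\bar b\partial_{\bar b}$ direction one has $D(2\tilde h_{\mu\rho}-\theta_\mu\mathrm{tr}_m\tilde h)=\rho_1\tilde\partial\tilde h+\rho_1\Theta_1(\tilde h)+\Theta_1(\tilde\rho^{\frac{n-1}2}\tilde h)(\tilde\partial\tilde h)+\Theta_1(\tilde\rho^{\frac{n-1}2}\tilde h)(\tilde h)$, which yields $D\tilde h_{\rho\rho}=\rho_1(\cdots)$. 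Integrating $\bar b\partial_{\bar b}\tilde h_{\rho\rho}=\bar a\partial_{\bar a}\tilde h_{\rho\rho}+\bar a\tilde\partial\tilde h+\cdots$ in $\xi=-\ln\bar b$ from the boundary $\bar b=0$ (where $\tilde h_{\rho\rho}|_{S_1^+}$ may be nonzero — this is the new feature compared to $\widetilde\Omega_2$, where one integrated from $b=0$ with vanishing data; here we integrate from $\Sigma_4$, not from $S_1^+$, or we use that $\tilde h_{\rho\rho}|_{\Sigma_4}=O(\rho_1^\delta)$ from \eqref{eq.6.4.1}) and using $\int_0^{\rho_2}{b'}^{\delta-1}db'\sim\rho_2^\delta$ up to a logarithm, one obtains a pointwise bound $|\tilde D^I\tilde h_{\rho\rho}|\leq C'\rho_1^{1/2}|\ln\rho_2|\,M_4^N(\mathfrak t;\tilde h,\tilde m)$; the logarithm appears because here the $\xi$-integration runs over a half-line of length $|\ln\rho_2|$ rather than being uniformly bounded as in $\widetilde\Omega_2$. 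The $L^2$ bound with weight $\rho_2^\alpha$ follows the same way: square, integrate in $\theta$ and in $\bar b$ against $\bar b^{\alpha}\frac{d\bar b}{\bar b}$, use Cauchy--Schwarz on the $\xi$-integral (this is where the $|\ln|t||$ factor is produced) and the faster $\bar a^{1/2}$-decay of $\tilde h_{\rho\rho}$ to close. Throughout, the commutators $[\Box_{\tilde m},\tilde D]$ from Lemma \ref{lem.4.7} and the structure of $\tilde f^I_{\mu\nu}$ from Lemma \ref{lem.6} are what guarantee the nonlinear terms have at least one extra $\rho_1$ or $\rho_1^{\frac{n-3}{2}}$ factor, so the hypotheses with $\epsilon_4'$ suffice.

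The main obstacle I anticipate is the logarithmic loss in $\rho_2$: unlike the situation in $\widetilde\Omega_2$ and $\widetilde\Omega_3$, the variable $\xi=-\ln\rho_2$ ranges over an unbounded interval as one moves toward temporal infinity $S_2^+$, so the naive integration of the transport identity $\bar b\partial_{\bar b}\tilde h_{\rho\rho}=\bar a\partial_{\bar a}\tilde h_{\rho\rho}+\cdots$ does not close without the $|\ln\rho_2|$ factor appearing in the statement. One has to be careful that this factor is genuinely harmless — that it does not degrade the time-like character of $T_4,T_4'$ chosen in Section \ref{sec.timelikefun} (it does not, since $T_4$ already contains a $\ln\phi$ term and the weight functions absorb it) and that it is compatible with the final weighted b-Sobolev bound on $S_1^+$ claimed in Theorem \ref{thm.est}, where the $\rho_2^{\sigma-\frac{n-1}2}$ weight with $\sigma>0$ leaves room to absorb any polynomial-times-logarithm factor. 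A secondary subtlety is bookkeeping the exact powers of $\bar a$ in Lemma \ref{lem.5.9}: the $\bar a^{\frac{n-7}2}$ appearing in $\mathcal{Q}_{\tilde g}-\mathcal{Q}_{\tilde m}$ is the most singular, and one needs $n\geq 4$ plus the $\bar a^\delta$ gain on $\tilde h_{\rho\rho}$ (so the net power is $\bar a^{\frac{n-7}2+\delta}\cdot\bar a^{\delta}$ on the $\Theta_1(\tilde h_{\rho\rho})(\tilde\partial\tilde h_{\rho\rho})$ term) to ensure the perturbation stays subordinate to the leading $C_1\sim\bar a^{\delta'-1/2}$ coefficient in $\mathcal{Q}_{\tilde m}(T_4,v)$; verifying this inequality of exponents for all $n\geq 4$ and $\delta'$ close to $\delta$ is routine but must be done explicitly.
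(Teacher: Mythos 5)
Your plan follows the same route as the paper's (very terse) proof: reduce to the templates of Lemmas \ref{lem.6.3}--\ref{lem.6.4}, establish equivalence of the quadratic forms via Lemmas \ref{lem.5.7} and \ref{lem.5.9} using the extra $\bar a$-decay of $\tilde h_{\rho\rho}$ to subordinate the perturbation, invoke the relation $\tilde\rho^{-\frac{n-1}{2}}\tilde D^I\tilde\rho^{\frac{n-1}{2}}v=\tilde D^Iv+\sum_{|J|<|I|}c_{IJ}\tilde D^Jv$ to identify the modified energy with the unmodified one, and integrate the harmonic-gauge transport identity $D\tilde h_{\rho\rho}=\rho_1(\cdots)$ from $\Sigma_4$ to obtain the $\rho_1^{1/2}$ gain. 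You correctly anticipate and explain the logarithmic losses: the $\xi=-\ln\bar b$ integration runs over an interval of length $\sim|\ln\rho_2|$, in contrast to $\widetilde\Omega_2$ where $\int_0^{b}b'^{\delta}\frac{db'}{b'}$ was uniformly bounded.

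The one structural point you pass over is precisely the one the paper singles out as new: the Sobolev embedding constant in the second and third displayed inequalities must be uniform in $\mathfrak t$, but the hypersurfaces $\Xi_4^{\mathfrak t}$ degenerate as $\mathfrak t\to 0^-$, with $\bar a$ and $\bar b$ both shrinking toward zero at the two ends. You cannot simply apply Lemma \ref{lem.2.3} on a fixed manifold; the paper obtains uniformity by splitting $\Xi_4^{\mathfrak t}$ into $\{\bar a\geq\sqrt{|\mathfrak t|}\}$, which carries a uniformly compact metric, and $\{\bar a\leq\sqrt{|\mathfrak t|}\}$, which carries a b-metric, and applying the two corresponding embeddings separately. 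Relatedly, your intermediate claim that $M_4^N(\mathfrak t;v,\tilde m)$ is comparable to the \emph{unweighted} $H^{N+1}(\Xi_4^{\mathfrak t})$-norm is imprecise; the paper's controlled quantity is the $L^2(\Xi_4^{\mathfrak t},\rho_2^{\alpha}\,d\mu^{T_4'}_{\tilde m})$-norm, the $\rho_2^{\alpha}$ factor coming from the $\bar b^{\alpha}$ prefactor in $\langle\mathcal F_{\tilde m}(T_4,\cdot),\nabla T_4'\rangle_{\tilde m}$ in Lemma \ref{lem.5.7}, and it is only this weighted combination that reproduces a b-Sobolev norm on the degenerate part of $\Xi_4^{\mathfrak t}$. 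Neither point is a fatal flaw in the plan, but the uniform-in-$\mathfrak t$ embedding is the technically novel content of this lemma and should be made explicit.
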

\begin{proof}
The proof of equivalence between $M_4^N(\mathfrak{t};v,\tilde{m})$ and $M_4^N(\mathfrak{t};v,\tilde{g})$ is the same as before, which is directly from the equivalence of quadratic forms $\langle \mathcal{F}_{\tilde{g}}(T_4,v),\nabla T_4'\rangle_{\tilde{m}} $, $\langle \mathcal{F}_{\tilde{g}}(T_4,v),\nabla T_4'\rangle_{\tilde{g}}$ and equivalence of volume forms $\mathrm{d}vol_{\tilde{m}}$, $\mathrm{d}vol_{\tilde{g}}$. And $ M_4^N(\mathfrak{t};\tilde{h},\tilde{m})$ gives the Sobolev norm:
$$
\sum_{|I|\leq N+1}\|\tilde{D}^I\tilde{h}\|_{L^2(\Xi_4^{\mathfrak{t}},\rho_2^{\alpha}d\mu_{\tilde{m}}^{T_4'})} 
\leq C''M_4^N(\mathfrak{t};\tilde{h},\tilde{m})
$$
for some $C''>0$. The Sobolev embedding inequalities here come from thinking of $\Xi_4^{\mathfrak{t}}$ as a union of two domains, $\{\bar{a}\geq \sqrt{\mathfrak{t}}\}$ a bounded domain with compact metric and  $\{\bar{a}\leq \sqrt{\mathfrak{t}}\}$ a domain in manifold with boundary which carries b-metric. 
Changing coordinates from $(a,b,\theta)$ to $(\mathfrak{t}, b, \theta)$, we can prove the rest inequalities similar as in $\widetilde{\Omega}_2$.  
\end{proof}

\begin{proposition}\label{prop.est.4}
For $n\geq 4$, there exists $\epsilon_4''>0$ such that if 
$\epsilon< \epsilon_4''$, then the equations (\ref{eq.12}) have a unique 
solution all over $\widetilde{\Omega}_4$ such that
\begin{equation}\label{eq.6.4}
M_4^N(\mathfrak{t};\tilde{h},\tilde{g})
\leq C\epsilon |\mathfrak{t}|^{\delta-\frac{1}{2}},
\quad\forall\ \mathfrak{t}\in[\mathfrak{t}_0,0],
\end{equation}
where $C>0$ is a constant depending on $\delta, \delta'$ but independent of $\epsilon_4''$ if it is small enough. 
\end{proposition}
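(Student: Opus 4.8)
The plan is to run a continuity/bootstrap argument in $\widetilde{\Omega}_4$ entirely parallel to the proofs of Proposition \ref{prop.est.2} and \ref{prop.est.3}, but using the sharper weighted time-like function $T_4'$ and the crucial inequality of Lemma \ref{lem.5.8}. First I would note that, since the solution already exists on $\widetilde{\Omega}_1\cup\widetilde{\Omega}_2\cup\widetilde{\Omega}_3$ by Propositions \ref{prop.est.1}--\ref{prop.est.3} and $\Sigma_4$ is space-like with (\ref{eq.6.4.1}) holding on it, a choice of $\epsilon_4''$ small enough lets me extend the solution a bit past $\Sigma_4$ into a region $\{T_4'\le\mathfrak t_1\}$ on which the hypotheses of Lemma \ref{lem.6.5} hold and on which a priori $M_4^N(\mathfrak t;\tilde h,\tilde g)<|\mathfrak t|^{\delta-\frac12}$. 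The goal is then to improve this to $M_4^N(\mathfrak t;\tilde h,\tilde g)\le C\epsilon|\mathfrak t|^{\delta-\frac12}$ with $C$ independent of $\epsilon_4''$, which by the usual extension-never-stops mechanism (as in the final paragraphs of Propositions \ref{prop.est.2}, \ref{prop.est.3}) yields the global solution on $\widetilde{\Omega}_4$.

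The core computation is the divergence identity of Lemma \ref{lem.3} applied on $\Omega_4^{\mathfrak t}$ with weight $T_4''=0$ (no extra weight is needed here because the volume element on $\Xi_4^{\mathfrak t}$ already carries the $\rho_2^\alpha$ factor built into $M_4^N$), giving
\begin{equation*}
(M_4^N(\mathfrak t;\tilde h,\tilde g))^2+(L_4^N(\mathfrak t;\tilde h,\tilde g))^2 = \int_{\Omega_4^{\mathfrak t}}\sum_{|I|\le N}\mathrm{div}_{\tilde g}\big(\mathcal F_{\tilde g}(T_4,\tilde\rho^{-\frac{n-1}{2}}\tilde D^I\tilde\rho^{\frac{n-1}{2}}\tilde h)\big)\,dvol_{\tilde g}.
\end{equation*}
I would expand the divergence via (\ref{eq.div}) into $\mathcal Q_{\tilde g}(T_4,\cdot)$ plus the $\langle\nabla T_4,\nabla\cdot\rangle_{\tilde g}(\Box_{\tilde g}+\gamma_0)\cdot$ term, use Lemma \ref{lem.6} to rewrite $(\Box_{\tilde g}+\gamma)\tilde D^I\tilde h$ as lower-order b-derivatives plus the nonlinear $\tilde f^I_{\mu\nu}$, and then invoke Lemma \ref{lem.5.7}, \ref{lem.5.9} to compare $\mathcal Q_{\tilde g}$ and the boundary quadratic form with their $\tilde m$-counterparts. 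The decisive point is that the extra decay of $\tilde h_{\rho\rho}$ in $\rho_1$ (from Lemma \ref{lem.6.5}, controlled by the harmonic gauge via Lemma \ref{lem.4}) forces every $\tilde g$-minus-$\tilde m$ correction in $\mathcal Q$, $\mathcal F$ and $\Box$ to be lower order, so the leading behaviour is governed by the flat model, and there Lemma \ref{lem.5.8} gives exactly $|T_4'|\mathcal Q_{\tilde m}(T_4,v)\le\lambda\langle\mathcal F_{\tilde m}(T_4,v),\nabla T_4'\rangle_{\tilde m}$ with $\lambda=1-2\delta$. Dividing the divergence integrand by the surface element, this produces a Gr\"onwall inequality of the schematic form
\begin{equation*}
\partial_{\mathfrak t}(M_4^N(\mathfrak t;\tilde h,\tilde g))^2 \le \big((1-2\delta)|\mathfrak t|^{-1}+C'|\mathfrak t|^{\delta-\delta'-1}\big)(M_4^N(\mathfrak t;\tilde h,\tilde g))^2 + \partial_{\mathfrak t}(L_4^N(\mathfrak t;\tilde h,\tilde g))^2,
\end{equation*}
where $\partial_{\mathfrak t}(L_4^N)^2\ge0$ is absorbed from the $\Sigma_4$ contribution bounded by $C_4^2|\mathfrak t|^{2\delta-2}\epsilon^2$ in (\ref{eq.6.4.1}). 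Integrating the factor $e^{-\int(\cdots)}|\mathfrak t|^{-(1-2\delta)}$ against the source $|\mathfrak t|^{2\delta-2}$ gives $(M_4^N)^2\lesssim\epsilon^2|\mathfrak t|^{2\delta-1}$, i.e.\ (\ref{eq.6.4}), with the exponent $\delta'$ close to $\delta$ (so $\delta-\delta'<\frac12-\delta$) ensuring the $C'|\mathfrak t|^{\delta-\delta'-1}$ term stays integrable after multiplying by $|\mathfrak t|^{1-2\delta}$.

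Once (\ref{eq.6.4}) is established on $[\mathfrak t_0,\mathfrak t_1)$, I would close the bootstrap exactly as in Proposition \ref{prop.est.3}: integrating $\partial_a\tilde h$ (controlled by Lemma \ref{lem.6.5} and (\ref{eq.6.4})) along the $T_4'$-foliation shows $\|\rho_2^{\frac{n-1}{2}}\tilde h\|_{L^\infty(\Xi_4^{\mathfrak t})}$ and $\|\rho_1^{-\delta}\rho_2^{\frac{n-1}{2}}\tilde h_{\rho\rho}\|_{L^\infty(\Xi_4^{\mathfrak t})}$ stay below $\epsilon_4,\epsilon_4'$ up to a constant times $\epsilon$, so for $\epsilon_4''$ small the hypotheses of Lemma \ref{lem.6.5} persist; hence the solution extends a bit past $\mathfrak t_1$ with the same constants, and the set of $\mathfrak t$ for which (\ref{eq.6.4}) holds is open, closed and nonempty in $[\mathfrak t_0,0]$, giving the global solution on $\widetilde{\Omega}_4$. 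Uniqueness follows by the same energy argument applied to the difference of two solutions, as in Proposition \ref{prop.est.1}. The main obstacle I anticipate is the bookkeeping in the divergence expansion: one must check that \emph{all} of the nonlinear terms coming from $\tilde f^I_{\mu\nu}$ in Lemma \ref{lem.6}, and in particular the $\tilde h_{\rho\rho}$-quadratic pieces and the many $\Gamma$-difference terms tabulated in Lemma \ref{lem.5.9}, are dominated by $C'|\mathfrak t|^{\delta-\delta'-1}M_4^N(\mathfrak t)^2$ after using the $\rho_1^{\frac12}|\ln\rho_2|$ gain on $\tilde h_{\rho\rho}$ and the smallness of the $L^\infty$ norms — i.e.\ that the quasilinear structure genuinely degenerates to the semilinear model near $S_1^+$; verifying the index arithmetic (powers of $\bar a$ and $\bar b$, and the logarithmic loss in $\rho_2$) is where the delicacy lies, but it is entirely analogous to what was done in $\widetilde{\Omega}_2$ and $\widetilde{\Omega}_3$.
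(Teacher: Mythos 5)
Your proposal matches the paper's proof closely: the same continuity/bootstrap scheme on the $T_4'$-foliation starting from $\Sigma_4$, the same divergence identity applied to $\tilde\rho^{-\frac{n-1}{2}}\tilde D^I\tilde\rho^{\frac{n-1}{2}}\tilde h$, the decisive appeal to Lemma \ref{lem.5.8} with $\lambda=1-2\delta$, and the same closure of the bootstrap via Lemma \ref{lem.6.5} and Sobolev embedding. The one detail to carry through explicitly is the $|\ln|\mathfrak t||$ factor in the Gr\"onwall coefficient forced by the $\rho_1^{\frac12}(\ln\rho_2)$ gain on $\tilde h_{\rho\rho}$ in Lemma \ref{lem.6.5}; you flag the logarithmic loss at the end but omit it from the schematic differential inequality, though it is harmless since $\int|\mathfrak t'|^{\delta-\delta'-1}|\ln|\mathfrak t'||\,d\mathfrak t'$ still converges for $\delta>\delta'$.
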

\begin{proof}
 We first can choose $\epsilon_4''$ such that we can solve the equation in 
$\widetilde{\Omega}_1\cup \widetilde{\Omega}_2\cup \widetilde{\Omega}_3$ and (\ref{eq.6.4.1}) 
holds. Choose $\epsilon_4''$ even smaller
if necessary such that we can extend the solution to $\mathfrak{t}<\mathfrak{t}_1$ for some 
$\mathfrak{t}_1\in(\mathfrak{t}_0,0)$ with Lemma \ref{lem.6.5}  satisfied and
$$
|\mathfrak{t}|^{\frac{1}{2}-\delta}
M_4^N(\mathfrak{t};\tilde{h},\tilde{g})\leq 1.
$$
Then
By Lemma \ref{lem.3}, if denoting $\tilde{h}^I=\tilde{\rho}^{-\frac{n-1}{2}}\tilde{D}^I\tilde{\rho}^{\frac{n-1}{2}}\tilde{h}$, then
$$
\begin{aligned}
(M_4^N(\mathfrak{t};\tilde{h},\tilde{g}))^2 -(L_4^N(\mathfrak{t};\tilde{h},\tilde{g}))^2
=& \int_{\Omega_4^{\mathfrak{t}}} \sum_{|I|\leq N}\mathrm{div}_{\tilde{g}}
(\mathcal{F}_{\tilde{g}}(T_4,\tilde{h}^I))dvol_{\tilde{g}}.
\end{aligned}
$$
Here the divergence term can be estimated by Lemma \ref{lem.5.7}, \ref{lem.5.8}, \ref{lem.5.9} as follows: 
\begin{equation*}
\begin{aligned}
 &\ \int_{\Xi_4^{\mathfrak{t}}} \sum_{|I|\leq N}\mathrm{div}_{\tilde{g}} 
(F(T_4,\tilde{h}^I)) dvol_{\tilde{g}}^{T_4'}
\\
=&\ \int_{\Xi_4^{\mathfrak{t}}} \sum_{|I|\leq N}  
\big(\mathcal{Q}_{\tilde{g}}(T_4,\tilde{h}^I)
 +\langle \nabla T_4, \nabla \tilde{h}^I\rangle_{\tilde{g}}
(\Box_{\tilde{g}}+\gamma_0)\tilde{h}^I\big) dvol_{\tilde{g}}^{T_4'}
\\
\leq &\
[(1-2\delta)|\mathfrak{t}|^{-1}+
2C'|\mathfrak{t}|^{\frac{n-5}{2}-\delta'}|\ln|\mathfrak{t}||M_4^N(\mathfrak{t};\tilde{h},\tilde{g})]
 (M_4^N(\mathfrak{t};\tilde{h},\tilde{g}))^2
\\
\leq&\ [ (1-2\delta)|\mathfrak{t}|^{-1}+2 C'|\mathfrak{t}|^{\delta-\delta'-1}|\ln|\mathfrak{t}||]
(M_4^N(\mathfrak{t};\tilde{h},\tilde{g}))^2.
\end{aligned}
\end{equation*}
Hence
$$
\begin{gathered}
\partial_{\mathfrak{t}}(M_4^N(\mathfrak{t};\tilde{h},\tilde{g}))^2
\leq \partial_{\mathfrak{t}}(L_4^N(\mathfrak{t};\tilde{h},\tilde{g}))^2 + 
[(1-2\delta)|\mathfrak{t}|^{-1}+2 C'|\mathfrak{t}|^{\delta-\delta'-1}|\ln|\mathfrak{t}||] 
(M_3^N(\mathfrak{t};\tilde{h},\tilde{g}))^2
\\
\Longrightarrow \quad 
\partial_{\mathfrak{t}}\big(
f(\mathfrak{t})
|\mathfrak{t}|^{1-2\delta}(M_4^N(\mathfrak{t};\tilde{h},\tilde{g}))^2
\big)
\leq f(\mathfrak{t})
|\mathfrak{t}|^{1-2\delta} \partial_{\mathfrak{t}}(L_4^N(\mathfrak{t};\tilde{h},\tilde{g}))^2
\end{gathered}
$$
where 
$
f(\mathfrak{t})= e^{\int_{t_0}^{\mathfrak{t}}2 C'|\mathfrak{t}'|^{\delta-\delta'-1}|\ln|\mathfrak{t}'|| d\mathfrak{t}'}
$
is positive and uniformly bounded for $\mathfrak{t}\in[\mathfrak{t}_0,0]$. This implies 
$$
M_4^N(\mathfrak{t};\tilde{h},\tilde{g}) 
\leq C|\mathfrak{t}|^{\delta-\frac{1}{2}}\epsilon, \quad \forall \mathfrak{t}\in[\mathfrak{t}_0,\mathfrak{t}_1). 
$$
Here
$C$ depends on $\delta,\delta'$ only if $\epsilon_4''$ small enough.  So we have proved (\ref{eq.6.4}) for $\mathfrak{t}\in[\mathfrak{t}_0,\mathfrak{t}_1)$. 
By Sobolev embedding theorem and Lemma \ref{lem.6.5}, this gives
$$
\begin{gathered}
\sum_{|I|\leq\frac{N}{2}+1}|\rho_2^{\frac{n-1}{2}}\tilde{D}^I\tilde{h}|\leq C'''\rho_2^{\sigma'}\epsilon , 
\quad
 |\rho_2^{\frac{n-1}{2}}\tilde{h}_{\rho\rho}| < C''' \rho_1^{\delta} \rho_2^{\sigma'}(\ln \rho_2)\epsilon ,
\end{gathered}
$$
where $\sigma'=\frac{n-1}{2}-\alpha(\frac{1}{2}-\delta)>0$, and $C'''$ only depends on $\delta, \delta',\alpha $ if $\epsilon_4''$ is small enough. 
Choose $\epsilon_4''$ even smaller if necessary such that 
$M_4^N(\mathfrak{t};\tilde{h},\tilde{g})<C\epsilon |\mathfrak{t}|^{\delta-\frac{1}{2}}$ implies the conditions in Lemma 
\ref{lem.6.5}. Then we can extend the solution up to and a bit over $\Xi_4^{\mathfrak{t}_1}$ such that all the above inequalities still
hold with the same constants.  Hence the extension does not stop until it arrives at $\mathfrak{t}=0$, i.e. we solve the equation (\ref{eq.12}) globally.
\end{proof}

\begin{corollary}\label{cor.6.4}
 For $n\geq 4$ suppose $\epsilon<\epsilon_4''$ with $\epsilon_4''$ chosen in Proposition \ref{prop.est.4}.
 Then $\tilde{h}$ is $C^{\delta}$ up to $\widetilde{\Omega}_4\cap S_1^+$ and for any $\sigma\in(0,\sigma')$ where $\sigma'=\frac{n-1}{2}-\alpha(\frac{1}{2}-\delta)$,
$$
\|\tilde{h}_{S_1^+}\|_{\rho_2^{\sigma-\frac{n-1}{2}}H_b^{N}(\widetilde{\Omega}_4\cap S_1^+)}\leq C\epsilon
$$
for some constant $C>0$ depending on $\alpha,\delta,\delta', \sigma$ only but independent of $\epsilon_4''$ 
if it is small enough. 
\end{corollary}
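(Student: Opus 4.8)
The plan is to transfer the energy estimate of Proposition \ref{prop.est.4}, which controls $M_4^N(\mathfrak{t};\tilde{h},\tilde{g})$ in a weighted $L^2$ norm on the slices $\Xi_4^{\mathfrak{t}}$, down to the limiting surface $\mathfrak{t}=0$, which is precisely $\widetilde{\Omega}_4\cap S_1^+$. First I would note that the $C^\delta$ regularity up to $S_1^+$ follows exactly as in Corollary \ref{cor.6.2} and \ref{cor.6.3}: from Lemma \ref{lem.6.5} and Proposition \ref{prop.est.4} one gets a pointwise bound $|\partial_{\bar a}\tilde h|\le C'\epsilon\, \bar a^{\delta-1}\bar b^{\sigma'}$ (with $\sigma'=\frac{n-1}{2}-\alpha(\frac12-\delta)$), which is integrable in $\bar a$ down to $\bar a=0$ because $\delta>0$; hence $\tilde h$ extends continuously and H\"older-$\delta$ to $\widetilde\Omega_4\cap S_1^+$, and the restriction $\tilde h|_{S_1^+}$ is well defined.

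Next I would set up the tangential high-energy norm on the slices, mirroring the $Q_2^N$, $Q_3^N$ construction: define
\begin{equation*}
Q_4^N(\mathfrak{t};v,\tilde{g})=\left(\int_{\Xi_4^{\mathfrak{t}}}\sum_{k+|I|\le N}|\,\rho_2^{k}(\bar b\partial_{\bar b})^k\slashpar_\theta^I v\,|^2\,\rho_2^{2\sigma-(n-1)}\,d\mu_{\tilde g}^{T_4'}\right)^{\frac12},
\end{equation*}
where only vector fields tangent to $S_1^+$ (i.e. $\bar b\partial_{\bar b}$ and the spherical $Z_{ij}$) are used, and the weight $\rho_2^{2\sigma-(n-1)}$ is chosen with $\sigma\in(0,\sigma')$ so that it is strictly weaker than the natural weight in $M_4^N$. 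Differentiating $Q_4^N(\mathfrak t;\tilde h,\tilde g)^2$ in $\mathfrak t$ and using the divergence identity of Lemma \ref{lem.3} together with the structure of the right-hand side from Lemma \ref{lem.6}, one obtains
\begin{equation*}
\partial_{\mathfrak t}\bigl(Q_4^N(\mathfrak t;\tilde h,\tilde g)\bigr)^2\le 2|\mathfrak t|^{-\frac12}\,M_4^N(\mathfrak t;\tilde h,\tilde g)\,Q_4^N(\mathfrak t;\tilde h,\tilde g)\bigl(C''+C'''Q_4^N(\mathfrak t;\tilde h,\tilde g)\bigr)+C''''|\mathfrak t|^{-\frac12}\bigl(M_4^N(\mathfrak t;\tilde h,\tilde g)\bigr)^2,
\end{equation*}
the key point being that the extra $\rho_2$-margin $\sigma<\sigma'$ absorbs the logarithmic loss $|\ln|\mathfrak t||$ appearing in Lemma \ref{lem.6.5} and in the nonlinear terms of Lemma \ref{lem.5.9}. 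Since $M_4^N(\mathfrak t;\tilde h,\tilde g)\le C\epsilon|\mathfrak t|^{\delta-\frac12}$ by Proposition \ref{prop.est.4}, the factor $|\mathfrak t|^{-\frac12}M_4^N(\mathfrak t;\tilde h,\tilde g)\lesssim \epsilon|\mathfrak t|^{\delta-1}$ is integrable on $[\mathfrak t_0,0]$; a Gr\"onwall argument, exactly as in Corollary \ref{cor.6.2}, then gives $Q_4^N(\mathfrak t;\tilde h,\tilde g)\le C\epsilon$ uniformly for $\mathfrak t\in[\mathfrak t_0,0]$, and $Q_4^N(0;\tilde h,\tilde g)$ is by construction equivalent to $\|\tilde h|_{S_1^+}\|_{\rho_2^{\sigma-\frac{n-1}{2}}H_b^N(\widetilde\Omega_4\cap S_1^+)}$.

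The main obstacle is the interplay between the logarithmic divergence of $\tilde h_{\rho\rho}$ near $S_1^+$ (the $\ln\rho_2$ in Lemma \ref{lem.6.5}) and the borderline weight $\rho_2^{\sigma'}$: one must verify that choosing $\sigma$ strictly below $\sigma'$ genuinely converts $\rho_2^{\sigma'}|\ln\rho_2|$-type bounds into clean $\rho_2^{\sigma}$-bounds with no loss, and that the nonlinear contributions in $\tilde f^I_{\mu\nu}$ from Lemma \ref{lem.6}, after the substitutions from Lemma \ref{lem.5.9}, still close in this weaker norm — in particular the terms carrying $\bar a^{\frac{n-5}{2}}$ and $\bar a^{\frac{n-7}{2}}$ powers must be checked against the $\bar a^{\frac12}$ gain on $\tilde h_{\rho\rho}$ exactly as in the $\widetilde\Omega_2$ case, using $n\ge 4$. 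A secondary point to handle carefully is the compatibility at $\Sigma_4$: the restriction of the already-constructed solution there provides the initial data $Q_4^N(\mathfrak t_0;\tilde h,\tilde g)=0$ (since $\Xi_4^{\mathfrak t_0}$ degenerates) and the flux term $L_4^N$ is controlled by \eqref{eq.6.4.1}, so no new regularity input is needed — the estimate is purely a propagation statement. Once these are in place the stated bound follows, and combining Corollaries \ref{cor.6.2}, \ref{cor.6.3}, \ref{cor.6.4} yields the global norm bound on $\tilde h|_{S_1^+}$ claimed in Theorem \ref{thm.est}.
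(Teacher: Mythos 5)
The overall strategy is correct and mirrors the paper's: first extract the $C^{0,\delta}$ regularity pointwise from Lemma \ref{lem.6.5} and Proposition \ref{prop.est.4}, then define a weaker tangential energy quantity $Q_4^N$ on the slices $\Xi_4^{\mathfrak t}$ and close a Gr\"onwall argument using the $M_4^N(\mathfrak t;\tilde h,\tilde g)\le C\epsilon|\mathfrak t|^{\delta-\frac12}$ bound. That is the paper's proof. However, two specific points in your write-up would not survive closer inspection.

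The weight in your $Q_4^N$ is wrong. You take $\rho_2^{2\sigma-(n-1)}$ (a \emph{negative} power) with an extra factor $\rho_2^k$ on the $k$-th $\bar b\partial_{\bar b}$ derivative, whereas the paper's definition uses the \emph{positive} power $\bar b^{\,n-1-2\sigma+\alpha}$ with no extra $\rho_2^k$'s, and runs over all $\tilde D\in\mathscr B_4=\{Z_{\mu\nu}\}$. The discrepancy is not cosmetic. Near $\bar a=0$ one has $T_4'\approx -2^{\alpha-1}\bar a\bar b^{\alpha}$, so $d\mu^{T_4'}_{\tilde g}\approx c\,\bar b^{-\alpha}\frac{d\bar b}{\bar b}d\theta$ on the limiting slice; with the paper's weight the $\bar b^{-\alpha}$ factor cancels and
$Q_4^N(0;\tilde h,\tilde g)^2\approx c\int \bar b^{\,n-1-2\sigma}\sum_I|\tilde D^I\tilde h|^2\tfrac{d\bar b}{\bar b}\,d\theta$,
which is exactly the $\rho_2^{\sigma-\frac{n-1}{2}}H_b^N(S_1^+\cap\widetilde\Omega_4)$ norm, and the exponent $n-1-2\sigma$ combined with the $|\tilde h|\lesssim\bar b^{\alpha(\delta-\frac12)}$ decay from Proposition \ref{prop.est.4} gives a $\bar b$-exponent $2(\sigma'-\sigma)>0$, hence a convergent integral precisely because $\sigma<\sigma'$. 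With your weight $\bar b^{2\sigma-(n-1)}$ (still carrying the extra $\bar b^{-\alpha}$ from the density), the $\bar b$-integral at $\mathfrak t=0$ would require $\sigma>\frac{n-1}{2}+\alpha(\frac12-\delta)>\sigma'$ to converge, contradicting $\sigma<\sigma'$; the limiting quantity would be infinite and the conclusion unobtainable.

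You also skip the auxiliary quantity $P_4^N$. The divergence identity over $\Omega_4^{\mathfrak t}$ picks up, in addition to the $\Xi_4^{\mathfrak t}$ term, a flux term over $\Sigma_4\cap\{\mathfrak t_0\le T_4'<\mathfrak t\}$ whose sweep grows with $\mathfrak t$; the paper tracks this with $P_4^N$ and feeds it the $L_4^N$ bound from (\ref{eq.6.4.1}) via a coupled system of two differential inequalities. Saying ``the flux term $L_4^N$ is controlled so no new input is needed'' is not enough: the $\Sigma_4$ contribution appears as an inhomogeneous source in the $Q_4^N$ ODE and must be absorbed by the companion inequality $\partial_{\mathfrak t}(P_4^N)^2\le 2C'' L_4^N P_4^N$ with vanishing data $P_4^N(\mathfrak t_0)=0$ before the Gr\"onwall step closes. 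Once the weight is corrected and the $P_4^N$ coupling is added, the paper's exponent $|\mathfrak t|^{\frac{n-1-2\sigma}{2\alpha}-1}$ (sitting in $(\delta-\frac32,-\frac12)$ thanks to $\frac12-\delta<\frac{n-1-2\sigma}{2\alpha}<\frac12$) replaces your $|\mathfrak t|^{-\frac12}$, and the Gr\"onwall argument goes through as you intend.
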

\begin{proof}
By Lemma \ref{lem.6.5} and Proposition \ref{prop.est.4}, in $\widetilde{\Omega}_4$ for some constant $C'>0$
$$
\sum_{|I|\leq \frac{N}{2}+1}|\tilde{D}^I\tilde{h}|\leq C'\bar{a}^{\delta-\frac{1}{2}}\bar{b}^{\alpha(\delta-\frac{1}{2})} \epsilon,
\quad
\sum_{|I|\leq \frac{N}{2}}|\partial_{\bar{a}}\tilde{D}^I\tilde{h}|\leq C'\bar{a}^{\delta-1} \bar{b}^{\alpha(\delta-\frac{1}{2})}\epsilon.
$$
Hence $\tilde{h}$ is $C^{0,\delta}$ up to $\widetilde{\Omega}_4\cap S_1^{+}$ and 
the radiations field $\tilde{h}|_{\widetilde{\Omega}_4\cap S_1}$ is well defined.  
Moreover, define
$$
\begin{aligned}
Q_4^N(\mathfrak{t};v,\tilde{g}) = \left(\int_{\Xi_4^{\mathfrak{t}}} \bar{b}^{n-1-2\sigma+\alpha}
\sum_{|I|\leq N} |\tilde{D}^I v|^2 d\mu^{T_4'}_{\tilde{g}} \right)^{\frac{1}{2}},
\\
P_4^N(\mathfrak{t};v,\tilde{g}) = \left(\int_{\Sigma_4\cap\{\mathfrak{t}_0\leq T_4'<\mathfrak{t}\}} \sum_{|I|\leq N} |\tilde{D}^I v|^2 d\mu^{T_3}_{\tilde{g}} \right)^{\frac{1}{2}}.
\end{aligned}
$$
Then for some $C''>0$,
$$
\begin{gathered}
 \begin{aligned}
 \partial_{\mathfrak{t}}(Q_4^N(\mathfrak{t};\tilde{h},\tilde{g}))^2 \leq 
&\ 2 |\mathfrak{t}|^{\frac{n-1-2\sigma}{2\alpha}-1} M_4^N(\mathfrak{t};\tilde{h},\tilde{g})Q_4^N(\mathfrak{t};v,\tilde{g}) \big(C''
+C''Q_4^N(\mathfrak{t};v,\tilde{g}) \big)
\\
&\ +2C''L_4^N(\mathfrak{t};\tilde{h},\tilde{g}) P_4^N(\mathfrak{t};\tilde{h},\tilde{g}),
\\
 \partial_{\mathfrak{t}}(P_4^N(\mathfrak{t};\tilde{h},\tilde{g}))^2 \leq &\ 
 2C'' L_4^N(\mathfrak{t};\tilde{h},\tilde{g}) P_4^N(\mathfrak{t};\tilde{h},\tilde{g}),
 \end{aligned}
  \\
 Q_4^N(\mathfrak{t}_0;\tilde{h},\tilde{g})= P_4^N(\mathfrak{t}_0;\tilde{h},\tilde{g})=0.
 \quad\quad\quad\quad\quad
  \quad\quad\quad\quad\quad
   \quad\quad\quad\quad\quad
    \quad\quad\quad
\end{gathered}
$$
Here $\frac{1}{2}>\frac{n-1-2\sigma}{2\alpha}>\frac{1}{2}-\delta$, which implies that
$$
Q_4^N(\mathfrak{t};\tilde{h},\tilde{g})
 \leq C\epsilon, \quad P_4^N(\mathfrak{t};\tilde{h},\tilde{g})
 \leq C\epsilon\quad \forall \mathfrak{t}\in [\mathfrak{t}_0,0], 
$$
where $C$ depends on $\alpha,\delta,\delta', \sigma$. Finally, $Q_4^N(0;\tilde{h},\tilde{g}) $ gives the norm bound of $\tilde{h}|_{S_1^+\cap\widetilde{\Omega}_4}$.
\end{proof}

\vspace{0.2in}
\section{Nonlinear M\o ller Wave Operator}\label{sec.mollerop}
For $n\geq 4$, suppose $(h^0,h^1)\in \widetilde{\mathcal{V}}^{N,\delta}_{\epsilon}$ with $N\geq n+6,\delta\in(0,\frac{1}{2})$ and $\epsilon>0$ small such that 
Theorem \ref{thm.est} holds.  
Then the global solution $h$ obtained in Theorem \ref{thm.est} provides a true solution to Einstein equation (\ref{eq.1}) since $h$ satisfies the harmonic gauge condition (\ref{harmonic.3}) globally by Theorem {\ref{thm.lr}}. This imposes constraint conditions on the radiation field. 
\begin{equation*}
\partial_{\tau}(\tilde{h}_{\mu\rho}|_{S_1^+} )
-\tfrac{1}{2}\theta_{\mu}\partial_{\tau}(\mathrm{tr}_m\tilde{h}|_{S_1^+})=0, \quad \mu=0,1,...,n.
\end{equation*}
This is equivalent to 
\begin{equation}\label{harmonic.char}
(\tilde{h}_{\mu\rho}|_{S_1^+} )
-\tfrac{1}{2}\theta_{\mu}(\mathrm{tr}_m\tilde{h}|_{S_1^+})=0,\quad \mu=0,1,...,n.
\end{equation}
when $\tilde{h}|_{S_1^+}$ has some decay as $\tau\rightarrow -\infty$.
To refine the image space for the radiation field, first let us consider the Cauchy data with only conormal singularity at $\partial\Sigma_0$. 
\begin{lemma} \label{lem.7.1}
If $(h_0,h_1)\in\widetilde{\mathcal{V}}^{N,\delta}_{\epsilon}\cap 
\mathscr{A}^{\frac{n-1}{2}+\delta}(\Sigma_0)$, then 
$h\in \mathscr{A}^{\frac{n-1}{2}+\delta, \frac{n-1}{2}, \sigma}(X) $, where $\sigma>0$ is given in Theorem \ref{thm.est}. 
\end{lemma}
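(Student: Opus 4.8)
The plan is to propagate conormal regularity from the Cauchy surface $\Sigma_0$ through the four domains $\widetilde{\Omega}_1,\dots,\widetilde{\Omega}_4$, exactly paralleling the structure of the energy estimates in Section \ref{sec.energy}, but now upgrading from weighted b-Sobolev control to conormal (symbol-type) control. The hypothesis $(h^0,h^1)\in\mathscr{A}^{\frac{n-1}{2}+\delta}(\Sigma_0)$ means that all iterated b-vector fields applied to the rescaled initial data stay bounded with the stated weight; equivalently $\tilde h^0=\rho_0^{-\frac{n-1}{2}}h^0$ and its analogue for $h^1$ lie in $\rho_0^{\delta}\mathscr{A}^{0}(\Sigma_0)$. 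By Lemma \ref{lem.2.3} such data automatically lies in every $\rho_0^{\frac{n-1}{2}+\delta}H_b^{N+1}\times\rho_0^{\frac{n+1}{2}+\delta}H_b^{N}$, so Theorem \ref{thm.est} applies and produces the global solution $h$ with $\tilde h$ that is $C^{0,\delta}$ up to $S_1^\pm$; the point is to show $\tilde h$ is actually conormal with the three weights $\rho_0^{\frac{n-1}{2}+\delta}$, $\rho_1^{\frac{n-1}{2}}$, $\rho_2^{\sigma}$, i.e. $\tilde h\in\rho_0^{\delta}\rho_1^{0}\rho_2^{\sigma-\frac{n-1}{2}}\mathscr{A}^{0,0,0}(X)$ after accounting for the $\tilde\rho^{\frac{n-1}{2}}$ prefactor relating $h$ and $\tilde h$.

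The key steps, in order, are: (1) In $\widetilde\Omega_1$, repeat the iteration scheme of Proposition \ref{prop.est.1} but with norms $M_1^N$ replaced by norms involving arbitrarily many b-derivatives $\tilde D^I$ with $|I|$ unbounded; since the initial data is conormal and the estimates of Lemma \ref{lem.6.2} close at every order $N$ uniformly, a standard bootstrap gives $\tilde h\in\mathscr{A}^{\frac{n-1}{2}+\delta,0,0}$ on $\widetilde\Omega_1$ (here $\rho_1=\rho_2=1$). (2) In $\widetilde\Omega_2$, the b-vector fields tangent to $S_1$ are spanned by $a\partial_a,b\partial_b,\slashpar_\theta$; Lemma \ref{lem.6} gives the equation satisfied by $\tilde D^I\tilde h$ for $\tilde D\in\mathscr B_2$, and repeating the weighted energy estimate of Proposition \ref{prop.est.2} at every order, together with the Sobolev embedding of Lemma \ref{lem.2.3} on the p-submanifold $\Xi_2^{\mathfrak t}$, upgrades $\tilde h$ to be conormal with weight $\rho_0^\delta\rho_1^0$ up to $S_1^+$. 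The faster decay $\tilde h_{\rho\rho}=O(\rho_1^{1/2+\delta})$ established in Lemma \ref{lem.6.3} is essential so that the perturbation terms in $\Box_{\tilde g}$, $\mathcal Q_{\tilde g}$, $\mathcal F_{\tilde g}$ remain lower order at every derivative level. (3) In $\widetilde\Omega_3$ one argues identically using $\mathscr B_3$ and Proposition \ref{prop.est.3}; the matching of quadratic forms on $\Sigma_2$ already carried out there works at all orders since the conormal classes are invariant under the coordinate changes involved. (4) In $\widetilde\Omega_4$, using $\mathscr B_4$ and Proposition \ref{prop.est.4} together with Corollary \ref{cor.6.4}, one obtains conormality with the weight $\rho_2^{\sigma}$ near temporal infinity, where $\sigma<\sigma'=\frac{n-1}{2}-\alpha(\frac12-\delta)$. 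Patching the four domains with a partition of unity subordinate to $\{\widetilde\Omega_i\}$ and using that the conormal classes glue (being local statements in b-vector fields), one gets $\tilde h\in\rho_0^\delta\rho_2^{\sigma-\frac{n-1}{2}}\mathscr{A}^{0,0,0}(X)$, hence $h=\tilde\rho^{\frac{n-1}{2}}\tilde h\in\mathscr{A}^{\frac{n-1}{2}+\delta,\frac{n-1}{2},\sigma}(X)$ as claimed.

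The main obstacle is Step (2)–(3): one must show that the energy estimates close at \emph{every} order of differentiation by the vector fields $\tilde D\in\mathscr B_j$ with \emph{uniform} structure, i.e. that commuting $\Box_{\tilde g}$ past $\tilde D^I$ never produces a term that is worse than $\rho_1$-times a top-order term plus strictly nonlinear contributions — this is exactly the content of Lemma \ref{lem.6}, but one needs the nonlinear terms $\tilde f^I_{\mu\nu}(\tilde h)$ to be controlled in $L^\infty$ by lower-order conormal bounds via the Moser-type estimates implicit in the $\Theta_k(\tilde h)$ formalism and the Sobolev embedding $H_b^{N}\hookrightarrow L^\infty$ for $N>\frac12\dim X$. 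Since the solution is already known to exist globally and to satisfy the finite-order bounds of Theorem \ref{thm.est}, there is no new existence difficulty; the work is purely the bookkeeping of propagating regularity order by order, and the only genuinely delicate point is ensuring that the weight $\rho_1^{\frac{n-5}{2}}$ appearing in the source of (\ref{eq.12}), combined with the $\rho_1^{1/2+\delta}$ gain on $\tilde h_{\rho\rho}$ and its b-derivatives, keeps the inductive hypothesis consistent at all orders rather than only up to $|I|\le N$. Once conormality of $\tilde h$ near $S_1^\pm$ is in hand, the constraint (\ref{harmonic.char}) on the radiation field follows by restriction, which is why conormality (giving decay as $\tau\to-\infty$) is the natural target here.
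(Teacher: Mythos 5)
Your proposal matches the paper's proof: fix $N'>N$ and run the same divided-domain energy estimates for $M_i^{N'}(\mathfrak t;\tilde h,\tilde g)$, observing that the cubic term in the divergence estimate is replaced, by interpolation (your ``Moser-type estimates''), with $C'M_i^{N}(M_i^{N'})^2$, so the differential inequality for $M_i^{N'}$ becomes linear with coefficients controlled by the already-established order-$N$ bound and closes with constants independent of $N'$. One cosmetic slip: the four domains $\widetilde\Omega_i$ are traversed sequentially via the matching norms on the separating surfaces $\Sigma_j$ rather than glued by a partition of unity, and since the solution $\tilde h$ already exists by Theorem \ref{thm.est} you estimate it directly rather than re-running the Picard iteration.
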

\begin{proof}
For all $N'> N$, we estimate $M_i^{N'}(\mathfrak{t};\tilde{h},\tilde{g})$ in the same way as we do for $M_i^{N}(\mathfrak{t};\tilde{h},\tilde{g})$. The only difference is that in this case, we can replace the cubic term $(M_i^{N}(\mathfrak{t};\tilde{h},\tilde{g}))^3$ in the estimates of divergence term by 
$C'M_i^N(\mathfrak{t},\tilde{h})(M_i^{N'}(\mathfrak{t},\tilde{h}))^2$ as a result of interpolation. Hence the nonlinear estimating reduces to linear estimating: 
$$
\begin{aligned}
&\partial_{\mathfrak{t}}M_1^{N'}(T_1;\tilde{h},\tilde{g}) \leq C' M_1^{N'}(T_1;\tilde{h},\tilde{g}),
\\
&\partial_{\mathfrak{t}}M_i^{N'}(T'_i;\tilde{h},\tilde{g}) \leq 
((\tfrac{1}{2}-\delta)|T'_i|^{-1}+C'|T'_i|^{\delta-\delta'-1}|\ln |T'||) M_i^{N'}(T'_i;\tilde{h},\tilde{g}), \quad i=2,3,4
\end{aligned}
$$
for some $C'>0$. For some $C_1, C_2,C_3,C_4>0$, we have
$$
\begin{aligned}
&M_1^{N'}(T_1;\tilde{h},\tilde{g}) \leq C_1 M_1^{N'}(0;\tilde{h},\tilde{g}) ,\\
&M_i^{N'}(T'_i;\tilde{h},\tilde{g}) \leq C_i |T'_i|^{\delta-\frac{1}{2}} M_1^{N'}(0;\tilde{h},\tilde{g}), \quad i=2,3,4.
\end{aligned}
$$
Then by the same proof as Corollary \ref{cor.6.2}, \ref{cor.6.3}, \ref{cor.6.4}, we have 
$$
\|\tilde{h}_{S_1^+}\|_{\rho_0^{\delta}\rho_2^{\sigma-\frac{n-1}{2}}H_b^{N'}(S_1^{+})} \leq CM_1^{N'}(0;\tilde{h},\tilde{g}) 
$$
for some $C$ only depending on $N', \delta, \delta', \alpha$. Since $N'$ is arbitrary large, $\tilde{h}$ only have conormal singularity at $\partial X$. By Lemma \ref{lem.2.3}, we have $\tilde{h}\in \mathcal{A}^{\delta,0,\sigma-\frac{n-1}{2}}$. Hence $h=\tilde{\rho}^{\frac{n-1}{2}}\tilde{h}\in \mathscr{A}^{\frac{n-1}{2}+\delta, \frac{n-1}{2}, \sigma}(X) $. 
\end{proof}

\begin{lemma}\label{lem.7.2}
If $(h_0,h_1)\in\widetilde{\mathcal{V}}^{N,\delta}_{\epsilon}\cap 
\mathscr{A}^{\frac{n-1}{2}+\delta}(\Sigma_0)$, then
$h\in \mathscr{A}^{\frac{n-1}{2}+\delta, \frac{n-1}{2}, \frac{n-1}{2}+\delta}(X).$
\end{lemma}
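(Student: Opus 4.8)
The plan is to bootstrap the decay rate of $\tilde h$ at $S_1^+$ from the weak rate $\rho_2^{\sigma}$ established in Lemma \ref{lem.7.1} (with $\sigma<\frac{n-1}{2}-\alpha(\frac12-\delta)$) up to the optimal rate $\rho_2^{\frac{n-1}{2}+\delta}$, i.e. to show $\tilde h\in\mathscr{A}^{\delta,0,\delta}(X)$. The only region where work is needed is $\widetilde\Omega_4$, since in $\widetilde\Omega_1,\widetilde\Omega_2,\widetilde\Omega_3$ the defining function $\rho_2$ is bounded below and Lemma \ref{lem.7.1} already gives conormality with the right $\rho_0$- and $\rho_1$-weights. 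So the task is a temporal-infinity decay estimate near $S_2^\pm$, carried out on the slices $\Xi_4^{\mathfrak t}$ used in Section \ref{sec.energy}.

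First I would revisit the energy identity in $\widetilde\Omega_4$ from Proposition \ref{prop.est.4}, but now weight the energy norm by an extra power $\rho_2^{2\beta}$ for $\beta$ to be increased step by step starting from $\beta_0=\sigma-\alpha(\frac12-\delta)$-type value already available. Concretely, define $M_{4,\beta}^{N}(\mathfrak t;v,\tilde g)$ by inserting $e^{-2\beta\ln\rho_2}=\bar b^{-2\beta}$ (times the $T_4'$-weight) into the integrand $\langle\mathcal F_{\tilde g}(T_4,\cdot),\nabla T_4'\rangle_{\tilde g}$, exactly as $\rho_0^{\delta}$ was inserted in $\widetilde\Omega_2$. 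The divergence computation then produces, besides the good terms controlled by Lemma \ref{lem.5.7}, \ref{lem.5.8}, \ref{lem.5.9}, an extra term $-2\beta\langle\mathcal F_{\tilde g}(T_4,v),\nabla\ln\bar b\rangle_{\tilde g}$; since $\nabla\ln\bar b$ is, up to bounded factors, a timelike direction comparable to $\nabla T_4'$, this term has a sign that helps (or is absorbed by the $\lambda$-coercivity in Lemma \ref{lem.5.8} after shrinking $\lambda$ slightly), so the weighted energy still closes as long as $\beta$ stays below the threshold where $\rho_2^{\beta}$ ceases to be integrable against the source. The source $\tilde f^I_{\mu\nu}$ from Lemma \ref{lem.6} carries a prefactor $(\rho_0\rho_2)^{\frac{n-1}{2}}\rho_1^{\frac{n-5}{2}}$ times quadratic (or higher) terms in $\tilde h$; feeding in the already-known bound $|\tilde h|\lesssim\bar a^{\delta-\frac12}\bar b^{\alpha(\delta-\frac12)}\epsilon$ from Corollary \ref{cor.6.4} shows the nonlinearity gains $\rho_2^{\alpha(\frac12-\delta)}$ worth of extra decay at each iteration. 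Thus one obtains a finite sequence $\beta_0<\beta_1<\cdots<\beta_K$ with $\beta_{k+1}=\beta_k+\alpha(\frac12-\delta)$ (capped appropriately by $\frac{n-1}{2}-\frac{n-3}{2}$-type linear limits and by the harmonic-gauge improvement on $\tilde h_{\rho\rho}$), terminating once $\beta_K\geq\frac{n-1}{2}+\delta$. At each stage the $L^\infty$ and interpolated Sobolev bounds of Lemma \ref{lem.6.5}, together with the interpolation trick used in Lemma \ref{lem.7.1}, keep the estimate linear in the top norm, so no smallness is lost through the iteration.

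Once the weighted energies $M_{4,\beta_K}^{N'}(\mathfrak t;\tilde h,\tilde g)$ are bounded for all $N'$, Sobolev embedding (Lemma \ref{lem.2.3}) upgrades this to $\tilde h\in\mathscr{A}^{0,0,\delta}$ near $S_2^\pm$; combined with $\mathscr{A}^{\delta,0,\sigma-\frac{n-1}{2}}$ near $S_0,S_1^+$ from Lemma \ref{lem.7.1} this gives $\tilde h\in\mathscr{A}^{\delta,0,\delta}(X)$, whence $h=\tilde\rho^{\frac{n-1}{2}}\tilde h\in\mathscr{A}^{\frac{n-1}{2}+\delta,\frac{n-1}{2},\frac{n-1}{2}+\delta}(X)$, which is the claim. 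I expect the main obstacle to be controlling the $\beta$-weighted energy right up to the endpoint exponent: near $\beta\approx\frac{n-1}{2}+\delta$ the extra weight interacts with the $\alpha$-dependent coercivity of $\langle\mathcal F_{\tilde m}(T_4,v),\nabla T_4'\rangle_{\tilde m}$ (the constants $C_1,\dots,C_3,D_1,\dots,D_5$ of Lemma \ref{lem.5.7}), so one must check that $\lambda=1-2\delta$ can still be replaced by $\lambda$ slightly larger — equivalently that $\alpha$ was chosen in Corollary \ref{cor.6.4} with enough room, i.e. $\alpha<\frac{n-1}{1-2\delta}$ strictly — to absorb the $-2\beta\nabla\ln\bar b$ contribution; this is exactly why $\sigma$ in Theorem \ref{thm.est} was only $<\sigma'$ and the endpoint rate $\frac{n-1}{2}+\delta$ requires the separate iteration here rather than being immediate. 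A secondary technical point is handling the logarithmic factor $|\ln|\mathfrak t||$ appearing in Lemma \ref{lem.6.5} and in the divergence estimate of Proposition \ref{prop.est.4}; as there, it is harmless because $|\mathfrak t|^{\eta}|\ln|\mathfrak t||$ is still integrable for any $\eta>-1$, so it only costs an arbitrarily small loss in the exponent at each step, which is why one takes $\delta'$ close to $\delta$ and iterates finitely many times.
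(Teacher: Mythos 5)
Your approach is genuinely different from the paper's, which sidesteps the $\widetilde\Omega_4$ energy machinery entirely once Lemma \ref{lem.7.1} is in hand. The paper writes $\Box_m h_{\mu\nu}=F_{\mu\nu}(\partial h,\partial h)-H^{\alpha\beta}\partial_\alpha\partial_\beta h_{\mu\nu}$ and observes that, given $h\in\mathscr{A}^{\frac{n-1}{2}+\delta,\frac{n-1}{2},\sigma}(X)$, this right-hand side lies in $\mathscr{A}^{n+1+2\delta,n-1,2+2\sigma}(X)$; it then invokes the mapping property of the free Minkowski wave operator on the compactified space from \cite{MW} to conclude $h\in\mathscr{A}^{\frac{n-1}{2}+\delta,\frac{n-1}{2},2\sigma}(X)$ provided $2\sigma<\frac{n-1}{2}+\delta$, and iterates the doubling $\sigma\mapsto 2\sigma\mapsto 4\sigma\mapsto\cdots$ until $2^k\sigma\geq\frac{n-1}{2}+\delta$. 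The doubling is automatic because the error is genuinely quadratic in $h$, and the cap $\frac{n-1}{2}+\delta$ is the sharp linear rate, which the Radon-transform methods of \cite{MW} capture exactly; an energy method on the $\Xi_4^{\mathfrak t}$ slices cannot see that endpoint.

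Your weighted-energy bootstrap does not close, for two concrete reasons. First, the sign of the extra term is wrong. In $\widetilde\Omega_4$ one computes $\langle\nabla T_4,\nabla\ln\bar b\rangle_{\tilde m}=-\partial_{\bar a}T_4>0$: since $T_4$ contains $+\xi=-\ln\bar b$, as $\bar b\downarrow 0$ both $T_4$ and $T_4'$ increase while $\ln\bar b$ decreases, so $\nabla\ln\bar b$ is \emph{oppositely} time-oriented to $\nabla T_4'$, not ``comparable'' to it as you assert. By the positivity lemma for $\mathcal F_{\tilde g}$ stated after Definition \ref{def.4}, this gives $\langle\mathcal F_{\tilde g}(T_4,v),\nabla\ln\bar b\rangle_{\tilde g}<0$, so with the growing weight $\bar b^{-2\beta}$ (which is what is needed to \emph{prove} $\bar b$-decay; your own notation oscillates between $\rho_2^{2\beta}$ and $e^{-2\beta\ln\rho_2}=\bar b^{-2\beta}$) the divergence contributes $-2\beta\langle\mathcal F_{\tilde g}(T_4,v),\nabla\ln\bar b\rangle_{\tilde g}>0$, a growth term with no smallness. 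It also cannot be absorbed into Lemma \ref{lem.5.8}: $\nabla\ln\bar b=-\partial_{\bar a}$ is an $O(1)$ vector field whereas $\nabla T_4'=O(\bar b^{\alpha})$, so the ratio of the two flux quadratic forms blows up like $\bar b^{-\alpha}$ near the corner $S_1^+\cap S_2^+$ and no fixed $\lambda<1$ dominates it. Second, even if the sign were favorable, the $T_4'$-foliation structurally caps the energy method at $\sigma<\sigma'=\frac{n-1}{2}-\alpha(\frac12-\delta)<\frac{n-2}{2}+\delta$ for every admissible $\alpha>1$ (Corollary \ref{cor.6.4}), a full $\tfrac12$ short of the target $\frac{n-1}{2}+\delta$. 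That $\tfrac12$ is not a loose constant: it is the degeneracy of the null slicing at the corner, the same mechanism that produces the $|\mathfrak t|^{\delta-\frac12}$ rates in Propositions \ref{prop.est.2}--\ref{prop.est.4}, and no additive iteration on $\beta$ inside the same estimate can cross it. The sharp endpoint has to be imported from the linear theory, which is exactly what the paper's proof does.
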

\begin{proof}
Applying the linear wave operator $\Box_{m}$ to the solution $h$ gives
\begin{equation*}
\begin{aligned}
\Box_{m}h_{\mu\nu} =\ &  \Box_{g}h_{\mu\nu}+(\Box_{m}-\Box_{g})h_{\mu\nu}
= F_{\mu\nu}(\partial h,\partial h) -H^{\alpha\beta}\partial_{\alpha}\partial_{\beta}h_{\mu\nu}
\in \mathscr{A}^{n+1+2\delta, n-1, 2+2\sigma}(X).
\end{aligned}
\end{equation*}
Hence $h\in \mathscr{A}^{\frac{n-1}{2}+\delta, \frac{n-1}{2},2\sigma}(X)$ if $2\sigma<\frac{n-1}{2}+\delta$. See \cite{MW} for details. 
Repeating $k$ times until $2^k\sigma\geq \frac{n-1}{2}+\delta$ and finally we have
$h\in \mathscr{A}^{\frac{n-1}{2}+\delta, \frac{n-1}{2},\frac{n-1}{2}+\delta}(X)$
\end{proof}

Fix Cauchy data $(h^0,h^1)\in \widetilde{\mathcal{V}}^{N,\delta}_{\epsilon}\cap 
\mathscr{A}^{\frac{n-1}{2}+\delta}(\Sigma_0)$ and let $h$ be the solution obtained in Theorem \ref{thm.est}.   Consider the Cauchy problem for linear wave equation with background metric $g=m+h$: 
\begin{equation}\label{eq.7.1}
\Box_{g} k_{\mu\nu} = F_{\mu\nu}(h)(\partial k,\partial h), \quad
(k_{t=0}, \partial_tk|_{t=0})=(k^0,k^1).
\end{equation}
Then $k$ globally exists and the radiation field is well defined. Moreover,  $k=h$ is a solution if $(k^0,k^1)=(h^0,h^1)$. Denote by $\tilde{k}=\tilde{\rho}^{\frac{1-n}{2}}k$ and define the following map
\begin{equation*}
{}^g\mathcal{R_F}: \rho_0^{\lambda}C^{\infty}\times \rho_0^{\lambda+1}C^{\infty}\ni (k^0,k^1)\rightarrow \tilde{k}|_{S^+_1}\in (\rho_0\rho_2)^{\lambda-\frac{n-1}{2}}C^{\infty}(S_1^+).
\end{equation*}
 for $\lambda\in \mathbb{C}, \Re\lambda\in (\frac{n-1}{2},\frac{n}{2})$.
\begin{lemma}\label{lem.7.3}
For $\Re\lambda\in(\frac{n-1}{2},\frac{n}{2})$, 
${}^g\mathcal{R_F}$ and ${}^m\mathcal{R_F}$ have the same boundary operator. 
\end{lemma}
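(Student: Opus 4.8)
The statement to prove, Lemma~\ref{lem.7.3}, asserts that the two maps $^g\mathcal{R_F}$ and $^m\mathcal{R_F}$ have the same \emph{boundary operator}, i.e.\ that the normal operator / principal part of the radiation-field map at $S_1^+$ is unchanged when we replace the Minkowski background $m$ by the perturbed metric $g = m + h$. The plan is to exploit that $h$ is not merely small but has extra decay at null infinity: by Lemma~\ref{lem.7.2}, $h \in \mathscr{A}^{\frac{n-1}{2}+\delta,\,\frac{n-1}{2},\,\frac{n-1}{2}+\delta}(X)$, so after the conformal rescaling the perturbation terms $H^{\alpha\beta}\partial_\alpha\partial_\beta$, $\gamma - \gamma_0$, and $\tilde F$ all carry a genuine positive power of $\rho_1$ (equivalently of $\rho_0\rho_2$ and of $\rho_1$ together) relative to the flat model operator $\Box_{\tilde m} + \gamma_0 = 2\partial_{\rho_1}D + \tilde\partial^2 + \tilde\partial$ described in Lemma~\ref{lem.2}. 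The boundary operator of the radiation-field map lives at $\rho_1 = 0$ and only sees the leading-order behaviour in $\rho_1$; terms with an extra factor of $\rho_1$ contribute nothing to it.

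Concretely, I would proceed as follows. First, recall the structure from Lemma~\ref{lem.2} and Lemma~\ref{lem.4.7}: near $S_1^+$ (in domains $\Omega_2,\Omega_3,\Omega_4$) we have $\Box_{\tilde g} + \gamma = (\Box_{\tilde m} + \gamma_0) + \rho_1^{\frac{n-5}{2}}(\rho_0\rho_2)^{\frac{n-1}{2}}(\cdots)$, and the model operator factors through $2\partial_{\rho_1}D$ plus terms tangent to $S_1^+$. The radiation field $\tilde k|_{S_1^+}$ is obtained by solving $(\Box_{\tilde g}+\gamma)\tilde k = (\rho_0\rho_2)^{\frac{n-1}{2}}\rho_1^{\frac{n-5}{2}}\tilde F(h)(\tilde\partial k,\tilde\partial h)$ from the Cauchy surface and restricting to $\rho_1 = 0$; for the linear equation~\eqref{eq.7.1} with background $g$ this is a linear map of $(k^0,k^1)$, and its boundary operator is by definition the operator on $S_1^+$ (and on $\Sigma_0$) capturing the $\rho_1 \to 0$ asymptotics. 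Second, I would write $^g\mathcal{R_F} - {}^m\mathcal{R_F}$ via Duhamel: the difference $\tilde k^g - \tilde k^m$ solves $(\Box_{\tilde m}+\gamma_0)(\tilde k^g - \tilde k^m) = -[(\Box_{\tilde g}-\Box_{\tilde m}) + (\gamma - \gamma_0)]\tilde k^g + (\text{source difference})$, where every term on the right-hand side, by Lemma~\ref{lem.7.2} applied to $h$ (and the energy estimates of Section~\ref{sec.energy} applied to $k$), lies in a space $\rho_1^{\epsilon_0}\mathscr{A}^{\cdots}$ with $\epsilon_0 > 0$. Third, I would invoke the mapping property of the model operator $\Box_{\tilde m}+\gamma_0$ between weighted conormal spaces on $X$ (as in Section~\ref{sec.est1} and the references \cite{Me2}, \cite{MW}): a source with an extra power $\rho_1^{\epsilon_0}$ produces a solution with an extra power $\rho_1^{\epsilon_0}$, hence $\tilde k^g - \tilde k^m \in \rho_1^{\epsilon_0}\mathscr{A}^{\cdots}$, so its restriction to $S_1^+$ vanishes to positive order and does not affect the boundary operator. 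Finally I would note the same argument near $\Sigma_0$ shows the Cauchy-data side of the boundary operator is unchanged, so the two boundary operators coincide.

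The main obstacle will be making precise what ``boundary operator'' means in this setting and checking that the extra-decay terms are genuinely subprincipal for it. The radiation-field map is not a pseudodifferential operator on a closed manifold; rather, its ``boundary operator'' should be understood as the pair consisting of the indicial/normal operator of $\Box_{\tilde m}+\gamma_0$ at $S_1^+$ together with the propagation from $\Sigma_0$, and one must verify that a perturbation lying in $\rho_1^{\epsilon_0}(\rho_0\rho_2)^{\epsilon_1}\mathscr{A}^{0,0,0}$ truly lands in the ``residual'' part. This requires (i) confirming, term by term using Lemmas~\ref{lem.4}, \ref{lem.2}, \ref{lem.4.8}, and \ref{lem.7.2}, that $\Box_{\tilde g}-\Box_{\tilde m}$, $\gamma-\gamma_0$, and $\tilde F$ each gain a strictly positive power of $\rho_1$ when $h$ has the decay from Lemma~\ref{lem.7.2} — the delicate point being the $\tilde h_{\rho\rho}$ component, which by Lemma~\ref{lem.4} decays like $\rho_1^{1/2}$ relative to the other components and feeds the coefficient of $D^2$; and (ii) checking that the positivity threshold $\frac n2 - \Re\lambda > 0$ for $\Re\lambda \in (\frac{n-1}{2},\frac n2)$ is exactly what keeps the relevant indicial root separated from the competing one, so that the corrected solution still has a well-defined leading term at $\rho_1=0$ matching the flat model. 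Once these two bookkeeping facts are in place, the conclusion is immediate from the mapping properties already used in Section~\ref{sec.est1}.
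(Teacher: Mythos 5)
Your argument has a genuine gap at the third step. You invoke a mapping property for $\Box_{\tilde m}+\gamma_0$ of the form ``a source with an extra power $\rho_1^{\epsilon_0}$ produces a solution with an extra power $\rho_1^{\epsilon_0}$,'' but $S_1^+$ is a \emph{characteristic} hypersurface, and the transport across it does not behave that way. In the coordinates of $\Omega_2,\Omega_3,\Omega_4$, the model operator is $2\partial_{\rho_1}D + (\text{tangential})$; solving $\partial_{\rho_1}(Dv)=f$ inward from the Cauchy surface, any source that is merely integrable in $\rho_1$ produces a trace of $v$ at $\rho_1=0$ that is finite and generically nonzero—no extra factor of $\rho_1$ appears. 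There is also a sanity check that should have flagged this: if $\tilde k^g - \tilde k^m$ vanished to positive order at $S_1^+$, you would be proving $^g\mathcal{R_F} = {}^m\mathcal{R_F}$ outright, which is strictly stronger than the lemma and false (the perturbation does change the radiation field). The paper's own calculation confirms the $\rho_1$ rate is not improved: $k-k' \in \mathscr{A}^{\frac{n-1}{2}+\delta+\lambda,\,\frac{n-1}{2},\,\frac{n-1}{2}+\delta+\lambda} + \mathscr{A}^{\frac{n-1}{2}+\delta+\lambda+2,\,\frac{n-1}{2},\,n-2}$, with $\rho_1$ exponent $\frac{n-1}{2}$—exactly the Friedlander rate, the same as for $k$ and $k'$ individually, so $\widetilde{k-k'}|_{S_1^+}$ is generically nonzero.

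The gain that actually makes the lemma true is at $S_0$ and $S_2$, i.e.\ at the corner $\partial S_1^+$, and the ``boundary operator'' of $\mathcal{R_F}$ in the sense of \cite{MW} is precisely what lives there: the map from the leading $\rho_0^{\lambda}$ coefficient of Cauchy data at $\partial\Sigma_0$ to the leading $(\rho_0\rho_2)^{\lambda-\frac{n-1}{2}}$ coefficient of the radiation field at $\partial S_1^+$. The paper compares with the free solution $k'$ (same Cauchy data), and uses that $\Box_m(k-k') = -H^{\mu\nu}\partial_\mu\partial_\nu k + F(h)(\partial k,\partial h)$ gains two factors of $\rho_0\rho_2$ from the $\partial_\mu$'s and the extra $\mathscr{A}^{\frac{n-1}{2}+\delta,\frac{n-1}{2},\frac{n-1}{2}+\delta}$ decay of $h$ from Lemma~\ref{lem.7.2}; inverting $\Box_m$ then gives the cited expansion. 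The hypothesis $\Re\lambda<\frac{n}{2}<n-2$ (for $n\geq 4$) is exactly what ensures the resulting $\rho_2$ exponent $n-2-\frac{n-1}{2}$ still beats $\lambda-\frac{n-1}{2}$, so the correction dies at $\partial S_1^+$ and the boundary operators agree. Note also that the paper works with $\Box_m$ on $(t,x)$ rather than the conformal operator, which sidesteps the delicate $\rho_1$-bookkeeping you flag (for $n=4$ the coefficient $\rho_1^{\frac{n-5}{2}}\tilde h_{\rho\rho}$ of $D^2$ does not in itself gain a power of $\rho_1$); in that picture the $\rho_1$ rate is pinned to $\frac{n-1}{2}$ by the indicial data, and the only thing to track is the gain at $\rho_0$ and $\rho_2$.
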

\begin{proof}
With $(k^0,k^1)\in \rho_0^{\lambda}C^{\infty}\times \rho_0^{\lambda+1}C^{\infty}$, by similar energy estimates in the proof of Lemma \ref{lem.7.1}, we first have $k\in \mathscr{A}^{\lambda,\frac{n-1}{2},\sigma}$. Then applying $\Box_{m}$ as in the proof of Lemma \ref{lem.7.2}, we get $k\in \mathscr{A}^{\lambda,\frac{n-1}{2},\lambda}$. 
Consider the equation 
\begin{equation*}
\Box_{m} k'_{\mu\nu} =0, \quad
(k'_{t=0}, \partial_tk'|_{t=0})=(k^0,k^1).
\end{equation*}
Then
\begin{equation*}
\begin{aligned}
\Box_{m} (k-k') =-H^{\mu\nu}\partial_{\mu}\partial_{\nu}k+F(h)(\partial k, \partial h)
\in\mathcal{A}^{\frac{n-1}{2}+\delta+\lambda+2,n-1,\frac{n-1}{2}+\delta+\lambda+2}.
\end{aligned}
\end{equation*}
Hence $k'-k\in \mathcal{A}^{\frac{n-1}{2}+\delta+\lambda,\frac{n-1}{2},\frac{n-1}{2}+\delta+\lambda}
+\mathcal{A}^{\frac{n-1}{2}+\delta+\lambda+2,\frac{n-1}{2}, n-2}$. Since $\Re\lambda<n-2$ for $n\geq 4$, this term contributes zero to the boundary operator. See \cite{MW} for details.
\end{proof}
By Lemma \ref{lem.7.3} and the mapping property of ${}^m\mathcal{R_F}$ given in \cite{MW}, we show the following lemma.  
\begin{lemma}\label{lem.7.4}
The linear M\o ller wave operator ${}^g\mathcal{R_F}$ defines a continuous map for $\delta\in (0,\frac{1}{2})$ 
\begin{equation*}
\begin{aligned}
{}^g\mathcal{R_F}: \rho_0^{\frac{n-1}{2}+\delta} &H_b^{N+1}(\Sigma_0)\times \rho_0^{\frac{n+1}{2}+\delta}H_b^{N+1}(\Sigma_0)
\\
&\longrightarrow (\rho_0\rho_2)^{\delta} [H_b^{\frac{1}{2}-\delta}(\overline{\mathbb{R}};H^{N+\frac{1}{2}+\delta} (\mathbb{S}^{n-1}))\cap L^2(\mathbb{S}^{n-1};H_b^{N+1}(\overline{\mathbb{R}}))]
\end{aligned}
\end{equation*}
Here  $\|{}^g\mathcal{R_F}\|<C$ for some constant $C>0$ independent of $(h^0,h^1)\in \widetilde{\mathcal{V}}^{N,\delta}_{\epsilon}\cap 
\mathscr{A}^{\frac{n-1}{2}+\delta}(\Sigma_0)$. Moreover, 
${}^m\mathcal{R_F}$ is an isomorphism. 
\end{lemma}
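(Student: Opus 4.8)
The plan is to deduce the mapping property and invertibility of ${}^g\mathcal{R_F}$ from those of ${}^m\mathcal{R_F}$, using Lemma \ref{lem.7.3} to identify their leading behaviour. First I would invoke the linear radiation field theory for the flat wave equation: by the results quoted from \cite{MW}, the free M\o ller wave operator ${}^m\mathcal{R_F}$ is an isomorphism
\begin{equation*}
\rho_0^{\frac{n-1}{2}+\delta}H_b^{N+1}(\Sigma_0)\times \rho_0^{\frac{n+1}{2}+\delta}H_b^{N+1}(\Sigma_0)\;\xrightarrow{\ \cong\ }\;(\rho_0\rho_2)^{\delta}\big[H_b^{\frac{1}{2}-\delta}(\overline{\mathbb{R}};H^{N+\frac{1}{2}+\delta}(\mathbb{S}^{n-1}))\cap L^2(\mathbb{S}^{n-1};H_b^{N+1}(\overline{\mathbb{R}}))\big],
\end{equation*}
for $\delta\in(0,\tfrac12)$; this is the statement that carries the bulk of the analytic content, and I would quote it rather than reprove it. The last sentence of the lemma (${}^m\mathcal{R_F}$ is an isomorphism) is then exactly this quoted fact, so nothing further is needed for it beyond a citation.

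Next I would treat ${}^g\mathcal{R_F}$ as a perturbation of ${}^m\mathcal{R_F}$. For Cauchy data $(k^0,k^1)$ first taken in the dense conormal subspace $\rho_0^{\frac{n-1}{2}+\delta}C^\infty\times\rho_0^{\frac{n+1}{2}+\delta}C^\infty$, Lemma \ref{lem.7.3} gives that the boundary operators of ${}^g\mathcal{R_F}$ and ${}^m\mathcal{R_F}$ coincide, i.e. $\tilde k|_{S_1^+}=\tilde k'|_{S_1^+}$ where $k'$ solves the flat linear Cauchy problem with the same data; the difference $k-k'$ was shown in the proof of Lemma \ref{lem.7.3} to lie in a conormal space with index strictly above the one that contributes to the restriction to $S_1^+$ (using $\Re\lambda<n-2$ for $n\ge 4$). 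Combined with the energy estimates of Section \ref{sec.energy} — applied to the linear equation (\ref{eq.7.1}), which is simpler than (\ref{eq.12}) since it is linear in $k$ — one obtains the a priori bound
\begin{equation*}
\|\tilde k|_{S_1^+}\|_{(\rho_0\rho_2)^{\delta}[H_b^{\frac12-\delta}(\overline{\mathbb{R}};H^{N+\frac12+\delta}(\mathbb{S}^{n-1}))\cap L^2(\mathbb{S}^{n-1};H_b^{N+1}(\overline{\mathbb{R}}))]}\le C\,\|(k^0,k^1)\|_{\rho_0^{\frac{n-1}{2}+\delta}H_b^{N+1}\times\rho_0^{\frac{n+1}{2}+\delta}H_b^{N+1}},
\end{equation*}
with $C$ uniform over $(h^0,h^1)\in\widetilde{\mathcal{V}}^{N,\delta}_\epsilon\cap\mathscr{A}^{\frac{n-1}{2}+\delta}(\Sigma_0)$ because all the constants appearing in Propositions \ref{prop.est.1}--\ref{prop.est.4} depend only on $N,\delta,\delta',\alpha$ once $\epsilon$ is small. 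A density argument then extends ${}^g\mathcal{R_F}$ to a bounded operator on the full weighted b-Sobolev space, giving continuity.

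I expect the main obstacle to be making the energy estimates for the \emph{linear} equation (\ref{eq.7.1}) genuinely uniform in the background $h$, and in particular tracking that the weights in $\rho_1$ near $S_1^+$ behave for $k$ exactly as they did for $h$ in Section \ref{sec.energy} — this requires re-examining the harmonic-gauge-type decay of $\tilde k_{\rho\rho}$, which for a general solution of (\ref{eq.7.1}) is not automatic from the gauge condition on $g$ but must instead be extracted from the structure of $F_{\mu\nu}(h)(\partial k,\partial h)$ and the already-established conormality $h\in\mathscr{A}^{\frac{n-1}{2}+\delta,\frac{n-1}{2},\frac{n-1}{2}+\delta}(X)$ from Lemma \ref{lem.7.2}. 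The secondary technical point is the bookkeeping in Lemma \ref{lem.7.3}'s perturbation step: one must verify that iterating $\Box_m$ on $k-k'$ raises the decay order at $S_1^+$ past $n-2$ in finitely many steps, which is where the hypothesis $\Re\lambda\in(\frac{n-1}{2},\frac{n}{2})$ together with $n\ge 4$ is used, and then quote the flat-space mapping theorem from \cite{MW} for the surviving leading term.
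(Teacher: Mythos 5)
Your proposal follows essentially the same route the paper takes: it quotes the flat-space mapping and isomorphism properties of ${}^m\mathcal{R_F}$ from \cite{MW}, uses Lemma \ref{lem.7.3} to transfer them to ${}^g\mathcal{R_F}$, and extends to the full weighted b-Sobolev space by density. The paper's own "proof" is exactly the single sentence preceding the lemma — ``By Lemma \ref{lem.7.3} and the mapping property of ${}^m\mathcal{R_F}$ given in \cite{MW}'' — so your elaboration is a reasonable filling-in of what the paper leaves implicit.

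One point to be careful about: you write that Lemma \ref{lem.7.3} gives $\tilde k|_{S_1^+}=\tilde k'|_{S_1^+}$, but this identity is not literally true. The conclusion of Lemma \ref{lem.7.3} is that the two maps have the same \emph{boundary operator}, which in the Mellin/conormal framework of \cite{MW} is the leading-coefficient map. The proof of Lemma \ref{lem.7.3} shows $k-k'$ lies in conormal spaces of index strictly larger at $S_0$ and $S_2^\pm$ (using $\Re\lambda<n-2$, $n\ge4$), so $\tilde k|_{S_1^+}-\tilde k'|_{S_1^+}$ survives as a genuinely nonzero but faster-decaying term; what one can say is that it does not contribute a pole in the relevant strip, i.e.\ contributes zero \emph{to the boundary operator}. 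The way to close the argument is therefore to observe that the difference term, being in a conormal space with extra $\rho_0$- and $\rho_2$-decay of order $\frac{n-1}{2}+\delta$ (after the conformal rescaling), maps into the target with norm controlled by the data, after which the [MW] estimate for the flat piece carries the full bound. Your appeal to the energy estimates of Section \ref{sec.energy} is a workable alternative route to the same control on the remainder, but it is not what the paper relies on; the paper's bookkeeping is entirely conormal. You also correctly flag that uniformity of the constant over $(h^0,h^1)\in\widetilde{\mathcal{V}}^{N,\delta}_\epsilon\cap\mathscr{A}^{\frac{n-1}{2}+\delta}(\Sigma_0)$ comes from the fact that the background $h$ is fixed in $L^\infty$ and conormal norm by $\epsilon$, which controls all coefficients in (\ref{eq.7.1}).
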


Denote by $\widetilde{\mathcal{W}}^{N,\delta}_{\epsilon}$ be the collection of elements in 
$$(\rho_0\rho_2)^{\delta} [H_b^{\frac{1}{2}-\delta}(\overline{\mathbb{R}};H^{N+\frac{1}{2}+\delta} (\mathbb{S}^{n-1}))\cap L^2(\mathbb{S}^{n-1};H_b^{N+1}(\overline{\mathbb{R}}))]$$ with norm less than $\epsilon$ and satisfying the harmonic gauge condition (\ref{harmonic.char}). Since $\delta>0$, $\widetilde{\mathcal{W}}^{N,\delta}_{\epsilon}$ is a small neighborhood of $0$ in the Sobolev space $\widetilde{\mathcal{W}}^{N,\delta}_{\infty}$.

\begin{theorem}\label{thm.op}
With the assumption in Theorem \ref{thm.est} and choosing $\epsilon>0$ even smaller if necessary, the nonlinear M\o ller wave operator defines a continuous map and open map
\begin{equation*}
\mathscr{R_F}: \widetilde{\mathcal{V}}^{N,\delta}_{\epsilon}\longrightarrow 
\widetilde{\mathcal{W}}^{N,\delta}_{C\epsilon}
\end{equation*}
for  some $C>0$. 
\end{theorem}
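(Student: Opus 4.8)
The plan is to establish the three claimed properties of $\mathscr{R_F}$ --- well-definedness into $\widetilde{\mathcal{W}}^{N,\delta}_{C\epsilon}$, continuity, and openness --- by linearizing about an arbitrary solution and invoking the implicit function theorem in Banach spaces, using the linear mapping and isomorphism results already in hand. First I would note that Theorem \ref{thm.est}, together with Corollaries \ref{cor.6.2}, \ref{cor.6.3}, \ref{cor.6.4}, shows that for $(h^0,h^1)\in\widetilde{\mathcal{V}}^{N,\delta}_\epsilon$ the solution $h$ exists globally, satisfies the harmonic gauge condition (\ref{harmonic.3}) everywhere (by Theorem \ref{thm.lr}), and has $\tilde h|_{S_1^\pm}$ lying in the weighted b-Sobolev space defining $\widetilde{\mathcal{W}}^{N,\delta}_\infty$ with norm $\le C\epsilon$; the gauge identity (\ref{harmonic.char}) then places $\tilde h|_{S_1^\pm}$ inside $\widetilde{\mathcal{W}}^{N,\delta}_{C\epsilon}$. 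So $\mathscr{R_F}$ is well defined as a map of sets; what remains is the analytic structure.

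For continuity, I would take two Cauchy data sets $(h^0,h^1),(\bar h^0,\bar h^1)\in\widetilde{\mathcal{V}}^{N,\delta}_\epsilon$ with corresponding solutions $h,\bar h$, and run the same domain-by-domain energy scheme of Section \ref{sec.energy} on the difference $\tilde h-\bar{\tilde h}$. Because both $\tilde h$ and $\bar{\tilde h}$ are already controlled by $C\epsilon$ in the relevant norms, the quadratic nonlinearity $\tilde F$ contributes only terms bounded by $\epsilon$ times energy of the difference, exactly as in the convergence argument inside Proposition \ref{prop.est.1} and its analogues in $\widetilde\Omega_2,\widetilde\Omega_3,\widetilde\Omega_4$. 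This yields $M_i^N(\mathfrak{t};\tilde h-\bar{\tilde h},\tilde g)\lesssim \|(h^0,h^1)-(\bar h^0,\bar h^1)\|$ with the same weights $|\mathfrak{t}|^{\delta-\frac12}$, and then the boundary-norm estimate as in the corollaries gives $\|\mathscr{R_F}(h^0,h^1)-\mathscr{R_F}(\bar h^0,\bar h^1)\|_{\widetilde{\mathcal{W}}^{N,\delta}}\lesssim \|(h^0,h^1)-(\bar h^0,\bar h^1)\|_{\widetilde{\mathcal{V}}^{N,\delta}}$. One must be slightly careful that the gauge conditions are preserved along the difference, but this is automatic since both endpoints satisfy (\ref{constraint.6})+(\ref{constraint.7}).

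For openness, the idea is: $\mathscr{R_F}$ is the nonlinear analogue of the linear M\o ller operator ${}^g\mathcal{R_F}$, and Lemma \ref{lem.7.4} says ${}^m\mathcal{R_F}$ (hence, by Lemma \ref{lem.7.3}, the boundary operator of ${}^g\mathcal{R_F}$) is an isomorphism onto the target Sobolev space, with the harmonic gauge constraints on the Cauchy side matching the constraints (\ref{harmonic.char}) on the characteristic side under the splitting of Section \ref{sec.initialdata}. Concretely I would show that the derivative $D\mathscr{R_F}$ at a point $(h^0,h^1)$ equals ${}^g\mathcal{R_F}$ restricted to the tangent space $\mathcal{V}^{N,\delta}$ of $\widetilde{\mathcal{V}}^{N,\delta}_\epsilon$, which by Lemma \ref{lem.7.4} and the $\epsilon$-smallness is a bounded isomorphism onto the tangent space of $\widetilde{\mathcal{W}}^{N,\delta}_\infty$ (a Neumann-series perturbation of the Minkowski isomorphism, valid for $\epsilon$ small since $\|{}^g\mathcal{R_F}-{}^m\mathcal{R_F}\|=O(\epsilon)$). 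Then the inverse function theorem for $C^1$ maps between Banach manifolds gives that $\mathscr{R_F}$ is a local diffeomorphism near each point, hence open; shrinking $\epsilon$ if necessary gives the global statement on $\widetilde{\mathcal{V}}^{N,\delta}_\epsilon$. Finally one extends from the dense subset $\widetilde{\mathcal{V}}^{N,\delta}_\epsilon\cap\mathscr{A}^{\frac{n-1}{2}+\delta}(\Sigma_0)$ (where Lemmas \ref{lem.7.1}--\ref{lem.7.3} apply directly) to all of $\widetilde{\mathcal{V}}^{N,\delta}_\epsilon$ by the continuity just proved.

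The main obstacle I expect is differentiability of $\mathscr{R_F}$: one must show the solution map $(h^0,h^1)\mapsto h$ is $C^1$ from $\widetilde{\mathcal{V}}^{N,\delta}_\epsilon$ into the space of solutions with the energy-norm topology, and that taking the restriction to $S_1^\pm$ commutes with differentiation --- i.e. that the Gateaux derivative solves the linearized equation (\ref{eq.7.1}) and that its radiation field is $D\mathscr{R_F}$ applied to the variation. This requires a uniform (in the base point) version of all the energy estimates of Section \ref{sec.energy} applied to difference quotients and to the linearized equation simultaneously, together with the loss-free interpolation trick of Lemma \ref{lem.7.1} to close at the top Sobolev level; the weights $|\mathfrak{t}|^{\delta-\frac12}$ near $S_1^\pm$ are exactly what makes the difference-quotient remainder converge. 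Once this regularity of the solution operator is in place, the rest is bookkeeping with Lemmas \ref{lem.7.3} and \ref{lem.7.4}.
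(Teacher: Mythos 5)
Your proposal is substantially correct and uses the same structural ingredients as the paper --- the linear mapping results of Lemmas \ref{lem.7.3}--\ref{lem.7.4}, identification of the derivative with ${}^g\mathcal{R_F}$ restricted to the constraint manifold, and an inverse/implicit function theorem argument --- but the route through the openness step is genuinely different, and slightly heavier than what the paper uses. The paper computes the linearization of $\mathscr{R_F}$ only at $(0,0)$, where it is ${}^m\mathcal{R_F}$ restricted to the tangent space $\mathcal{T}^{N,\frac{n-1}{2}+\delta}$ cut out by the linearized gauge conditions ${}^m\Gamma_\mu(h)|_{t=0}=\partial_t{}^m\Gamma_\mu(h)|_{t=0}=0$; it then uses the commutation identity $\Box_m{}^m\Gamma_\mu(h)={}^m\Gamma_\mu(\Box_mh)$ to see that this restriction is still an isomorphism onto $\widetilde{\mathcal{W}}^{N,\delta}_\infty$, and invokes the implicit function theorem once at the origin, shrinking $\epsilon$ so that all of $\widetilde{\mathcal{V}}^{N,\delta}_\epsilon$ lies in the resulting neighborhood. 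You instead propose to show that $D\mathscr{R_F}$ is invertible at \emph{every} basepoint $(h^0,h^1)$ via a Neumann-series argument, which requires the operator-norm estimate $\|{}^g\mathcal{R_F}-{}^m\mathcal{R_F}\|=O(\epsilon)$. That estimate is not established in the paper: Lemma \ref{lem.7.4} gives a uniform bound $\|{}^g\mathcal{R_F}\|<C$ and Lemma \ref{lem.7.3} gives equality of \emph{boundary operators} in the Mellin sense, neither of which directly yields a small operator-norm difference, so this step would need its own proof (plausible, by an energy estimate on $k-k'$ in the notation of Lemma \ref{lem.7.3}, but not free). Your approach, if completed, is stronger --- it gives openness at every point without relying on shrinking $\epsilon$ to stay near the origin --- but the paper's one-point linearization is more economical and is what is actually needed for the stated theorem. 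Two other differences are worth noting: your continuity argument (energy estimates on the difference $\tilde h-\bar{\tilde h}$ across the four domains) is more explicit than the paper's density-plus-uniform-bound one-liner and is the right thing to write down; and your concern about the $C^1$ regularity of the solution operator is legitimate --- the paper glosses over it, and the differentiability of $\mathscr{R_F}$ with derivative given by the linearized Cauchy problem (\ref{eq.7.1}) does need to be checked (via difference quotients and the uniform weighted energy estimates, as you indicate) before either form of the inverse function theorem can be applied.
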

\begin{proof}
First, $\mathscr{R_F}$ is well defined and continuous. When restricting to $(h^0,h^1)\in  \widetilde{\mathcal{V}}^{N,\delta}_{\epsilon} \cap \mathscr{A}^{\frac{n-1}{2}+\delta}(\Sigma_0)$, it is obvious that
$$
\|\mathscr{R_F}\|=\|{}^g\mathcal{R_F}\|< C. 
$$
where $C$ is given in Lemma \ref{lem.7.4}. By density argument, it holds for all $(h^0,h^1)\in  \widetilde{\mathcal{V}}^{N,\delta}_{\epsilon}$. 
Secondly, the linearization of $\mathscr{R_F}$ at $(0,0)$ is ${}^m\mathcal{R_F}|_{\mathcal{T}^{N,\frac{n-1}{2}+\delta}}$, where $\mathcal{T}^{N,\frac{n-1}{2}+\delta}$ is the tangent space of $\widetilde{\mathcal{V}}^{N,\delta}_{\epsilon}$.
Consider the wave equation 
$$
\Box_m h=0, \quad (h|_{t=0},\partial_gh|_{t=0}) = (h^0, h^1).
$$
And let 
$$
{}^m\Gamma_{\mu}(h)=m^{\alpha\beta}\partial_{\alpha}h_{\mu\beta}-\partial_{\mu}
(\mathrm{tr}_m h), \quad\mu=0,1,...,n.
$$
Then  the solution space of the linearization of harmonic gauge conditions  (\ref{constraint.6}) and 
(\ref{constraint.7})  is
$$
\mathcal{T}^{N,\frac{n-1}{2}+\delta}=\{(h^0, h^1)\in \rho_0^{\frac{n-1}{2}+\delta} H_b^{N+1}(\Sigma_0)\times \rho_0^{\frac{n+1}{2}+\delta}H_b^{N+1}(\Sigma_0):
{}^m\Gamma_{\mu}(h)|_{t=0}, \ \partial_t{}^m\Gamma_{\mu}(h)|_{t}=0
\}
$$
Since $\Box_{m}{}^m\Gamma_{\mu}(h)={}^m\Gamma_{\mu}(\Box_m h)$, we have the restriction map 
$$
{}^m\mathcal{R_F} |_{\mathcal{T}^{N,\frac{n-1}{2}+\delta}}: \mathcal{T}^{N,\frac{n-1}{2}+\delta} \longrightarrow \widetilde{\mathcal{W}}^{N,\delta}_{\infty}
$$
is still an isomorphism.  By implicit function theorem, $\mathscr{R_F}: \widetilde{\mathcal{V}}^{N,\delta}_{\epsilon}\longrightarrow 
\widetilde{\mathcal{W}}^{N,\delta}_{C\epsilon}$  is open if $\epsilon$ is small enough. 
\end{proof}

\textbf{Proof of Theorem \ref{mainthm}}: It follows from Theorem \ref{thm.est} and Theorem \ref{thm.op} directly.

\vspace{0.2in}
\section{Appendix}
In this section, we prove some mapping property of $P=|\triangle|^{\frac{1}{2}}$ on $\mathbb{R}^n$ for $n\geq 3$ by the same method we used in \cite{MW}, where we show the mapping properties of Radon transform and Radiation field map for standard wave equation. 

Define for  $u\in C_c^{\infty}(\mathbb{R}^n)$
	\begin{equation}\label{app.1}
	Pu(x) =\frac{1}{(2\pi)^n}\int_{\mathbb{R}^n}\int_{\mathbb{R}^n} e^{i(x-y)\cdot \xi} |\xi| u(y)dy d\xi.
	\end{equation}
Then a similar proof as for the inverse Radon transform in \cite{He} shows that: 
	$$
	Pu=\begin{cases}\frac{(-i)^n}{2(2\pi)^{n-1}} \mathcal{M}\partial_s^n\mathcal{R}u
	= \frac{1}{2(2\pi)^{n-1}} \mathcal{M}\mathcal{R}(\triangle^{\frac{n}{2}}u)&\ \textrm{n=even}\\
	\frac{(-i)^n}{2(2\pi)^{n-1}} \mathcal{M}\mathcal{H}\partial_s^n\mathcal{R}u
	=\frac{-i}{2(2\pi)^{n-1}} \mathcal{M}\mathcal{H}\partial_s\mathcal{R}(\triangle^{\frac{n-1}{2}}u) &\ \textrm{n=odd}
	\end{cases}
	$$
where $\triangle=-\Sigma_{i=1}^n\partial_i^2$ and $\mathcal{R}$ is the Radon transform
	$$
	(\mathcal{R}u)(s,\omega) = \int_{x\cdot \omega=s} u(x)dH
	$$
with $dH$ the induced Lebesgue measure on hyperplane $x\cdot \omega=s$ and $\mathcal{M}, \mathcal{H}$ are defined as follows: for $v\in C_c^{\infty}(\mathbb{R}\times \mathbb{S}^{n-1})$, 
	$$
	\begin{aligned}
	(\mathcal{M}v)(x) =&\  \int_{\mathbb{S}^{n-1}} v(x\cdot\omega,\omega) d\omega, \\
	(\mathcal{H}v)(s,\omega)=&\ \frac{i}{\pi} \lim_{(\epsilon,N)\rightarrow (0,\infty)} \int_{\epsilon<|s-s'|<N} (s-s')^{-1} v(s',\omega)ds'.
	\end{aligned}
	$$
\begin{proposition}\label{app.1}
For $n\geq 3$ and $\lambda\in (0,n-1)$, $P$ extends to an isomorphism: 
	$$
	P: \rho_0^{\lambda} H_b^{N}(\overline{\mathbb{R}^n}) \longrightarrow  \rho_0^{\lambda+1}H_b^{N-1}(\overline{\mathbb{R}^n}).
	$$
satisfying $P^2=\triangle$.
\end{proposition}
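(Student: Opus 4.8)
The plan is to realize $P$ through the Radon transform and its back--projection, and to run the weighted b--Sobolev bookkeeping exactly as was done for the Radon transform and the radiation field map in \cite{MW}. Put $k_n=\lfloor n/2\rfloor$ and let $\epsilon_n=1$ if $n$ is odd and $\epsilon_n=0$ if $n$ is even. Using the intertwining $\mathcal{R}(\triangle u)=\partial_s^{2}\mathcal{R}u$, the two formulas for $P$ displayed just above the statement can be written uniformly as
\begin{equation*}
Pu=c_n\,\mathcal{M}\,\mathcal{H}^{\,\epsilon_n}\,\partial_s^{\,\epsilon_n}\,\mathcal{R}\big(\triangle^{k_n}u\big),
\end{equation*}
with $c_n$ an explicit nonzero constant, so that only the genuine differential operator $\triangle^{k_n}$ is applied to $u$ itself. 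I would then treat the individual factors in turn through weighted b--Sobolev spaces: (i) $\triangle^{k_n}\colon\rho_0^{\lambda}H_b^{N}(\overline{\mathbb{R}^n})\to\rho_0^{\lambda+2k_n}H_b^{N-2k_n}(\overline{\mathbb{R}^n})$ is bounded for every $\lambda$, because in polar coordinates $r=1/\rho_0$ one has $\triangle=\rho_0^{2}\,(\text{b--differential operator of order }2)$; (ii) the Radon transform $\mathcal{R}$ carries $\rho_0^{\mu}H_b^{M}(\overline{\mathbb{R}^n})$ continuously into a weighted b--Sobolev space on $\overline{\mathbb{R}}_s\times\mathbb{S}^{n-1}$, with the conormal weight at $s=\pm\infty$ governed by the decay of its argument at $|x|=\infty$ and the usual order gain near $s=0$; since the argument is $\triangle^{k_n}u$, decaying at order $\lambda+2k_n$, the fibre integral converges because $\lambda+2k_n>n-1$ for $\lambda>0$; (iii) for odd $n$, the operators $\partial_s$ and the Hilbert transform $\mathcal{H}$ in $s$ are translation invariant in $s$, hence tangential to the b--structure at $s=\pm\infty$, so $\partial_s$ lowers orders by one and $\mathcal{H}$ preserves them; (iv) the back--projection $\mathcal{M}$ returns to a weighted b--Sobolev space on $\overline{\mathbb{R}^n}$. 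A routine count of orders through the composition shows the net effect is the loss of one derivative and the gain of one power of $\rho_0$, which is the claimed mapping.

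The step I expect to be the main obstacle is to show that this composition is an \emph{isomorphism} precisely for $\lambda\in(0,n-1)$; this is an indicial/normal--operator analysis at spatial infinity, of the same flavour as the threshold analysis in \cite{MW}. The normal operator of $P$ at $\rho_0=0$ is the homogeneous convolution operator $\triangle^{1/2}$ on $\mathbb{R}^{n}$; decomposing into spherical harmonics and Mellin transforming in the radial direction produces an explicit meromorphic family (a quotient of Gamma functions), and one must see that it is invertible exactly when the weight exponent avoids its poles. On the constant spherical harmonic these poles sit at $\lambda=0$ and $\lambda=n-1$ --- the values at which $\triangle^{1/2}(r^{-\lambda})$ degenerates, to a multiple of a constant and to a multiple of $\delta_0$ respectively --- while on the higher harmonics they lie outside $[0,n-1]$; hence $(0,n-1)$ is the maximal open interval of admissible weights. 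In the Radon picture this is precisely the condition at $s=\pm\infty$ under which $\mathcal{H}$ and $\mathcal{M}$ are bounded and invertible on the relevant weighted spaces, which is where \cite{MW} locates the obstruction as well.

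Granting the mapping property, injectivity is immediate: if $Pu=0$ with $u\in\rho_0^{\lambda}H_b^{N}(\overline{\mathbb{R}^n})$ and $\lambda>0$, then $|\xi|\,\widehat u=0$, so $\widehat u$ is supported at the origin, $u$ is a polynomial, and the only polynomial in $\rho_0^{\lambda}H_b^{N}(\overline{\mathbb{R}^n})$ is $0$. For surjectivity I would run the same factorization for the Fourier multiplier $|\xi|^{-1}$, i.e.\ the Riesz potential $I_1$; the identical bookkeeping, with the threshold shifted by one unit, yields a bounded operator $\rho_0^{\lambda+1}H_b^{N-1}(\overline{\mathbb{R}^n})\to\rho_0^{\lambda}H_b^{N}(\overline{\mathbb{R}^n})$ for $\lambda\in(0,n-1)$, and because $|\xi|\cdot|\xi|^{-1}=1$ on the Fourier side this operator is a two--sided inverse of $P$ (one checks the composition on $C_c^{\infty}(\mathbb{R}^{n})$ and extends by density). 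Finally $P^{2}=\triangle$ is immediate from the definition of $P$ as the Fourier multiplier by $|\xi|$: $P^{2}$ is multiplication by $|\xi|^{2}$, that is $-\sum_{i=1}^{n}\partial_i^{2}=\triangle$, an identity valid on $C_c^{\infty}(\mathbb{R}^{n})$ and extended to $\rho_0^{\lambda}H_b^{N}(\overline{\mathbb{R}^n})$ by the continuity just established, applied with the weights $\lambda$ and $\lambda+1$.
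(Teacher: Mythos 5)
Your overall strategy matches the paper's: realize $P$ through the Radon transform factorization $P=c_n\mathcal{M}\mathcal{H}^{\epsilon_n}\partial_s^{\epsilon_n}\mathcal{R}\triangle^{k_n}$, and analyze the action on weighted b--Sobolev spaces by Mellin transforming in $\rho_0$ at spatial infinity. The closing observations on $P^2=\triangle$ and on injectivity via the Fourier support argument are clean and correct.

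However, the step you dispose of as ``a routine count of orders through the composition'' is where essentially all of the paper's work is concentrated, and I do not think it can be dismissed. After Mellin transform, what one needs is not just continuity of each factor but a \emph{uniform} family estimate as the Mellin parameter $\tau$ moves along the vertical line $\Re\tau=\lambda$: one needs the indicial family $B(\tau)=\rho^{-\tau-1}P\rho^{\tau}|_{\rho=0}$, an operator on $\mathbb{S}^{n-1}$, to be bounded $H^N(\mathbb{S}^{n-1})\to H^{N-1}(\mathbb{S}^{n-1})$ with a bound uniform in $\Im\tau$. The paper achieves this by explicitly factoring $B(\tau)$ through the Funk--type operators $A(\tau)$ with Schwartz kernel $(\theta\cdot\omega)_+^{\tau-n}$ and then running a semiclassical decomposition of the kernel (with semiclassical parameter $\eta=1/\Im\tau$), splitting it into a smoothing piece, a semiclassical piece, and an antipodal piece, and invoking boundedness of semiclassical families. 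None of this is a count of orders; it is the genuine analytic content, and your sketch does not provide a substitute for it. Relatedly, your description of the indicial object as ``the homogeneous convolution operator $\triangle^{1/2}$ on $\mathbb{R}^n$'' conflates $P$ itself with its Mellin--transformed boundary family: the latter is a meromorphic family of (pseudodifferential) operators on the sphere, not an operator on $\mathbb{R}^n$. Your proposed reduction to a quotient of Gamma functions on each spherical harmonic is one way to compute the pointwise symbol of $B(\tau)$, but it does not by itself yield the uniform Sobolev estimates in $\Im\tau$ that the isomorphism requires, and your claimed pole locations $\lambda=0,n-1$ should be checked against the paper's statement that the simple poles of $B(\tau)$ lie at $(n+\mathbb{N}_0)\cup(-\mathbb{N})$, which places the endpoints of $(0,n-1)$ strictly inside the pole--free strip; the constraint on $\lambda$ therefore does not come from poles of $B(\tau)$ alone, and your argument as written does not identify what actually fails at the endpoints. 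Finally, the surjectivity argument via $I_1$ is a reasonable idea, but note that $I_1$ is not obtained from your displayed factorization by ``shifting the threshold''; one has to write $I_1=c\,\mathcal{M}\mathcal{H}^{\epsilon_n}\partial_s^{n-2}\mathcal{R}$ (two fewer $s$--derivatives, not an extra power of $\triangle$), and then the same semiclassical family analysis must be repeated for that composition, so this step is also far from free.
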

\begin{proof}
By the mapping property of $\triangle$, we only need to show the above map is continuous. For $\Re\lambda\in (0,n-1)$, by a similar proof as in \cite{MW}, $P$ extends to a continuous map
	$$
	\rho^{\lambda}C^{\infty}(\overline{\mathbb{R}}^n)\longrightarrow \rho^{\lambda+1}C^{\infty}(\overline{\mathbb{R}}^n)+\rho^{n+1}C^{\infty}(\overline{\mathbb{R}}^n)
	$$
which restricts to the boundary to define
	$$
	\begin{gathered}
	B(\tau)f=\rho^{-\tau-1}P\rho^{\tau}\tilde{f}|_{\rho=0}: C^{\infty}(\mathbb{S}^{n-1})\longrightarrow C^{\infty}(\mathbb{S}^{n-1})\\
	\mathrm{where}\ \tilde{f}\in C^{\infty}(\overline{\mathbb{R}}^n), \ \tilde{f}|_{\rho=0}=f. 
	\end{gathered}
	$$
Here $B(\tau)$ is a meromorphic family of semicalssical pseudodifferential operator with simple poles at $(n+\mathbb{N}_0)\cup (-\mathbb{N})$ . 
Moreover, let $A(\tau):C^{\infty}(\mathbb{S}^{n-1})\longrightarrow C^{\infty}(\mathbb{S}^{n-1})$ defined by the Schwartz kernel $(\theta\cdot \omega)_+^{\tau-n}$. Then
	$$
	\begin{gathered}
	B(\tau)=\begin{cases}C_n(\tau)A(n-1-\tau)A(\tau) &\ n=\mathrm{even}\\
	\frac{i}{\pi}C_n(\tau)[-C_+(\tau+1)A(n-1-\tau)+C_-(\tau+1)\Theta A(n-1-\tau)] A(\tau) &\ n=\mathrm{odd}
	\end{cases}
	\end{gathered}
	$$
where $\Theta$ is the antipodal map and 
	$$
	C_n(\tau)= \frac{(-i)^n}{(2\pi)^{n-1}}\prod_{i=0}^{n-1}(\tau-i),\ C_{\pm}(\tau+1)=\int_0^{\infty}(1\mp t)^{-1}t^{-\tau-1}dt.
	$$
Hence the Schwartz kernel of $B(\tau)$ can be split into three parts: 
	$$
	\begin{gathered}
	\chi(B(\tau))(\theta,\theta')=B_0(\theta,\theta')+B_{sc}(\theta,\theta')+B_{\pm}(\theta,\pm\theta'),\quad \mathrm{where}
	\\
	B_0\in \eta^{-1}\Psi_{sc}^{-n}(\mathbb{S}^{n-1}),\ B_{sc}\in \eta^{-1}\Psi_{sc}^{1}(\mathbb{S}^{n-1}),\ B_{\pm}\in \mathcal{S}(\mathbb{R}_{\mathrm{Im}\tau},\Psi^{1}(\mathbb{S}^{n-1}))
	\end{gathered}
	$$
with $\eta=\frac{1}{\mathrm{Im}\tau}$ the semiclassical parameter as $\mathrm{Im}\tau\rightarrow 0$. From standard estimates on the boundedness of semiclassical families, $B(\tau)$ defines two continuous maps:
	$$
	\begin{aligned}
	&B(\tau): H^N(\mathbb{S}^{n-1})
	\longrightarrow H^{N-1}(\mathbb{S}^{n-1}), \quad \|B(\tau)\|\leq C;
	\\
	&B(\tau): H^{N}(\mathbb{S}^{n-1})
	\longrightarrow H^{N}(\mathbb{S}^{n-1}),\quad \|B(\tau)\|\leq C(1+|\mathrm{Im}\tau|). 
	\end{aligned}
	$$ 
By Mellin transform and taking $\alpha=\mathrm{Re}\tau$, we finish the proof. 
\end{proof}

\end{document}